\newtheorem{observation}{Remark}[section]
\newtheorem{lemma}[observation]{Lemma}  
\newtheorem{theorem}[observation]{Theorem}
\newtheorem{definition}[observation]{Definition}
\newtheorem{example}[observation]{Example}
\newtheorem{remark}[observation]{Remark}
\newtheorem{proposition}[observation]{Proposition} 
\newtheorem{corollary}[observation]{Corollary}
\newdimen\w@dth
\def\setw@dth#1#2{\setbox\z@\hbox{\scriptsize $#1$}\w@dth=\wd\z@
\setbox\@ne\hbox{\scriptsize $#2$}\ifnum\w@dth<\wd\@ne \w@dth=\wd\@ne \fi
\advance\w@dth by 1.2em}
\def\t@^#1_#2{\allowbreak\def\n@one{#1}\def\n@two{#2}\mathrel
{\setw@dth{#1}{#2}
\mathop{\hbox to \w@dth{\rightarrowfill}}\limits
\ifx\n@one\empty\else ^{\box\z@}\fi
\ifx\n@two\empty\else _{\box\@ne}\fi}}
\def\t@@^#1{\@ifnextchar_ {\t@^{#1}}{\t@^{#1}_{}}}
\def\t@left^#1_#2{\def\n@one{#1}\def\n@two{#2}\mathrel{\setw@dth{#1}{#2}
\mathop{\hbox to \w@dth{\leftarrowfill}}\limits
\ifx\n@one\empty\else ^{\box\z@}\fi
\ifx\n@two\empty\else _{\box\@ne}\fi}}
\def\t@@left^#1{\@ifnextchar_ {\t@left^{#1}}{\t@left^{#1}_{}}}
\def\two@^#1_#2{\def\n@one{#1}\def\n@two{#2}\mathrel{\setw@dth{#1}{#2}
\mathop{\vcenter{\hbox to \w@dth{\rightarrowfill}\kern-1.7ex
                 \hbox to \w@dth{\rightarrowfill}}%
       }\limits
\ifx\n@one\empty\else ^{\box\z@}\fi
\ifx\n@two\empty\else _{\box\@ne}\fi}}
\def\tw@@^#1{\@ifnextchar_ {\two@^{#1}}{\two@^{#1}_{}}}
\def\tofr@^#1_#2{\def\n@one{#1}\def\n@two{#2}\mathrel{\setw@dth{#1}{#2}
\mathop{\vcenter{\hbox to \w@dth{\rightarrowfill}\kern-1.7ex
                 \hbox to \w@dth{\leftarrowfill}}%
       }\limits
\ifx\n@one\empty\else ^{\box\z@}\fi
\ifx\n@two\empty\else _{\box\@ne}\fi}}
\def\t@fr@^#1{\@ifnextchar_ {\tofr@^{#1}}{\tofr@^{#1}_{}}}
\newdimen\W@dth
\def\setW@dth#1#2{\setbox\z@\hbox{$#1$}\W@dth=\wd\z@
\setbox\@ne\hbox{$#2$}\ifnum\W@dth<\wd\@ne \W@dth=\wd\@ne \fi
\advance\W@dth by 1.2em}
\def\T@^#1_#2{\allowbreak\def\N@one{#1}\def\N@two{#2}\mathrel
{\setW@dth{#1}{#2}
\mathop{\hbox to \W@dth{\rightarrowfill}}\limits
\ifx\N@one\empty\else ^{\box\z@}\fi
\ifx\N@two\empty\else _{\box\@ne}\fi}}
\def\T@@^#1{\@ifnextchar_ {\T@^{#1}}{\T@^{#1}_{}}}
\def\T@left^#1_#2{\def\N@one{#1}\def\N@two{#2}\mathrel{\setW@dth{#1}{#2}
\mathop{\hbox to \W@dth{\leftarrowfill}}\limits
\ifx\N@one\empty\else ^{\box\z@}\fi
\ifx\N@two\empty\else _{\box\@ne}\fi}}
\def\T@@left^#1{\@ifnextchar_ {\T@left^{#1}}{\T@left^{#1}_{}}}
\def\Tofr@^#1_#2{\def\N@one{#1}\def\N@two{#2}\mathrel{\setW@dth{#1}{#2}
\mathop{\vcenter{\hbox to \W@dth{\rightarrowfill}\kern-1.7ex
                 \hbox to \W@dth{\leftarrowfill}}%
       }\limits
\ifx\N@one\empty\else ^{\box\z@}\fi
\ifx\N@two\empty\else _{\box\@ne}\fi}}
\def\T@fr@^#1{\@ifnextchar_ {\Tofr@^{#1}}{\Tofr@^{#1}_{}}}
\def\Two@^#1_#2{\def\N@one{#1}\def\N@two{#2}\mathrel{\setW@dth{#1}{#2}
\mathop{\vcenter{\hbox to \W@dth{\rightarrowfill}\kern-1.7ex
                 \hbox to \W@dth{\rightarrowfill}}%
       }\limits
\ifx\N@one\empty\else ^{\box\z@}\fi
\ifx\N@two\empty\else _{\box\@ne}\fi}}
\def\Tw@@^#1{\@ifnextchar_ {\Two@^{#1}}{\Two@^{#1}_{}}}
\def\to{\@ifnextchar^ {\t@@}{\t@@^{}}}
\def\from{\@ifnextchar^ {\t@@left}{\t@@left^{}}}
\def\tofro{\@ifnextchar^ {\t@fr@}{\t@fr@^{}}}
\def\To{\@ifnextchar^ {\T@@}{\T@@^{}}}
\def\From{\@ifnextchar^ {\T@@left}{\T@@left^{}}}
\def\Two{\@ifnextchar^ {\Tw@@}{\Tw@@^{}}}
\def\Tofro{\@ifnextchar^ {\T@fr@}{\T@fr@^{}}}
\title{Properties and Characterisations of \\ Cofree Cartesian Differential Categories}
\author{Jean-Simon Pacaud Lemay}
\begin{document}
 
\allowdisplaybreaks

\maketitle

\begin{abstract}  Cartesian differential categories come equipped with a differential operator which formalises the total derivative from multivariable calculus. Cofree Cartesian differential categories always exist over a specified base category, where the general construction is based on Faà di Bruno's formula. A natural question to ask is, when given an arbitrary Cartesian differential category, how can one check if it is cofree without knowing the base category? In this paper, we provide characterisations of cofree Cartesian differential categories without specifying a base category. The key to these characterisations is, surprisingly, maps whose derivatives are zero, which we call differential constants. One characterisation is in terms of the homsets being complete ultrametric spaces, where the ultrametric is induced by differential constants, which is similar to the metric for power series. Another characterisation is as algebras of a monad. In either characterisation, the base category is the category of differential constants. We also discuss other basic properties of cofree Cartesian differential categories, such as the linear maps, and explain how many well-known Cartesian differential categories (such as polynomial or smooth functions) are not cofree. 
\end{abstract}

\noindent \small \textbf{Acknowledgements.} The author would first like to thank Robin Cockett, Robert Seely, and Geoff Cruttwell for their support of this research. The author would also like to thank the organizers of the 22nd meeting of the Yorkshire and Midlands Category Theory Seminar (YAMCATS) for inviting me to give a talk on the subject of this research paper, and to the seminar's audience for their questions and comments. In particular, the author would like to thank Steve Vickers for pointing out the metric was actually an \emph{ultra}metric. The author also thanks the anonymous reviewer for useful suggestions on improving the paper's presentation. For this research, the author was financially supported by a JSPS Postdoctoral Fellowship, Award \#: P21746. 
\tableofcontents
\newpage


\section{Introduction}

Cartesian differential categories, introduced by Blute, Cockett, and Seely in \cite{blute2009cartesian}, are one of the cornerstones in the theory of differential categories, which, as the name suggests, uses category theory to provide and study the foundations and applications of differentiation in a variety of contexts. Briefly, a Cartesian differential category (Definition \ref{cartdiffdef}) is a category with finite products such that each hom-set is a $k$-module, for some fixed commutative unital semiring $k$, which allows for zero maps and sums of maps (Definition \ref{LACdef} \& \ref{CLACdef}), and also comes equipped with a differential combinator $\mathsf{D}$, which for every map ${f: A \to B}$ produces its derivative $\mathsf{D}[f]: A \times A \to B$. The differential combinator satisfies seven axioms, known as \textbf{[CD.1]} to \textbf{[CD.7]}, which formalise the basic identities of the total derivative from multivariable differential calculus such as the chain rule, linearity in vector argument, symmetry of partial derivatives, etc. Cartesian differential categories have been able to formalise various concepts of differential calculus and have also found applications related to computer science, such as for the categorical semantics of the differential $\lambda$-calculus \cite{Cockett-2019}. 
 
There are many interesting examples of Cartesian differential categories, with the two primary examples relating to differentiating polynomials and differentiating smooth functions (Example \ref{ex:CDC}.(\ref{ex:poly}) \& (\ref{ex:smooth})). The central notion of study in this paper are \emph{cofree} Cartesian differential categories (Definition \ref{def:cofree}) which satisfy a couniversal property over a base category amongst Cartesian differential categories. It is worth mentioning that Cartesian differential categories are essentially algebraic, and so it follows from standard results that free Cartesian differential categories exist: they can be given by the term logic for Cartesian differential categories \cite[Section 4]{blute2009cartesian}. On the other hand, the fact that cofree Cartesian differential categories always exist does not follow from any standard theory. This is an interesting similarity that the theory of differential categories shares with the theory of differential algebras, in the sense that cofree differential algebras also exist, which are called Hurwitz series rings \cite{keigher2000hurwitz}. 

The original construction of cofree Cartesian differential categories is called the Faà di Bruno construction (Definition \ref{def:faa}), as first introduced by Cockett and Seely in \cite{cockett2011faa}. The name comes from the fact that composition in a cofree Cartesian differential category is given by a generalisation of Faà di Bruno's formula for higher order derivatives \cite[Lemma 3.4]{garner2020cartesian}. Maps in the Faà di Bruno construction are called Faà di Bruno sequences (Definition \ref{def:faasequence}), which are sequences $(f_0, f_1, f_2, \hdots)$ of maps of the base category where the $n$-th term is interpreted as the $n$-th higher order derivative of the $0$-th term (Definition \ref{def:partial}). The Faà di Bruno construction was then used by Garner and the author in \cite{garner2020cartesian} to prove two fundamental results for Cartesian differential categories. The first was characterising Cartesian differential categories as \emph{skew-}enriched categories \cite[Theorem 6.4]{garner2020cartesian}, while the second was proving that every Cartesian differential category embeds into the coKleisli category of a differential category \cite[Theorem 8.7]{garner2020cartesian}. 

Properties and applications of the Faà di Bruno construction have not yet been explored much. This is most likely due to the fact that composition and differentiation in the Faà di Bruno construction are somewhat complicated, making it somewhat challenging to work directly inside the Faà di Bruno construction. This is unsurprising since, famously, Faà di Bruno's formula is also quite complex and combinatorial. To simplify working with the Faà di Bruno construction, Cockett and Seely used term logic and trees in \cite{cockett2011faa}, while Garner and the author used some pragmatic combinatorial notation \cite{cockett2011faa}. However, in both cases, these techniques are somewhat involved and heavy on notation. Another construction of cofree Cartesian differential categories was then introduced by the author in \cite{lemay2018tangent}, in the hopes that this construction would be easier to work with. Unfortunately, this is still not quite the case. While differentiation is much simpler and composition is somewhat easier to work with, the trade-off is that maps in this alternative construction are much more complicated. Thus, working with either of these constructions of cofree Cartesian differential categories is not as straightforward as one would hope. 

The main objective of this paper is to demonstrate that one does not necessarily need to understand the internal workings of a cofree Cartesian differential category to study it. Indeed, simply from knowing that a Cartesian differential category is cofree over a base category, we may use its couniversal property to derive many interesting properties and structures. Of course, any cofree Cartesian differential category will be isomorphic to the Faà di Bruno construction of its base category (Lemma \ref{lem:cofree-faa2}), so these results may be derived by working with the Faà di Bruno construction directly. However, since working in the Faà di Bruno construction directly is somewhat tricky, the goal of this paper is to do as many proofs using only the couniversal property and avoid working directly with the Faà di Bruno construction (especially its composition and differentiation) as much as possible. In Section \ref{sec:cofreelin}, we show that in a cofree Cartesian differential category, the differential linear maps (Definition \ref{def:dlin}) correspond to the $k$-linear maps of the base category (Proposition \ref{prop:difflincofree}), while the $k$-linear maps instead correspond to pairs of $k$-linear maps of the base category (Proposition \ref{prop:klincofree}). In Section \ref{sec:ultrametric}, we explain how every hom-set of a cofree Cartesian differential category is, in fact, a complete ultrametric space (Proposition \ref{prop:um}), where the metric is induced by the functor to the base category and the higher order derivatives, similar to the metric for power series or Hurwitz series \cite{keigher2000hurwitz}. 

We also discuss how for certain base categories, a more concrete and familiar description of the cofree Cartesian differential category is possible. In particular, we provide a very simple description of the cofree Cartesian differential category over a category with finite biproducts (Proposition \ref{prop:biproductcofree}). Another example we discuss, as first shown in \cite[Proposition 4.9]{garner2020cartesian}, is that for the category of $k$-modules and arbitrary functions between them, the cofree Cartesian differential category over it is given by the coKleisli category of a comonad on the category of $k$-modules (Example \ref{ex:cofreeQ}). In fact, this is an example of a monoidal differential category \cite[Definition 2.4]{blute2006differential}, and in particular, a categorical model of differential linear logic \cite{ehrhard2017introduction} which was studied by Clift and Murfet in \cite[Section 3]{clift2020cofree}. In the case where $k$ is an algebraically closed field of characteristic zero, such as the complex numbers, the category of (finitely generated) cofree cocommutative $k$-coalgebras is a cofree Cartesian $k$-differential category over the category of (finite-dimensional) $k$-vector spaces and arbitrary set functions between them (Example \ref{ex:cofreereal}). 

Another main objective of this paper is to provide a base independent characterisation of cofree Cartesian differential categories. Indeed, a natural question to ask is, when given an arbitrary Cartesian differential category, how can one check if it is cofree without knowing the base category? To do so, we must first understand how to reconstruct the base category from a cofree Cartesian differential category. The answer is given by taking the maps whose derivatives are zero, which we call differential constants (Definition \ref{def:dcon}). This is somewhat surprising since differential constants have, up till now, played no meaningful role in the theory of Cartesian differential categories, while differential linear maps are usually the more important maps. It is somewhat counter-intuitive that a Cartesian differential category is completely generated by maps whose derivatives are zero, yet this is the case for cofree Cartesian differential categories. However, in general, one cannot simply construct a subcategory of differential constants since, crucially, the identity map is not a differential constant. As such, in Section \ref{sec:dcon}, we introduce the notion of a differential constant unit (Definition \ref{def:varsigma}), which behaves like an identity map for differential constants, and if it exists, we can build the category of differential constants as desired. For a cofree Cartesian differential category, its category of differential constants is isomorphic to its base category (Proposition \ref{prop:cofree-diffcon}). As a result, we then show that a Cartesian differential category is cofree if and only if it has a differential constant unit and is cofree over its category of differential constants (Theorem \ref{thm:diffcon1}). From this characterisation, we can easily check when a Cartesian differential category is not cofree by checking if it has a differential constant or not. As such, we can explain why the Cartesian differential categories of polynomials or smooth functions are both not cofree (Example \ref{ex:notcofree}).  

To provide a completely internal and base independent characterisation of cofree Cartesian differential categories, we also need the ability to describe maps as converging infinite sums of their higher order derivative. From the point of view of the Faà di Bruno construction, we wish to able to say that a Faà di Bruno sequence is an infinite converging sum of the form $(f_0, f_1, \hdots) = (f_0, 0, \hdots) + (0, f_1, 0, \hdots) + \hdots$ (Example \ref{ex:cofreeChCD}.(\ref{ex:faaChCD})). In Section \ref{sec:ChDC}, we explain that this can be done in a Cartesian differential category that has a differential constant unit which induces an ultrametric on the homsets, which we call being differential constant complete (Definition \ref{def:dconcom}), and such that the differential constants are well-behaved enough, which we call having convenient differential constants (Definition \ref{def:dconconv}). Therefore a Cartesian differential category is cofree if and only if it has a differential constant unit, is differential constant complete, and has convenient differential constants (Theorem \ref{thm:2}).   

In Section \ref{sec:cofreealg}, we also provide a characterisation of cofree Cartesian differential categories as the algebras of a monad on the category of Cartesian differential categories. This monad arises from the Faà di Bruno adjunction \cite[Corollary 3.13]{garner2020cartesian}, which is the adjunction where the left adjoint is the forgetful functor and the right adjoint is given by the Faà di Bruno construction. The comonad of the Faà di Bruno adjunction was first studied by Cockett and Seely in \cite[Theorem 2.2.2]{cockett2011faa}, where they showed that the coalgebras of this comonad were precisely Cartesian $k$-differential categories \cite[Theorem 3.2.4]{cockett2011faa}. In more categorical terms, this says that the Faà di Bruno adjunction is comonadic. For the induced monad of the Faà di Bruno adjunction, we show that a Cartesian differential category is an algebra of this monad if and only if it is cofree (Theorem \ref{thm:faaalg-cofree}). From this, it follows that the Faà di Bruno adjunction is also monadic (Proposition \ref{prop:monadic}), and that the Faà di Bruno comonad is of effective descent type \cite[Section 2]{mesablishvili2006monads}.

The characterisations introduced in this paper should simplify working with cofree Cartesian differential categories and hopefully lead to interesting new results and applications for cofree Cartesian differential categories.

\section{Cartesian Differential Categories}\label{sec:CDC}

In this background section, we review the basics of Cartesian differential categories. For a more in-depth introduction to Cartesian differential categories, we refer the reader to \cite{blute2009cartesian,garner2020cartesian}. In this paper, we will work with Cartesian differential categories relative to a fixed commutative unital semiring $k$, as was done in \cite{garner2020cartesian}, so in particular, we do not necessarily assume that we have negatives. When $k = \mathbb{N}$, the semiring of natural numbers, we obtain precisely Blute, Cockett, and Seely's original definition and theory from \cite{blute2009cartesian}. 

The underlying structure of a Cartesian differential category is that of a Cartesian left $k$-linear category, which can be described as a category with finite products which is \emph{skew}-enriched over the category of $k$-modules and $k$-linear maps between them \cite{garner2020cartesian}. Essentially, this means that each hom-set is a $k$-module, so in particular, we have zero maps and can take the sum of maps, but also allow for maps which do not preserve zeroes or sums. Maps which do preserve the module structure are called $k$-linear maps. 

\begin{definition} \label{LACdef} \cite[Section 2.1]{garner2020cartesian} A \textbf{left $k$-linear category} is a category $\mathbb{A}$ such that each hom-set $\mathbb{A}(A,B)$ is a $k$-module with scalar multiplication $\cdot : k \times  \mathbb{A}(A,B) \to  \mathbb{A}(A,B)$, addition ${+: \mathbb{A}(A,B) \times \mathbb{A}(A,B) \to \mathbb{A}(A,B)}$, and zero $0 \in \mathbb{A}(A,B)$, and such that for any map $x: A^\prime \to A$, pre-composition ${\_ \circ x: \mathbb{A}(A,B) \to  \mathbb{A}(A^\prime,B)}$ is a $k$-linear morphism, that is, for all $r,s \in k$ and $f,g \in \mathbb{A}(A,B)$ the following equality holds\footnote{In an arbitrary category, we use the classical notation for composition $\circ$ as opposed to diagrammatic order which was used in other papers on Cartesian differential categories, such as in \cite{blute2009cartesian,lemay2018tangent} for example. We denote identity maps as ${1_A: A \to A}$.}:
\begin{align}
(r \cdot f + s \cdot g) \circ x = r \cdot (f \circ x) + s \cdot (g \circ x) 
\end{align}
In a left $k$-linear category $\mathbb{A}$, a map $f: A\to B$ is said to be \textbf{$k$-linear} if post-composition $f \circ \_ : \mathbb{A}(A^\prime,A) \to \mathbb{A}(A^\prime,B)$ is a $k$-linear morphism, that is, for all $r,s \in k$ and $x,y \in \mathbb{A}(A^\prime,A)$ the following equality holds:
\begin{align}
 f \circ (r \cdot x + s \cdot y) =   r \cdot (f \circ x) + s \cdot (f \circ y)  
\end{align}
For a left $k$-linear category $\mathbb{A}$, let $k\text{-}\mathsf{lin}\left[ \mathbb{A} \right]$ be the subcategory of $k$-linear maps of $\mathbb{A}$ and $\mathcal{I}_{\mathbb{A}}: k\text{-}\mathsf{lin}\left[ \mathbb{A} \right] \to \mathbb{A}$ be the inclusion functor. A \textbf{$k$-linear category} is a left $k$-linear category $\mathbb{A}$ such that every map in $\mathbb{A}$ is $k$-linear, so $k\text{-}\mathsf{lin}\left[ \mathbb{A} \right] = \mathbb{A}$. 
\end{definition}

A list of basic properties of $k$-linear maps can be found in \cite[Proposition 1.1.2]{blute2009cartesian}, such as the fact they are closed under composition and $k$-linear structure. Also, note that a $k$-linear category is precisely a category enriched over $k$-modules. It is worth mentioning that a category can be a left $k$-linear category in possibly different ways. So being a left $k$-linear category is a structure rather than a property. 

We now add finite products to the story and require that the projections be $k$-linear. For a category with finite products, we denote the terminal object as $\ast$, the product by $A_0 \times \hdots \times A_n$ with projections $\pi_j: A_0 \times \hdots \times A_n \to A_j$, and denote the tupling operation by $\langle -, -, \hdots, - \rangle$. 

\begin{definition} \label{CLACdef} \cite[Section 2.1]{garner2020cartesian} A \textbf{Cartesian left $k$-linear category} is a left $k$-linear category $\mathbb{A}$ such that $\mathbb{A}$ has finite products and all projections ${\pi_j: A_0 \times \hdots \times A_n \to A_j}$ are $k$-linear maps. 
\end{definition}

As mentioned above, Cartesian left $\mathbb{N}$-linear categories are precisely Cartesian left additive categories \cite[Definition 1.2.1]{blute2009cartesian}, and in this case, the $\mathbb{N}$-linear maps are precisely additive maps \cite[Definition 1.1.1]{blute2009cartesian}. Conversely, every Cartesian left $k$-linear category is a Cartesian left additive category, and every $k$-linear map is an additive map. Properties of Cartesian left $k$-linear categories can be found in \cite[Section 1.2]{blute2009cartesian}, while a list of examples can be found in \cite[Example 2.3]{garner2020cartesian}. Again, also note that while products are unique up to isomorphism, a category can possibly have multiple $k$-linear structures, which makes it into a Cartesian left $k$-linear category. That said, we will only refer to Cartesian left $k$-linear categories by their underlying category since, in this paper, there should be no confusion. 

In a Cartesian left $k$-linear category, since not every map is $k$-linear or additive, the product $\times$ is not a coproduct and, therefore, not a biproduct. That said, for any Cartesian left $k$-linear category $\mathbb{A}$, its subcategory of $k$-linear maps $k\text{-}\mathsf{lin}\left[ \mathbb{A} \right]$ is a $k$-linear category with finite biproducts. Conversely, a $k$-linear category with finite biproducts is precisely a Cartesian left $k$-linear category where every map is $k$-linear \cite[Example 2.3.(ii)]{garner2020cartesian}. 

A key concept for the story of this paper is the notion of functors between Cartesian left $k$-linear categories that preserve both the $k$-linear structure and finite products \emph{strictly}. Of course, one could instead consider functors that preserve the products up to isomorphism. However, since this approach only adds little to the story of characterising cofree Cartesian differential categories, we will do as in \cite{garner2020cartesian,lemay2018tangent} and work with strict structure preserving functors. Furthermore, it follows that these functors also preserve $k$-linear maps. 

\begin{definition} \cite[Section 3]{lemay2018tangent} A \textbf{Cartesian $k$-linear functor} between Cartesian left $k$-linear categories $\mathbb{A}$ and $\mathbb{A}^\prime$ is a functor $\mathcal{F}: \mathbb{A} \to \mathbb{A}^\prime$ which preserves the product structure strictly, that is, $\mathcal{F}(A_0 \times \hdots \times A_n) = \mathcal{F}(A_0) \times \hdots \times \mathcal{F}(A_n)$ and $\mathcal{F}(\pi_j) = \pi_j$, and also preserves the $k$-linear structure strictly, that is, $\mathcal{F}(r \cdot f+ s\cdot g) = r\cdot \mathcal{F}(f) + s\cdot \mathcal{F}(g)$ for all parallel maps $f,g \in \mathbb{A}$ and $r,s \in k$. 
\end{definition}

\begin{lemma}\cite[Lemma 1.3.2]{blute2009cartesian}\label{lem:funcadd} Let $\mathcal{F}: \mathbb{A} \to \mathbb{A}^\prime$ be a Cartesian $k$-linear functor. Then if $f$ is a $k$-linear map in $\mathbb{A}$, $\mathcal{F}(f)$ is a $k$-linear map in $\mathbb{A}^\prime$. Therefore, there is a Cartesian $k$-linear functor $k\text{-}\mathsf{lin}\left[\mathcal{F}\right]: k\text{-}\mathsf{lin}\left[ \mathbb{A} \right] \to k\text{-}\mathsf{lin}\left[ \mathbb{A}^\prime \right]$, defined on objects and maps as $k\text{-}\mathsf{lin}\left[\mathcal{F}\right](-) = \mathcal{F}(-)$, and such that the following diagram commutes: 
 \begin{align*} \xymatrixcolsep{5pc}\xymatrix{ k\text{-}\mathsf{lin}\left[ \mathbb{A} \right] \ar[r]^-{k\text{-}\mathsf{lin}\left[\mathcal{F}\right]} \ar[d]_-{\mathcal{I}_{\mathbb{A}}} & k\text{-}\mathsf{lin}\left[ \mathbb{A}^\prime \right] \ar[d]^-{\mathcal{I}_{\mathbb{A}^\prime}} \\ 
 \mathbb{A} \ar[r]_-{\mathcal{F}} & \mathbb{A}^\prime }
\end{align*}
\end{lemma}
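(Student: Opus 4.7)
The plan is to first establish that $\mathcal{F}$ sends $k$-linear maps of $\mathbb{A}$ to $k$-linear maps of $\mathbb{A}^\prime$, and then from this deduce the existence and stated properties of the induced functor $k\text{-}\mathsf{lin}\left[\mathcal{F}\right]$. The subtlety in the first part is that the definition of $k$-linearity for $f: A \to B$ quantifies over arbitrary maps $x, y: A^\prime \to A$ in the ambient category. So to show $\mathcal{F}(f)$ is $k$-linear in $\mathbb{A}^\prime$, we would need to control arbitrary $x^\prime, y^\prime: C^\prime \to \mathcal{F}(A)$ in $\mathbb{A}^\prime$, but most of these are not of the form $\mathcal{F}(x)$. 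The trick is to reformulate $k$-linearity purely in terms of structural maps built from $\times$ and the $k$-module operations.

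Concretely, I would first show that a map $f: A \to B$ in a Cartesian left $k$-linear category is $k$-linear if and only if for every $r, s \in k$ the equation
\[ f \circ (r \cdot \pi_0 + s \cdot \pi_1) = r \cdot (f \circ \pi_0) + s \cdot (f \circ \pi_1) \]
holds as parallel maps $A \times A \to B$, where $\pi_0, \pi_1: A \times A \to A$ are the projections. The forward direction is a special case of the definition, and the converse follows by precomposing with $\langle x, y \rangle$ for arbitrary $x, y: A^\prime \to A$ and using that projections are $k$-linear so that $\pi_0 \circ \langle x, y\rangle = x$ and $\pi_1 \circ \langle x, y\rangle = y$, combined with the left $k$-linearity of composition.

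Next, since $\mathcal{F}$ preserves products strictly, it fixes the projections $\pi_0, \pi_1$; since it preserves the $k$-linear structure on homsets strictly, it commutes with $r \cdot (-) + s \cdot (-)$. Applying $\mathcal{F}$ to the characterizing equation above therefore yields exactly the same equation with $f$ replaced by $\mathcal{F}(f)$ and $A, B$ replaced by $\mathcal{F}(A), \mathcal{F}(B)$, so by the characterization $\mathcal{F}(f)$ is $k$-linear.

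With this, define $k\text{-}\mathsf{lin}\left[\mathcal{F}\right]$ to act as $\mathcal{F}$ on both objects and (now well-typed) morphisms; functoriality and strict preservation of identities and composition are inherited from $\mathcal{F}$. The stated square commutes on the nose because both composites are literally $\mathcal{F}$ on objects and morphisms, as the inclusions $\mathcal{I}_{\mathbb{A}}$ and $\mathcal{I}_{\mathbb{A}^\prime}$ are the identity on data. Finally, $k\text{-}\mathsf{lin}\left[\mathcal{F}\right]$ is itself a Cartesian $k$-linear functor: products in $k\text{-}\mathsf{lin}\left[\mathbb{A}\right]$ and its $k$-linear category counterpart are biproducts inherited from the ambient categories (as noted in the excerpt above), and the $k$-module structure on each hom-set agrees with the ambient one, so each of strict product preservation, strict projection preservation, and strict preservation of the $k$-linear structure for $k\text{-}\mathsf{lin}\left[\mathcal{F}\right]$ is an immediate restriction of the corresponding property for $\mathcal{F}$. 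The only genuinely non-cosmetic step is the reformulation of $k$-linearity in the first paragraph; everything else is bookkeeping.
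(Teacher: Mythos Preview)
Your proof is correct. The paper does not actually supply a proof of this lemma; it merely states it with a citation to \cite[Lemma 1.3.2]{blute2009cartesian}, so there is nothing to compare against beyond noting that your reformulation of $k$-linearity via the single equation $f \circ (r \cdot \pi_0 + s \cdot \pi_1) = r \cdot (f \circ \pi_0) + s \cdot (f \circ \pi_1)$ is exactly the standard trick and is what makes the argument go through.
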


Observe of course that ${\mathcal{I}_{\mathbb{A}}: k\text{-}\mathsf{lin}\left[ \mathbb{A} \right] \to \mathbb{A}}$ is trivially a Cartesian $k$-linear functor. 

Cartesian differential categories are Cartesian left $k$-linear categories that also come equipped with a differential combinator, which is an operator that sends maps to their derivative. The axioms of a differential combinator are analogues of the basic properties of the total derivative from multivariable differential calculus. There are various equivalent ways of expressing the axioms of a Cartesian differential category. Here, we have chosen the one found in \cite[Section 2.2]{garner2020cartesian}. Terminology-wise, in \cite{garner2020cartesian} we used the term Cartesian $k$-linear differential category, but for brevity, here we will use the term Cartesian $k$-differential category. It is also essential to notice that in this paper, unlike in \cite{blute2009cartesian,cockett2011faa} and other early works on Cartesian differential categories, we incorporate the convention used in more recent works where the linear argument of the derivative is its second argument rather than its first argument. 

\begin{definition}\label{cartdiffdef} \cite[Section 2.2]{garner2020cartesian} A \textbf{differential combinator} $\mathsf{D}$ on a Cartesian left $k$-linear category $\mathbb{X}$ is a family of functions between the hom-sets:
\begin{align*} \mathsf{D}: \mathbb{X}(X,Y) \to \mathbb{X}(X \times X,Y) && \frac{f : X \to Y}{\mathsf{D}[f]: X \times X \to Y}
\end{align*}
where $\mathsf{D}[f]$ is called the \textbf{derivative} of $f$, and such that the following seven axioms hold:  
\begin{enumerate}[{\bf [CD.1]}]
\item \label{CDCax1} $\mathsf{D}[r\cdot f + s \cdot g] = r \cdot \mathsf{D}[f] + s \cdot \mathsf{D}[g]$ for all $r,s \in k$ 
\item \label{CDCax2} $\mathsf{D}[f] \circ \langle x, r\cdot y+ s\cdot z \rangle= r\cdot \left( \mathsf{D}[f] \circ \langle x, y \rangle \right) + s \cdot \left( \mathsf{D}[f] \circ \langle x, z \rangle \right)$ for all $r,s \in k$ and suitable maps $x$, $y$, and $z$ 
\item \label{CDCax3} $\mathsf{D}[1_X]=\pi_1$ and $\mathsf{D}[\pi_j] = \pi_j \circ \pi_1 = \pi_{n+j+1}$ 
\item \label{CDCax4} $\mathsf{D}[\left\langle f_0, \hdots, f_n \right \rangle] = \left \langle  \mathsf{D}[f_0], \hdots, \mathsf{D}[f_n] \right \rangle$
\item \label{CDCax5} $\mathsf{D}[g \circ f] = \mathsf{D}[g] \circ \langle f \circ \pi_0, \mathsf{D}[f] \rangle$
\item \label{CDCax6} $\mathsf{D}\left[\mathsf{D}[f] \right] \circ \left \langle x, y, 0, z \right \rangle=  \mathsf{D}[f] \circ \langle x, z \rangle$ for all suitable maps $x$, $y$, and $z$ 
\item \label{CDCax7} $\mathsf{D}\left[\mathsf{D}[f] \right] \circ \left \langle x, y, z, 0 \right \rangle = \mathsf{D}\left[\mathsf{D}[f] \right] \circ \left \langle x, z, y, 0 \right \rangle$ for all suitable maps $x$, $y$, and $z$ 
\end{enumerate}
A \textbf{Cartesian $k$-differential category} is a pair $(\mathbb{X}, \mathsf{D})$ consisting of a Cartesian left $k$-linear category $\mathbb{X}$ and differential combinator $\mathsf{D}$ on $\mathbb{X}$. 
\end{definition}

The convention of denoting Cartesian differential categories as pairs is not necessarily the norm in previous papers on the subject. We adopt this notation for two reasons: (1) to easily distinguish between Cartesian left $k$-linear categories and Cartesian $k$-differential categories, and (2) later we will encounter cases of Cartesian left $k$-linear categories that have more than one possible differential combinator. That said, we will often abuse notation and write $\mathsf{D}$ for all differential combinators. We only use different notations for differential combinators when we wish to emphasize that there is more than one possible differential combinator of interest. 

Briefly, the intuition for axioms of a differential combinator are that: \textbf{[CD.1]} the differential combinator is a $k$-linear morphism, \textbf{[CD.2]} derivatives are $k$-linear in their second argument, \textbf{[CD.3]} what the derivative of identity maps and projections are, \textbf{[CD.4]} the derivative of a tuple is the tuple of the derivatives, \textbf{[CD.5]} the chain rule for the derivative of composition, \textbf{[CD.6]} derivatives are \emph{differential} linear in their second argument, and lastly \textbf{[CD.7]} is the symmetry of the partial derivatives. Again, we mention that when $k = \mathbb{N}$, a Cartesian $\mathbb{N}$-differential category is precisely a Cartesian differential category in the original sense of Blute, Cockett, and Seely \cite[Definition 2.1.1]{blute2009cartesian}, and conversely, every Cartesian $k$-differential category is a Cartesian differential category in the original sense. It is also worth mentioning that there is a sound and complete term logic for Cartesian differential categories \cite[Section 4]{blute2009cartesian}.   

An important class of maps in a Cartesian differential category is the class of \emph{differential} linear maps \cite[Definition 2.2.1]{blute2009cartesian}. In this paper, we borrow the terminology from \cite{garner2020cartesian} and will instead call them $\mathsf{D}$-linear maps, to better distinguish them from $k$-linear maps. 

\begin{definition} \label{def:dlin}\cite[Definition 2.7]{garner2020cartesian} In a Cartesian $k$-differential category $(\mathbb{X}, \mathsf{D})$, a map $f: X \to Y$ is \textbf{differential linear}, or simply \textbf{$\mathsf{D}$-linear} for short, if $\mathsf{D}[f] = f \circ \pi_1$. Let $\mathsf{D}\text{-}\mathsf{lin}\left[ \mathbb{X} \right]$ to be the subcategory of $\mathsf{D}$-linear maps of $\mathbb{X}$ and let ${\mathcal{J}_{(\mathbb{X}, \mathsf{D})}: \mathsf{D}\text{-}\mathsf{lin}\left[ \mathbb{X} \right] \to \mathbb{X}}$ the inclusion functor. 
\end{definition}

Properties of differential linear maps can be found in \cite[Lemma 2.6]{cockett2020linearizing}, such as the fact that they are closed under composition, $k$-linear structure, and product structure. In particular, every differential linear map is also $k$-linear \cite[Lemma 2.6.i]{cockett2020linearizing}, and as result, it follows that for Cartesian $k$-differential category $(\mathbb{X}, \mathsf{D})$, its subcategory of $\mathsf{D}$-linear maps $\mathsf{D}\text{-}\mathsf{lin}\left[ \mathbb{X} \right]$ will be a $k$-linear category with finite biproducts \cite[Corollary 2.2.3]{blute2009cartesian}. It is important to note that although $k$-linear maps and differential linear maps often coincide in many examples, in an arbitrary Cartesian differential category, not every $k$-linear map is necessarily differential linear. 

\begin{example}\label{ex:CDC} \normalfont Here are some well-known main examples of Cartesian differential categories. See \cite{cockett2020linearizing,garner2020cartesian} for lists of other examples of Cartesian differential categories. 
\begin{enumerate}[{\em (i)}]
\item \label{ex:poly} Let $k\text{-}\mathsf{POLY}$ be the category whose objects are $n \in \mathbb{N}$ and where a map ${P: n \to m}$ is a $m$-tuple of polynomials in $n$ variables, that is, $P = \langle p_1(\vec x), \hdots, p_m(\vec x) \rangle$ with $p_i(\vec x) \in k[x_1, \hdots, x_n]$. Then $k\text{-}\mathsf{POLY}$ is a Cartesian $k$-differential category where on objects the product is given by addition, $n \times m = n +m$, and the differential combinator is given by the standard differentiation of polynomials, that is, for $P = \langle p_1(\vec x), \hdots, p_m(\vec x) \rangle: n \to m$, its derivative $\mathsf{D}[P]: 2n \to m$ (where recall that $n \times n = 2n$) is the tuple of the sum of the partial derivatives of the polynomials $p_i(\vec x)$:
\begin{align*}
\mathsf{D}[P](\vec x, \vec y) := \left( \sum \limits^n_{i=1} \frac{\partial p_1(\vec x)}{\partial x_i} y_i, \hdots, \sum \limits^n_{i=1} \frac{\partial p_n(\vec x)}{\partial x_i} y_i \right) 
\end{align*} 
where $\sum \limits^n_{i=1} \dfrac{\partial p_j (\vec x)}{\partial x_i} y_i \in k[x_1, \hdots, x_n, y_1, \hdots, y_n]$. A map $P: n \to m$ is $\mathsf{D}$-linear if and only if it is a tuple of homogenous polynomials of degree 1, so of the form $p_i(\vec x) = \sum\limits^n_{i=1} a_i x_i$. As such if $P: n \to m$ is $\mathsf{D}$-linear, then it corresponds to a $k$-linear morphism $k^n \to k^m$, and so $\mathsf{D}\text{-}\mathsf{lin}[k\text{-}\mathsf{POLY}]$ is equivalent to the category of finite-dimensional free $k$-modules and $k$-linear morphisms between them. However, there could be polynomials that are $k$-linear but not $\mathsf{D}$-linear. For example, if $k=\mathbb{Z}_2$, then $x^2$ is $\mathbb{Z}_2$-linear but not $\mathsf{D}$-linear. 
\item \label{ex:smooth} Let $\mathbb{R}$ be the set of real numbers. Define $\mathsf{SMOOTH}$ as the category whose objects are the Euclidean spaces $\mathbb{R}^n$ and whose maps are smooth functions between them. $\mathsf{SMOOTH}$ is a Cartesian $\mathbb{R}$-differential category where the differential combinator is defined as the total derivative of a smooth function. For a smooth function ${F: \mathbb{R}^n \to \mathbb{R}^m}$, which is in fact an $m$-tuple $F = \langle f_1, \hdots, f_m \rangle$ of smooth functions $f_i: \mathbb{R}^n \to \mathbb{R}$, the derivative ${\mathsf{D}[F]: \mathbb{R}^n \times \mathbb{R}^n \to \mathbb{R}^m}$ is defined as:
\[\mathsf{D}[F](\vec x, \vec y) := \left \langle \sum \limits^n_{i=1} \frac{\partial f_1}{\partial x_i}(\vec x) y_i, \hdots, \sum \limits^n_{i=1} \frac{\partial f_n}{\partial x_i}(\vec x) y_i \right \rangle\]
Note that $\mathbb{R}\text{-}\mathsf{POLY}$ is a sub-Cartesian differential category of $\mathsf{SMOOTH}$. A smooth function $F: \mathbb{R}^n \to \mathbb{R}^m$ is $\mathsf{D}$-linear if and only if it is $\mathbb{R}$-linear in the classical sense. Therefore, $\mathsf{D}\text{-}\mathsf{lin}[\mathsf{SMOOTH}]$ is equivalent to the category of finite-dimensional real vector spaces and $\mathbb{R}$-linear morphisms between them.
\item \label{ex:biproductdiff} Any $k$-linear category $\mathbb{B}$ with finite biproduct is a Cartesian $k$-differential category where the differential combinator $\mathsf{D}^{\mathsf{lin}}$ is defined by precomposing with the second projection map: $ \mathsf{D}^{\mathsf{lin}}[f] = f \circ \pi_1$. Therefore, every map is $\mathsf{D}^{\mathsf{lin}}$-linear by definition and so $\mathsf{D}^{\mathsf{lin}}\text{-}\mathsf{lin}[\mathbb{B}] = \mathbb{B}$. As an explicit example, let $k\text{-}\mathsf{MOD}$ be the category of $k$-modules and $k$-linear morphisms between them. Then $k\text{-}\mathsf{MOD}$ is a Cartesian $k$-differential category where for a $k$-linear morphism $f: M \to N$, its derivative $\mathsf{D}^{\mathsf{lin}}[f]: M \times M \to N$ is defined as $\mathsf{D}^{\mathsf{lin}}[f](m,n) = f(n)$. In fact, a Cartesian $k$-differential category in which every map is differential linear is a $k$-linear category with finite biproducts. That said, as we will see later, it is possible to equip certain $k$-linear categories with finite biproducts with other differential combinators $\mathsf{D}$ such that not every map is $\mathsf{D}$-linear.  
\item An important source of examples of Cartesian differential categories are those which arise as the coKleisli category of a differential category \cite{blute2006differential}. We will discuss a particular example of this in Example \ref{ex:cofreeQ}. 
\end{enumerate} 
\end{example}

We now turn our attention to functors between Cartesian differential categories, which are Cartesian $k$-linear functors that also commute with the differential combinator. It immediately follows that such functors also preserve differential linear maps. 

\begin{definition} \cite[Section 4]{lemay2018tangent} For Cartesian $k$-differential categories $(\mathbb{X}, \!\mathsf{D})$ and $(\mathbb{X}^\prime\! , \!\mathsf{D})$, a \textbf{Cartesian $k$-differential functor} $\mathcal{F}\!:\! (\mathbb{X}, \mathsf{D}) \to (\mathbb{X}^\prime, \mathsf{D})$ between them is a Cartesian $k$-linear functor ${\mathcal{F} : \mathbb{X} \to \mathbb{X}^\prime}$ such that $\mathcal{F}\left(\mathsf{D}[f] \right) = \mathsf{D}\left[ \mathcal{F}(f) \right]$.  
\end{definition}

\begin{lemma}\label{lem:funclin} Let $\mathcal{F}: (\mathbb{X}, \mathsf{D}) \to (\mathbb{X}^\prime, \mathsf{D})$ be a Cartesian $k$-differential functor. Then if $f$ is a differential linear map in $(\mathbb{X}, \mathsf{D})$, $\mathcal{F}(f)$ is a differential linear map in $(\mathbb{X}^\prime, \mathsf{D})$. Therefore, there is a Cartesian $k$-linear functor $\mathsf{D}\text{-}\mathsf{lin}\left[\mathcal{F}\right]: \mathsf{D}\text{-}\mathsf{lin}\left[ \mathbb{X} \right] \to \mathsf{D}\text{-}\mathsf{lin}\left[ \mathbb{X}^\prime \right]$, defined on objects and maps as $\mathsf{D}\text{-}\mathsf{lin}\left[\mathcal{F}\right](-) = \mathcal{F}(-)$, and such that the following diagram commutes: 
 \begin{align*} \xymatrixcolsep{5pc}\xymatrix{ \mathsf{D}\text{-}\mathsf{lin}\left[ \mathbb{X} \right] \ar[r]^-{\mathsf{D}\text{-}\mathsf{lin}\left[\mathcal{F}\right]} \ar[d]_-{\mathcal{J}_{(\mathbb{X}, \mathsf{D})}} & \mathsf{D}\text{-}\mathsf{lin}\left[ \mathbb{X}^\prime \right] \ar[d]^-{\mathcal{J}_{(\mathbb{X}^\prime, \mathsf{D})}} \\ 
 \mathbb{X} \ar[r]_-{\mathcal{F}} & \mathbb{X}^\prime }
\end{align*}
\end{lemma}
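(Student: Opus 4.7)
The plan is to mimic the proof of the analogous Lemma \ref{lem:funcadd} for $k$-linear maps, replacing the $k$-linearity equation with the defining equation $\mathsf{D}[f] = f \circ \pi_1$ of a differential linear map. The argument is a short diagram chase using only functoriality, strict preservation of products, and the commutation of $\mathcal{F}$ with $\mathsf{D}$.

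First I would take an arbitrary $\mathsf{D}$-linear map $f: X \to Y$ in $(\mathbb{X}, \mathsf{D})$, so by Definition \ref{def:dlin} we have $\mathsf{D}[f] = f \circ \pi_1$. Applying $\mathcal{F}$ to both sides and using that $\mathcal{F}$ is a functor gives $\mathcal{F}(\mathsf{D}[f]) = \mathcal{F}(f) \circ \mathcal{F}(\pi_1)$. Since $\mathcal{F}$ is a Cartesian $k$-linear functor, it preserves projections strictly, so $\mathcal{F}(\pi_1) = \pi_1$. On the other side, since $\mathcal{F}$ is a Cartesian $k$-differential functor, $\mathcal{F}(\mathsf{D}[f]) = \mathsf{D}[\mathcal{F}(f)]$. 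Combining these yields $\mathsf{D}[\mathcal{F}(f)] = \mathcal{F}(f) \circ \pi_1$, which says exactly that $\mathcal{F}(f)$ is $\mathsf{D}$-linear in $(\mathbb{X}^\prime, \mathsf{D})$.

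Having established that $\mathsf{D}$-linear maps are preserved, I would then define $\mathsf{D}\text{-}\mathsf{lin}\left[\mathcal{F}\right]$ on objects and maps by the same assignment as $\mathcal{F}$; the preservation result just shown ensures this is well-typed as a functor landing in $\mathsf{D}\text{-}\mathsf{lin}\left[ \mathbb{X}^\prime \right]$. Functoriality is inherited from $\mathcal{F}$, and it is a Cartesian $k$-linear functor because $\mathcal{F}$ already preserves products and the $k$-linear structure strictly (and these structures in $\mathsf{D}\text{-}\mathsf{lin}\left[ \mathbb{X} \right]$ are inherited from $\mathbb{X}$ via $\mathcal{J}_{(\mathbb{X},\mathsf{D})}$). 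Commutativity of the stated square is then immediate from the definition of $\mathsf{D}\text{-}\mathsf{lin}\left[\mathcal{F}\right]$ as the restriction of $\mathcal{F}$, since both composites send an object or map in $\mathsf{D}\text{-}\mathsf{lin}\left[ \mathbb{X} \right]$ to its image under $\mathcal{F}$.

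There is no real obstacle here: the whole lemma is a routine unwinding of definitions, with the only substantive input being the strict preservation of $\pi_1$ and the commutation $\mathcal{F} \circ \mathsf{D} = \mathsf{D} \circ \mathcal{F}$. The result is essentially the $\mathsf{D}$-linear analogue of \cite[Lemma 1.3.2]{blute2009cartesian} recorded here as Lemma \ref{lem:funcadd}, and the proof follows exactly the same pattern.
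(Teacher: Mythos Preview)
Your proposal is correct and matches the paper's treatment: the paper states this lemma without proof, treating it as the routine definitional check that it is, and your argument spells out exactly that check using $\mathcal{F}(\mathsf{D}[f]) = \mathsf{D}[\mathcal{F}(f)]$ and $\mathcal{F}(\pi_1) = \pi_1$.
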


Per Example \ref{ex:CDC}.(\ref{ex:biproductdiff}), for any Cartesian $k$-differential category $(\mathbb{X}, \mathsf{D})$, $(\mathsf{D}\text{-}\mathsf{lin}\left[ \mathbb{X} \right], \mathsf{D}^{\mathsf{lin}})$ is a Cartesian $k$-differential category and, moreover, ${\mathcal{J}_{(\mathbb{X}, \mathsf{D})}: (\mathsf{D}\text{-}\mathsf{lin}\left[ \mathbb{X} \right], \mathsf{D}^{\mathsf{lin}}) \to (\mathbb{X}, \mathsf{D})}$ is a Cartesian $k$-differential functor. 

We conclude this section by discussing higher-order derivatives in a Cartesian differential category. There are two kinds: the total higher derivative and the partial higher derivative. The difference is that the former is simply successive applications of the differential combinator, while the latter is successive partial derivations of the non-linear argument. Indeed, in a Cartesian $k$-differential category $(\mathbb{X}, \mathsf{D})$, we can define the partial derivatives by inputting zeroes in the total derivative. So for a map $f: X_0 \times \hdots \times X_n \to Y$, its \textbf{$j$-th partial derivative} \cite[Definition 2.7]{garner2020cartesian} in the argument $X_j$ is the map $\mathsf{D}_{j}[f]: X_0 \times \hdots \times X_n \times X_j \to Y$ defined as:
\begin{align}
    \mathsf{D}_{j}[f] = \mathsf{D}[f] \circ \langle \pi_0, \pi_1, \hdots, \pi_n, 0, \hdots, 0,  \pi_{n+1}, 0, \hdots, 0 \rangle 
\end{align}
where $\pi_{n+1}$ is in the $n+j$-th spot. We also say that a map $f: X_0 \times \hdots \times X_n \to Y$ is \textbf{$\mathsf{D}$-linear in its $j$-th argument} \cite[Definition 2.6]{garner2020cartesian} if $\mathsf{D}_{j}[f] = f \circ \langle \pi_0, \pi_1, \hdots, \pi_{j-1}, \pi_{n+1}, \pi_{j+1}, \hdots \pi_n \rangle$. As mentioned above, \textbf{[CD.6]} is equivalent to the statement that derivatives are differential linear in their second argument, that is, $\mathsf{D}_1\left[ \mathsf{D}[f] \right] = \mathsf{D}[f] \circ \langle \pi_0, \pi_2 \rangle$. 

Now for $n \in \mathbb{N}$, we denote $X^n$ as a shorthand for the product of $n$-copies of $X$. Then for an arbitrary map $f: X \to Y$, applying the differential combinator $n$-times results in a map of type $\mathsf{D}^n[f]: X^{2^n} \to Y$ called the $n$-th total derivative. However, as explained in \cite[Section 3.1]{garner2020cartesian}, it follows from \textbf{[CD.6]} that there is a lot of redundant information in $\mathsf{D}^n[f]$, since it has many differential linear arguments. On the other hand, we can differentiate $f$ once to get $\mathsf{D}[f]: X \times X \to Y$, and then take the partial derivative in the non-differential linear argument to get a map of type $\mathsf{D}_{0}\left[\mathsf{D}[f] \right]: X \times X \times X \to Y$, and so on.  This gives the higher-order partial derivatives of $f$. 

\begin{definition}\label{def:partial}\cite[Definition 3.1]{garner2020cartesian} In a Cartesian $k$-differential category $(\mathbb{X}, \mathsf{D})$, for a map $f: X \to Y$ and $n \in \mathbb{N}$, the \textbf{$n$-th derivative} of $f$ is the map $\partial^n[f]: X \times X^n \to Y$ defined as $\partial^n[f] = \underbrace{\mathsf{D}_0[\mathsf{D}_0[ \hdots \mathsf{D}_0}_{n \text{ times}}[f ] \hdots ] ]$, where by convention $\partial^0[f]=f$ and $\partial^1[f]=\mathsf{D}[f]$.
\end{definition}

\begin{example} \normalfont To highlight the difference between $\mathsf{D}^n$ and $\partial^n$, let us work out a basic example. Consider the polynomial function $p(x) = x^2$. Then $\mathsf{D}$ will differentiate all variables, while $\partial$ will only differentiate the first variable:  
\begin{align*}
& \mathsf{D}^0[p](x) = x^2 && \partial^0[p](x) = x^2 \\
& \mathsf{D}^1[p](x,y) = 2xy && \partial^1[p](x,y) = 2xy \\
& \mathsf{D}^2[p](x,y,z,w) = 2yz + 2xw && \partial^2[p](x,y,z) = 2yz \\
&  \mathsf{D}^3[p](x,y,z,w, a,b,c,d) = 2zb + 2yc + 2wa + 2xd && \partial^3[p](x,y,z,w) = 0 
\end{align*}
    In general, on polynomials, $\mathsf{D}$ will never ``stop'', while $\partial$ will eventually hit zero. 
\end{example}

The higher order total derivative $\mathsf{D}^n$ and the higher order partial derivative $\partial^n$ can actually be defined from one another \cite[Lemma  3.2]{garner2020cartesian}. Indeed, $\partial^n[f]$ can be defined by inputting zeroes in the appropriate arguments of $\mathsf{D}^n[f]$, while $\mathsf{D}^n[f]$ can be expressed as a sum of the $\partial^{k}[f]$, for $1 \leq k \leq n$. While they are ``interchangeable'', we will tell the story of this paper using the higher order partial derivatives $\partial^n$, for reasons we explain below. 

Here are now some useful identities for higher-order partial derivatives. \textbf{[HD.1]} to \textbf{[HD.7]} are the higher-order versions of \textbf{[CD.1]} to \textbf{[CD.7]}, while \textbf{[HD.8]} tells us what the derivative of a higher-order partial derivative is. In particular, \textbf{[HD.5]} is Faà di Bruno's Formula, which expresses the higher-order chain rule. To help write down Faà di Bruno's Formula, let's introduce some notation. For every $n \in \mathbb{N}$, let $[0]=\emptyset$ and let $[n+1] = \lbrace 1< \hdots < n+1 \rbrace$. Now for every subset $I = \lbrace i_1 < \hdots<  i_m \rbrace \subseteq [n+1]$, define $\pi_I: X^n \to X^m$ to be $\pi_I := \langle \pi_{i_1}, \hdots, \pi_{i_m} \rangle$. We denote a \emph{non-empty} partition of $[n+1]$ as $[n+1] = A_1 \vert \hdots \vert A_k$, and let $\vert A_j \vert$ be the cardinality of $A_j$. Then Faà di Bruno's Formula \cite[Lemma 3.14]{garner2020cartesian} for the $n+1$-th derivative is expressed as a sum over the non-empty partitions of $[n+1]$. 

\begin{lemma}\label{lem:partial} \cite[Section 3]{garner2020cartesian} In a Cartesian $k$-differential category $(\mathbb{X}, \mathsf{D})$,  
\begin{enumerate}[{\bf [HD.1]}]
\item  $\partial^n[r\cdot f + s \cdot g] = r \cdot \partial^n[f] + s \cdot \partial^n[g]$ for all $r,s \in k$ 
\item For all $1 \leq j \leq n$, $r,s \in k$ and suitable maps $x_0, x_1, \hdots, x_n$ and $x^\prime_j$:
\begin{gather*}
    \partial^n[f] \circ \langle x_0, x_1, \hdots, r \cdot x_j + s \cdot x^\prime_j, \hdots, x_n\rangle = r \cdot \left(\partial^n[f] \circ \langle x_0, x_1, \hdots, x_j , \hdots, x_n\rangle \right) + s \cdot \left(\partial^n[f] \circ \langle x_0, x_1, \hdots, x^\prime_j , \hdots, x_n\rangle \right)
\end{gather*} 
\item If $f$ is $\mathsf{D}$-linear then $\partial^0[f]=f$, $\partial^1[f]= f \circ \pi_1$, and $\partial^{n+2}[f]=0$ 
\item $\partial^n[\left\langle f_0, \hdots, f_n \right \rangle] = \left \langle  \partial^n[f_0], \hdots, \partial^n[f_n] \right \rangle$
\item The following equality holds:
\begin{gather*}
\partial^n[g \circ f] = \sum \limits_{ [n]=A_1 \vert \hdots \vert A_k}  \partial^k[g] \circ \left \langle f \circ \pi_0,  \partial^{\vert A_1 \vert}[f] \circ \langle \pi_0, \pi_{A_1} \rangle, \hdots,  \partial^{\vert A_k \vert}[f] \circ \langle \pi_0, \pi_{A_k} \rangle \right \rangle 
\end{gather*} 
\item $\partial^n[f]: X \times X^n \to Y$ is $\mathsf{D}$-linear in each of its last $n$-arguments.
\item For all permutations $\sigma: \lbrace 1, \hdots, n \rbrace \xrightarrow{\cong} \lbrace 1, \hdots, n \rbrace$, and all suitable maps $x_0$, ..., and $x_n$, the following equality holds:
 \[ \partial^{n}[f] \circ \langle x_0, x_1, \hdots, x_n\rangle = \partial^{n}[f]  \circ \langle x_0, x_{\sigma(1)}, \hdots, x_{\sigma(n)}\rangle   \]
\item The following equalities hold: 
\begin{align*}
    \mathsf{D}[\partial^n[f]] &=~ \partial^n \left[ \mathsf{D}[f] \right] \circ \left \langle \pi_0, \pi_{n+1}, \pi_1, \pi_{n+2}, \hdots, \pi_{n}, \pi_{2n+1} \right \rangle \\
    &=~ \partial^{n+1}[f] \circ \langle \pi_0, \pi_1, \hdots, \pi_n, \pi_{n+1} \rangle + \sum \limits^n_{j=1} \partial^{n}[f] \circ \langle \pi_0, \pi_1, \hdots, \pi_{j-1}, \pi_{n+j+1}, \pi_{j+1}, \hdots, \pi_n \rangle.  
\end{align*}
\end{enumerate}
\end{lemma}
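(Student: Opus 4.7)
The plan is to prove each identity by induction on $n$, using the recursive definition $\partial^n[f] = \mathsf{D}_0[\partial^{n-1}[f]]$ together with the corresponding axiom from \textbf{[CD.1]} through \textbf{[CD.7]}. The base cases $n=0$ (where $\partial^0[f]=f$) and $n=1$ (where $\partial^1[f]=\mathsf{D}[f]$) are either trivial or reduce directly to a stated [CD] axiom, and the inductive step propagates the property through one further application of $\mathsf{D}_0$. The eight identities are not logically independent, so I would prove them in a suitable interleaved order; in particular, \textbf{[HD.6]} is needed as an input to \textbf{[HD.5]}, and \textbf{[HD.7]} is invoked throughout to rearrange iterated partial derivatives.

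The identities \textbf{[HD.1]}, \textbf{[HD.2]}, \textbf{[HD.3]}, \textbf{[HD.4]}, \textbf{[HD.6]}, and \textbf{[HD.7]} are routine inductions. \textbf{[HD.1]} lifts the $k$-linearity \textbf{[CD.1]} of the differential combinator; \textbf{[HD.4]} lifts the pairing rule \textbf{[CD.4]}; \textbf{[HD.6]} follows from \textbf{[CD.6]} and the observation that taking a partial derivative in the $0$-th argument preserves $\mathsf{D}$-linearity in any of the other arguments, so the argument newly introduced at each step is $\mathsf{D}$-linear and remains so thereafter; \textbf{[HD.2]} is then a consequence of \textbf{[HD.6]} since every $\mathsf{D}$-linear map is $k$-linear. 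For \textbf{[HD.7]}, every permutation is a product of adjacent transpositions, so it suffices to treat the swap of two consecutive linear arguments, which reduces to \textbf{[CD.7]} applied inside the iterated $\mathsf{D}_0$ after using \textbf{[HD.6]} to move the swap to the correct position. For \textbf{[HD.3]}, if $f$ is $\mathsf{D}$-linear then $\partial^1[f] = f \circ \pi_1$ by definition; one more application of $\mathsf{D}_0$ combined with the chain rule \textbf{[CD.5]} and the $\mathsf{D}$-linearity of $\pi_1$ (granted by \textbf{[CD.3]}) forces $\partial^2[f] = 0$, whence $\partial^{n+2}[f] = 0$ for all $n$ by \textbf{[HD.1]}.

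The main obstacle is \textbf{[HD.5]}, Faà di Bruno's formula, which I would also prove by induction on $n$. The base case $n=1$ is precisely the chain rule \textbf{[CD.5]}. For the inductive step, I would apply $\mathsf{D}_0$ to the sum expression for $\partial^n[g \circ f]$ provided by the induction hypothesis, which ranges over non-empty partitions of $[n]$. Each summand is a composite of $\partial^k[g]$ with a pairing containing $f \circ \pi_0$ and the factors $\partial^{\vert A_j \vert}[f] \circ \langle \pi_0, \pi_{A_j} \rangle$. The chain rule together with \textbf{[HD.4]} distributes $\mathsf{D}_0$ across the composite and the pairing, and the structure of the partial derivative $\mathsf{D}_0$ produces two kinds of contributions: either the partial derivative acts on the outer $\partial^k[g]$, contributing (by definition of $\partial^{k+1}$) a $\partial^{k+1}[g]$ term in which a new singleton block $\{n+1\}$ has been attached with $\partial^1[f] \circ \langle \pi_0, \pi_{n+1} \rangle$; or it acts on one of the inner $\partial^{\vert A_j \vert}[f] \circ \langle \pi_0, \pi_{A_j} \rangle$ factors, replacing it with $\partial^{\vert A_j \vert + 1}[f] \circ \langle \pi_0, \pi_{A_j}, \pi_{n+1} \rangle$, which amounts to augmenting the block $A_j$ with the new element $n+1$. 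Many spurious terms that would arise from differentiating already-$\mathsf{D}$-linear arguments vanish by \textbf{[HD.6]}. The combinatorial key is that every non-empty partition of $[n+1]$ arises exactly once from a non-empty partition of $[n]$ via either augmenting one of its blocks with $n+1$ or adjoining $\{n+1\}$ as a new singleton, which rearranges the resulting expression into the desired sum indexed by partitions of $[n+1]$.

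Finally, \textbf{[HD.8]} follows by direct computation. The first equality, relating $\mathsf{D}[\partial^n[f]]$ to $\partial^n[\mathsf{D}[f]]$ up to a permutation of arguments, is proved by induction on $n$: each step commutes one application of $\mathsf{D}_0$ past the outer $\mathsf{D}$ using the symmetry axiom \textbf{[CD.7]}, which produces exactly the stated reordering. For the second equality, I would apply $\mathsf{D}$ directly to $\partial^n[f]$ via the chain rule and expand the result as a sum over the $n+1$ arguments of $\partial^n[f]$, using \textbf{[HD.6]} to recognise that for the $n$ $\mathsf{D}$-linear arguments the differentiation collapses to a simple substitution by the direction (producing the permuted $\partial^n[f]$ terms), while differentiation in the $0$-th argument adds a further $\mathsf{D}_0$ and hence yields precisely $\partial^{n+1}[f]$ composed with the indicated reindexing.
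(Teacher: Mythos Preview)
The paper does not actually prove this lemma: it is stated with a citation to \cite[Section 3]{garner2020cartesian} and no proof is given in the present paper. So there is no ``paper's own proof'' to compare against here.

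That said, your outline is the standard route and matches what one finds in the cited reference. The inductive scheme you describe---bootstrapping each \textbf{[HD.$j$]} from the corresponding \textbf{[CD.$j$]} via $\partial^{n+1} = \mathsf{D}_0 \circ \partial^n$, with \textbf{[HD.6]} and \textbf{[HD.7]} established early so they are available for the later items---is exactly how these identities are handled in \cite{garner2020cartesian}. Your treatment of \textbf{[HD.5]} via the combinatorial recursion on partitions of $[n+1]$ (adjoin $n{+}1$ to an existing block versus create a new singleton block) is the correct mechanism, and your observation that the spurious cross-terms vanish by \textbf{[HD.6]} is the essential point that makes the bookkeeping close up. The sketch for \textbf{[HD.8]} is a bit terse but points at the right computation: the first equality is an iterated use of \textbf{[CD.7]} to commute $\mathsf{D}$ past the string of $\mathsf{D}_0$'s, and the second is the decomposition of the full derivative of $\partial^n[f]$ into its $0$-th partial (giving $\partial^{n+1}[f]$) plus the $n$ partials in the $\mathsf{D}$-linear slots (each collapsing to a substitution by \textbf{[HD.6]}). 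Nothing is missing at the level of strategy.
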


\section{Cofree Cartesian Differential Categories}

A Cartesian $k$-differential category is cofree if it satisfies a couniversal property amongst Cartesian $k$-differential categories over a base Cartesian left $k$-linear category. 

\begin{definition}\label{def:cofree} A \textbf{cofree Cartesian $k$-differential category} over a Cartesian left $k$-linear category $\mathbb{A}$ is a pair $\left((\mathbb{X}, \mathsf{D}), \mathcal{E}\right)$ consisting of a Cartesian $k$-differential category $(\mathbb{X}, \mathsf{D})$ and a Cartesian $k$-linear functor $\mathcal{E}: \mathbb{X} \to \mathbb{A}$ which satisfies the following couniversal property: for any Cartesian $k$-differential category $(\mathbb{Y},\mathsf{D})$ and Cartesian $k$-linear functor ${\mathcal{F}: \mathbb{Y} \to \mathbb{A}}$, there exists a unique Cartesian $k$-differential functor $\mathcal{F}^\flat: \mathbb{Y} \to \mathbb{X}$ such that the following diagram commutes: 
\begin{align*} \xymatrixcolsep{5pc}\xymatrix{\mathbb{Y} \ar[dr]_-{\mathcal{F}}  \ar@{-->}[r]^-{\exists! ~ \mathcal{F}^\flat}  & \mathbb{X} \ar[d]^-{\mathcal{E}} \\
  & \mathbb{A} }
\end{align*}
A Cartesian $k$-differential category $(\mathbb{X}, \mathsf{D})$ is said to be \textbf{cofree} if there exists a Cartesian left $k$-linear category $\mathbb{A}$ and a Cartesian left $k$-linear functor $\mathcal{E}: \mathbb{X} \to \mathbb{A}$ such that $\left((\mathbb{X}, \mathsf{D}), \mathcal{E}\right)$ is a cofree Cartesian $k$-differential category over $\mathbb{A}$. 
\end{definition}

In \cite{cockett2011faa}, Cockett and Seely provided a general construction of a cofree Cartesian $k$-differential category over any Cartesian left $k$-linear category, which they called the \textbf{Faà di Bruno construction}. Therefore, we may state that:  

\begin{proposition} For every Cartesian left $k$-linear category, there exists a cofree Cartesian $k$-differential category over it.
\end{proposition}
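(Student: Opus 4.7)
The plan is to invoke the Faà di Bruno construction of Cockett and Seely \cite{cockett2011faa} and to verify that it satisfies the couniversal property stated in Definition \ref{def:cofree}. Since the full construction will be spelled out later in the paper (Definition \ref{def:faa}), here I only sketch the shape of the argument.

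First, given a Cartesian left $k$-linear category $\mathbb{A}$, I would define a category $\mathsf{Faa}(\mathbb{A})$ whose objects are those of $\mathbb{A}$ and whose morphisms $X \to Y$ are Faà di Bruno sequences, namely sequences $(f^0, f^1, f^2, \ldots)$ with $f^n : X \times X^{\times^n} \to Y$ in $\mathbb{A}$, subject to the symmetry and $k$-multilinearity conditions in the last $n$ arguments that one expects from an $n$-th partial derivative (as formulated via \textbf{[HD.2]}, \textbf{[HD.6]}, \textbf{[HD.7]}). Composition is defined using Faà di Bruno's formula (the higher-order chain rule \textbf{[HD.5]}), identities are $(1_X, \pi_1, 0, 0, \ldots)$, and the $k$-module structure is componentwise. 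The differential combinator is defined on a sequence $(f^0, f^1, f^2, \ldots)$ by the shift-and-symmetrise rule dictated by \textbf{[HD.8]}, essentially producing the sequence whose $n$-th component is $\partial^{n+1}$ combined with the lower-order terms forced by the Leibniz-type expansion.

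Next, I would verify that $\mathsf{Faa}(\mathbb{A})$ is a Cartesian left $k$-linear category (products are the products of $\mathbb{A}$ lifted componentwise, and projections and pairings are immediate) and that the combinator defined above satisfies \textbf{[CD.1]}--\textbf{[CD.7]}. All of these axioms can be checked by essentially unwinding the definition and using the corresponding \textbf{[HD]} identity on each component. The forgetful functor $\mathcal{E} : \mathsf{Faa}(\mathbb{A}) \to \mathbb{A}$ sends $(f^0, f^1, f^2, \ldots)$ to $f^0$, and is trivially a Cartesian $k$-linear functor.

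For the couniversal property, given any Cartesian $k$-differential category $(\mathbb{Y}, \mathsf{D})$ and any Cartesian $k$-linear functor $\mathcal{F} : \mathbb{Y} \to \mathbb{A}$, I would define $\mathcal{F}^\flat : \mathbb{Y} \to \mathsf{Faa}(\mathbb{A})$ on objects as $\mathcal{F}$ and on a morphism $g : Y \to Y'$ by the sequence of images of the higher partial derivatives,
\begin{align*}
\mathcal{F}^\flat(g) \;=\; \bigl( \mathcal{F}(\partial^0[g]),\, \mathcal{F}(\partial^1[g]),\, \mathcal{F}(\partial^2[g]),\, \ldots \bigr).
\end{align*}
The conditions of being a Faà di Bruno sequence then follow directly from \textbf{[HD.2]}, \textbf{[HD.6]}, \textbf{[HD.7]} applied in $(\mathbb{Y}, \mathsf{D})$, preserved by the Cartesian $k$-linear functor $\mathcal{F}$. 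Functoriality of $\mathcal{F}^\flat$ reduces exactly to Faà di Bruno's formula \textbf{[HD.5]}, preservation of products and of the $k$-linear structure is componentwise from \textbf{[HD.1]} and \textbf{[HD.4]}, and compatibility with the differential combinators is \textbf{[HD.8]}. The equality $\mathcal{E} \circ \mathcal{F}^\flat = \mathcal{F}$ is immediate by inspection of the $0$-th component, and uniqueness is forced because any Cartesian $k$-differential functor into $\mathsf{Faa}(\mathbb{A})$ lifting $\mathcal{F}$ must, on a morphism $g$, produce a sequence whose $n$-th component is the $n$-th partial derivative of the image in $\mathbb{A}$, which is exactly the formula above.

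The main obstacle is the verification that the composition defined via Faà di Bruno's formula is associative and unital, and that it interacts correctly with the shift-style differential combinator so that \textbf{[CD.5]} and \textbf{[CD.6]} hold. This is the well-known combinatorial heart of the Faà di Bruno construction and is carried out in detail in \cite{cockett2011faa}; once it is granted, all remaining verifications are routine checks on each component of a Faà di Bruno sequence.
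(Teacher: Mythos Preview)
Your proposal is correct and matches the paper's approach exactly: the paper also proves this proposition simply by citing the Fa\`a di Bruno construction of Cockett and Seely \cite{cockett2011faa}, and then spells out the construction (your $\mathsf{Faa}(\mathbb{A})$ is the paper's $\mathsf{F}[\mathbb{A}]$), the functor $\mathcal{E}$, and the couniversal map $\mathcal{F}^\flat(g) = \mathcal{F}(\partial^\bullet[g])$ in the subsequent definitions and propositions, exactly as you outline. Your identification of the combinatorial associativity/unitality and \textbf{[CD.5]}--\textbf{[CD.6]} checks as the nontrivial core, deferred to \cite{cockett2011faa} and \cite{garner2020cartesian}, is also how the paper handles it.
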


As such, the Faà di Bruno construction induces a right adjoint to the forgetful functor between the category of Cartesian $k$-differential categories and the category of Cartesian left $k$-linear categories \cite[Corollary 3.13]{garner2020cartesian}. Before reviewing the Faà di Bruno construction, let us mention some basic yet important properties of cofree Cartesian differential categories. While these are of course immediate consequences of the couniversal property, or equivalently simply standard results about adjunctions, we take the pain of recording them here since we will make use of them frequently in proofs throughout the paper. 

\begin{lemma} \label{lem:cofree-lem0} Let $\left((\mathbb{X}, \mathsf{D}), \mathcal{E}\right)$ be a cofree Cartesian $k$-differential category over a Cartesian left $k$-linear category $\mathbb{A}$.
\begin{enumerate}[{\em (i)}]
\item \label{lem:cofree-lem1} If $(\mathbb{Y}, \mathsf{D})$ is a Cartesian $k$-differential category, then for any Cartesian $k$-differential functors $\mathcal{F}: (\mathbb{Y}, \mathsf{D}) \to (\mathbb{X}, \mathsf{D})$ and $\mathcal{G}: (\mathbb{Y}, \mathsf{D}) \to (\mathbb{X}, \mathsf{D})$, $\mathcal{F} = \mathcal{G}$ if and only if the following diagram commutes: 
\begin{align*} \xymatrixcolsep{5pc}\xymatrix{\mathbb{Y} \ar[d]_-{\mathcal{G}}  \ar[r]^-{\mathcal{F}}  & \mathbb{X} \ar[d]^-{\mathcal{E}} \\
 \mathbb{X} \ar[r]_-{\mathcal{E}} & \mathbb{A} }
\end{align*}
\item \label{lem:cofree-lem2.i} If $\left((\mathbb{X}^\prime, \mathsf{D}), \mathcal{E}^\prime\right)$ is also a cofree Cartesian $k$-differential categories over $\mathbb{A}$, then there exists a unique Cartesian $k$-differential isomorphism $\mathcal{F}: (\mathbb{X},\mathsf{D}) \to(\mathbb{X}^\prime,\mathsf{D})$, so $\mathbb{X} \cong \mathbb{X}^\prime$, such that the following diagram commutes: 
 \begin{align*} \xymatrixcolsep{5pc}\xymatrix{    \mathbb{X}  \ar[dr]_-{\mathcal{E}} \ar@{-->}[rr]^-{\exists! ~\mathcal{F}}_-{\cong} &&  \mathbb{X}^\prime  \ar[dl]^-{\mathcal{E}^\prime}  \\
& \mathbb{A} }
\end{align*}
\item \label{lem:cofree-lem2.ii} If $\left((\mathbb{X}, \mathsf{D}), \mathcal{E}\right)$ is a cofree Cartesian $k$-differential category over $\mathbb{A}$ and there exists a Cartesian $k$-differential isomorphism $\mathcal{F}: (\mathbb{X},\mathsf{D}) \to(\mathbb{X}^\prime,\mathsf{D})$, so $\mathbb{X} \cong \mathbb{X}^\prime$, then $\left((\mathbb{X}^\prime, \mathsf{D}), \mathcal{E} \circ \mathcal{F}^{-1}\right)$ is a cofree Cartesian $k$-differential category over $\mathbb{A}$.
\item \label{lem:cofreebase.ii} If $\left((\mathbb{X}, \mathsf{D}), \mathcal{E}\right)$ is a cofree Cartesian $k$-differential category over $\mathbb{A}$ and there exists a Cartesian $k$-linear isomorphism $\mathcal{A}: \mathbb{A} \to \mathbb{A}^\prime$, so $\mathbb{A} \cong \mathbb{A}^\prime$, then $\left((\mathbb{X}, \mathsf{D}), \mathcal{A} \circ \mathcal{E} \right)$ is a cofree Cartesian $k$-differential category over $\mathbb{A}^\prime$. 
\end{enumerate}
\end{lemma}

Later in Corollary \ref{lem:cofreebase}, we will explain how the base categories are also unique up to isomorphism (which is not necessarily true for arbitrary adjunctions).  

Let us now review Cockett and Seely's Faà di Bruno construction, as first introduced in \cite{cockett2011faa}. For a complete detailed account of the Faà di Bruno construction over Cartesian left $k$-linear categories, we refer the reader to \cite{cockett2011faa, garner2020cartesian}. It is also worth mentioning the slight differences in presentation in those references. In \cite{cockett2011faa}, Cockett and Seely use the term calculus for Cartesian differential categories \cite[Section 4]{blute2009cartesian}, while in \cite{garner2020cartesian}, Garner and the author use basic combinatorial and categorical notation. The objects of the Faà di Bruno construction are the same as the objects of the base category. The maps in the Faà di Bruno construction consist of Faà di Bruno sequences, which are sequences that generalise sequences of the form $(f, \mathsf{D}[f], \partial^2[f], \partial^3[f], \hdots)$ in a Cartesian $k$-differential category. The basic underlying property of a Faà di Bruno sequence is that the maps are multilinear and symmetric. 

\begin{definition}\label{def:faasequence} \cite[Section 3.1]{garner2020cartesian} In a Cartesian left $k$-linear category $\mathbb{A}$, a map of type ${f: A \times A^n \to B}$ is said to be:
\begin{enumerate}[{\em (i)}]
\item \textbf{$k$-multilinear} in its last $n$-arguments if for all $1 \leq j \leq n$, $r,s \in k$, and suitable maps $x_0, x_1 \hdots, x_n$ and $x^\prime_j$, the following equality holds: 
\begin{gather*}
    f \circ \langle x_0, x_1, \hdots, r \cdot x_j + s \cdot x^\prime_j, \hdots, x_n\rangle = r\cdot \left(f \circ \langle x_0, x_1, \hdots, x_j , \hdots, x_n\rangle \right) + s\cdot \left(f \circ \langle x_0, x_1, \hdots, x^\prime_j , \hdots, x_n\rangle \right) 
\end{gather*}  
 \item \textbf{symmetric} in its last $n$-arguments if for all permutations $\sigma \!: \! \lbrace 1, \hdots, \!n \rbrace \! \xrightarrow{\cong} \! \lbrace 1, \hdots, \!n \rbrace$, and all suitable maps $x_0, x_1 \hdots, x_n$, the following equality holds:
 \[ f \circ \langle x_0, x_1, \hdots, x_n\rangle = f \circ \langle x_0, x_{\sigma(1)}, \hdots, x_{\sigma(n)}\rangle   \]
\end{enumerate}
 For objects $A$ and $B$ in $\mathbb{A}$, an \textbf{$\mathbb{A}$-Faà di Bruno sequence} from $A$ to $B$, denoted ${f_{\bullet}: A \to B}$, is a sequence of maps in $\mathbb{A}$, $f_{\bullet} = (f_n)_{n \in \mathbb{N}} = (f_0,f_1, f_2, \hdots)$ where $f_0: A \to B$ is an arbitrary map and ${f_{n+1}: A \times A^n \to B}$ is a map which is $k$-multilinear and symmetric in its last $n$-arguments. For $\mathbb{A}$-Faà di Bruno sequences $f_{\bullet}: A \to B$ and ${g_{\bullet}: A \to B}$, we say that $f_{\bullet} = g_{\bullet}$ if for all $n \in \mathbb{N}$, $f_n = g_n$. 
\end{definition}

The rest of the Cartesian differential structure of the Faà di Bruno construction is given by generalising the properties of the higher-order partial derivatives from Lemma \ref{lem:partial}. In particular, composition is given by Faà di Bruno's formula \textbf{[HD.5]}, while the differential combinator is given by \textbf{[HD.8]}. The functor from the Faà di Bruno construction back down to the base category is defined as the identity on objects, while mapping a Faà di Bruno sequence to its $0$-th term. 

\begin{definition}\label{def:faa}\cite[Definition 3.6]{garner2020cartesian} For a Cartesian left $k$-linear category $\mathbb{A}$, the \textbf{Faà di Bruno construction} over $\mathbb{A}$ is the Cartesian $k$-differential category $(\mathsf{F}[\mathbb{A}], \mathsf{D})$ where:
\begin{enumerate}[{\em (i)}]
\item The objects of $\mathsf{F}[\mathbb{A}]$ are the same as the objects of $\mathbb{A}$;
\item A map from $A$ to $B$ in $\mathsf{F}[\mathbb{A}]$ is an $\mathbb{A}$-Faà di Bruno sequence $f_{\bullet}: A \to B$;
\item Composition of $\mathbb{A}$-Faà di Bruno sequences $f_{\bullet}: A \to B$ and $g_{\bullet}: B \to C$ is the $\mathbb{A}$-Faà di Bruno sequence $f_{\bullet} \circ g_{\bullet}: A \to C$ defined as: 
\begin{gather*}
   f_{\bullet} \circ g_{\bullet} = \left( \sum \limits_{ [n]=A_1 \vert \hdots \vert A_k}  g^k \circ \left \langle f \circ \pi_0,  f_{\vert A_1 \vert} \circ \langle \pi_0, \pi_{A_1} \rangle, \hdots,  f_{\vert A_k \vert} \circ \langle \pi_0, \pi_{A_k} \rangle \right \rangle \right)_{n \in \mathbb{N}}
\end{gather*}
\item The identity maps are the $\mathbb{A}$-Faà di Bruno sequences ${1_A}_\bullet: A \to A$ defined as ${1_A}_\bullet = (1_A, \pi_1, 0, 0, \hdots)$;
\item The terminal object and the product of objects are the same as in $\mathbb{A}$;
\item The projections are the $\mathbb{A}$-Faà di Bruno sequences ${\pi_j}_\bullet: A_0 \times \hdots \times A_n \to A_j$ defined as ${\pi_j}_\bullet = (\pi_j, \pi_{n+j+1}, 0, 0, \hdots)$, while the tuple of $\mathbb{A}$-Faà di Bruno sequences is defined pointwise, $\langle {f_{0}}_\bullet, \hdots, {f_{m}}_\bullet \rangle = \left( \langle {f_0}_n, \hdots, {f_m}_n \rangle \right)_{n \in \mathbb{N}}$;
\item The $k$-module structure of $\mathbb{A}$-Faà di Bruno sequences is defined pointwise, so addition is $f_{\bullet} + g_{\bullet} = \left( f_n + g_n \right)_{n \in \mathbb{N}}$, scalar multiplication is $r \cdot f_{\bullet} = (r \cdot f_n)_{n \in \mathbb{N}}$, and zero is $0_\bullet= (0,0, \hdots)$;
\item The differential combinator $\mathsf{D}$ sends an $\mathbb{A}$-Faà di Bruno sequence $f_{\bullet}: A \to B$ to the $\mathbb{A}$-Faà di Bruno sequence $\mathsf{D}[f_{\bullet}]: A \times A \to B$ defined as follows: 
\begin{gather*}
\mathsf{D}[f_{\bullet}]  =  \left(f_{n+1} \circ \langle \pi_0, \hdots , \pi_{n+1} \rangle  +  \sum \limits^n_{j=1} f_n \circ \langle \pi_0, \hdots, \pi_{j-1}, \pi_{n+j+1}, \pi_{j+1}, \hdots, \pi_n \rangle \right)_{n \in \mathbb{N}}
\end{gather*}
\end{enumerate}
Let $\mathcal{E}_{\mathbb{A}}: \mathsf{F}[\mathbb{A}] \to \mathbb{A}$ be the functor defined on objects as $\mathcal{E}_\mathbb{A}(A) =A$ and on maps as $\mathcal{E}_\mathbb{A}(f_{\bullet}) = f_0$. 
\end{definition}

For the couniversal property of the Faà di Bruno construction \cite[Section 3.4]{garner2020cartesian}, in order to construct the necessary unique Cartesian $k$-differential functors, we require the following two results: (1) that the higher order partial derivatives do indeed form a Faà di Bruno sequence (which is \textbf{[HD.2]} and \textbf{[HD.7]}), and (2) that Cartesian $k$-linear functors preserve Faà di Bruno sequences. 

\begin{lemma}\label{lem:partialbullet}\cite[Lemma 3.2.(i)]{garner2020cartesian} In a Cartesian $k$-differential category $(\mathbb{X}, \mathsf{D})$, for any map $f: X \to Y$, the sequence $\partial^{\bullet}[f]: X \to Y$, defined as $\partial^{\bullet}[f] = (\partial^n[f])_{n \in \mathbb{N}}$, is an $\mathbb{X}$-Faà di Bruno sequence. 
\end{lemma}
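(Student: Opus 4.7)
The plan is to unpack Definition \ref{def:faasequence} and match each requirement against an item of Lemma \ref{lem:partial}. An $\mathbb{X}$-Faà di Bruno sequence consists of a sequence $(f^n)_{n \in \mathbb{N}}$ in which $f^0$ is an unconstrained map $X \to Y$, and each subsequent term $f^{n+1}$, of the appropriate arity, is $k$-multilinear and symmetric in its derivative slots. Taking $f^n := \partial^n[f]$, the zeroth condition is automatic because Definition \ref{def:partial} declares $\partial^0[f] = f$, which has exactly the type of $f^0$.

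For $n \geq 1$ I will simply invoke the two items of Lemma \ref{lem:partial} that precisely match these requirements. The $k$-multilinearity of $\partial^n[f]$ in each of its last $n$ derivative arguments is the content of \textbf{[HD.2]}, and the symmetry in those same arguments under an arbitrary permutation $\sigma$ is the content of \textbf{[HD.7]}. The type of $\partial^n[f]$, namely $X \times X^{\times^n} \to Y$, matches that required of $f^n$ by the definition in Definition \ref{def:partial}, so no further coherence has to be checked.

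Since everything reduces to citing two clauses of a lemma already established in the preceding section, I do not anticipate any meaningful obstacle. The only mild care required is in tracking the indexing convention, namely identifying which slot of $\partial^n[f]$ is the non-differential-linear base argument (the one selected by $\pi_0$) and which slots are the derivative arguments to which \textbf{[HD.2]} and \textbf{[HD.7]} apply. Once that identification is made, the verification is immediate and the lemma follows.
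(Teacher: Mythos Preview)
Your proposal is correct and is precisely the argument the paper intends: immediately before stating this lemma the paper remarks that the result ``is \textbf{[HD.2]} and \textbf{[HD.7]}'', and the lemma itself is cited from \cite{garner2020cartesian} without an in-paper proof. Your unpacking of Definition~\ref{def:faasequence} and matching against those two items of Lemma~\ref{lem:partial} is exactly the intended justification.
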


\begin{lemma}\cite[Section 4]{garner2020cartesian} Let $\mathcal{F}: \mathbb{A} \to \mathbb{A}^\prime$ be a Cartesian $k$-linear functor. If a map $f: A_0 \times A_1 \times \hdots \times A_n \to B$ in $\mathbb{A}$ is $k$-multilinear and/or symmetric in its last $n$-arguments, then $\mathcal{F}(f): \mathcal{F}(A_0) \times \mathcal{F}(A_1) \times \hdots \times \mathcal{F}(A_n) \to \mathcal{F}(B)$ is $k$-multilinear and/or symmetric in its last $n$-arguments. Therefore, if $f_{\bullet}: A \to B$ is an $\mathbb{A}$-Faà di Bruno sequence, the sequence $\mathcal{F}(f_{\bullet}): \mathcal{F}(A) \to \mathcal{F}(B)$, defined as $\mathcal{F}(f_{\bullet}) = (\mathcal{F}(f_n))_{n \in \mathbb{N}}$, is an $\mathbb{A}^\prime$-Faà di Bruno sequence. 
\end{lemma}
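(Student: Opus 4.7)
The plan is to reduce $k$-multilinearity and symmetry to single equations in $\mathbb{A}$ that involve only products, pairings, projections, and the $k$-module structure on hom-sets, since all of these are preserved strictly by $\mathcal{F}$. The general condition quantifies over \emph{arbitrary} test maps in $\mathbb{A}^\prime$, which need not lie in the image of $\mathcal{F}$, so one cannot simply apply $\mathcal{F}$ to the defining equation; the projection-level reformulation is what makes the transfer work.

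First I would observe that a map $f : A_0 \times \ldots \times A_n \to B$ is $k$-multilinear in its last $n$-arguments if and only if, for each $1 \leq j \leq n$ and $r, s \in k$, the following equation holds in $\mathbb{A}(A_0 \times \ldots \times A_n \times A_j, B)$:
\[f \circ \langle \pi_0, \pi_1, \ldots, r \cdot \pi_j + s \cdot \pi_{n+1}, \ldots, \pi_n \rangle = r \cdot f + s \cdot \bigl(f \circ \langle \pi_0, \ldots, \pi_{j-1}, \pi_{n+1}, \pi_{j+1}, \ldots, \pi_n \rangle\bigr).\]
The forward direction is immediate on specialising the multilinearity condition to projections and using $\langle \pi_0, \ldots, \pi_n \rangle = 1$. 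For the converse, pre-composing this equation with an arbitrary tuple $\langle x_0, \ldots, x_n, x_j^\prime \rangle : C \to A_0 \times \ldots \times A_n \times A_j$, then simplifying with the left $k$-linear axiom (composition distributing over sums and scalars on the left) and the universal property of the product (projections of a pairing compute components), recovers the general condition. An analogous reduction reformulates symmetry in the last $n$-arguments as the single equation $f = f \circ \langle \pi_0, \pi_{\sigma(1)}, \ldots, \pi_{\sigma(n)} \rangle$ for each permutation $\sigma$.

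Next, because $\mathcal{F}$ preserves products strictly, so that $\mathcal{F}(\pi_j) = \pi_j$ and $\mathcal{F}(\langle g_0, \ldots, g_m \rangle) = \langle \mathcal{F}(g_0), \ldots, \mathcal{F}(g_m) \rangle$, and acts $k$-linearly on each hom-set, applying $\mathcal{F}$ to the projection-level equation above yields the same-shaped equation for $\mathcal{F}(f) : \mathcal{F}(A_0) \times \ldots \times \mathcal{F}(A_n) \to \mathcal{F}(B)$ in $\mathbb{A}^\prime$. Invoking the converse direction of the reduction, now performed inside $\mathbb{A}^\prime$, gives the full $k$-multilinearity of $\mathcal{F}(f)$ against arbitrary maps in $\mathbb{A}^\prime$. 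The symmetry case is handled identically.

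Finally, given an $\mathbb{A}$-Faà di Bruno sequence $f^\bullet = (f^n)_{n \in \mathbb{N}} : A \to B$, each $f^{n+1} : A \times A^{\times^n} \to B$ is by definition $k$-multilinear and symmetric in its last $n$-arguments, so by the first part $\mathcal{F}(f^{n+1}) : \mathcal{F}(A) \times \mathcal{F}(A)^{\times^n} \to \mathcal{F}(B)$ has the same property, the domain typing being correct because $\mathcal{F}$ preserves products strictly. Hence $\mathcal{F}(f^\bullet) = (\mathcal{F}(f^n))_{n \in \mathbb{N}}$ is indeed an $\mathbb{A}^\prime$-Faà di Bruno sequence. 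The only mildly delicate step in the whole argument is the projection reduction, but this is a routine manipulation in a Cartesian left $k$-linear category.
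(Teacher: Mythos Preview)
The paper does not give its own proof of this lemma; it merely cites \cite[Section 4]{garner2020cartesian} and moves on. So there is no in-paper argument to compare against. Your approach---reducing the universally quantified multilinearity and symmetry conditions to single equations built from projections, pushing those equations through $\mathcal{F}$, and then re-expanding in $\mathbb{A}^\prime$---is exactly the standard way to handle conditions that quantify over arbitrary generalized elements, and it is correct in substance.

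There is one small slip in your displayed equation: it lives in $\mathbb{A}(A_0 \times \ldots \times A_n \times A_j, B)$, so the term $r \cdot f$ on the right does not type-check (its domain is $A_0 \times \ldots \times A_n$). It should read $r \cdot \bigl(f \circ \langle \pi_0, \ldots, \pi_n \rangle\bigr)$, where $\langle \pi_0, \ldots, \pi_n \rangle$ is the projection dropping the extra $A_j$ factor; correspondingly, the remark ``using $\langle \pi_0, \ldots, \pi_n \rangle = 1$'' is not quite right, since that equality holds only on the smaller domain. With that correction the forward direction, the converse via precomposition with $\langle x_0, \ldots, x_n, x_j^\prime \rangle$, and the transfer along $\mathcal{F}$ all go through exactly as you describe.
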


We may now properly state the couniversal property of the Faà di Bruno construction.

\begin{proposition} \cite[Theorem 3.12]{garner2020cartesian} For any Cartesian left $k$-linear category $\mathbb{A}$, the triple $\left((\mathsf{F}[\mathbb{A}], \mathsf{D}), \mathcal{E}_{\mathbb{A}}\right)$ is a cofree Cartesian $k$-differential category over $\mathbb{A}$, where in particular for any Cartesian $k$-differential category $(\mathbb{Y},\mathsf{D})$ and Cartesian $k$-linear functor ${\mathcal{F}: \mathbb{Y} \to \mathbb{A}}$, the unique Cartesian $k$-differential functor $\mathcal{F}^\flat: (\mathbb{Y}, \mathsf{D}) \to (\mathsf{F}[\mathbb{A}], \mathsf{D})$ such that the following diagram commutes: 
\begin{align*} \xymatrixcolsep{5pc}\xymatrix{\mathbb{Y} \ar[dr]_-{\mathcal{F}}  \ar@{-->}[r]^-{\exists! ~ \mathcal{F}^\flat}  & \mathsf{F}[\mathbb{A}] \ar[d]^-{\mathcal{E}_{\mathbb{A}}} \\
  & \mathbb{A} }
\end{align*}
is defined on objects as $\mathcal{F}^\flat(Y) = \mathcal{F}(Y)$ and on maps as $\mathcal{F}^\flat(f) = \mathcal{F}(\partial^{\bullet}[f])$. Furthermore, for every $\mathbb{A}$-Faà di Bruno sequence $f_{\bullet}$, we have that $\mathcal{E}(\partial^n[f_{\bullet}]) = \partial^n[f_{\bullet}]_0 = f_n$. 
\end{proposition}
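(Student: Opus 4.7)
The plan is to verify that the stated candidate $\mathcal{F}^\flat$, defined by $\mathcal{F}^\flat(Y) = \mathcal{F}(Y)$ and $\mathcal{F}^\flat(f) = \mathcal{F}(\partial^{\bullet}[f])$, is a well-defined Cartesian $k$-differential functor, check that $\mathcal{E}_{\mathbb{A}} \circ \mathcal{F}^\flat = \mathcal{F}$, and then prove uniqueness. First, by Lemma \ref{lem:partialbullet}, $\partial^{\bullet}[f]$ is a $\mathbb{Y}$-Faà di Bruno sequence, and by the preceding lemma, applying the Cartesian $k$-linear functor $\mathcal{F}$ componentwise preserves $k$-multilinearity and symmetry, so $\mathcal{F}(\partial^{\bullet}[f])$ is an $\mathbb{A}$-Faà di Bruno sequence; hence $\mathcal{F}^\flat(f)$ genuinely lives in $\mathsf{F}[\mathbb{A}]$.

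Next I would verify functoriality and preservation of all the required structure termwise. Preservation of identities and projections follows from \textbf{[HD.3]} and the fact that $\mathcal{F}$ preserves identities, projections, and zeroes. The key case is preservation of composition: for composable $f,g$ in $\mathbb{Y}$, the $n$-th component of $\mathcal{F}^\flat(g \circ f)$ is $\mathcal{F}(\partial^n[g \circ f])$, which by Faà di Bruno's formula \textbf{[HD.5]} expands as a sum indexed by partitions $[n] = A_1 \vert \cdots \vert A_k$ of composites of $\partial^k[g]$ with tuples of $\partial^{\vert A_j \vert}[f]$. Since $\mathcal{F}$ strictly preserves composition, products, pairings and sums, applying $\mathcal{F}$ termwise produces precisely the $n$-th component of $\mathcal{F}^\flat(g) \circ \mathcal{F}^\flat(f)$ according to the formula in Definition \ref{def:faa}. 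Preservation of pairings, products, and $k$-linear structure is likewise immediate termwise from \textbf{[HD.1]} and \textbf{[HD.4]}. Preservation of the differential combinator reduces, via the explicit formula for $\mathsf{D}$ in $\mathsf{F}[\mathbb{A}]$, to exactly the identity in \textbf{[HD.8]} expressing $\partial^n[\mathsf{D}[f]]$ as $\partial^{n+1}[f] \circ \langle \pi_0, \pi_1, \ldots, \pi_n, \pi_{n+1}\rangle$ plus a sum of terms $\partial^n[f] \circ \langle \pi_0, \ldots, \pi_{n+j+1}, \ldots \rangle$; pushing this identity through $\mathcal{F}$ gives the result. I expect the composition case to be the main bookkeeping obstacle, though it is essentially mechanical once \textbf{[HD.5]} is in hand.

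Commutativity with $\mathcal{E}_{\mathbb{A}}$ is straightforward: $\mathcal{E}_{\mathbb{A}}(\mathcal{F}^\flat(f)) = \mathcal{F}(\partial^0[f]) = \mathcal{F}(f)$. For uniqueness, suppose $\mathcal{G} : (\mathbb{Y}, \mathsf{D}) \to (\mathsf{F}[\mathbb{A}], \mathsf{D})$ is any Cartesian $k$-differential functor with $\mathcal{E}_{\mathbb{A}} \circ \mathcal{G} = \mathcal{F}$. Since $\mathcal{G}$ commutes with $\mathsf{D}$ strictly, it commutes with every partial derivative $\partial^n$, so for each map $f$ in $\mathbb{Y}$ and each $n$,
\[ \mathcal{G}(f)^n = \mathcal{E}_{\mathbb{A}}(\partial^n[\mathcal{G}(f)]) = \mathcal{E}_{\mathbb{A}}(\mathcal{G}(\partial^n[f])) = \mathcal{F}(\partial^n[f]) = \mathcal{F}^\flat(f)^n, \]
where the first equality uses the ``furthermore'' claim applied to the Faà di Bruno sequence $\mathcal{G}(f)$. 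This forces $\mathcal{G} = \mathcal{F}^\flat$.

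Finally, the ``furthermore'' statement $\mathcal{E}_{\mathbb{A}}(\partial^n[f^\bullet]) = \partial^n[f^\bullet]^0 = f^n$ follows by induction on $n$ directly from the explicit formula for $\mathsf{D}$ on $\mathsf{F}[\mathbb{A}]$: evaluating the given expression for $\mathsf{D}[f^\bullet]$ at $0$-th component and then specialising to the partial derivative $\mathsf{D}_0$ (which inserts zeroes into all but one of the newly created arguments) kills the sum over $j$ and leaves exactly $f^{n+1}$. Iterating this anchors the induction used in uniqueness above and completes the argument.
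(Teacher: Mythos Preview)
Your proposal is correct and follows the standard direct verification that one would expect for this result; note however that the paper does not actually give its own proof of this proposition, but rather cites it from \cite{garner2020cartesian}. Your outline matches the approach taken there: the identities \textbf{[HD.1]}--\textbf{[HD.8]} are precisely what is needed to check that $\mathcal{F}^\flat$ is a Cartesian $k$-differential functor, and the uniqueness argument via the ``furthermore'' claim $\mathcal{E}_{\mathbb{A}}(\partial^n[f^\bullet]) = f^n$ is exactly the right one.
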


It is worth mentioning that the functoriality of $\mathcal{F}^\flat: (\mathbb{Y}, \mathsf{D}) \to (\mathsf{F}[\mathbb{A}], \mathsf{D})$ follows from Faà di Bruno's formula. 

Since cofree Cartesian $k$-differential categories are unique up to isomorphism, a Cartesian $k$-differential category is cofree if and only if it is isomorphic to a Faà di Bruno construction. This provides our first (yet obvious) characterisation of cofree Cartesian differential categories. Again, this follows from standard facts about adjunctions, though we record it here for future use. 

\begin{lemma}\label{lem:cofree-faa2} Let $\mathbb{A}$ be a Cartesian left $k$-linear category, $\mathbb{X}$ a Cartesian $k$-differential category, and $\mathcal{E}: \mathbb{X} \to \mathbb{A}$ a Cartesian $k$-linear functor. Then $\left((\mathbb{X}, \mathsf{D}), \mathcal{E}\right)$ a cofree Cartesian $k$-differential category over $\mathbb{A}$ if and only if the unique Cartesian $k$-differential functor $\mathcal{E}^\flat: (\mathbb{X}, \mathsf{D}) \to (\mathsf{F}[\mathbb{A}], \mathsf{D})$ which makes the following diagram commute:
\begin{align*} \xymatrixcolsep{5pc}\xymatrix{\mathbb{X} \ar[dr]_-{\mathcal{E}}  \ar@{-->}[r]^-{\exists! ~ \mathcal{E}^\flat}  & \mathsf{F}[\mathbb{A}] \ar[d]^-{\mathcal{E}_{\mathbb{A}}} \\
  & \mathbb{A} }
\end{align*}
is an isomorphism. Therefore, a Cartesian $k$-differential category $(\mathbb{X}, \mathsf{D})$ is cofree if and only if there exists a Cartesian left $k$-linear category $\mathbb{A}$ for which there exists a Cartesian $k$-differential isomorphism $(\mathsf{F}[\mathbb{A}], \mathsf{D}) \cong (\mathbb{X}, \mathsf{D})$. 
\end{lemma}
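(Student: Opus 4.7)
The plan is to reduce both directions of the biconditional to the two parts of Lemma \ref{lem:cofree-lem2}, using the previously established fact that $\left((\mathsf{F}[\mathbb{A}], \mathsf{D}), \mathcal{E}_{\mathbb{A}}\right)$ is a cofree Cartesian $k$-differential category over $\mathbb{A}$. No serious computation or direct manipulation of Faà di Bruno sequences is needed; the whole argument rests on the uniqueness and transfer statements we have already proven.

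For the forward direction, suppose $\left((\mathbb{X}, \mathsf{D}), \mathcal{E}\right)$ is cofree over $\mathbb{A}$. Since $\left((\mathsf{F}[\mathbb{A}], \mathsf{D}), \mathcal{E}_{\mathbb{A}}\right)$ is also cofree over the same base $\mathbb{A}$, Lemma \ref{lem:cofree-lem2}(i) yields a unique Cartesian $k$-differential isomorphism from $(\mathbb{X}, \mathsf{D})$ to $(\mathsf{F}[\mathbb{A}], \mathsf{D})$ commuting with the functors to $\mathbb{A}$. By the couniversal property defining $\mathcal{E}^\flat$, this unique isomorphism must coincide with $\mathcal{E}^\flat$, so $\mathcal{E}^\flat$ is an isomorphism.

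For the backward direction, suppose $\mathcal{E}^\flat$ is an isomorphism, with inverse $(\mathcal{E}^\flat)^{-1} : (\mathsf{F}[\mathbb{A}], \mathsf{D}) \to (\mathbb{X}, \mathsf{D})$, a Cartesian $k$-differential isomorphism. Apply Lemma \ref{lem:cofree-lem2}(ii) to the cofree Cartesian $k$-differential category $\left((\mathsf{F}[\mathbb{A}], \mathsf{D}), \mathcal{E}_{\mathbb{A}}\right)$ and the isomorphism $(\mathcal{E}^\flat)^{-1}$. This gives that $\left((\mathbb{X}, \mathsf{D}), \mathcal{E}_{\mathbb{A}} \circ \mathcal{E}^\flat\right)$ is a cofree Cartesian $k$-differential category over $\mathbb{A}$. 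But the commutativity of the defining triangle for $\mathcal{E}^\flat$ gives $\mathcal{E}_{\mathbb{A}} \circ \mathcal{E}^\flat = \mathcal{E}$, so $\left((\mathbb{X}, \mathsf{D}), \mathcal{E}\right)$ is cofree over $\mathbb{A}$, as desired.

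The only potentially subtle point is being careful that the isomorphism identified in the forward direction is in fact the canonical one $\mathcal{E}^\flat$ rather than some other Cartesian $k$-differential isomorphism, but this is immediate from the uniqueness clause in the couniversal property of $\left((\mathsf{F}[\mathbb{A}], \mathsf{D}), \mathcal{E}_{\mathbb{A}}\right)$ together with the triangle condition. Thus the lemma follows with essentially no new work beyond packaging Lemma \ref{lem:cofree-lem2}.
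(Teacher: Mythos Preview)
Your proof is correct and follows essentially the same approach as the paper: both directions are reduced to the two parts of Lemma~\ref{lem:cofree-lem2}, applied with the known cofree Cartesian $k$-differential category $\left((\mathsf{F}[\mathbb{A}], \mathsf{D}), \mathcal{E}_{\mathbb{A}}\right)$, and the triangle identity $\mathcal{E}_{\mathbb{A}} \circ \mathcal{E}^\flat = \mathcal{E}$ finishes the backward direction. Your write-up is in fact slightly more explicit than the paper's about which isomorphism is fed into Lemma~\ref{lem:cofree-lem2}(ii), but the argument is the same.
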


An immediate consequence of this fact is that cofree Cartesian differential categories have essentially the same objects as their base category. 

\begin{corollary}\label{cor:Eobj} Let $\left((\mathbb{X}, \mathsf{D}), \mathcal{E}\right)$ be a cofree Cartesian $k$-differential category over a Cartesian left $k$-linear category $\mathbb{A}$. Then $\mathcal{E}$ is bijective on objects. 
\end{corollary}
\begin{proof} By Lemma \ref{lem:cofree-faa2}, $\mathcal{E}^\flat: \mathbb{X} \to \mathsf{F}[\mathbb{A}]$ is an isomorphism, and so bijective on objects. While $\mathcal{E}_{\mathbb{A}}: \mathsf{F}[\mathbb{A}] \to \mathbb{A}$ is clearly bijective on objects as well. Thus since $\mathcal{E} = \mathcal{E}_{\mathbb{A}} \circ \mathcal{E}^\flat$, we get that $\mathcal{E}$ is also bijective on objects. 
\end{proof}

There is another general construction of cofree Cartesian differential categories given in \cite{lemay2018tangent}. In this construction, the maps are instead given by \textbf{$\mathcal{D}$-sequences} \cite[Definition 4.2]{lemay2018tangent}, which instead generalise sequences of the form $(f, \mathsf{D}[f], \mathsf{D}^2[f], \hdots)$. As discussed in \cite[Section 3.4]{garner2020cartesian}, there is a bijective correspondence between $\mathcal{D}$-sequences and Faà di Bruno sequences, given precisely by the same formulas relating $\mathsf{D}^n$ and $\partial^n$. The advantage of Faà di Bruno sequences is that they are simpler since there are no connections between the maps of a Faà di Bruno sequence, and Faà di Bruno sequences often provide a more understandable story. The disadvantage of Faà di Bruno sequences is that they are sometimes difficult to work with since composition and the differential combinator are given by somewhat complex formulas. On the other hand, the advantage of $\mathcal{D}$-sequences is that they are easier to work with since composition is given by a simpler formula and the differential combinator is defined by shifting the sequence to the left. The disadvantage of $\mathcal{D}$-sequences is that the components of a $\mathcal{D}$-sequence are related to one another, and it is more difficult to check if a sequence is a $\mathcal{D}$-sequence. Since in this paper we will not need to work with the composition or differentiation of Faà di Bruno sequences directly, we have elected to work with them instead since they provide a simpler story to tell. That said, we record the fact that cofree Cartesian differential categories can also be characterised as being isomorphic to the $\mathcal{D}$-sequences construction. 

\begin{proposition} A Cartesian $k$-differential category $(\mathbb{X}, \mathsf{D})$ is cofree if and only if there exists a Cartesian left $k$-linear category $\mathbb{A}$ for which there exists a Cartesian $k$-differential isomorphism $(\mathcal{D}[\mathbb{A}], \mathsf{D}) \cong (\mathbb{X}, \mathsf{D})$, where $(\mathcal{D}[\mathbb{A}], \mathsf{D})$ is the Cartesian $k$-differential category of $\mathcal{D}$-sequences of $\mathbb{A}$ as defined in \cite[Definition 4.7]{lemay2018tangent}.
\end{proposition}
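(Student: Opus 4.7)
The plan is to leverage the uniqueness of cofree Cartesian $k$-differential categories (Lemma \ref{lem:cofree-lem2}) together with the fact, recalled from \cite{lemay2018tangent}, that the $\mathcal{D}$-sequences construction $\left((\mathcal{D}[\mathbb{A}], \mathsf{D}), \mathcal{E}^{\mathcal{D}}_\mathbb{A}\right)$ is itself a cofree Cartesian $k$-differential category over $\mathbb{A}$ (here $\mathcal{E}^{\mathcal{D}}_\mathbb{A}$ sends a $\mathcal{D}$-sequence to its $0$-th term, in complete analogy with $\mathcal{E}_\mathbb{A}$). The statement is then essentially a transcription of the previous proposition, with the Faà di Bruno construction replaced by the $\mathcal{D}$-sequences construction, and the bridge between the two is provided by uniqueness of cofree objects.

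For the $\Rightarrow$ direction, suppose $(\mathbb{X}, \mathsf{D})$ is cofree. Then by definition there exists some Cartesian left $k$-linear category $\mathbb{A}$ together with a Cartesian $k$-linear functor $\mathcal{E}: \mathbb{X} \to \mathbb{A}$ such that $\left((\mathbb{X}, \mathsf{D}), \mathcal{E}\right)$ is a cofree Cartesian $k$-differential category over $\mathbb{A}$. Since $\left((\mathcal{D}[\mathbb{A}], \mathsf{D}), \mathcal{E}^{\mathcal{D}}_\mathbb{A}\right)$ is also a cofree Cartesian $k$-differential category over the same $\mathbb{A}$, Lemma \ref{lem:cofree-lem2}.(i) yields a (unique) Cartesian $k$-differential isomorphism $(\mathcal{D}[\mathbb{A}], \mathsf{D}) \cong (\mathbb{X}, \mathsf{D})$.

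For the $\Leftarrow$ direction, suppose there exists a Cartesian left $k$-linear category $\mathbb{A}$ and a Cartesian $k$-differential isomorphism $\mathcal{F}: (\mathcal{D}[\mathbb{A}], \mathsf{D}) \to (\mathbb{X}, \mathsf{D})$. Since $\left((\mathcal{D}[\mathbb{A}], \mathsf{D}), \mathcal{E}^{\mathcal{D}}_\mathbb{A}\right)$ is cofree over $\mathbb{A}$, Lemma \ref{lem:cofree-lem2}.(ii) (applied with roles of $\mathbb{X}$ and $\mathbb{X}^\prime$ interchanged, and using $\mathcal{F}^{-1}$) shows that $\left((\mathbb{X}, \mathsf{D}), \mathcal{E}^{\mathcal{D}}_\mathbb{A} \circ \mathcal{F}^{-1}\right)$ is cofree over $\mathbb{A}$, hence $(\mathbb{X}, \mathsf{D})$ is cofree.

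The only ingredient not already stated in the excerpt is that $\left((\mathcal{D}[\mathbb{A}], \mathsf{D}), \mathcal{E}^{\mathcal{D}}_\mathbb{A}\right)$ is cofree, which is the content of the corresponding result in \cite{lemay2018tangent}; alternatively, one could cite the bijective correspondence between $\mathcal{D}$-sequences and Faà di Bruno sequences mentioned in the paragraph preceding the proposition to produce an explicit Cartesian $k$-differential isomorphism $(\mathcal{D}[\mathbb{A}], \mathsf{D}) \cong (\mathsf{F}[\mathbb{A}], \mathsf{D})$ over $\mathbb{A}$, and then deduce cofreeness of $(\mathcal{D}[\mathbb{A}], \mathsf{D})$ from Lemma \ref{lem:cofree-lem2}.(ii) applied to this isomorphism. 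There is no real obstacle: the entire argument is formal manipulation of universal properties, and no computation with the concrete formulas for composition or differentiation of $\mathcal{D}$-sequences is required.
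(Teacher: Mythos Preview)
Your proof is correct and matches the paper's approach: the paper states this proposition without proof, treating it as an immediate consequence of the preceding discussion that $(\mathcal{D}[\mathbb{A}],\mathsf{D})$ is cofree over $\mathbb{A}$ (via \cite{lemay2018tangent} or the bijection with Fa\`a di Bruno sequences) together with Lemma~\ref{lem:cofree-lem2}. Your argument spells out precisely this reasoning.
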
  

Here are now some examples of cofree Cartesian $k$-differential categories that can be described more concretely. Our first such examples was discussed in \cite[Section 4.3]{garner2020cartesian}, which provides a connection between cofree Cartesian $k$-differential category and differential linear logic \cite{ehrhard2017introduction}. 

\begin{example} \normalfont \label{ex:cofreeQ} Let $k\text{-}\mathsf{MOD}^\omega$ be the category whose objects are $k$-modules and whose maps are arbitrary set functions between them. Then $k\text{-}\mathsf{MOD}^\omega$ is a Cartesian left $k$-linear category \cite[Example 2.3.(i)]{garner2020cartesian} where the finite product structure is given by the standard product of $k$-modules (which is not a biproduct in $k\text{-}\mathsf{MOD}^\omega$), and where the $k$-linear structure is given pointwise.
The $k$-linear maps in the Cartesian left $k$-linear category sense are precisely the $k$-linear morphisms between $k$-modules, in other words $k\text{-}\mathsf{lin}(k\text{-}\mathsf{MOD}^\omega) = k\text{-}\mathsf{MOD}$. The cofree Cartesian $k$-differential category over $k\text{-}\mathsf{MOD}^\omega$ can be described as the coKleisli category of a certain comonad $\mathcal{Q}$ on $k\text{-}\mathsf{MOD}$ which is defined in \cite[Definition 4.8]{garner2020cartesian}. For a $k$-module $M$, let $\mathcal{S}(M)$ be the free symmetric $k$-algebra over $M$, $\mathcal{S}(M) = \bigoplus\limits_{n \in \mathbb{N}} \mathcal{S}_n(M)$, where $\mathcal{S}_n(M)$ is the $n$-th symmetric power of $M$. Then define $\mathcal{Q}(M)$ as the coproduct indexed by the elements of $M$ of copies of $\mathcal{S}(M)$:
\[ \mathcal{Q}(M) := \bigoplus \limits_{x \in M} \mathcal{S}(M) \]
Following \cite[Definition 2.14]{clift2020cofree}, for $x_0, x_1, \hdots, x_n \in M$, we denote $\vert x_1, \hdots, x_n \rangle_{x_0}$ to represent the pure symmetrized tensor $x_1 \otimes^s \hdots \otimes^s x_n \in \mathcal{S}_n(M)$ in the $x_0$-component copy of $\mathcal{S}(M)$ in $\mathcal{Q}(M)$, and $\vert 1 \rangle_{x_0}$ for the multiplicative unit of $\mathcal{S}(M)$ in the $x_0$ -component. Furthermore, $\mathcal{Q}$ is in fact a \textbf{differential modality} \cite[Section 4.2]{garner2020cartesian} and therefore comes equipped with a deriving transformation $\mathsf{d}_M: \mathcal{Q}(M) \otimes_k M \to \mathcal{Q}(M)$, whose axioms are analogues of the basic algebraic properties of differentiation. This makes $k\text{-}\mathsf{MOD}$ into a monoidal differential category \cite[Definition 2.4]{blute2006differential}, and in particular a categorical model of differential linear logic \cite[Section 3]{clift2020cofree}. In general, the coKleisli category of a differential modality is a Cartesian $k$-differential category \cite[Proposition 3.2.1]{blute2009cartesian}. Therefore, the coKleisli category $k\text{-}\mathsf{MOD}^\mathcal{Q}$ is indeed a Cartesian $k$-differential category and also a categorical model of the differential $\lambda$-calculus \cite{Cockett-2019}. Recall that a map between $k$-modules $M$ and $M^\prime$ in $k\text{-}\mathsf{MOD}^\mathcal{Q}$ is a $k$-linear morphism $f: \mathcal{Q}(M) \to M^\prime$. Then its derivative is the $k$-linear morphism $\mathsf{D}[f]: \mathcal{Q}(M \times M) \to M^\prime$ defined as in \cite[Definition 5.11]{clift2020cofree}, which is worked out on pure symmetrized tensors to be: 
\begin{align*}
  \mathsf{D}[f]\left( \vert 1 \rangle_{(x_0,y_0)} \right) &=~ f( \vert y_0 \rangle_{x_0}) \\
  \mathsf{D}[f]\left(\vert (x_1, y_1), \hdots, (x_n, y_n) \rangle_{(x_0,y_0)} \right) &=~ f\left( \vert x_1, x_2, \hdots, x_n, y_0 \rangle_{x_0} \right) + \sum \limits^n_{j=1} f \left( \vert x_1, x_2, \hdots, y_j, \hdots, x_n \rangle_{x_0} \right) 
\end{align*}
The functor $\mathcal{E}_\mathcal{Q}: k\text{-}\mathsf{MOD}^\mathcal{Q} \to k\text{-}\mathsf{MOD}^\omega$ is the identity on objects, $\mathcal{E}_\mathcal{Q}(M) = M$, and sends a $k$-linear morphism $f: \mathcal{Q}(M) \to M^\prime$ to the function $\mathcal{E}_\mathcal{Q}(f): M \to M^\prime$ defined as $\mathcal{E}_\mathcal{Q}(f)(x) = f( \vert 1 \rangle_{x})$. As explained in \cite[Proposition 4.9]{garner2020cartesian}, $\left( (k\text{-}\mathsf{MOD}^\mathcal{Q}, \mathsf{D}), \mathcal{E}^\mathcal{Q} \right)$ is a cofree Cartesian $k$-differential category over $k\text{-}\mathsf{MOD}^\omega$. We can restrict this example to the finite-dimensional case and rewrite this example in terms of polynomials since $\mathcal{S}(k) = k[x_1, \hdots, x_n]$. It is also worth mentioning that for any Cartesian left $k$-linear category $\mathbb{A}$, its Faà di Bruno construction $\mathsf{F}[\mathbb{A}]$ can be embedded into the coKleisli category of a generalisation of the comonad $\mathcal{Q}$ on the category of $k\text{-}\mathsf{MOD}$ valued presheaves of $\mathbb{A}$ \cite[Section 4.4]{garner2020cartesian}. This result was then used to show that a Cartesian $k$-differential category embeds into the coKleisli category of the differential modality of a monoidal differential category \cite[Theorem 8.7]{garner2020cartesian}.  
\end{example}

\begin{example} \label{ex:cofreereal}\normalfont When $k$ is an algebraically closed field of characteristic zero, $\mathcal{Q}(V)$ is the cofree cocommutative $k$-coalgebra over a $k$-vector space $V$ \cite[Proposition 2.11]{clift2020cofree}. In particular, this makes $\mathcal{Q}$ a free exponential modality, and the resulting categorical models of differential linear logic and the differential $\lambda$-calculus were studied by Clift and Murfet in \cite[Section 3 \& 5]{clift2020cofree}. In this case, the coKleisli category of $\mathcal{Q}$ is equivalent to the category of cofree cocommutative $k$-coalgebras. Therefore, the category of (finitely generated) cofree cocommutative $k$-coalgebras is a cofree Cartesian $k$-differential category over the category of (finite-dimensional) $k$-vector spaces and arbitrary set functions between them. 
\end{example}

We conclude this section by describing cofree Cartesian $k$-differential categories over $k$-linear categories with finite biproducts, which is a novel result. To do so, we must first define a new construction of a Cartesian $k$-differential category over any Cartesian left $k$-linear category. We will also make use of this construction to help characterise the $k$-linear maps of a cofree Cartesian $k$-differential category and in the proof of Proposition \ref{prop:cofree-diffcon}. It is important to note that the differential combinator in the following definition does indeed copy the second map. 

\begin{lemma} \label{lem:Adelta} Let $\mathbb{A}$ be a Cartesian left $k$-linear category. Then $(\mathbb{A}^\Delta, \mathsf{D}^\Delta)$ is a Cartesian $k$-differential category where: 
\begin{enumerate}[{\em (i)}]
\item The objects of $\mathbb{A}^\Delta$ are the same as $\mathbb{A}$;
\item The maps in $\mathbb{A}^\Delta$ are pairs $(f,g): A \to B$ where $f: A \to B$ is an arbitrary map and $g: A \to B$ is a $k$-linear map in $\mathbb{A}$, so $\mathbb{A}^\Delta(A,B) = \mathbb{A}(A,B) \times k\text{-}\mathsf{lin}\left[ \mathbb{A} \right](A,B)$;
\item Composition is defined point-wise, $(f_1,g_1) \circ (f_2,g_2) = (f_1 \circ f_2, g_1 \circ g_2)$;
\item The identity of $A$ is the pair $(1_A, 1_A)$.
\item The terminal object and the product of objects are the same as in $\mathbb{A}$;
\item The projections are the pairs $(\pi_j, \pi_j): A_0 \times \hdots \times A_n \to A_j$ and where the tupling is defined point-wise, $\left \langle (f_0,g_0), \hdots, (f_n ,g_n) \right \rangle = \left( \langle f_0, \hdots, f_n \rangle,  \langle g_0, \hdots, g_n \rangle \right)$;
\item The $k$-module structure is defined point-wise, so the addition is $(f_1, g_1) + (f_2, g_2) = (f_1 +f_2, g_1 + g_2)$, the scalar multiplication is $r \cdot (f,g) = (r \cdot f, r \cdot g)$, and the zero is the pair $(0,0)$;
\item The differential combinator $\mathsf{D}^\Delta$ is defined as $\mathsf{D}^\Delta[(f,g)] = (g \circ \pi_1, g \circ \pi_1)$. 
\end{enumerate}
Furthermore, the functor $\mathcal{P}_{\mathbb{A}}: \mathbb{A}^\Delta \to \mathbb{A}$ defined on objects as $\mathcal{P}_{\mathbb{A}}(A) = A$ and on maps as $\mathcal{P}_{\mathbb{A}}(f,g) = f$, is a Cartesian $k$-linear functor. 
\end{lemma}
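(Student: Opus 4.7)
The plan is to verify each piece of structure in turn, using the fact that everything except the differential combinator is defined componentwise, so most checks reduce immediately to the corresponding facts in $\mathbb{A}$ and in $k\text{-}\mathsf{lin}[\mathbb{A}]$. First I would check that $\mathbb{A}^\Delta$ is a well-defined category: composition $(f_1,g_1)\circ(f_2,g_2)=(f_1\circ f_2, g_1\circ g_2)$ lands in the right hom-set because $k$-linear maps are closed under composition, and associativity and unit laws are inherited componentwise from $\mathbb{A}$. The left $k$-linear structure and its compatibility with precomposition likewise follow from the fact that $k$-linear maps are closed under the $k$-module structure, so both components live in the correct hom-set and precomposition is $k$-linear on each slot. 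The finite product structure is immediate since the projections $(\pi_j,\pi_j)$ are well-defined (projections in $\mathbb{A}$ are $k$-linear), pairing is componentwise, and the universal property reduces to the universal property in $\mathbb{A}$ applied separately in each slot; projections are $k$-linear by construction, so $\mathbb{A}^\Delta$ is a Cartesian left $k$-linear category.

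Next I would verify the seven differential combinator axioms. The central observation is that $\mathsf{D}^\Delta[(f,g)]$ depends only on the $k$-linear component $g$, and in fact has the shape of the biproduct-style combinator $\mathsf{D}^{\mathsf{lin}}$ from Example \ref{ex:CDC}.(\ref{ex:biproductdiff}) applied diagonally. So \textbf{[CD.1]} follows since $(-)\circ\pi_1$ is $k$-linear, \textbf{[CD.3]} and \textbf{[CD.4]} are immediate from $(1_A,1_A)$ and pointwise pairing, and \textbf{[CD.2]} reduces, after applying the pairing $\langle x,ry+sz\rangle$ componentwise and the identity $\pi_1\circ\langle-,-\rangle = (-)$, to the $k$-linearity of $g$ in each component. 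For \textbf{[CD.5]}, expanding both sides gives $((g_1\circ g_2)\circ\pi_1,(g_1\circ g_2)\circ\pi_1)$ on the left, while on the right the pairing $\langle (f_2,g_2)\circ\pi_0,\mathsf{D}^\Delta[(f_2,g_2)]\rangle$ has second components $\langle g_2\circ\pi_0, g_2\circ\pi_1\rangle$ in both slots, and postcomposing with $g_1\circ\pi_1$ collapses to $g_1\circ g_2\circ\pi_1$ in each slot as required.

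For \textbf{[CD.6]} and \textbf{[CD.7]}, iterating the combinator yields $\mathsf{D}^\Delta[\mathsf{D}^\Delta[(f,g)]] = (g\circ\pi_1\circ\pi_1, g\circ\pi_1\circ\pi_1)$, where the outer $\pi_1\circ\pi_1$ is the third projection out of $(A\times A)\times(A\times A)$. Thus postcomposing with $\langle x,y,0,z\rangle$ (componentwise) picks out $z$ and gives $(g\circ z, g\circ z) = \mathsf{D}^\Delta[(f,g)]\circ\langle x,z\rangle$, proving \textbf{[CD.6]}; while postcomposing with either $\langle x,y,z,0\rangle$ or $\langle x,z,y,0\rangle$ picks out $0$ in the last slot and gives $(0,0)$ on both sides of \textbf{[CD.7]}. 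The main obstacle, if any, is just bookkeeping with the product indices for \textbf{[CD.5]} through \textbf{[CD.7]}, but there are no surprises because the differential combinator strips away the non-linear datum.

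Finally, for $\mathcal{P}_{\mathbb{A}}$, functoriality is immediate from the componentwise definitions ($\mathcal{P}_\mathbb{A}(1_A)=1_A$ and $\mathcal{P}_\mathbb{A}((f_1,g_1)\circ(f_2,g_2)) = f_1\circ f_2$), product preservation is strict since products and projections are defined identically on the underlying object level and $\mathcal{P}_\mathbb{A}(\pi_j,\pi_j)=\pi_j$, and $k$-linear preservation is strict because the $k$-module structure is pointwise. Hence $\mathcal{P}_{\mathbb{A}}$ is a Cartesian $k$-linear functor, completing the proof.
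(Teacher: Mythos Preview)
Your proposal is correct and follows essentially the same approach as the paper: a direct verification that the componentwise definitions give a Cartesian left $k$-linear category and that the seven axioms hold for $\mathsf{D}^\Delta$. The paper is much terser---it notes that $\mathbb{A}^\Delta$ is a sub-Cartesian $k$-linear category of $\mathbb{A}\times k\text{-}\mathsf{lin}[\mathbb{A}]$ (which dispatches the Cartesian left $k$-linear structure in one line) and then declares the seven axioms ``straightforward'' and leaves them to the reader---so your write-up is effectively the exercise the paper omits. One small wording slip in your \textbf{[CD.5]} sketch: the two slots of the pairing $\langle (f_2,g_2)\circ\pi_0,\mathsf{D}^\Delta[(f_2,g_2)]\rangle$ are $\langle f_2\circ\pi_0,\,g_2\circ\pi_1\rangle$ and $\langle g_2\circ\pi_0,\,g_2\circ\pi_1\rangle$, not identical; but since postcomposition with $g_1\circ\pi_1$ only sees the second entry of each, the conclusion is unaffected.
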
    
\begin{proof} It is clear that $\mathbb{A}^\Delta$ is indeed a Cartesian left $k$-linear category, in fact, it is a sub-Cartesian left $k$-linear category of $\mathbb{A} \times k\text{-}\mathsf{lin}\left[ \mathbb{A} \right]$. For the differential combinator, since the second map is $k$-linear and the differential combinator involves the second projection, which gets rid of the first argument terms, it is straightforward to check that $\mathsf{D}^\Delta$ satisfies the seven axioms \textbf{[CD.1]} to \textbf{[CD.7]}. So we leave this an exercise for the reader. Therefore, we conclude that $(\mathbb{A}^\Delta, \mathsf{D}^\Delta)$ is indeed a Cartesian $k$-differential category. 
\end{proof}

If $\mathbb{B}$ is a $k$-linear category with finite biproducts, recall that $k\text{-}\mathsf{lin}\left[ \mathbb{B} \right] = \mathbb{B}$. Therefore, maps in $\mathbb{B}^\Delta$ are simply pairs of parallel maps in $\mathbb{B}$, so $\mathbb{B}^\Delta(A,B) = \mathbb{B}(A,B) \times \mathbb{B}(A,B)$. In particular, this means that $\mathbb{B}^\Delta$ will also be a $k$-linear category with finite biproducts. As such, another way of proving that $(\mathbb{B}^\Delta, \mathsf{D}^\Delta)$ is a Cartesian $k$-differential category is by using \cite[Corollary 2.3.2]{blute2009cartesian}, which states that for a $k$-linear category with finite biproducts, to give a differential combinator is precisely to give Cartesian $k$-linear endofunctor which is the identity on objects and idempotent. For $\mathsf{D}^\Delta$, its associated functor is the one that sends maps $(f,g)$ to $(g,g)$. Furthermore, it is important to note that the differential combinator $\mathsf{D}^\Delta$ is different from the canonical differential combinator for biproducts $\mathsf{D}^{\mathsf{lin}}$ as defined in Example \ref{ex:CDC}.(\ref{ex:biproductdiff}). Indeed, $\mathsf{D}^\Delta[(f,g)] = (g \circ \pi_1, g \circ \pi_1)$, which gets rid of the first map, while $\mathsf{D}^{\mathsf{lin}}[(f,g)] = (f \circ \pi_1, g \circ \pi_1)$, which keeps the first map. 

We will now prove that $(\mathbb{B}^\Delta, \mathsf{D}^\Delta)$ is a cofree Cartesian $k$-differential category over $\mathbb{B}$. This may be somewhat surprising since the maps in the Faà di Bruno construction were infinite sequences, while the maps in $\mathbb{B}^\Delta$ are simply pairs. The reason for this collapse is that in any $k$-linear category with finite biproducts, the only $k$-multilinear maps in their last $n+2$ arguments are zero maps. Therefore, a Faà di Bruno sequence must be of the form $(f_0, f_1, 0, 0, \hdots)$.  Moreover in this case, the couniversal property can be described using the induced \textbf{linearization combinator} $\mathsf{L}$ \cite[Definition 3.1]{cockett2020linearizing}, which is an operator that makes any map differential linear. Indeed, in a Cartesian $k$-differential category $(\mathbb{X}, \mathsf{D})$, for any map $f: X \to Y$, define $\mathsf{L}[f]: A \to A$, called the \textbf{linearization} of $f$, as $\mathsf{L}[f] = \mathsf{D}[f] \circ \langle 0,1_X \rangle$. Then $\mathsf{L}[f]$ is $\mathsf{D}$-linear, and furthermore, $f$ is $\mathsf{D}$-linear if and only if $f = \mathsf{L}[f]$ \cite[Lemma 2.8]{cockett2020linearizing}. 

\begin{proposition}\label{prop:biproductcofree} Let $\mathbb{B}$ be a $k$-linear category with finite biproducts. Then the triple $\left((\mathbb{B}^\Delta, \mathsf{D}^\Delta), \mathcal{P}_{\mathbb{B}} \right)$ is a cofree Cartesian $k$-differential category over $\mathbb{B}$, where in particular for any Cartesian $k$-differential category $(\mathbb{Y},\mathsf{D})$ and Cartesian $k$-linear functor ${\mathcal{F}: \mathbb{Y} \to \mathbb{B}}$, the unique Cartesian $k$-differential functor $\mathcal{F}^\flat: \mathbb{Y} \to \mathbb{B}^\Delta$ such that the following diagram commutes: 
\begin{align*} \xymatrixcolsep{5pc}\xymatrix{\mathbb{Y} \ar[dr]_-{\mathcal{F}}  \ar@{-->}[r]^-{\exists! ~ \mathcal{F}^\flat}  & \mathbb{B}^\Delta \ar[d]^-{\mathcal{P}_{\mathbb{B}}} \\
  & \mathbb{B} }
\end{align*}
is defined on objects as $\mathcal{F}^\flat(Y) = \mathcal{F}(Y)$ and on maps as $\mathcal{F}^\flat(f) = \left(\mathcal{F}(f), \mathcal{F}\left( \mathsf{L}[f]  \right) \right)$. 
\end{proposition}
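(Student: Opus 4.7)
The plan is to directly verify that $\mathcal{F}^\flat$ as defined is a Cartesian $k$-differential functor which makes the required diagram commute, and then extract uniqueness from the shape of $\mathsf{D}^\Delta$. Commutativity of the diagram is immediate from $\mathcal{P}_{\mathbb{B}}(\mathcal{F}(f), \mathcal{F}(\mathsf{L}[f])) = \mathcal{F}(f)$. Preservation of identities uses \textbf{[CD.3]} to give $\mathsf{L}[1_Y] = 1_Y$, of pairings uses \textbf{[CD.4]} to give $\mathsf{L}[\langle f_0, \ldots, f_n\rangle] = \langle \mathsf{L}[f_0], \ldots, \mathsf{L}[f_n]\rangle$, and of the $k$-linear structure uses \textbf{[CD.1]} to give $\mathsf{L}[r \cdot f + s \cdot g] = r \cdot \mathsf{L}[f] + s \cdot \mathsf{L}[g]$; in each case the two components of $\mathcal{F}^\flat$ transport across compatibly with the pointwise definitions in $\mathbb{B}^\Delta$.

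Two points require real work. First, composition: the chain rule \textbf{[CD.5]} gives $\mathsf{L}[g \circ f] = \mathsf{D}[g] \circ \langle f \circ 0, \mathsf{L}[f]\rangle$, while $\mathsf{L}[g] \circ \mathsf{L}[f] = \mathsf{D}[g] \circ \langle 0, \mathsf{L}[f]\rangle$ by direct computation. These generally differ in $\mathbb{Y}$, but because every map in $\mathbb{B}$ is $k$-linear and $\mathcal{F}$ is Cartesian $k$-linear, $\mathcal{F}(f \circ 0) = \mathcal{F}(f) \circ 0 = 0$ in $\mathbb{B}$, so the two expressions coincide once pushed through $\mathcal{F}$. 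Second, preservation of the differential combinator requires $(\mathcal{F}(\mathsf{D}[f]), \mathcal{F}(\mathsf{L}[\mathsf{D}[f]])) = (\mathcal{F}(\mathsf{L}[f]) \circ \pi_1, \mathcal{F}(\mathsf{L}[f]) \circ \pi_1)$. The second component reduces to the internal identity $\mathsf{L}[\mathsf{D}[f]] = \mathsf{L}[f] \circ \pi_1$, which in fact holds in any Cartesian $k$-differential category: writing $\mathsf{L}[\mathsf{D}[f]] = \mathsf{D}[\mathsf{D}[f]] \circ \langle 0, 0, \pi_0, \pi_1\rangle$, splitting via \textbf{[CD.2]} into $\mathsf{D}[\mathsf{D}[f]] \circ \langle 0, 0, \pi_0, 0\rangle + \mathsf{D}[\mathsf{D}[f]] \circ \langle 0, 0, 0, \pi_1\rangle$, applying \textbf{[CD.6]} to the second summand to obtain $\mathsf{L}[f] \circ \pi_1$, and applying \textbf{[CD.7]} followed by \textbf{[CD.6]} to kill the first. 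For the first component, one needs $\mathcal{F}(\mathsf{D}[f]) = \mathcal{F}(\mathsf{L}[f] \circ \pi_1)$ in $\mathbb{B}$: since $\mathbb{B}$ has finite biproducts one can write $1_{\mathcal{F}(X) \times \mathcal{F}(X)} = \langle \pi_0, 0\rangle + \langle 0, \pi_1\rangle$ and decompose $\mathcal{F}(\mathsf{D}[f])$ accordingly, then \textbf{[CD.2]} forces $\mathcal{F}(\mathsf{D}[f]) \circ \langle \pi_0, 0\rangle = 0$, leaving exactly $\mathcal{F}(\mathsf{L}[f]) \circ \pi_1$.

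For uniqueness, let $\mathcal{G}: (\mathbb{Y}, \mathsf{D}) \to (\mathbb{B}^\Delta, \mathsf{D}^\Delta)$ be any Cartesian $k$-differential functor with $\mathcal{P}_{\mathbb{B}} \circ \mathcal{G} = \mathcal{F}$, and write $\mathcal{G}(f) = (a_f, b_f)$. The commuting diagram at $f$ forces $a_f = \mathcal{F}(f)$. For the second component, since $\mathcal{G}$ commutes with $\mathsf{D}$ it also commutes with $\mathsf{L}$, so $\mathcal{G}(\mathsf{L}[f]) = \mathsf{D}^\Delta[(a_f, b_f)] \circ \langle 0, 1\rangle = (b_f, b_f)$ by direct computation; reapplying the commuting diagram at $\mathsf{L}[f]$ yields $b_f = \mathcal{F}(\mathsf{L}[f])$, so $\mathcal{G} = \mathcal{F}^\flat$. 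The main obstacle in the argument is isolating the internal identity $\mathsf{L}[\mathsf{D}[f]] = \mathsf{L}[f] \circ \pi_1$ from the \textbf{[CD]}-axioms; once this is in hand everything else reduces to the biproduct decomposition in $\mathbb{B}$.
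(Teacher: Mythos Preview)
Your proof is correct and follows essentially the same strategy as the paper's: both verify the couniversal property directly by showing $\mathcal{F}^\flat$ is a Cartesian $k$-differential functor and then extracting uniqueness from the shape of $\mathsf{D}^\Delta$. The paper packages the key observation as a single lemma---that $\mathcal{F}(\mathsf{D}[f]) \circ \langle x, y\rangle = \mathcal{F}(\mathsf{D}[f]) \circ \langle 0, y\rangle$ in $\mathbb{B}$, proved via the biproduct decomposition and \textbf{[CD.2]}---and reuses it for both composition and the differential combinator, whereas you handle the two cases separately (using $\mathcal{F}(f \circ 0) = 0$ for composition and the biproduct splitting $1 = \langle \pi_0, 0\rangle + \langle 0, \pi_1\rangle$ for the first component of the differential); but these are the same idea unpacked at different granularities, and your derivation of $\mathsf{L}[\mathsf{D}[f]] = \mathsf{L}[f] \circ \pi_1$ from \textbf{[CD.2]}, \textbf{[CD.6]}, \textbf{[CD.7]} matches the paper's exactly.
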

\begin{proof} We must first prove that $\mathcal{F}^\flat$ is a functor. Since identity maps are always differential linear by \textbf{[CD.3]}, we have that $\mathsf{L}[1] = 1$. Therefore, it follows that $\mathcal{F}^\flat$ preserves identities. To show that $\mathcal{F}^\flat$ also preserves composition, first note that for all maps $f$ of $\mathbb{Y}$, using \textbf{[CD.2]} and that all maps in $\mathbb{B}$ are $k$-linear, we can compute that for any appropriate map $x,y$ in $\mathbb{B}$: 
\begin{align*}
    \mathcal{F}(\mathsf{D}[f]) \circ \langle x,y \rangle &=~  \mathcal{F}(\mathsf{D}[f]) \circ \left(  \langle x, 0 \rangle  +  \langle 0, y \rangle  \right) \\
     &=~ \left( \mathcal{F}(\mathsf{D}[f]) \circ \langle x, 0 \rangle  \right) + \left( \mathcal{F}(\mathsf{D}[f]) \circ \langle 0, y\rangle \right) \\
    &=~ \left( \mathcal{F}(\mathsf{D}[f]) \circ \langle \pi_0, 0 \rangle \circ x \right) + \left( \mathcal{F}(\mathsf{D}[f]) \circ \langle 0, y\rangle \right) \\
    &=~ \left(\mathcal{F}\left( \mathsf{D}[f] \circ \langle \pi_0, 0 \rangle  \right) \circ x \right) + \left( \mathcal{F}(\mathsf{D}[f]) \circ \langle 0, y\rangle \right) \\
    &=~ \left( \mathcal{F}\left(0 \right) \circ x \right) + \left( \mathcal{F}(\mathsf{D}[f]) \circ \langle 0, y \rangle \right) \\
    &=~ (0 \circ x) + \left( \mathcal{F}(\mathsf{D}[f]) \circ \langle 0, y\rangle  \right) \\
    &=~ 0 + \left( \mathcal{F}(\mathsf{D}[f]) \circ \langle 0, y \rangle  \right) \\
    &=~ \mathcal{F}(\mathsf{D}[f]) \circ \langle 0, y \rangle
\end{align*}
So $\mathcal{F}(\mathsf{D}[f])$ is independent of its first variable, so $ \mathcal{F}^\flat(\mathsf{D}[f]) \circ \langle x,y \rangle = \mathcal{F}^\flat(\mathsf{D}[f]) \circ \langle 0, y \rangle$. Now using this and \textbf{[CD.5]}, we can compute that: 
\begin{align*}
 \mathcal{F}^\flat(g \circ f) &=~  \left(\mathcal{F}(g 
 \circ f), \mathcal{F}\left( \mathsf{L}[g \circ f]  \right) \right) \\ 
 &=~  \left(\mathcal{F}(g 
 \circ f), \mathcal{F}\left( \mathsf{D}[g \circ f] \circ \langle 0,1 \rangle  \right) \right) \\
&=~ \left(\mathcal{F}(g \circ f), \mathcal{F}\left( \mathsf{D}[g \circ f] \right)  \circ \langle 0,1 \rangle \right)   \\
 &=~ \left(\mathcal{F}(g) \circ \mathcal{F}(f), \mathcal{F}\left( \mathsf{D}[g] \circ \left\langle f \circ \pi_0, \mathsf{D}[f]  \right\rangle \right) \circ \langle 0,1 \rangle \right) \\
&=~ \left(\mathcal{F}(g) \circ \mathcal{F}(f), \mathcal{F}\left( \mathsf{D}[g] \right) \circ \left\langle \mathcal{F}\left(f \circ \pi_0\right), \mathcal{F}\left(\mathsf{D}[f]\right)  \right\rangle  \circ \langle 0,1 \rangle \right) \\
&=~ \left(\mathcal{F}(g) \circ \mathcal{F}(f), \mathcal{F}\left( \mathsf{D}[g] \right) \circ \left\langle 0, \mathcal{F}\left(\mathsf{D}[f]\right)  \right\rangle  \circ \langle 0,1 \rangle \right) \\
&=~ \left(\mathcal{F}(g) \circ \mathcal{F}(f), \mathcal{F}\left( \mathsf{D}[g] \right) \circ \langle 0, 1  \rangle \circ \mathcal{F}\left(\mathsf{D}[f]\right) \circ \langle 0,1 \rangle \right) \\
&=~ \left(\mathcal{F}(g), \mathcal{F}\left( \mathsf{D}[g]  \right) \circ \langle 0,1 \rangle \right) \circ \left(\mathcal{F}(f), \mathcal{F}\left( \mathsf{D}[f]  \right) \circ \langle 0,1 \rangle \right) \\
&=~ \left(\mathcal{F}(g), \mathcal{F}\left( \mathsf{D}[g] \circ \langle 0,1 \rangle \right)  \right) \circ \left(\mathcal{F}(f), \mathcal{F}\left( \mathsf{D}[f] \circ \langle 0,1   \rangle\right) \right) \\
&=~ \left(\mathcal{F}(g), \mathcal{F}\left( \mathsf{L}[g]  \right) \right) \circ \left(\mathcal{F}(f), \mathcal{F}\left( \mathsf{L}[f]  \right) \right) \\
&=~ \mathcal{F}^\flat(g) \circ \mathcal{F}^\flat(f) 
\end{align*}
So $\mathcal{F}^\flat$ is indeed a functor. The linearizing combinator $\mathsf{L}$ satisfies six axioms \textbf{[L.1]} to \textbf{[L.6]} \cite[Definition 3.1]{cockett2020linearizing} which are analogues of the first six axioms \textbf{[CD.1]} to \textbf{[CD.6]} of the differential combinator. Since $\mathcal{F}$ is a Cartesian $k$-linear functor and the linearizing combinator $\mathsf{L}$ preserves the $k$-linear structure and projections, it follows that $\mathcal{F}^\flat$ is also a Cartesian $k$-linear functor. To show that $\mathcal{F}^\flat$ also preserves the differential combinator, first observe that by \textbf{[CD.6]} and \textbf{[CD.7]} it follows that the linearization of a derivative is $\mathsf{L}[\mathsf{D}[f]] = \mathsf{D}[f] \circ \langle 0, \pi_1 \rangle$. Also, since $\mathcal{F}(\mathsf{D}[f])$ is independent of its first variable, we have that $\mathcal{F}^\flat(\mathsf{D}[f]) = \mathcal{F}^\flat(\mathsf{D}[f]) \circ \langle 0, \pi_1 \rangle$. So we compute: 
\begin{align*}
\mathcal{F}^\flat\left( \mathsf{D}[f]\right) &=~ \left(\mathcal{F}\left( \mathsf{D}[f]  \right), \mathcal{F}\left( \mathsf{L}\left[ \mathsf{D}[f] \right]  \right)  \right) \\
&=~ \left(\mathcal{F}\left( \mathsf{D}[f]  \right)\circ \langle 0, \pi_1 \rangle , \mathcal{F}\left(  \mathsf{D}[f] \circ\langle 0, \pi_1 \rangle  \right)  \right) \\
&=~ \left(\mathcal{F}\left( \mathsf{D}[f]  \right) \circ \langle 0,1 \rangle \circ \pi_1, \mathcal{F}\left( \mathsf{D}[f]  \right) \circ \langle 0,1 \rangle \circ \pi_1\right) \\
&=~ \left(\mathcal{F}\left( \mathsf{D}[f] \circ \langle 0,1 \rangle \right)  \circ \pi_1, \mathcal{F}\left( \mathsf{D}[f] \circ \langle 0,1 \rangle \right)  \circ \pi_1\right) \\
&=~ \left(\mathcal{F}\left( \mathsf{L}[f]  \right)  \circ \pi_1, \mathcal{F}\left( \mathsf{L}[f]  \right)  \circ \pi_1\right) \\
&=~ \mathsf{D}^\Delta\left[\left(\mathcal{F}(f), \mathcal{F}\left( \mathsf{L}[f]  \right)  \right) \right] \\
&=~  \mathsf{D}\left[\mathcal{F}^\flat(f) \right]  
\end{align*}
So $\mathcal{F}^\flat$ is a Cartesian $k$-differential functor. By definition, we also have that $\mathcal{P}_{\mathbb{B}} \circ \mathcal{F}^\flat = \mathcal{F}$, as desired. For uniqueness, suppose there was another Cartesian $k$-differential functor $\mathcal{G}$, such that $\mathcal{P}_{\mathbb{B}} \circ \mathcal{G} = \mathcal{F}$. This implies that on objects we have that $\mathcal{G}(Y) = \mathcal{F}^\flat(Y)$ and on maps we have that $\mathcal{G}(f) = (\mathcal{F}(f), f^\prime)$ for some map $f^\prime$. Note we have that $f^\prime \circ \pi_1 = \mathcal{P}_{\mathbb{B}}( \mathsf{D}^\Delta[\mathcal{G}(f)] )$. Since $\mathcal{G}$ also preserves the differential combinator, we compute: 
\begin{align*}
f^\prime &=~ f^\prime \circ \pi_1 \circ \langle 0,1 \rangle \\
&=~ \mathcal{P}_{\mathbb{B}}( \mathsf{D}^\Delta[\mathcal{G}(f)] ) \circ \langle 0,1 \rangle \\
&=~  \mathcal{P}_{\mathbb{B}}( \mathcal{G}( \mathsf{D}[f]) ) \circ \langle 0,1 \rangle \\
&=~ \mathcal{F}\left( \mathsf{D}[f]  \right) \circ \langle 0,1 \rangle \\
&=~ \mathcal{F}\left( \mathsf{D}[f]  \circ \langle 0,1 \rangle \right) \\
&=~ \mathcal{F}\left( \mathsf{L}[f]  \right) 
\end{align*}
Thus $\mathcal{G}(f) = \left(\mathcal{F}(f), \mathcal{F}\left( \mathsf{L}[f]  \right) \right) = \mathcal{F}^\flat(f)$. So $\mathcal{G} = \mathcal{F}^\flat$ and therefore $\mathcal{F}^\flat$ is unique. So we conclude that $\left( (\mathbb{B}^\Delta, \mathsf{D}^\Delta), \mathcal{P}_{\mathbb{B}}\right)$ is a cofree Cartesian $k$-differential category over $\mathbb{B}$. 
\end{proof}

\section{Linear Maps in Cofree Cartesian Differential Categories}\label{sec:cofreelin}

In this section, we describe the $k$-linear maps and differential linear maps in a cofree Cartesian $k$-differential category. We will show that the differential linear maps correspond precisely to the $k$-linear maps in the base category, while the $k$-linear maps instead correspond to pairs of $k$-linear maps in the base category. More precisely, we will show that the subcategory of differential linear maps is isomorphic to the subcategory of $k$-linear maps of the base category, while the subcategory of $k$-linear maps is isomorphic to the cofree Cartesian $k$-differential category of the subcategory of $k$-linear maps of the base category. 

\begin{proposition}\label{prop:difflincofree} Let $\left((\mathbb{X}, \mathsf{D}), \mathcal{E}\right)$ be a cofree Cartesian $k$-differential category over a Cartesian left $k$-linear category $\mathbb{A}$. Then the functor $\mathcal{E}_{\mathsf{lin}}: \mathsf{D}\text{-}\mathsf{lin}\left[ \mathbb{X} \right] \to k\text{-}\mathsf{lin}\left[ \mathbb{A} \right]$ defined on objects and maps as $\mathcal{E}_{\mathsf{lin}}(-) = \mathcal{E}(-)$, is a Cartesian $k$-linear isomorphism, so $\mathsf{D}\text{-}\mathsf{lin}\left[ \mathbb{X} \right] \cong k\text{-}\mathsf{lin}\left[ \mathbb{A} \right]$, and it is the unique Cartesian $k$-linear isomorphism such that the following diagram commutes: 
 \begin{align*} \xymatrixcolsep{5pc}\xymatrix{ \mathsf{D}\text{-}\mathsf{lin}\left[ \mathbb{X} \right] \ar@{-->}[r]^-{\exists! ~\mathcal{E}_{\mathsf{lin}}}_-{\cong} \ar[d]_-{\mathcal{J}_{(\mathbb{X}, \mathsf{D})}} & k\text{-}\mathsf{lin}\left[ \mathbb{A} \right] \ar[d]^-{\mathcal{I}_{\mathbb{A}}} \\ 
 \mathbb{X} \ar[r]_-{\mathcal{E}} & \mathbb{A} }
\end{align*}
In other words, for every $\mathsf{D}$-linear map $f$ in $\mathbb{X}$, $\mathcal{E}(f)$ is $k$-linear in $\mathbb{A}$, and conversely, for every $k$-linear map $f$ in $\mathbb{A}$, there exists a unique $\mathsf{D}$-linear map $f^\sharp$ in $\mathbb{X}$ such that $\mathcal{E}(f^\sharp) = f$.  
\end{proposition}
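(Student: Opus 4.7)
The plan is to exhibit an explicit inverse to $\mathcal{E}_{\mathsf{lin}}$ using the couniversal property of $\left((\mathbb{X}, \mathsf{D}), \mathcal{E}\right)$ applied to a well-chosen test Cartesian $k$-differential category. First, $\mathcal{E}_{\mathsf{lin}}$ is well-defined and Cartesian $k$-linear: every $\mathsf{D}$-linear map in $\mathbb{X}$ is $k$-linear by \cite[Lemma 2.6.i]{cockett2020linearizing}, and by Lemma \ref{lem:funcadd} the Cartesian $k$-linear functor $\mathcal{E}$ sends $k$-linear maps to $k$-linear maps, so $\mathcal{E}(f) \in k\text{-}\mathsf{lin}[\mathbb{A}]$ whenever $f$ is $\mathsf{D}$-linear in $\mathbb{X}$. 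Preservation of the product and $k$-linear structure by $\mathcal{E}_{\mathsf{lin}}$ is inherited immediately from $\mathcal{E}$, and the square commutes by the very definition $\mathcal{E}_{\mathsf{lin}}(-) = \mathcal{E}(-)$.

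To build the inverse, equip the $k$-linear category with finite biproducts $k\text{-}\mathsf{lin}[\mathbb{A}]$ with the trivial differential combinator $\mathsf{D}^{\mathsf{lin}}$ from Example \ref{ex:CDC}.(\ref{ex:biproductdiff}). Since $\mathcal{I}_{\mathbb{A}}: k\text{-}\mathsf{lin}[\mathbb{A}] \to \mathbb{A}$ is a Cartesian $k$-linear functor, the couniversal property of $\left((\mathbb{X}, \mathsf{D}), \mathcal{E}\right)$ yields a unique Cartesian $k$-differential functor $\mathcal{I}_{\mathbb{A}}^\flat: (k\text{-}\mathsf{lin}[\mathbb{A}], \mathsf{D}^{\mathsf{lin}}) \to (\mathbb{X}, \mathsf{D})$ satisfying $\mathcal{E} \circ \mathcal{I}_{\mathbb{A}}^\flat = \mathcal{I}_{\mathbb{A}}$. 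Because every map in $(k\text{-}\mathsf{lin}[\mathbb{A}], \mathsf{D}^{\mathsf{lin}})$ is $\mathsf{D}^{\mathsf{lin}}$-linear by construction, Lemma \ref{lem:funclin} guarantees that $\mathcal{I}_{\mathbb{A}}^\flat$ lands in $\mathsf{D}$-linear maps and thus factors through $\mathcal{J}_{(\mathbb{X}, \mathsf{D})}$, producing a Cartesian $k$-linear functor $\mathcal{I}_{\mathbb{A}}^\sharp: k\text{-}\mathsf{lin}[\mathbb{A}] \to \mathsf{D}\text{-}\mathsf{lin}[\mathbb{X}]$ with $\mathcal{J}_{(\mathbb{X}, \mathsf{D})} \circ \mathcal{I}_{\mathbb{A}}^\sharp = \mathcal{I}_{\mathbb{A}}^\flat$. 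This $\mathcal{I}_{\mathbb{A}}^\sharp$ is the candidate inverse.

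One direction is direct: $\mathcal{E}_{\mathsf{lin}} \circ \mathcal{I}_{\mathbb{A}}^\sharp$ sends $g$ to $\mathcal{E}(\mathcal{I}_{\mathbb{A}}^\flat(g)) = \mathcal{I}_{\mathbb{A}}(g) = g$, hence equals $\mathrm{id}_{k\text{-}\mathsf{lin}[\mathbb{A}]}$. The opposite direction is the main step, and the hard part is that we want to identify two functors whose codomain is the subcategory $\mathsf{D}\text{-}\mathsf{lin}[\mathbb{X}]$, whereas the uniqueness afforded by cofreeness is formulated for functors into $\mathbb{X}$ itself; this is handled by Lemma \ref{lem:cofree-lem1}. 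Both $\mathcal{J}_{(\mathbb{X}, \mathsf{D})} \circ \mathcal{I}_{\mathbb{A}}^\sharp \circ \mathcal{E}_{\mathsf{lin}}$ and $\mathcal{J}_{(\mathbb{X}, \mathsf{D})}$ are Cartesian $k$-differential functors from $(\mathsf{D}\text{-}\mathsf{lin}[\mathbb{X}], \mathsf{D}^{\mathsf{lin}})$ to $(\mathbb{X}, \mathsf{D})$, and post-composing either with $\mathcal{E}$ yields the same functor (namely $f \mapsto \mathcal{E}(f)$), so Lemma \ref{lem:cofree-lem1} forces them to be equal. Faithfulness of the subcategory inclusion $\mathcal{J}_{(\mathbb{X}, \mathsf{D})}$ then cancels on the left, giving $\mathcal{I}_{\mathbb{A}}^\sharp \circ \mathcal{E}_{\mathsf{lin}} = \mathrm{id}_{\mathsf{D}\text{-}\mathsf{lin}[\mathbb{X}]}$. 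Finally, uniqueness of $\mathcal{E}_{\mathsf{lin}}$ among Cartesian $k$-linear isomorphisms making the stated square commute is immediate, since $\mathcal{I}_{\mathbb{A}}$ is faithful and injective on objects and so determines any competing candidate by the commutation condition.
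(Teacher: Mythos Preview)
Your proof is correct and follows essentially the same approach as the paper: both construct the inverse by applying the couniversal property to the inclusion $\mathcal{I}_{\mathbb{A}}: (k\text{-}\mathsf{lin}[\mathbb{A}], \mathsf{D}^{\mathsf{lin}}) \to \mathbb{A}$, observe that the resulting $\mathcal{I}_{\mathbb{A}}^\flat$ lands in $\mathsf{D}$-linear maps via Lemma~\ref{lem:funclin}, verify one composite directly, and obtain the other composite by invoking Lemma~\ref{lem:cofree-lem1} on the two Cartesian $k$-differential functors $\mathcal{I}_{\mathbb{A}}^\flat \circ \mathcal{E}_{\mathsf{lin}}$ and $\mathcal{J}_{(\mathbb{X}, \mathsf{D})}$. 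The only cosmetic difference is that you phrase the final cancellation via faithfulness of $\mathcal{J}_{(\mathbb{X}, \mathsf{D})}$, whereas the paper leaves that step implicit.
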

\begin{proof} As explained in Section \ref{sec:CDC}, $k\text{-}\mathsf{lin}\left[ \mathbb{A} \right]$ and $\mathsf{D}\text{-}\mathsf{lin}\left[ \mathbb{X} \right]$ are $k$-linear categories with finite biproducts, and thus are also Cartesian left $k$-linear categories. Now let us explain why $\mathcal{E}_{\mathsf{lin}}$ is well-defined. On objects, this is clear. On maps, recall that if $f$ is $\mathsf{D}$-linear, then $f$ is also $k$-linear, and therefore by Lemma \ref{lem:funcadd}, $\mathcal{E}(f)$ is $k$-linear in $\mathbb{A}$. Therefore, $\mathcal{E}_{\mathsf{lin}}$ is well-defined, and since $\mathcal{E}$ is a Cartesian $k$-linear functor, $\mathcal{E}_{\mathsf{lin}}$ is also. Furthermore, by definition we also have that $\mathcal{I}_{\mathbb{A}} \circ \mathcal{E}_{\mathsf{lin}} = \mathcal{E} \circ \mathcal{J}_{(\mathbb{X}, \mathsf{D})}$, and since $\mathcal{I}_{\mathbb{A}}$ is monic, $\mathcal{E}_{\mathsf{lin}}$ is the unique such functor. To define the inverse of $\mathcal{E}_{\mathsf{lin}}$, we use the fact that $(k\text{-}\mathsf{lin}\left[ \mathbb{A} \right], \mathsf{D}^{\mathsf{lin}})$ is a Cartesian $k$-differential category, as defined in Example \ref{ex:CDC}.(\ref{ex:biproductdiff}). Therefore, by the couniversal property of $\left((\mathbb{X}, \mathsf{D}), \mathcal{E}\right)$, there exists a unique Cartesian $k$-differential functor $\mathcal{I}_{\mathbb{A}}^\flat: (k\text{-}\mathsf{lin}\left[ \mathbb{A} \right], \mathsf{D}^{\mathsf{lin}})  \to (\mathbb{X}, \mathsf{D})$ which makes the following diagram commute: 
\begin{align*} \xymatrixcolsep{5pc}\xymatrix{k\text{-}\mathsf{lin}\left[ \mathbb{A} \right] \ar[dr]_-{\mathcal{I}_{\mathbb{A}}}  \ar@{-->}[r]^-{\exists! ~ \mathcal{I}_{\mathbb{A}}^\flat}  & \mathbb{X} \ar[d]^-{\mathcal{E}} \\
  & \mathbb{A} }
\end{align*}
Then define $\mathcal{E}^{-1}_{\mathsf{lin}}: k\text{-}\mathsf{lin}\left[ \mathbb{A} \right] \to \mathsf{D}\text{-}\mathsf{lin}\left[ \mathbb{X} \right]$ on objects and maps as $\mathcal{E}^{-1}_{\mathsf{lin}}(-) = \mathcal{I}_{\mathbb{A}}^\flat(-)$. On objects $\mathcal{E}^{-1}_{\mathsf{lin}}$ is well-defined, while on maps, since every map $f$ in $k\text{-}\mathsf{lin}\left[ \mathbb{A} \right]$ is $\mathsf{D}^{\mathsf{lin}}$-linear by definition, by Lemma \ref{lem:funclin}, $\mathcal{I}_{\mathbb{A}}^\flat(f)$ will also be $\mathsf{D}$-linear in $\mathbb{X}$. Therefore, $\mathcal{E}^{-1}_{\mathsf{lin}}$ is well-defined, and since $\mathcal{I}_{\mathbb{A}}^\flat$ is a Cartesian $k$-linear functor, $\mathcal{E}^{-1}_{\mathsf{lin}}$ is as well. Since $\mathcal{E} \circ \mathcal{I}_{\mathbb{A}}^\flat = \mathcal{I}_{\mathbb{A}}$, it follows that $\mathcal{E}_{\mathsf{lin}} \circ \mathcal{E}^{-1}_{\mathsf{lin}} = 1_{\mathsf{D}\text{-}\mathsf{lin}\left[ \mathbb{X} \right]}$. For the other direction, recall that $(\mathsf{D}\text{-}\mathsf{lin}\left[ \mathbb{X} \right], \mathsf{D}^{\mathsf{lin}})$ is also a Cartesian $k$-differential category, and note that both $\mathcal{I}_{\mathbb{A}}^\flat \circ \mathcal{E}_{\mathsf{lin}}$ and $\mathcal{J}_{(\mathbb{X}, \mathsf{D})}$ are Cartesian $k$-differential functors of type $(\mathsf{D}\text{-}\mathsf{lin}\left[ \mathbb{X} \right], \mathsf{D}^{\mathsf{lin}}) \to (\mathbb{X}, \mathsf{D})$. Then, by Lemma \ref{lem:cofree-lem0}.(\ref{lem:cofree-lem1}), since the diagram on the left commutes, the diagram on the right commutes: 
\begin{align*}   
   \begin{array}[c]{c}\xymatrixcolsep{5pc}\xymatrix{\mathsf{D}\text{-}\mathsf{lin}\left[ \mathbb{X} \right] \ar[d]_-{\mathcal{J}_{(\mathbb{X}, \mathsf{D})}} \ar[r]^-{\mathcal{E}_{\mathsf{lin}}} & k\text{-}\mathsf{lin}\left[ \mathbb{A} \right] \ar[dr]_-{\mathcal{I}_{\mathbb{A}}}  \ar[r]^-{\mathcal{I}_{\mathbb{A}}^\flat}  & \mathbb{X} \ar[d]^-{\mathcal{E}} \\
 \mathbb{X} \ar[rr]_-{\mathcal{E}} & & \mathbb{A} }  \end{array} \! \stackrel{\text{Lem.\ref{lem:cofree-lem0}.(\ref{lem:cofree-lem1})}}{\Longrightarrow} \! \begin{array}[c]{c}\xymatrixcolsep{5pc}\xymatrix{\mathsf{D}\text{-}\mathsf{lin}\left[ \mathbb{X} \right] \ar[dr]_-{\mathcal{J}_{(\mathbb{X}, \mathsf{D})}} \ar[r]^-{\mathcal{E}_{\mathsf{lin}}} & k\text{-}\mathsf{lin}\left[ \mathbb{A} \right]  \ar[d]^-{\mathcal{I}_{\mathbb{A}}^\flat}  \\
&  \mathbb{X}  } \end{array}
\end{align*}
Then it follows from the diagram on the right that $\mathcal{E}^{-1}_{\mathsf{lin}} \circ \mathcal{E}_{\mathsf{lin}} = 1_{k\text{-}\mathsf{lin}\left[ \mathbb{A} \right]}$. So we conclude that $\mathcal{E}_{\mathsf{lin}}$ is a Cartesian $k$-linear isomorphism, and so $\mathsf{D}\text{-}\mathsf{lin}\left[ \mathbb{X} \right] \cong k\text{-}\mathsf{lin}\left[ \mathbb{A} \right]$. 
\end{proof}


To prove the desired result for $k$-linear maps, we first need to discuss the derivative of $k$-linear maps. It turns out that the derivative of a $k$-linear map is also a $k$-linear map. Therefore, the subcategory of $k$-linear maps is a Cartesian $k$-differential category.  

\begin{lemma}\label{lem:klindiff} \cite[Proposition 2.3.1]{blute2009cartesian} Let $(\mathbb{X}, \mathsf{D})$ be a Cartesian $k$-differential category. If $f$ is a $k$-linear map in $\mathbb{X}$, then $\mathsf{D}[f]$ is also a $k$-linear map in $\mathbb{X}$. Therefore, $(k\text{-}\mathsf{lin}\left[ \mathbb{X} \right], \mathsf{D})$ is a Cartesian $k$-differential category and the inclusion functor $\mathcal{I}_{\mathbb{X}}: (k\text{-}\mathsf{lin}\left[ \mathbb{X} \right], \mathsf{D}) \to (\mathbb{X}, \mathsf{D})$ is a Cartesian $k$-differential category. 
\end{lemma}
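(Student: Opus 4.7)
The plan is to reduce the lemma to its single substantive claim: if $f: X \to Y$ is $k$-linear in $\mathbb{X}$, then its derivative $\mathsf{D}[f]: X \times X \to Y$ is also $k$-linear. Granted this, $k\text{-}\mathsf{lin}\left[\mathbb{X}\right]$ is closed under $\mathsf{D}$, so the combinator restricts; the axioms \textbf{[CD.1]} through \textbf{[CD.7]} are equational and already hold in $(\mathbb{X},\mathsf{D})$, so they carry over to the full subcategory verbatim; and $\mathcal{I}_\mathbb{X}$ is by construction strictly structure-preserving, hence is a Cartesian $k$-differential functor.

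To establish the substantive claim, I would apply the chain rule \textbf{[CD.5]} in two different ways to one carefully chosen composite. Fix $r,s \in k$ and consider the map $\sigma = r\cdot \pi_0 + s \cdot \pi_1: X \times X \to X$, which is $k$-linear because it is a $k$-linear combination of the $k$-linear projections and $k$-linear maps are closed under $k$-module structure. On the one hand, direct application of \textbf{[CD.5]}, combined with \textbf{[CD.1]} and \textbf{[CD.3]} to evaluate $\mathsf{D}[\sigma]$ and left $k$-linearity to evaluate $\sigma \circ \pi_0$, gives
\[\mathsf{D}[f \circ \sigma] = \mathsf{D}[f] \circ \langle r\cdot (\pi_0 \circ \pi_0) + s \cdot (\pi_1 \circ \pi_0),\ r \cdot (\pi_0 \circ \pi_1) + s \cdot (\pi_1 \circ \pi_1) \rangle.\]
On the other hand, the $k$-linearity of $f$ yields $f \circ \sigma = r \cdot (f \circ \pi_0) + s \cdot (f \circ \pi_1)$, so that \textbf{[CD.1]} followed by \textbf{[CD.5]} and \textbf{[CD.3]} on each summand gives
\[\mathsf{D}[f \circ \sigma] = r \cdot \mathsf{D}[f] \circ \langle \pi_0 \circ \pi_0, \pi_0 \circ \pi_1 \rangle + s \cdot \mathsf{D}[f] \circ \langle \pi_1 \circ \pi_0, \pi_1 \circ \pi_1 \rangle.\]

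Equating these two expressions and then precomposing with the reparameterising map $\tau = \langle \langle u_1, v_1 \rangle, \langle u_2, v_2 \rangle \rangle: A \to (X\times X) \times (X\times X)$, which swaps the roles of the middle two factors of the fourfold product, converts the resulting universal identity into
\[\mathsf{D}[f] \circ (r \cdot u + s \cdot v) = r\cdot (\mathsf{D}[f] \circ u) + s\cdot (\mathsf{D}[f] \circ v)\]
for arbitrary $u = \langle u_1, u_2 \rangle, v = \langle v_1, v_2 \rangle: A \to X \times X$, which is precisely the $k$-linearity of $\mathsf{D}[f]$.

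The main obstacle is noticing the trick: applying the chain rule to a composite whose outer map is $k$-linear produces, via the two ways of computing $\mathsf{D}[f \circ \sigma]$, a universal equation for $\mathsf{D}[f]$ that, after reparameterising the domain by $\tau$, is exactly its $k$-linearity. Once the substantive claim is in hand, the remaining verifications, namely that the differential combinator restricts to $k\text{-}\mathsf{lin}\left[\mathbb{X}\right]$, that the seven axioms transport there, and that $\mathcal{I}_\mathbb{X}$ is a Cartesian $k$-differential functor, are immediate.
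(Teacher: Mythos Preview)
Your argument is correct. The key computation---evaluating $\mathsf{D}[f\circ\sigma]$ two ways via \textbf{[CD.5]}, once directly and once after using the $k$-linearity of $f$ to rewrite $f\circ\sigma$, then precomposing with $\tau=\langle\langle u_1,v_1\rangle,\langle u_2,v_2\rangle\rangle$---does yield exactly $\mathsf{D}[f]\circ(r\cdot u+s\cdot v)=r\cdot(\mathsf{D}[f]\circ u)+s\cdot(\mathsf{D}[f]\circ v)$ once one uses that the projections are $k$-linear so that $\langle r\cdot u_1+s\cdot v_1,\,r\cdot u_2+s\cdot v_2\rangle=r\cdot\langle u_1,u_2\rangle+s\cdot\langle v_1,v_2\rangle$. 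Since every map into $X\times X$ is a pairing, this is the full $k$-linearity of $\mathsf{D}[f]$, and the remaining claims about the restricted combinator and the inclusion functor are, as you say, immediate.

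There is nothing to compare against in the paper itself: the lemma is stated with a citation to \cite[Proposition 2.3.1]{blute2009cartesian} and no proof is given. Your proof is a clean self-contained verification of the cited result.
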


Using Proposition \ref{prop:biproductcofree}, we may show that $k$-linear maps in a cofree Cartesian $k$-differential category correspond to pairs of $k$-linear maps in the base category. 

\begin{proposition}\label{prop:klincofree} Let $\left((\mathbb{X}, \mathsf{D}), \mathcal{E}\right)$ be a cofree Cartesian $k$-differential category over a Cartesian left $k$-linear category $\mathbb{A}$. Since $(k\text{-}\mathsf{lin}\left[ \mathbb{X} \right], \mathsf{D})$ is a Cartesian $k$-differential category as defined in Lemma \ref{lem:klindiff}, consider the induced unique Cartesian $k$-differential functor $\mathcal{E}^\flat_{k\text{-}\mathsf{lin}}: (k\text{-}\mathsf{lin}\left[ \mathbb{X} \right], \mathsf{D}) \to (k\text{-}\mathsf{lin}\left[ \mathbb{A} \right]^\Delta, \mathsf{D}^\Delta)$ that makes the following diagram commute:  
\begin{align*} \xymatrixcolsep{5pc}\xymatrix{k\text{-}\mathsf{lin}\left[ \mathbb{X} \right] \ar[dr]_-{\mathcal{E}_{k\text{-}\mathsf{lin}}}  \ar@{-->}[r]^-{\exists! ~ \mathcal{E}_{k\text{-}\mathsf{lin}}^\flat}  & k\text{-}\mathsf{lin}\left[ \mathbb{A} \right]^\Delta \ar[d]^-{\mathcal{P}_{k\text{-}\mathsf{lin}\left[ \mathbb{A} \right]}} \\
  & k\text{-}\mathsf{lin}\left[ \mathbb{A} \right] }
\end{align*}
as defined in Proposition \ref{prop:biproductcofree}. Explicitly, $\mathcal{E}^\flat_{k\text{-}\mathsf{lin}}$ is defined on objects as $\mathcal{E}^\flat_{k\text{-}\mathsf{lin}}(X) = \mathcal{E}(X)$ and on maps as $\mathcal{E}^\flat_{k\text{-}\mathsf{lin}}(f) = \left(\mathcal{E}(f), \mathcal{E}\left( \mathsf{L}[f]  \right) \right)$. Then $\mathcal{E}_{k\text{-}\mathsf{lin}}$ is a Cartesian $k$-differential isomorphism, so $k\text{-}\mathsf{lin}\left[ \mathbb{X} \right] \cong k\text{-}\mathsf{lin}\left[ \mathbb{A} \right]^\Delta$, and it is the unique Cartesian $k$-differential isomorphism such that the following diagram commutes: 
 \begin{align*} \xymatrixcolsep{5pc}\xymatrix{ \mathsf{k}\text{-}\mathsf{lin}\left[ \mathbb{X} \right] \ar@{-->}[r]^-{\exists! ~\mathcal{E}^\flat_{k\text{-}\mathsf{lin}}}_-{\cong} \ar[dd]_-{\mathcal{I}_{\mathbb{X}}} & k\text{-}\mathsf{lin}\left[ \mathbb{A} \right]^\Delta \ar[d]^-{\mathcal{P}_{k\text{-}\mathsf{lin}\left[ \mathbb{A} \right]}}  \\ 
 & k\text{-}\mathsf{lin}\left[ \mathbb{A} \right] \ar[d]^-{\mathcal{I}_{\mathbb{A}}}\\ 
 \mathbb{X} \ar[r]_-{\mathcal{E}} & \mathbb{A} }
\end{align*}
In other words, for every $k$-linear map $f$ in $\mathbb{X}$, $\mathcal{E}(f)$ and $\mathcal{E}\left( \mathsf{L}[f]  \right)$ are $k$-linear maps in $\mathbb{A}$, and conversely for every pair of parallel $k$-linear maps $f$ and $g$ in $\mathbb{A}$, there exists a unique $k$-linear map $(f,g)^\sharp$ in $\mathbb{X}$ such that $\mathcal{E}((f,g)^\sharp) = f$ and $\mathcal{E}\left( \mathsf{D}[(f,g)^\sharp] \right) = g \circ \pi_1$. 
\end{proposition}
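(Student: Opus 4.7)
The plan is to construct the inverse $\Phi$ of $\mathcal{E}^\flat_{k\text{-}\mathsf{lin}}$ directly by applying the couniversal property of $((\mathbb{X}, \mathsf{D}), \mathcal{E})$, and then to verify the inverse relations through two uses of Lemma \ref{lem:cofree-lem1} against the cofree structures of $\mathbb{X}$ and of $k\text{-}\mathsf{lin}[\mathbb{A}]^\Delta$ (the latter furnished by Proposition \ref{prop:biproductcofree} applied to the $k$-linear category with finite biproducts $k\text{-}\mathsf{lin}[\mathbb{A}]$). Uniqueness of $\mathcal{E}^\flat_{k\text{-}\mathsf{lin}}$ as the isomorphism making the stated diagram commute will then be immediate from the couniversal property of $k\text{-}\mathsf{lin}[\mathbb{A}]^\Delta$.

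To construct $\Phi$, I will first observe that every map in $k\text{-}\mathsf{lin}[\mathbb{A}]^\Delta$ is $k$-linear: by Lemma \ref{lem:Adelta} maps there are pairs $(f,g)$ of parallel $k$-linear maps in $\mathbb{A}$ composed pointwise, and a short direct verification confirms that such pairs are themselves $k$-linear in $k\text{-}\mathsf{lin}[\mathbb{A}]^\Delta$. Next I will apply the couniversal property of $((\mathbb{X}, \mathsf{D}), \mathcal{E})$ to the Cartesian $k$-linear functor $\mathcal{I}_\mathbb{A} \circ \mathcal{P}_{k\text{-}\mathsf{lin}[\mathbb{A}]}: k\text{-}\mathsf{lin}[\mathbb{A}]^\Delta \to \mathbb{A}$ to obtain a unique Cartesian $k$-differential functor $(\mathcal{I}_\mathbb{A} \circ \mathcal{P}_{k\text{-}\mathsf{lin}[\mathbb{A}]})^\flat: (k\text{-}\mathsf{lin}[\mathbb{A}]^\Delta, \mathsf{D}^\Delta) \to (\mathbb{X}, \mathsf{D})$. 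By Lemma \ref{lem:funcadd}, applied to a source in which every map is $k$-linear, its image lies in $k\text{-}\mathsf{lin}[\mathbb{X}]$, so it factors uniquely as $\mathcal{I}_\mathbb{X} \circ \Phi$ for a Cartesian $k$-differential functor $\Phi: (k\text{-}\mathsf{lin}[\mathbb{A}]^\Delta, \mathsf{D}^\Delta) \to (k\text{-}\mathsf{lin}[\mathbb{X}], \mathsf{D})$. A diagram chase using $\mathcal{I}_\mathbb{A} \circ \mathcal{E}_{k\text{-}\mathsf{lin}} = \mathcal{E} \circ \mathcal{I}_\mathbb{X}$ from Proposition \ref{prop:difflincofree} then yields $\mathcal{E}_{k\text{-}\mathsf{lin}} \circ \Phi = \mathcal{P}_{k\text{-}\mathsf{lin}[\mathbb{A}]}$.

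For $\mathcal{E}^\flat_{k\text{-}\mathsf{lin}} \circ \Phi = 1_{k\text{-}\mathsf{lin}[\mathbb{A}]^\Delta}$, post-composition with $\mathcal{P}_{k\text{-}\mathsf{lin}[\mathbb{A}]}$ gives $\mathcal{E}_{k\text{-}\mathsf{lin}} \circ \Phi = \mathcal{P}_{k\text{-}\mathsf{lin}[\mathbb{A}]} = \mathcal{P}_{k\text{-}\mathsf{lin}[\mathbb{A}]} \circ 1$, so Lemma \ref{lem:cofree-lem1} applied to the cofree structure on $k\text{-}\mathsf{lin}[\mathbb{A}]^\Delta$ delivers the desired equality. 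For $\Phi \circ \mathcal{E}^\flat_{k\text{-}\mathsf{lin}} = 1_{k\text{-}\mathsf{lin}[\mathbb{X}]}$, post-composing $\mathcal{I}_\mathbb{X} \circ \Phi \circ \mathcal{E}^\flat_{k\text{-}\mathsf{lin}}$ with $\mathcal{E}$ produces $\mathcal{I}_\mathbb{A} \circ \mathcal{P}_{k\text{-}\mathsf{lin}[\mathbb{A}]} \circ \mathcal{E}^\flat_{k\text{-}\mathsf{lin}} = \mathcal{I}_\mathbb{A} \circ \mathcal{E}_{k\text{-}\mathsf{lin}} = \mathcal{E} \circ \mathcal{I}_\mathbb{X}$; Lemma \ref{lem:cofree-lem1} now applied to the cofree structure on $\mathbb{X}$ gives $\mathcal{I}_\mathbb{X} \circ \Phi \circ \mathcal{E}^\flat_{k\text{-}\mathsf{lin}} = \mathcal{I}_\mathbb{X}$, and faithfulness of the subcategory inclusion $\mathcal{I}_\mathbb{X}$ allows us to cancel and conclude $\Phi \circ \mathcal{E}^\flat_{k\text{-}\mathsf{lin}} = 1$.

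The only step demanding real care is the factorization of $(\mathcal{I}_\mathbb{A} \circ \mathcal{P}_{k\text{-}\mathsf{lin}[\mathbb{A}]})^\flat$ through $k\text{-}\mathsf{lin}[\mathbb{X}]$, which reduces to the easily verified fact that every map in $k\text{-}\mathsf{lin}[\mathbb{A}]^\Delta$ is $k$-linear; everything else is a mechanical deployment of the two couniversal properties together with Lemma \ref{lem:cofree-lem1}. The unpacking of the \emph{in other words} clause of the statement is then a routine reading off of the explicit description of $\mathcal{E}^\flat_{k\text{-}\mathsf{lin}}$ given in Proposition \ref{prop:biproductcofree}, combined with the identity $\mathsf{L}[f] = \mathsf{D}[f] \circ \langle 0, 1\rangle$ to translate between the $\mathsf{L}$- and $\mathsf{D}$-formulations of the second coordinate.
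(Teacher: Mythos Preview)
Your proof is correct and follows essentially the same strategy as the paper: both construct the inverse by applying the couniversal property of $((\mathbb{X},\mathsf{D}),\mathcal{E})$ to the Cartesian $k$-linear functor $\mathcal{I}_\mathbb{A}\circ\mathcal{P}_{k\text{-}\mathsf{lin}[\mathbb{A}]}$, observe that every map in $k\text{-}\mathsf{lin}[\mathbb{A}]^\Delta$ is $k$-linear so that the resulting functor factors through $k\text{-}\mathsf{lin}[\mathbb{X}]$, and then verify the two inverse identities. The one substantive difference is that for the identity $\mathcal{E}^\flat_{k\text{-}\mathsf{lin}}\circ\Phi = 1$ the paper performs an explicit elementwise computation (evaluating $\mathcal{E}^\flat_{k\text{-}\mathsf{lin}}$ on $\mathcal{R}(f,g)$ using the formula from Proposition~\ref{prop:biproductcofree}), whereas you invoke Lemma~\ref{lem:cofree-lem1} against the cofree structure of $k\text{-}\mathsf{lin}[\mathbb{A}]^\Delta$; your route is slightly cleaner since it treats both directions uniformly.

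One small correction: the identity $\mathcal{I}_\mathbb{A}\circ\mathcal{E}_{k\text{-}\mathsf{lin}} = \mathcal{E}\circ\mathcal{I}_\mathbb{X}$ that you use is the commuting square of Lemma~\ref{lem:funcadd} (applied to the Cartesian $k$-linear functor $\mathcal{E}$), not Proposition~\ref{prop:difflincofree}, which concerns $\mathsf{D}$-linear rather than $k$-linear maps.
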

\begin{proof} First observe that by definition and Lemma \ref{lem:funcadd}, it follows that $\mathcal{I}_{\mathbb{A}} \circ \mathcal{P}_{k\text{-}\mathsf{lin}\left[ \mathbb{A} \right]} \circ \mathcal{E}^\flat_{k\text{-}\mathsf{lin}} = \mathcal{E} \circ \mathcal{I}_{\mathbb{X}}$ as desired. Since $\mathcal{I}_{\mathbb{A}}$ is monic and by Lemma \ref{lem:cofree-lem0}.(\ref{lem:cofree-lem1}), we also have that $\mathcal{E}^\flat_{k\text{-}\mathsf{lin}}$ is the unique such Cartesian $k$-differential functor. To construct the inverse of $\mathcal{E}^\flat_{k\text{-}\mathsf{lin}}$, consider the unique Cartesian $k$-differential functor $\mathcal{R}: (k\text{-}\mathsf{lin}\left[ \mathbb{A} \right]^\Delta, \mathsf{D}^\Delta) \to (\mathbb{X}, \mathsf{D})$ such that the following diagram commutes: 
\begin{align*} \xymatrixcolsep{5pc}\xymatrix{k\text{-}\mathsf{lin}\left[ \mathbb{A} \right]^\Delta \ar[d]_-{\mathcal{P}_{k\text{-}\mathsf{lin}\left[ \mathbb{A} \right]}}  \ar@{-->}[r]^-{\exists! ~ \mathcal{R}}  & \mathbb{X} \ar[d]^-{\mathcal{E}} \\
 k\text{-}\mathsf{lin}\left[ \mathbb{A} \right] \ar[r]_-{\mathcal{I}_{\mathcal{A}}}  & \mathbb{A} }
\end{align*}
Now recall that every map in $k\text{-}\mathsf{lin}\left[ \mathbb{A} \right]^\Delta$ is $k$-linear, therefore $k\text{-}\mathsf{lin}\left[k\text{-}\mathsf{lin}\left[ \mathbb{A} \right]^\Delta \right] = k\text{-}\mathsf{lin}\left[ \mathbb{A} \right]^\Delta$. Therefore we have that $\mathcal{R}_{ k\text{-}\mathsf{lin}}: k\text{-}\mathsf{lin}\left[ \mathbb{A} \right]^\Delta \to k\text{-}\mathsf{lin}\left[ \mathbb{X} \right]$. We will now show that $\mathcal{R}_{ k\text{-}\mathsf{lin}}$ and $\mathcal{E}^\flat_{k\text{-}\mathsf{lin}}$ are inverses of each other. First note that by definition of $\mathcal{R}$, we have that on objects $\mathcal{E}(\mathcal{R}(A)) = A$ and on maps $\mathcal{E}(\mathcal{R}(f,g)) = f$. Since $\mathcal{R}$ also commutes with the differential combinator, we compute that: 
\begin{gather*} \mathcal{E}\left( \mathsf{D}\left[ \mathcal{R}(f,g) \right]  \right) = \mathcal{E}\left( \mathcal{R}\left( \mathsf{D}^\Delta\left[ (f,g) \right] \right)  \right) = \mathcal{E}\left( \mathcal{R}\left( g \circ \pi_1, g \circ \pi_1 \right)  \right) = \mathcal{E}(\mathcal{R}(g,g)) \circ  \mathcal{E}(\mathcal{R}(\pi_1, \pi_1)) = g \circ \pi_1 
\end{gather*}
So $\mathcal{E}\left( \mathsf{D}\left[ \mathcal{R}(f,g) \right]  \right) = g \circ \pi_1$ and therefore it follows that $\mathcal{E}\left( \mathsf{L}\left[ \mathcal{R}(f,g) \right]  \right) = g$. Therefore, on object $\mathcal{E}^\flat_{k\text{-}\mathsf{lin}} \left( \mathcal{R}_{ k\text{-}\mathsf{lin}}(A) \right) =A$, while on maps we compute: 
\begin{gather*}
    \mathcal{E}^\flat_{k\text{-}\mathsf{lin}} \left( \mathcal{R}_{ k\text{-}\mathsf{lin}}(f,g) \right) =  \left(\mathcal{E}\left( \mathcal{R}(f,g) \right), \mathcal{E}\left( \mathsf{L}\left[ \mathcal{R}(f,g) \right]  \right) \right)= (f,g)
\end{gather*}
So $\mathcal{E}^\flat_{k\text{-}\mathsf{lin}} \circ \mathcal{R}_{ k\text{-}\mathsf{lin}} = 1_{k\text{-}\mathsf{lin}\left[ \mathbb{A} \right]^\Delta}$. For the other direction, note that $\mathcal{I}_\mathbb{X}$ and $\mathcal{R} \circ \mathcal{E}^\flat_{k\text{-}\mathsf{lin}}$ are Cartesian $k$-differential functors of type $(k\text{-}\mathsf{lin}\left[ \mathbb{X} \right] , \mathsf{D}) \to (\mathbb{X}, \mathsf{D})$. Then, by Lemma \ref{lem:cofree-lem0}.(\ref{lem:cofree-lem1}), since the diagram on the left commutes, the diagram on the right commutes: 
\begin{align*}   
   \begin{array}[c]{c}\xymatrixcolsep{5pc}\xymatrix{k\text{-}\mathsf{lin}\left[ \mathbb{X} \right] \ar[dd]_-{\mathcal{I}_{\mathbb{X}}} \ar[r]^-{\mathcal{E}^\flat_{k\text{-}\mathsf{lin}}} & k\text{-}\mathsf{lin}\left[ \mathbb{A} \right]^\Delta  \ar[d]_-{\mathcal{P}_{k\text{-}\mathsf{lin}\left[ \mathbb{A} \right]}} \ar[r]^-{\mathcal{R}}  & \mathbb{X} \ar[dd]^-{\mathcal{E}} \\
   & k\text{-}\mathsf{lin}\left[ \mathbb{A} \right] \ar[dr]_-{\mathcal{I}_{\mathbb{A}}}  & \\
 \mathbb{X} \ar[rr]_-{\mathcal{E}} & & \mathbb{A} }  \end{array} \!\! \stackrel{\text{Lem.\ref{lem:cofree-lem0}.(\ref{lem:cofree-lem1})}}{\Longrightarrow} \!\! \begin{array}[c]{c}\xymatrixcolsep{5pc}\xymatrix{k\text{-}\mathsf{lin}\left[ \mathbb{X} \right] \ar[dr]_-{\mathcal{I}_{\mathbb{X}}} \ar[r]^-{\mathcal{E}_{\mathsf{lin}}} & k\text{-}\mathsf{lin}\left[ \mathbb{A} \right]^\Delta  \ar[d]^-{\mathcal{R}}  \\
&  \mathbb{X}  } \end{array}
\end{align*}
Then it follows that $\mathcal{R} \circ \mathcal{E}^\flat_{k\text{-}\mathsf{lin}} \!= \! 1_{k\text{-}\mathsf{lin}\left[ \mathbb{X} \right]}$. Thus we conclude that $\mathcal{E}^\flat_{k\text{-}\mathsf{lin}}$ is a Cartesian differential isomorphism and so $k\text{-}\mathsf{lin}\left[ \mathbb{X} \right] \cong k\text{-}\mathsf{lin}\left[ \mathbb{A} \right]^\Delta$. 
\end{proof}

\begin{remark} \normalfont Proposition \ref{prop:klincofree} also implies that the subcategory of $k$-linear maps of a cofree Cartesian $k$-differential category is a cofree Cartesian $k$-differential category over the subcategory of $k$-linear maps of the base category. 
\end{remark}

We have shown that differential linear maps correspond to $k$-linear maps in the base. But since every differential linear map is also a $k$-linear map, it also corresponds to a pair of $k$-linear maps in the base category. The following corollary explains that for differential linear maps, its associated pair is given by two copies of the same $k$-linear map in the base category. 

\begin{corollary} Let $\left((\mathbb{X}, \mathsf{D}), \mathcal{E}\right)$ be a cofree Cartesian $k$-differential category over a Cartesian left $k$-linear category $\mathbb{A}$. Then the following diagram commutes:
\begin{align*} \xymatrixcolsep{5pc}\xymatrix{\mathsf{D}\text{-}\mathsf{lin}\left[ \mathbb{X} \right] \ar[d]_-{{\mathcal{J}_{(\mathbb{X}, \mathsf{D})}}_{\mathsf{lin}}} \ar[r]^-{\mathcal{E}^\flat_{k\text{-}\mathsf{lin}}}_-{\cong}  &  k\text{-}\mathsf{lin}\left[ \mathbb{A} \right] \ar[d]^-{1^\flat_{ k\text{-}\mathsf{lin}\left[ \mathbb{A} \right]}}  \\
 k\text{-}\mathsf{lin}\left[ \mathbb{X} \right]  \ar[r]_-{\mathcal{E}_{\mathsf{lin}}}^-{\cong}  & k\text{-}\mathsf{lin}\left[ \mathbb{A} \right]^\Delta }
\end{align*}
where $1^\flat_{ k\text{-}\mathsf{lin}\left[ \mathbb{A} \right]}: k\text{-}\mathsf{lin}\left[ \mathbb{A} \right] \to k\text{-}\mathsf{lin}\left[ \mathbb{A} \right]^\Delta$ is defined on objects as $1^\flat_{ k\text{-}\mathsf{lin}\left[ \mathbb{A} \right]}(A) =A$ and on maps as $1^\flat_{ k\text{-}\mathsf{lin}\left[ \mathbb{A} \right]}(f) = (f,f)$, and where ${\mathcal{J}_{(\mathbb{X}, \mathsf{D})}}_{\mathsf{lin}}: \mathsf{D}\text{-}\mathsf{lin}\left[ \mathbb{X} \right] \to k\text{-}\mathsf{lin}\left[ \mathbb{X} \right]$ is the inclusion functor. 
\end{corollary}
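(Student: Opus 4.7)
The plan is to verify commutativity of the square by direct object and morphism chase, relying on the explicit formulas given in Propositions \ref{prop:difflincofree} and \ref{prop:klincofree} together with the defining property of differential linear maps.

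First I would handle objects, which is immediate: for an object $X$ in $\mathsf{D}\text{-}\mathsf{lin}\left[\mathbb{X}\right]$, the top-then-right path sends $X$ to $\mathcal{E}(X)$ and then to $\mathcal{E}(X)$ (since $1^\flat_{k\text{-}\mathsf{lin}\left[\mathbb{A}\right]}$ is the identity on objects), while the left-then-bottom path sends $X$ to itself in $k\text{-}\mathsf{lin}\left[\mathbb{X}\right]$ and then to $\mathcal{E}(X)$ by definition of $\mathcal{E}^\flat_{k\text{-}\mathsf{lin}}$.

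Next I would handle morphisms. Let $f: X \to Y$ be a $\mathsf{D}$-linear map in $\mathbb{X}$. Going top-then-right gives $\mathcal{E}_{\mathsf{lin}}(f) = \mathcal{E}(f)$, which $1^\flat_{k\text{-}\mathsf{lin}\left[\mathbb{A}\right]}$ sends to the pair $(\mathcal{E}(f),\mathcal{E}(f))$. Going left-then-bottom first includes $f$ into $k\text{-}\mathsf{lin}\left[\mathbb{X}\right]$ (which is legal because every $\mathsf{D}$-linear map is $k$-linear) and then applies $\mathcal{E}^\flat_{k\text{-}\mathsf{lin}}$ to obtain $\bigl(\mathcal{E}(f),\mathcal{E}(\mathsf{L}[f])\bigr)$. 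The key step is then to invoke the identity $\mathsf{L}[f] = f$ for $\mathsf{D}$-linear $f$ from \cite[Lemma 2.8]{cockett2020linearizing}; equivalently, this follows directly from the definitions, since $\mathsf{L}[f] = \mathsf{D}[f] \circ \langle 0, 1_X \rangle = f \circ \pi_1 \circ \langle 0,1_X \rangle = f$. Hence $\mathcal{E}(\mathsf{L}[f]) = \mathcal{E}(f)$ and both paths yield $(\mathcal{E}(f),\mathcal{E}(f))$.

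There is no real obstacle here; the whole argument is essentially the observation that the linearization of a $\mathsf{D}$-linear map is itself. The only thing to be careful about is the implicit bookkeeping that $\mathcal{E}^\flat_{k\text{-}\mathsf{lin}}$ is being applied to the image of $f$ under the inclusion ${\mathcal{J}_{(\mathbb{X}, \mathsf{D})}}_{\mathsf{lin}}$, so one must note that the linearization in $\mathbb{X}$ of a $\mathsf{D}$-linear map agrees, once it is regarded as a $k$-linear map, with its linearization computed there. This is immediate because linearization is defined pointwise from the differential combinator $\mathsf{D}$ on $\mathbb{X}$, which is the same combinator used to define the Cartesian $k$-differential structure on $k\text{-}\mathsf{lin}\left[\mathbb{X}\right]$ via Lemma \ref{lem:klindiff}.
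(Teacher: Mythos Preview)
Your proof is correct and follows the same approach as the paper's: the paper's one-line argument is simply ``Recall that if $f$ is a $\mathsf{D}$-linear map in $\mathbb{X}$ then $\mathsf{L}[f] = f$. Therefore, it follows that the diagram commutes.'' You have spelled out the object and morphism chase explicitly, but the key step---that $\mathsf{L}[f] = f$ for $\mathsf{D}$-linear $f$, so that $\mathcal{E}^\flat_{k\text{-}\mathsf{lin}}(f) = (\mathcal{E}(f), \mathcal{E}(\mathsf{L}[f])) = (\mathcal{E}(f), \mathcal{E}(f))$---is identical.
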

\begin{proof} Recall that if $f$ is a $\mathsf{D}$-linear map in $\mathbb{X}$ then $\mathsf{L}[f] =f$. Therefore, it follows that the diagram commutes. 
\end{proof}

\begin{example} \normalfont Here are the differential linear maps and $k$-linear maps in our examples of cofree Cartesian $k$-differential categories. 
\begin{enumerate}[{\em (i)}]
\item For a Cartesian left $k$-linear category $\mathbb{A}$, in its Faà di Bruno construction, the $k$-linear maps are the $\mathbb{A}$-Faà di Bruno sequences of the form $(f, g \circ \pi_1, 0, 0, \hdots)$, for any parallel $k$-linear maps $f$ and $g$ in $\mathbb{A}$, while the differential linear maps are the $\mathbb{A}$-Faà di Bruno sequences of the form $(f, f \circ \pi_1, 0, 0, \hdots)$, for any $k$-linear map $f$ in $\mathbb{A}$.
\item For a $k$-linear category $\mathbb{B}$ with finite biproducts, in $\mathbb{B}^\Delta$, all maps $(f,g)$ are $k$-linear maps, while the $\mathsf{D}^\Delta$-linear maps are those of the form $(f,f)$. 
\item In $k\text{-}\mathsf{MOD}^\mathcal{Q}$, the $k$-linear maps correspond to the maps ${f: \mathcal{Q}(M) \to M^\prime}$ in $k\text{-}\mathsf{MOD}$ such that ${\mathcal{E}_\mathcal{Q}(f)\!:\! M \!\to\! M^\prime}$ is a $k$-linear morphism, that is, $f(\vert 1 \rangle_{r \cdot x + s \cdot y}) = r \cdot f(\vert 1 \rangle_x) + s \cdot f(\vert 1 \rangle_y)$, and also for pure symmetrized tensors of degree $n \geq 1$, the following equalities hold (note that in the first case, the index changes to zero on the right): 
\begin{align*}
    f( \vert x_1, \hdots, x_n \rangle_{x_0} ) = \begin{cases}f( \vert x_1 \rangle_{0} ) & \text{ if } n=1 \\
        0 & \text{ if } n > 1
    \end{cases}
\end{align*}
The differential linear maps are the $k$-linear maps such that $f(\vert 1 \rangle_x) = f(\vert x \rangle_0)$. 
\end{enumerate}
\end{example}

\section{Complete Ultrametric}\label{sec:ultrametric}

In this section, we show that each hom-set of a cofree Cartesian differential category is canonically a complete ultrametric space, where the ultrametric is similar to the canonical ultrametric for formal power series or Hurwitz series \cite[Theorem 1.1]{keigher2000hurwitz}. In Section \ref{sec:ChDC} we will use this ultrametric to show that every map in a cofree Cartesian differential category can be decomposed into a converging infinite sum of its higher order derivatives. 

The key to this ultrametric is that maps in a cofree Cartesian differential category are completely determined by the image of their higher-order derivatives in the base category. 

\begin{lemma}\label{lem:E=} Let $\left((\mathbb{X}, \mathsf{D}), \mathcal{E}\right)$ be a cofree Cartesian $k$-differential category over a Cartesian left $k$-linear category $\mathbb{A}$. Then for two parallel maps $f: X \to Y$ and $g: X \to Y$ in $\mathbb{X}$, the following are equivalent: 
\begin{enumerate}[{\em (i)}]
\item\label{Elem1.=} $f=g$; 
\item\label{Elem1.partial} For all $n \in \mathbb{N}$, $\mathcal{E}\left( \partial^n[f] \right) = \mathcal{E}\left( \partial^n[g] \right)$; 
\item\label{Elem1.D} For all $n \in \mathbb{N}$, $\mathcal{E}\left( \mathsf{D}^n[f] \right) = \mathcal{E}\left( \mathsf{D}^n[g] \right)$. 
\end{enumerate}
\end{lemma}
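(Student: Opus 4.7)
The plan is to use the characterisation from Lemma~\ref{lem:cofree-faa2}, which says the canonical comparison functor $\mathcal{E}^\flat : (\mathbb{X}, \mathsf{D}) \to (\mathsf{F}[\mathbb{A}], \mathsf{D})$ is an isomorphism, combined with the fact (recorded just before Lemma~\ref{lem:cofree-faa2}) that $\mathcal{E}^\flat$ acts on a map $f$ as $\mathcal{E}^\flat(f) = \mathcal{E}(\partial^\bullet[f]) = \left(\mathcal{E}(\partial^n[f])\right)_{n \in \mathbb{N}}$.

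First I would dispatch the implications (\ref{Elem1.=}) $\Rightarrow$ (\ref{Elem1.partial}) and (\ref{Elem1.=}) $\Rightarrow$ (\ref{Elem1.D}), both of which are immediate: if $f = g$ then applying the operator $\partial^n$ or $\mathsf{D}^n$ to both sides and then applying the functor $\mathcal{E}$ gives the desired equalities. Next, for (\ref{Elem1.partial}) $\Rightarrow$ (\ref{Elem1.=}), I observe that the hypothesis says exactly that $\mathcal{E}^\flat(f)$ and $\mathcal{E}^\flat(g)$ agree componentwise as Faà di Bruno sequences, hence $\mathcal{E}^\flat(f) = \mathcal{E}^\flat(g)$. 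Since $\mathcal{E}^\flat$ is an isomorphism by Lemma~\ref{lem:cofree-faa2}, it is in particular faithful, and so $f = g$.

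The remaining step is (\ref{Elem1.D}) $\Rightarrow$ (\ref{Elem1.partial}). For this I would recall the definition of the higher-order partial derivative: $\partial^n[f]$ is obtained from $\mathsf{D}^n[f] : X^{\times^{2^n}} \to Y$ by precomposition with a fixed map $m_n : X \times X^{\times^n} \to X^{\times^{2^n}}$ built from projections and zero maps (this is exactly the relation between $\mathsf{D}^n$ and $\partial^n$ noted in the paper between Definition~\ref{def:partial} and Lemma~\ref{lem:partial}). Since $\mathcal{E}$ is a Cartesian $k$-linear functor, it preserves the product structure, projections, and zero maps strictly, so $\mathcal{E}(m_n)$ is the analogous map in $\mathbb{A}$ and in particular does not depend on $f$. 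Thus
\begin{equation*}
  \mathcal{E}(\partial^n[f]) = \mathcal{E}(\mathsf{D}^n[f]) \circ \mathcal{E}(m_n) = \mathcal{E}(\mathsf{D}^n[g]) \circ \mathcal{E}(m_n) = \mathcal{E}(\partial^n[g]),
\end{equation*}
where the middle equality uses hypothesis (\ref{Elem1.D}).

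No step looks genuinely hard: the content all lies in the isomorphism $\mathcal{E}^\flat$ already established. The only minor wrinkle is simply invoking the correct formula relating $\mathsf{D}^n$ and $\partial^n$ cleanly enough to make the functoriality argument for (\ref{Elem1.D}) $\Rightarrow$ (\ref{Elem1.partial}) transparent.
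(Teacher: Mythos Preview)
Your proof is correct, but it takes a different route from the paper for the key implication (\ref{Elem1.partial}) $\Rightarrow$ (\ref{Elem1.=}). You invoke Lemma~\ref{lem:cofree-faa2} directly: since $\mathcal{E}^\flat(f) = \left(\mathcal{E}(\partial^n[f])\right)_{n\in\mathbb{N}}$ and $\mathcal{E}^\flat$ is an isomorphism, componentwise equality of these sequences forces $f = g$. The paper instead works purely with the couniversal property: it defines an equivalence relation $\thicksim_{\mathcal{E}}$ on hom-sets by declaring $f \thicksim_{\mathcal{E}} g$ exactly when (\ref{Elem1.partial}) holds, checks using \textbf{[HD.1]}--\textbf{[HD.8]} that this relation is a Cartesian $k$-differential congruence, forms the quotient $\mathbb{X}/\!\thicksim_{\mathcal{E}}$, and then uses the couniversal property to build a section of the quotient functor $\mathcal{Q}_{\thicksim_{\mathcal{E}}}$, showing it is monic. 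Your argument is markedly shorter and entirely valid, since Lemma~\ref{lem:cofree-faa2} is already available; the paper's approach reflects its stated methodological preference (see the introduction) to derive properties from the couniversal property alone rather than by transporting along the isomorphism with $\mathsf{F}[\mathbb{A}]$, and along the way exhibits a quotient-construction technique that may be of independent interest. For (\ref{Elem1.D}) $\Rightarrow$ (\ref{Elem1.partial}), your argument via $\partial^n[f] = \mathsf{D}^n[f] \circ z_n$ with $z_n$ preserved by $\mathcal{E}$ is exactly what the paper does.
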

\begin{proof} We will prove that $(\ref{Elem1.=}) \Rightarrow (\ref{Elem1.D}) \Rightarrow (\ref{Elem1.partial}) \Rightarrow (\ref{Elem1.=})$. Obviously, $(\ref{Elem1.=}) \Rightarrow (\ref{Elem1.D})$ is automatic. For $(\ref{Elem1.D}) \Rightarrow (\ref{Elem1.partial})$, recall that, as discussed in Section \ref{sec:CDC}, the $n$-th derivative $\partial^n[f]$ can be obtained by inserting zeroes into the appropriate arguments of the total $n$-th derivative $\mathsf{D}^n[f]$. A bit more explicitly, for a map $f: X \to Y$, $\partial^n[f] = \mathsf{D}^n[f] \circ z_n$, where ${z_n: X \times X^n \to X^{2^n}}$ is the ``injection'' as defined in \cite[Lemma  3.2.(iii)]{garner2020cartesian}, and so is a tuple of the projections $\pi_j$ and zeroes $0$ in the appropriate order. Note that $z_n$ can be defined for any object in any Cartesian left $k$-linear category. As such, since $\mathcal{E}$ is a Cartesian $k$-linear functor, we have that $\mathcal{E}(z_n) = z_n$, and thus $\mathcal{E}\left( \partial^n[f] \right) = \mathcal{E}\left( \mathsf{D}^n[f] \right) \circ z_n$. Therefore, if for all $n \in \mathbb{N}$, $\mathcal{E}\left( \mathsf{D}^n[f] \right) = \mathcal{E}\left( \mathsf{D}^n[g] \right)$, then for all $n \in \mathbb{N}$, $\mathcal{E}\left( \partial^n[f] \right) = \mathcal{E}\left( \partial^n[g] \right)$. 

For $(\ref{Elem1.partial}) \Rightarrow (\ref{Elem1.=})$, we will need to do some more work. For each hom-set $\mathbb{X}(X,Y)$, define the equivalence relation $\thicksim_{\mathcal{E}}$ as follows: for $f, g \in \mathbb{X}(X,Y)$, $f \thicksim_{\mathcal{E}} g$ if and only if for all $n \in \mathbb{N}$, $\mathcal{E}\left( \partial^n[f] \right) = \mathcal{E}\left( \partial^n[g] \right)$. The equivalence relation $\thicksim_{\mathcal{E}}$ is compatible with the Cartesian $k$-differential structure by the properties of the higher-order partial derivative (Lemma \ref{lem:partial}) in the following sense: 
\begin{enumerate}[{\em (i)}]
\item Faà di Bruno's formula \textbf{[HD.5]} tells us that $\partial^n[g \circ f]$ is a sum involving all the lower or equal degree derivatives $\partial^m[g]$ and $\partial^p[f]$, where $0 \leq m, p \leq n$, and some projections. Since $\mathcal{E}$ preserves sums and projections, it follows that if $f_1 \thicksim_{\mathcal{E}} g_1$ and $f_2 \thicksim_{\mathcal{E}} g_2$, then $f_1 \circ f_2 \thicksim_{\mathcal{E}} g_1 \circ g_2$. 
\item By \textbf{[HD.1]} and that $\mathcal{E}$ preserves the $k$-linear structure, it follows that if $f_1 \thicksim_{\mathcal{E}} g_1$ and $f_2 \thicksim_{\mathcal{E}} g_2$, then we have that $r \cdot f_1 + s \cdot f_2 \thicksim_{\mathcal{E}} r \cdot g_1 + s \cdot g_2$. 
\item By \textbf{[HD.4]} and that $\mathcal{E}$ preserves tuplings, it follows that if $f_0 \thicksim_{\mathcal{E}} g_0$, ..., and $f_n \thicksim_{\mathcal{E}} g_n$, then $\langle f_0, \hdots, f_n \rangle \thicksim_{\mathcal{E}} \langle g_0 \hdots, g_n \rangle$. 
\item Note that $\partial^0\left[\mathsf{D}[f] \right] = \partial^1[f]$ and that \textbf{[HD.8]} tells us that $\partial^{n+1}\left[\mathsf{D}[f] \right]$ is the sum of $\partial^{n+2}[f]$ and copies of $\partial^{n+1}[f]$, as well of some appropriate projections. Since $\mathcal{E}$ preserves sums and projections, it follows that if $f \thicksim_{\mathcal{E}} g$, then $\mathsf{D}[f] \thicksim_{\mathcal{E}} \mathsf{D}[g]$. 
\end{enumerate}

Now define the Cartesian $k$-differential category $\left(\mathbb{X}/\!\thicksim_{\mathcal{E}}, \mathsf{D} \right)$ whose objects are the same as $\mathbb{X}$ and where maps are equivalence classes $\llbracket f \rrbracket$ of maps $f$ of $\mathbb{X}$, so the hom-sets are the quotient sets $\mathbb{X}/\!\thicksim_{\mathcal{E}}(X,Y) = \mathbb{X}(X,Y)/\!\thicksim_{\mathcal{E}}$. Composition, identities, the $k$-linear structure, the product structure, and the differential combinator are defined on representatives as in $\mathbb{X}$, which is all well-defined since $\thicksim_{\mathcal{E}}$ is compatible with the Cartesian $k$-differential structure as described above. We have a Cartesian $k$-differential functor $\mathcal{Q}_{\thicksim_{\mathcal{E}}}: \left(\mathbb{X}, \mathsf{D} \right) \to \left(\mathbb{X}/\!\thicksim_{\mathcal{E}}, \mathsf{D} \right)$ which is defined on objects as $\mathcal{Q}_{\thicksim_{\mathcal{E}}}(X) = X$ and on maps as $\mathcal{Q}_{\thicksim_{\mathcal{E}}}(f) = \llbracket f \rrbracket$, and we also have a Cartesian $k$-linear functor ${\mathcal{E}_{\thicksim_{\mathcal{E}}}: \mathbb{X}/\!\thicksim_{\mathcal{E}} \to \mathbb{A}}$ which is defined on objects as $\mathcal{E}_{\thicksim_{\mathcal{E}}}(X) = \mathcal{E}(X)$ and on maps $\mathcal{E}(\llbracket f \rrbracket) = \mathcal{E}(f)$. This functor is well-defined since if $f \thicksim_{\mathcal{E}} g$, the $n=0$ case states that $\mathcal{E}(f) = \mathcal{E}(g)$. We also have that the following diagram commutes: 
\begin{align*} \xymatrixcolsep{5pc}\xymatrix{\mathbb{X} \ar[r]^-{\mathcal{Q}_{\thicksim_{\mathcal{E}}}}  \ar[dr]_-{\mathcal{E}}  & \mathbb{X}/\!\thicksim_{\mathcal{E}} \ar[d]^-{\mathcal{E}_{\thicksim_{\mathcal{E}}}} \\
  & \mathbb{A} }
\end{align*}
Moreover, by the couniversal property of $\left((\mathbb{X}, \mathsf{D}), \mathcal{E}\right)$, there exists a unique Cartesian $k$-differential functor $\mathcal{E}^\flat_{\thicksim_{\mathcal{E}}}: \left(\mathbb{X}/\!\thicksim_{\mathcal{E}}, \mathsf{D} \right) \to \left(\mathbb{X}, \mathsf{D} \right)$ which makes the following diagram commute: 
\begin{align*} \xymatrixcolsep{5pc}\xymatrix{\mathbb{X}/\!\thicksim_{\mathcal{E}} \ar[dr]_-{\mathcal{E}_{\thicksim_{\mathcal{E}}}}  \ar@{-->}[r]^-{\exists! ~ \mathcal{E}^\flat_{\thicksim_{\mathcal{E}}}}  & \mathbb{X} \ar[d]^-{\mathcal{E}} \\
  & \mathbb{A} }
\end{align*}
Then, by Lemma \ref{lem:cofree-lem0}.(\ref{lem:cofree-lem1}), since the diagram on the left commutes, the diagrams on the right commutes: 
\begin{align*}   
   \begin{array}[c]{c}\xymatrixcolsep{5pc}\xymatrix{\mathbb{X} \ar[dr]_-{\mathcal{E}}  \ar[r]^-{ \mathcal{Q}_{\thicksim_{\mathcal{E}}}}  & \mathbb{X}/\!\thicksim_{\mathcal{E}} \ar[r]^-{\mathcal{E}^\flat_{\thicksim_{\mathcal{E}}}} \ar[d]^-{ \mathcal{E}_{\thicksim_{\mathcal{E}}}}  & \mathbb{X} \ar[dl]^-{\mathcal{E}} \\
  & \mathbb{A} }  \end{array}  \stackrel{\text{Lem.\ref{lem:cofree-lem0}.(\ref{lem:cofree-lem1})}}{\Longrightarrow}  \begin{array}[c]{c}\xymatrixcolsep{5pc}\xymatrix{ \mathbb{X} \ar@{=}[dr] \ar[r]^-{\mathcal{Q}_{\thicksim_{\mathcal{E}}}} & \mathbb{X}/\!\thicksim_{\mathcal{E}}  \ar[d]^-{\mathcal{E}^\flat_{\thicksim_{\mathcal{E}}}}  \\
&  \mathbb{X}  } \end{array}
\end{align*}
Now suppose that for all $n \in \mathbb{N}$, $\mathcal{E}\left( \partial^n[f] \right) = \mathcal{E}\left( \partial^n[g] \right)$. In other words, $f \thicksim_{\mathcal{E}} g$, which implies that $\mathcal{Q}_{\thicksim_{\mathcal{E}}}(f) = \mathcal{Q}_{\thicksim_{\mathcal{E}}}(g)$. By the above diagram on the right, $\mathcal{Q}_{\thicksim_{\mathcal{E}}}$ is monic, and therefore, $f=g$ as desired. 
\end{proof}

An ultrametric is a special kind of metric where the triangle inequality is replaced with a stronger inequality. So let $\mathbb{R}_{\geq 0}$ be the set of real non-negative numbers. An ultrametric on a set $M$ is a function $\mathsf{d}: M \times M \to \mathbb{R}_{\geq 0}$ such that for all $x,y,z \in M$ the following axioms hold:
\begin{enumerate}[{\em (i)}]
\item Symmetry: $\mathsf{d}(x,y) = \mathsf{d}(y,x)$;
\item Non-Distinguishable: $\mathsf{d}(x,y) =0$ if and only if $x=y$;
\item Strong Triangle Inequality: $\mathsf{d}(x,z) \leq \mathsf{max}\lbrace \mathsf{d}(x,y), \mathsf{d}(y,z)  \rbrace$. 
\end{enumerate}
An ultrametric space is a pair $(M, \mathsf{d})$ consisting of a set $M$ and an ultrametric $\mathsf{d}$ on $M$. A complete ultrametric space is an ultrametric space where all Cauchy sequences converge. It is also useful to note that in an ultrametric space $(M, \mathsf{d})$, a sequence $(x_n)_{n \in \mathbb{N}}$ is Cauchy if and only if $\lim\limits_{n \to \infty} \mathsf{d}(x_n, x_{n+1}) =0$. In the proof below, we will also need to consider the completion of an ultrametric space. For an ultrametric space $(M, \mathsf{d})$, define $\mathsf{C}[(M, \mathsf{d})]$ to be the set of all Cauchy sequences $(x_n)_{n \in \mathbb{N}}$ in $(M, \mathsf{d})$. Then define an equivalence relation $\thicksim$ on $\mathsf{C}[(M, \mathsf{d})]$ as follows: $(x_n)_{n \in \mathbb{N}} \thicksim (y_n)_{n \in \mathbb{N}}$ if and only if $\lim\limits_{n \to \infty} \mathsf{d}(x_n, y_n) =0$. Then define the completion of $(M, \mathsf{d})$ as the quotient set $\overline{M} := \mathsf{C}[(M, \mathsf{d})]/ \thicksim$, which comes equipped with an ultrametric $\overline{\mathsf{d}}: \overline{M} \times \overline{M} \to \mathbb{R}_{\geq 0}$ defined as $\overline{\mathsf{d}}\left( [(x_n)_{n \in \mathbb{N}}], [(y_n)_{n \in \mathbb{N}}] \right) = \lim\limits_{n \to \infty} \mathsf{d}(x_n, y_n)$. Then $(\overline{M}, \overline{\mathsf{d}})$ is a complete ultrametric space. 

\begin{proposition}\label{prop:um} Let $\left((\mathbb{X}, \mathsf{D}), \mathcal{E}\right)$ be a cofree Cartesian $k$-differential category over a Cartesian left $k$-linear category $\mathbb{A}$. For each hom-set of $\mathbb{X}$, define the function:
\begin{gather*}
    {\mathsf{d}_\mathcal{E}: \mathbb{X}(X,Y) \times \mathbb{X}(X,Y) \to \mathbb{R}_{\geq 0}}\\
    \mathsf{d}_\mathcal{E}(f,g) = \begin{cases} 2^{-n} & \text{ where $n \in \mathbb{N}$ is the smallest natural number} \\
& \text{~such that $\mathcal{E}\left( \partial^n[f] \right) \neq \mathcal{E}\left( \partial^n[g] \right)$} \\ 
& \\ 
0 & \text{ if all $n \in \mathbb{N}$, $\mathcal{E}\left( \partial^n[f] \right) = \mathcal{E}\left( \partial^n[g] \right)$} 
\end{cases}
\end{gather*}
Then $\left( \mathbb{X}(X,Y), \mathsf{d}_\mathcal{E} \right)$ is a complete ultrametric space, and furthermore: 
\begin{enumerate}[{\em (i)}]
\item Composition is non-expansive:
\[\mathsf{d}_\mathcal{E}\left( g_1 \circ f_1, g_2 \circ f_2 \right) \leq \mathsf{max}\lbrace \mathsf{d}_\mathcal{E}(g_1, g_2), \mathsf{d}_\mathcal{E}(f_1, f_2) \rbrace\]
\item Pairing is an isometry:
\[\mathsf{d}_\mathcal{E}\left( \langle f_0, \hdots, f_n \rangle, \langle g_0, \hdots, g_n \rangle \right) = \mathsf{max}\lbrace \mathsf{d}_\mathcal{E}(f_0, g_0), \hdots, \mathsf{d}_\mathcal{E}(f_n, g_n) \rbrace\]
\item Addition and scalar multiplication is non-expansiveL
\[\mathsf{d}_\mathcal{E}(r \cdot f_1 + s \cdot g_1, r\cdot f_2 + s\cdot g_2) \leq \mathsf{max}\lbrace \mathsf{d}_\mathcal{E}(f_1, f_2), \mathsf{d}_\mathcal{E}(g_1, g_2) \rbrace\]
\item For the differential combinator, the following equality holds: 
\begin{align*}
\mathsf{d}_\mathcal{E}\left( \mathsf{D}[f], \mathsf{D}[g] \right) = \begin{cases} 2^{-n} & \text{ where $n \in \mathbb{N}$ is the smallest natural number} \\
& ~  \text{such that $\mathcal{E}\left( \partial^{n+1}[f] \right) \neq \mathcal{E}\left( \partial^{n+1}[g] \right)$}\\ 
& \\ 
0 & \text{ if all $n \in \mathbb{N}$, $\mathcal{E}\left( \partial^n[f] \right) = \mathcal{E}\left( \partial^n[g] \right)$} 
\end{cases} 
\end{align*}
In particular, if $\mathsf{d}_\mathcal{E}(f,g)=0$ then $\mathsf{d}_\mathcal{E}\left( \mathsf{D}[f], \mathsf{D}[g] \right) =0$, and if $\mathsf{d}_\mathcal{E}(f,g)=2^{-(n+1)}$ then $\mathsf{d}_\mathcal{E}\left( \mathsf{D}[f], \mathsf{D}[g] \right) = 2^{-n}$. 
\item A sequence $(f_n)_{n\in \mathbb{N}}$ in $\left( \mathbb{X}(X,Y), \mathsf{d}_\mathcal{E} \right)$ is Cauchy if and only if for all $m \in \mathbb{N}$, there exists a $N_m \in \mathbb{N}$ such that for all $n \geq N_m$, $\mathsf{d}_\mathcal{E}(f_n,f_{n+1}) < 2^{-m}$, or equivalently, $\mathcal{E}\left( \partial^{j}[f_n] \right) = \mathcal{E}\left( \partial^{j}[f_{n+1}] \right)$ for all $0 \leq j \leq m$. 
\end{enumerate}  
\end{proposition}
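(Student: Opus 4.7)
The plan is to verify the ultrametric axioms directly from the definition of $\mathsf{d}_\mathcal{E}$, then derive the operational properties from the higher order derivative identities of Lemma \ref{lem:partial} (reusing the compatibility arguments already established in the proof of Lemma \ref{lem:E=}), and finally handle completeness by transporting the problem across the isomorphism $\mathcal{E}^\flat: \mathbb{X} \xrightarrow{\cong} \mathsf{F}[\mathbb{A}]$ given by Lemma \ref{lem:cofree-faa2}. Symmetry is immediate; non-distinguishability is exactly Lemma \ref{lem:E=}; and for the strong triangle inequality, if $n$ is minimal with $\mathcal{E}(\partial^n[f]) \neq \mathcal{E}(\partial^n[h])$, then for any $g$ one of $\mathcal{E}(\partial^n[f]) \neq \mathcal{E}(\partial^n[g])$ or $\mathcal{E}(\partial^n[g]) \neq \mathcal{E}(\partial^n[h])$ must hold, giving $\mathsf{d}_\mathcal{E}(f,h) \leq \max\lbrace \mathsf{d}_\mathcal{E}(f,g), \mathsf{d}_\mathcal{E}(g,h)\rbrace$.

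For the operational bounds, the idea is that each of Faà di Bruno's formula \textbf{[HD.5]}, \textbf{[HD.1]}, \textbf{[HD.4]}, and \textbf{[HD.8]} expresses the $n$-th derivative of a composite/sum/pairing/derivative entirely in terms of lower or equal order derivatives of the inputs, together with projections (which $\mathcal{E}$ preserves). Concretely, to show composition is non-expansive, suppose both $\mathsf{d}_\mathcal{E}(f_1,f_2)$ and $\mathsf{d}_\mathcal{E}(g_1,g_2)$ are at most $2^{-n}$, so that $\mathcal{E}(\partial^j[f_i])$ and $\mathcal{E}(\partial^j[g_i])$ agree in $i$ for every $j \leq n$; then \textbf{[HD.5]} shows $\mathcal{E}(\partial^j[g_1 \circ f_1]) = \mathcal{E}(\partial^j[g_2 \circ f_2])$ for all $j \leq n$, so $\mathsf{d}_\mathcal{E}(g_1\circ f_1, g_2 \circ f_2) \leq 2^{-n}$. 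The argument for addition, scalar multiplication, and pairing is analogous using \textbf{[HD.1]} and \textbf{[HD.4]}; pairing is in fact an isometry rather than only non-expansive because $\partial^j$ commutes with pairing strictly, so the smallest failure index for the pairing is exactly the minimum of the smallest failure indices of the components.

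The differential combinator formula is the most delicate step, and this is where the real work lies. Rewriting \textbf{[HD.8]} by applying the inverse of the permutation $\langle \pi_0, \pi_{n+1}, \pi_1, \pi_{n+2}, \hdots, \pi_n, \pi_{2n+1}\rangle$, we obtain $\partial^n[\mathsf{D}[f]]$ as a sum of $\partial^{n+1}[f]$ composed with a canonical projection pairing plus terms in $\partial^n[f]$ composed with projections. Applying $\mathcal{E}$, this means $\mathcal{E}(\partial^n[\mathsf{D}[f]])$ is completely determined by $\mathcal{E}(\partial^{n+1}[f])$ and $\mathcal{E}(\partial^n[f])$; moreover, if the $\partial^n$-values agree but the $\partial^{n+1}$-values disagree, then the $\partial^n[\mathsf{D}[-]]$-values also disagree, since we can ``isolate'' the $\partial^{n+1}[f]$ contribution by precomposing with a pairing of projections and zeroes in the base category (the needed pairing is a $z_n$-like injection as in the proof of Lemma \ref{lem:E=}). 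Combining these two observations gives precisely the claimed index shift $\mathsf{d}_\mathcal{E}(\mathsf{D}[f], \mathsf{D}[g]) = 2^{-n}$ when $\mathsf{d}_\mathcal{E}(f,g) = 2^{-(n+1)}$, and $\mathsf{d}_\mathcal{E}(\mathsf{D}[f], \mathsf{D}[g]) = 0$ when $\mathsf{d}_\mathcal{E}(f,g) = 0$.

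For completeness and the Cauchy characterisation, by Lemma \ref{lem:cofree-faa2}, $\mathcal{E}^\flat : \mathbb{X} \xrightarrow{\cong} \mathsf{F}[\mathbb{A}]$ is a Cartesian $k$-differential isomorphism sending a map $f$ to the Faà di Bruno sequence $(\mathcal{E}(\partial^n[f]))_{n\in\mathbb{N}}$, so under this correspondence $\mathsf{d}_\mathcal{E}(f,g) = 2^{-n}$ exactly when the corresponding $\mathbb{A}$-Faà di Bruno sequences first differ at index $n$. The general fact that a sequence in an ultrametric space is Cauchy iff consecutive distances tend to zero immediately gives the stated characterisation $\mathsf{d}_\mathcal{E}(f_n, f_{n+1}) < 2^{-m}$ eventually, which translates to termwise agreement up to index $m$ of the associated Faà di Bruno sequences. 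Given such a Cauchy sequence, for each $j \in \mathbb{N}$ the maps $\mathcal{E}(\partial^j[f_n])$ in $\mathbb{A}$ are eventually constant at some $h^j$, and the resulting sequence $h^\bullet = (h^j)_{j \in \mathbb{N}}$ is an $\mathbb{A}$-Faà di Bruno sequence because each $(\mathcal{E}(\partial^j[f_n]))_{j}$ is; pulling $h^\bullet$ back through $\mathcal{E}^\flat$ gives a limit $f \in \mathbb{X}(X,Y)$ with $\mathsf{d}_\mathcal{E}(f_n, f) \to 0$, establishing completeness.
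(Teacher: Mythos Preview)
Your proposal is correct. For the ultrametric axioms and properties (i)--(v) your approach coincides with the paper's, stated a bit more concisely; in particular your ``isolation'' argument for (iv) makes explicit a step the paper leaves terse (the paper simply asserts that $\mathcal{E}(\partial^n[\mathsf{D}[f]]) \neq \mathcal{E}(\partial^n[\mathsf{D}[g]])$ ``since $\partial^n[\mathsf{D}[-]]$ involves $\partial^{n+1}[-]$'', whereas you justify this by precomposing with a suitable zero-injection to extract the $\partial^{n+1}$ term).

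The genuine divergence is in the completeness argument. The paper builds the completion $\overline{\mathbb{X}}$ as a Cartesian $k$-differential category whose hom-sets are the metric completions $\overline{\mathbb{X}(X,Y)}$, uses (i)--(iv) to show the operations descend to equivalence classes of Cauchy sequences, and then invokes the couniversal property of $((\mathbb{X},\mathsf{D}),\mathcal{E})$ to obtain a Cartesian $k$-differential functor $\overline{\mathcal{E}}^\flat: \overline{\mathbb{X}} \to \mathbb{X}$; every Cauchy sequence is then shown to converge to its image under $\overline{\mathcal{E}}^\flat$. You instead transport the problem across the isomorphism $\mathcal{E}^\flat: \mathbb{X} \xrightarrow{\cong} \mathsf{F}[\mathbb{A}]$ of Lemma~\ref{lem:cofree-faa2} and verify completeness directly for $\mathbb{A}$-Fa\`a di Bruno sequences under the first-difference metric, which is elementary. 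Your route is shorter and more direct; the paper's route is in line with its stated methodological aim of arguing from the couniversal property rather than working inside $\mathsf{F}[\mathbb{A}]$. That said, your use of $\mathsf{F}[\mathbb{A}]$ is mild---you only need the description of maps as sequences and the identity $\mathcal{E}_{\mathbb{A}}(\partial^n[f^\bullet]) = f^n$, not the composition law or the differential combinator there---so the trade-off is modest.
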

\begin{proof} We first explain why $\mathsf{d}_\mathcal{E}$ is an ultrametric. Symmetry is automatic. The non-distinguishable axiom is precisely Lemma \ref{lem:E=}. For the strong triangle inequality, consider $\mathsf{d}_\mathcal{E}(f,g)$. If $\mathsf{d}_\mathcal{E}(f,g)=0$ then by definition for any map $h$, $0 \leq \mathsf{max}\lbrace \mathsf{d}_\mathcal{E}(f,h), \mathsf{d}_\mathcal{E}(h,g)  \rbrace$. If $\mathsf{d}_\mathcal{E}(f,g) = 2^{-n}$, consider another map $h$. If $\mathsf{d}_\mathcal{E}(f,h)=0$ or $\mathsf{d}_\mathcal{E}(h,g)=0$, then $f=h$ or $h=g$, so in either case we have $\mathsf{max}\lbrace \mathsf{d}_\mathcal{E}(f,h), \mathsf{d}_\mathcal{E}(h,g)  \rbrace = \mathsf{d}_\mathcal{E}(f,g)$. So suppose that $\mathsf{d}_\mathcal{E}(f,h)=2^{-m}$ and $\mathsf{d}(h,g)=2^{-p}$. If $\mathsf{max}\lbrace 2^{-m},2^{-p} \rbrace < 2^{-n}$, then $n < \mathsf{min}\lbrace m,p \rbrace$ which would imply that $\mathcal{E}\left( \partial^n[f] \right) = \mathcal{E}\left( \partial^n[h] \right)$ and $\mathcal{E}\left( \partial^n[h] \right) = \mathcal{E}\left( \partial^n[g] \right)$. But this is a contradiction since $\mathsf{d}_\mathcal{E}(f,g) = 2^{-n}$ means that $\mathcal{E}\left( \partial^n[f] \right) \neq \mathcal{E}\left( \partial^n[g] \right)$. So we must have that $2^{-n} \leq \mathsf{max}\lbrace 2^{-m},2^{-p} \rbrace$ as desired. So $\left( \mathbb{X}(X,Y), \mathsf{d}_\mathcal{E} \right)$ is an ultrametric space. Next, let's prove that the ultrametric is compatible with the Cartesian $k$-differential structure by using the identities of the higher-order partial derivative from Lemma \ref{lem:partial}. \\

\noindent {\em (i)} By Faà di Bruno's formula \textbf{[HD.5]}, we have that $\partial^n[g \circ f]$ is a sum involving all the lower or equal degree derivatives $\partial^m[g]$ and $\partial^p[f]$, where $0 \leq m, p \leq n$, and some projections. Clearly if $\mathsf{d}_\mathcal{E}\left( g_1 \circ f_1, g_2 \circ f_2 \right) =0$, then by definition we have that $0 \leq \mathsf{max}\lbrace \mathsf{d}_\mathcal{E}(g_1, g_2), \mathsf{d}_\mathcal{E}(f_1, f_2) \rbrace$. So suppose that $\mathsf{d}\left( g_1 \circ f_1, g_2 \circ f_2 \right) = 2^{-n}$, which means that $\mathcal{E}\left( \partial^n[g_1 \circ f_1] \right) \neq \mathcal{E}\left( \partial^n[g_2 \circ f_2] \right)$. If $\mathsf{d}_\mathcal{E}(g_1, g_2)=0$, then $g_1 =g_2$, and therefore we have that $\mathsf{max}\lbrace \mathsf{d}_\mathcal{E}(g_1, g_2), \mathsf{d}_\mathcal{E}(f_1, f_2) \rbrace = \mathsf{d}_\mathcal{E}(f_1, f_2)$. Since $\mathcal{E}\left( \partial^n[g_1 \circ f_1] \right) \neq \mathcal{E}\left( \partial^n[g_1 \circ f_2] \right)$, it must be the case that $\mathcal{E}\left( \partial^m[f_1] \right) \neq \mathcal{E}\left( \partial^m[f_2] \right)$ for some $m \leq n$. This implies that $2^{-n} \leq 2^{-m} \leq \mathsf{d}_\mathcal{E}(f_1, f_2)= \mathsf{max}\lbrace \mathsf{d}_\mathcal{E}(g_1, g_2), \mathsf{d}_\mathcal{E}(f_1, f_2) \rbrace$. Similarly if $\mathsf{d}_\mathcal{E}(f_1, f_2)=0$, then $2^{-n} \leq \mathsf{d}_\mathcal{E}(g_1, g_2)= \mathsf{max}\lbrace \mathsf{d}_\mathcal{E}(g_1, g_2), \mathsf{d}_\mathcal{E}(f_1, f_2) \rbrace$. Now suppose that $\mathsf{d}_\mathcal{E}(g_1, g_2)=2^{-p}$ and $\mathsf{d}_\mathcal{E}(f_1, f_2)=2^{-m}$. If $\mathsf{max}\lbrace 2^{-m},2^{-p} \rbrace < 2^{-n}$, then $n < \mathsf{min}\lbrace m,p \rbrace$ which would imply that for all $0 \leq j \leq n$, $\mathcal{E}\left( \partial^j[g_1] \right) = \mathcal{E}\left( \partial^j[g_2] \right)$ and $\mathcal{E}\left( \partial^j[f_1] \right) = \mathcal{E}\left( \partial^j[f_2] \right)$. But then, by Faà di Bruno's formula, we would have that $\mathcal{E}\left( \partial^n[g_1 \circ f_1] \right) = \mathcal{E}\left( \partial^n[g_2 \circ f_2] \right)$, which is a contraction. So we must have that $2^{-n} \leq \mathsf{max}\lbrace 2^{-m},2^{-p} \rbrace$ as desired. So, we conclude that composition is non-expansive. \\

\noindent {\em (ii)} For simplicity, let us work with pairings $\langle f_0,f_1 \rangle$ and $\langle g_0, g_1 \rangle$. If $\mathsf{d}_\mathcal{E}\left( \langle f_0, f_1 \rangle, \langle g_0, g_1 \rangle \right)=0$ then by definition $0 \leq  \mathsf{max}\lbrace \mathsf{d}_\mathcal{E}(f_0, g_0),\mathsf{d}_\mathcal{E}(f_1, g_1) \rbrace$. So suppose that $\mathsf{d}_\mathcal{E}\left( \langle f_0,  f_1 \rangle, \langle g_0, g_1 \rangle \right) = 2^{-m}$. By \textbf{[HD.4]} and that $\mathcal{E}$ preserves pairings, we get that for all $0 \leq j < m$ that $\left\langle \mathcal{E}\left( \partial^j[f_0] \right), \mathcal{E}\left( \partial^j[f_1] \right) \right \rangle=\left\langle \mathcal{E}\left( \partial^j[g_0] \right), \mathcal{E}\left( \partial^j[g_1] \right) \right \rangle$, and also that $ \left\langle \mathcal{E}\left( \partial^m[f_0] \right), 
\mathcal{E}\left( \partial^m[f_1] \right) \right \rangle \neq \left\langle \mathcal{E}\left( \partial^m[g_0] \right), \mathcal{E}\left( \partial^m[g_1] \right) \right \rangle$. However, this implies that for both $i=0$ and $i=1$ that $\mathcal{E}\left( \partial^j[f_i] \right)=\mathcal{E}\left( \partial^j[g_i] \right)$ for all $0 \leq j < m$, but that $\mathcal{E}\left( \partial^m[f_0] \right) \neq \mathcal{E}\left( \partial^m[g_0] \right)$ or $\mathcal{E}\left( \partial^m[f_1] \right) \neq \mathcal{E}\left( \partial^m[g_1] \right)$. This implies that $\mathsf{max}\lbrace \mathsf{d}_\mathcal{E}(f_0, g_0), \mathsf{d}_\mathcal{E}(f_1, g_1) \rbrace = 2^{-n}$ as desired. So, we conclude that pairing is an isometry. Similarly, one can also show that for arbitrary tuplings that $\mathsf{d}_\mathcal{E}\left( \langle f_0, \hdots, f_n \rangle, \langle g_0, \hdots, g_n \rangle \right) = \mathsf{max}\lbrace \mathsf{d}_\mathcal{E}(f_0, g_0), \hdots, \mathsf{d}_\mathcal{E}(f_n, g_n) \rbrace$, and so tupling is an isometry as desired. \\

\noindent {\em (iii)} First observe that by the non-distinguishable axiom and that composition is non-expansive, it follows that for post-composition, we have that $\mathsf{d}_\mathcal{E}(g \circ f_1, g \circ f_2) \leq \mathsf{d}_\mathcal{E}(f_1, f_2)$. Next, observe that in any Cartesian left $k$-linear category that $r \cdot f + s \cdot g = (r \cdot \pi_0 + s \cdot \pi_1) \circ \langle f,g \rangle$. Using this identity, and since both post-composition and tuplings are non-expansive, it follows that $\mathsf{d}_\mathcal{E}(r \cdot f_1 + s \cdot g_1, r\cdot f_2 + s\cdot g_2) \leq \mathsf{max}\lbrace \mathsf{d}_\mathcal{E}(f_1, f_2), \mathsf{d}_\mathcal{E}(g_1, g_2) \rbrace$. So, we conclude that addition and scalar multiplication are non-expansive. \\

\noindent {\em (iv)} Recall that $\partial^0\left[\mathsf{D}[-] \right] = \partial^1[-]$, and by \textbf{[HD.8]} that $\partial^{n+1}\left[\mathsf{D}[-] \right]$ is computed out to be a sum in terms of $\partial^{n+2}[-]$ and $\partial^{n+1}[-]$, and involving some appropriate projections. Starting with the case that if all $n \in \mathbb{N}$, $\mathcal{E}\left( \partial^n[f] \right) = \mathcal{E}\left( \partial^n[g] \right)$, then $\mathsf{d}_\mathcal{E}(f,g)=0$, which implies $f=g$. Then $\mathsf{D}[f] = \mathsf{D}[g]$, and thus $\mathsf{d}_\mathcal{E}\left( \mathsf{D}[f], \mathsf{D}[g] \right) = 0$. On the other hand, if $n$ is the smallest natural number such that $\mathcal{E}\left( \partial^{n+1}[f] \right) \neq \mathcal{E}\left( \partial^{n+1}[g] \right)$, then for all $0 \leq j \leq n$ we have that $\mathcal{E}\left( \partial^{j}[f] \right) = \mathcal{E}\left( \partial^{j}[g] \right)$. Then by \textbf{[HD.8]}, since $\mathcal{E}$ preserves sums and projections, for all $0 \leq j \leq n$ we have that $\mathcal{E}\left( \partial^{j}\left[\mathsf{D}[f] \right] \right) = \mathcal{E}\left( \partial^{j}\left[\mathsf{D}[g] \right] \right)$ but that $\mathcal{E}\left( \partial^{n}\left[\mathsf{D}[f] \right] \right) \neq \mathcal{E}\left( \partial^{n}\left[\mathsf{D}[g] \right] \right)$, since $\partial^{n}\left[\mathsf{D}[-] \right]$ involves $\partial^{n+1}\left[-\right]$. Therefore, $\mathsf{d}_\mathcal{E}\left( \mathsf{D}[f], \mathsf{D}[g] \right) = 2^{-n}$. \\

\noindent {\em (v)} Note that $\mathsf{d}_\mathcal{E}\left( f, g \right) < 2^{-m}$ means precisely that for all $0 \leq j \leq m$, $\mathcal{E}\left( \partial^{j}[f] \right) = \mathcal{E}\left( \partial^{j}[g] \right)$. Now for the $\Rightarrow$ direction, since $(f_n)_{n \in \mathbb{N}}$ is Cauchy, for every $m \in \mathbb{N}$, there exists a smallest natural number $N_m$ such that for all $n \geq N_m$, $\mathsf{d}_\mathcal{E}\left( f_n, f_{n+1} \right) < 2^{-m}$ as desired. For the $\Leftarrow$ direction, let $\varepsilon >0$ be an arbitrary non-negative real number and let $m$ be the smallest natural number such that $2^{-m} \leq \varepsilon$. Then by assumption for all $n \geq N_m$, $\mathsf{d}_\mathcal{E}\left( f_n, f_{n+1} \right) < 2^{-m}$. As such, since we are working in an ultrametric space, we conclude that $(f_n)_{n \in \mathbb{N}}$ is Cauchy. \\

In order to prove that the hom-sets are also complete ultrametric spaces, let us define a new Cartesian $k$-differential category whose hom-sets are the completion of the hom-sets of $\mathbb{X}$. So define the Cartesian $k$-differential category $(\overline{\mathbb{X}}, \mathsf{D})$ to be the category whose objects are the same as $\mathbb{X}$ and whose hom-sets are $\overline{\mathbb{X}(X,Y)}$. So a map from $X$ to $Y$ in $\overline{\mathbb{X}}$ is an equivalence class of a Cauchy sequence $[(f_n)_{n \in \mathbb{N}}]$ in $\left( \mathbb{X}(X,Y), \mathsf{d}_\mathcal{E} \right)$. Composition, identities, the $k$-linear structure, and the product structure are defined on representatives component-wise as in $\mathbb{X}$ and the differential combinator $\mathsf{D}$ is defined similarly, that is, if $\mathsf{D}\left[ [(f_n)_{n \in \mathbb{N}}] \right] = \left[  \left( \mathsf{D}[f_n] \right)_{n \in \mathbb{N}} \right]$. We need to explain why this all is well-defined.

Since non-expansive maps and isometries preserve Cauchy sequences and the equivalence relation $\thicksim$, it follows that composition, tupling, addition, and scalar multiplication are all well-defined, so $\overline{\mathbb{X}}$ is indeed a Cartesian left $k$-linear category. It remains to explain why the differential combinator is well-defined, that is, why the differential combinator of $\mathbb{X}$ preserves Cauchy sequences and $\thicksim$. First observe that by $(iii)$, if $\mathsf{d}_\mathcal{E}(f,g) \leq 2^{-(n+1)}$ then $\mathsf{d}_\mathcal{E}\left( \mathsf{D}[f], \mathsf{D}[g] \right) \leq 2^{-n}$. Now let $(f_n)_{n \in \mathbb{N}}$ be a Cauchy sequence in $\left( \mathbb{X}(X,Y), \mathsf{d}_\mathcal{E} \right)$ and $m \in \mathbb{N}$. Then by $(v)$, we have an $N_{m+1} \in \mathbb{N}$ such that for all $n \geq N_{m+1}$, $\mathsf{d}(f_{n}, f_{n+1}) < 2^{-(m+1)}$. Therefore for all $n \geq N_{m+1}$, $\mathsf{d}_\mathcal{E}\left( \mathsf{D}[f_{n}], \mathsf{D}[f_{n+1}] \right) \leq 2^{-(m)}$, and so by $(iv)$ we conclude that $\left( \mathsf{D}[f_n] \right)_{n \in \mathbb{N}}$ is a Cauchy sequence in $\left( \mathbb{X}(X \times X,Y), \mathsf{d}_\mathcal{E} \right)$. Using a similar argument, we can also show $\mathsf{D}$ preserves $\thicksim$, that is, $(f_n)_{n \in \mathbb{N}} \thicksim (g_n)_{n \in \mathbb{N}}$ then $\left(\mathsf{D}[f]_n \right)_{n \in \mathbb{N}} \thicksim \left(\mathsf{D}[g]_n \right)_{n \in \mathbb{N}}$. Therefore, the differential combinator is well-defined, and thus $(\overline{\mathbb{X}}, \mathsf{D})$ is indeed a Cartesian $k$-differential category. 

Now observe that by $(v)$, if $(f_n)_{n \in \mathbb{N}}$ is a Cauchy sequence in $\left( \mathbb{X}(X,Y), \mathsf{d}_\mathcal{E} \right)$, then there exists a smallest natural number $N_0$ such that for all $n \leq N_0$ that $\mathcal{E}(f_n) = \mathcal{E}(f_{N_0})$. Therefore we obtain a Cartesian $k$-linear functor $\overline{\mathcal{E}}: \overline{\mathbb{X}} \to \mathbb{X}$ defined on objects as $\overline{\mathcal{E}}(X)$ and on maps as $\overline{\mathcal{E}}\left( [(f_n)_{n \in \mathbb{N}}] \right) = \mathcal{E}(f_{N_0})$, which indeed well-defined. By the couniversal property of $\left((\mathbb{X}, \mathsf{D}), \mathcal{E}\right)$, there exists a unique Cartesian $k$-differential functor ${\mathcal{E}^\flat_{\thicksim_{\mathcal{E}}}: (\overline{\mathbb{X}}, \mathsf{D}) \to \left(\mathbb{X}, \mathsf{D} \right)}$ which makes the following diagram commute: 
\begin{align*} \xymatrixcolsep{5pc}\xymatrix{\overline{\mathbb{X}}  \ar[dr]_-{\overline{\mathcal{E}}}  \ar@{-->}[r]^-{\exists! ~ \overline{\mathcal{E}}^\flat}  & \mathbb{X} \ar[d]^-{\mathcal{E}} \\
  & \mathbb{A} }
\end{align*}
We will now show that for a Cauchy sequence $(f_n)_{n \in \mathbb{N}}$ in $\left( \mathbb{X}(X,Y), \mathsf{d}_\mathcal{E} \right)$, it converges to the image of its equivalence class $\overline{\mathcal{E}}^\flat\left( [(f_n)_{n \in \mathbb{N}}]  \right)$. So let $\varepsilon > 0$ be an arbitrary real positive number and let $m$ be the smallest natural number such that $2^{-m} \leq \varepsilon$. By $(v)$, there exists a natural number $N$ such that for all $n \geq N$, $\mathsf{d}_{\mathcal{E}}(f_n, f_{n+1}) < 2^{-(m+1)}$, which implies that for all $0 \leq j \leq m+1$ that $\mathcal{E}\left( \partial^{j}[f_n] \right) = \mathcal{E}\left( \partial^{j}[f_{n+1}] \right)$. Now since $(f_n)_{n \in \mathbb{N}}$ is Cauchy, $(f_{N+n})_{n \in \mathbb{N}}$ is also a Cauchy sequence and $(f_n)_{n \in \mathbb{N}} \thicksim (f_{N+n})_{n \in \mathbb{N}}$, therefore $[(f_{n})_{n \in \mathbb{N}}] = [(f_{N+n})_{n \in \mathbb{N}}]$. Then for all $n \geq N$ and all $0 \leq j \leq m+1$, we have that $\mathcal{E}\left( \partial^{j}[f_n] \right) = \mathcal{E}\left( \partial^{j}[f_N] \right)$ and since $\overline{\mathcal{E}}^\flat$ commutes with the derivative operators we compute that: 
\begin{gather*}
    \mathcal{E}\left( \partial^{j}\left[ \overline{\mathcal{E}}^\flat\left( [(f_n)_{n \in \mathbb{N}}]  \right)\right] \right) = \mathcal{E}\left( \partial^{j}\left[ \overline{\mathcal{E}}^\flat\left( [(f_{N+n})_{n \in \mathbb{N}}]  \right)\right] \right) = \mathcal{E}\left( \overline{\mathcal{E}}^\flat\left(\partial^{j}\left[  [(f_{N+n})_{n \in \mathbb{N}}]  \right] \right)\right) \\
    = \mathcal{E}\left( \partial^{j}\left[  f_{N}  \right]\right) = \mathcal{E}\left( \partial^{j}[f_n] \right) 
\end{gather*}
Therefore, $\mathcal{E}\left( \partial^{j}\left[ \overline{\mathcal{E}}^\flat\left( [(f_n)_{n \in \mathbb{N}}]  \right)\right] \right) = \mathcal{E}\left( \partial^{j}[f_n] \right)$ for all $0 \leq j \leq m+1$. Thus we have that $\mathsf{d}_\mathcal{E}\left(f_n, \overline{\mathcal{E}}^\flat\left( [(f_n)_{n \in \mathbb{N}}]  \right) \right) \leq 2^{-(m+1)} < 2^{-m} \leq \varepsilon$. So the Cauchy sequence $(f_n)_{n \in \mathbb{N}}$ converges to $\overline{\mathcal{E}}^\flat\left( [(f_n)_{n \in \mathbb{N}}]  \right)$ and we conclude that  $\left( \mathbb{X}(X,Y), \mathsf{d}_\mathcal{E} \right)$ is a complete ultrametric space.  
\end{proof}

\begin{remark} \normalfont Complete ultrametric spaces and non-expansive maps between them form a Cartesian closed category. Proposition \ref{prop:um} says that, in fact, a cofree Cartesian $k$-differential category is enriched over complete ultrametric spaces as a Cartesian left $k$-linear category but not as a Cartesian $k$-differential category since the differential combinator is not a non-expansive map. That said composition, the $k$-linear structure, the tupling operation, and the differential combinator are all \emph{continuous}. 
\end{remark}

\begin{example} \normalfont Let us take a look at what the induced ultrametric and limits of Cauchy sequences are in our examples of cofree Cartesian $k$-differential categories. 
\begin{enumerate}[{\em (i)}]
\item For a Cartesian left $k$-linear category $\mathbb{A}$, in its Faà di Bruno construction, the distance between two Faà di Bruno sequences is given by the smallest $n \in \mathbb{N}$ where the maps in the sequence are different: 
\begin{align*}
\mathsf{d}_{\mathcal{E}_\mathbb{A}}(f_{\bullet},g_{\bullet}) = \begin{cases} 2^{-n} & \text{ where $n \!\in\! \mathbb{N}$ is the smallest natural number such that $f_n \!\neq\! g_n$} \\
0 & \text{ if all $n \in \mathbb{N}$, $f_n = g_n$} 
\end{cases}
\end{align*}
A sequence $({f_n}_{\bullet})_{n \in \mathbb{N}}$ of Faà di Bruno sequences is Cauchy if and only if the terms eventually stabilise, that is, for every $m\in \mathbb{N}$, there exists a smallest natural number $N_m$ such that for all $n \geq N_m$ and $0 \leq j \leq m$, ${f_{n}}_{j} = {f_{n+1}}_{j}$. Then a Cauchy sequence $({f_n}_{\bullet})_{n \in \mathbb{N}}$ converges to the Faà di Bruno sequence ${f_{N_\bullet}}_{\bullet} = ({f_{N_0}}_{0}, {f_{N_1}}_{1}, \hdots)$. 
\item For a $k$-linear category $\mathbb{B}$ with finite biproducts, in $\mathbb{B}^\Delta$, there are only three possible distances between maps: none, halfway, and completely apart. 
\begin{align*}
\mathsf{d}_{\mathcal{P}_\mathbb{B}}\left( (f_1, f_2), (g_1, g_2) \right) = \begin{cases} 1 & \text{ if $f_1 \neq f_2$} \\
\frac{1}{2} & \text{ if $f_1= f_2$ but $g_1 \neq g_2$ } \\ 
0 & \text{ if $f_1= f_2$ and $g_1 = g_2$} 
\end{cases}
\end{align*} 
A sequence $\left( (f_n, g_n) \right)_{n \in \mathbb{N}}$ is Cauchy if and only if it is eventually constant, that is, there exists an $N \in \mathbb{N}$ such that for all $n \geq N$, $(f_n,g_n) = (f_{n+1}, g_{n+1})$. Then a Cauchy sequence $\left( (f_n, g_n) \right)_{n \in \mathbb{N}}$ converges to the map it eventually repeats. 
\item In $k\text{-}\mathsf{MOD}^\mathcal{Q}$, the distance between $f: \mathcal{Q}(M) \to M^\prime$ and $g: \mathcal{Q}(M) \to M^\prime$ is given by the smallest degree on which they differ on pure symmetrized tensors: 
\begin{align*}
\mathsf{d}_{\mathcal{E}^\mathcal{Q}}(f,g) = \begin{cases} 2^{-n} & \text{ where $n \in \mathbb{N}$ is the smallest natural number such that} \\
& ~ \text{$f( \vert x_1, \hdots, x_n \rangle_{x_0} ) \neq g( \vert x_1, \hdots, x_n \rangle_{x_0} )$ for some $x_j \in M$} \\
& \\ 
0 & \text{ if $f=g$} 
\end{cases}
\end{align*} 
A sequence $(f_n)_{n \in \mathbb{N}}$ of $k$-linear morphism $f_n: \mathcal{Q}(M) \to M^\prime$ is Cauchy if and only if for every $m\in \mathbb{N}$ there exists a smallest natural number $N_m$ such that for all $n \geq N_m$, $f_n$ and $f_{n+1}$ agree all pure symmetrized tensors of degree $0 \leq j \leq m$, that is, $f_n(\vert 1 \rangle_{x_0}) = f_{n+1}(\vert 1 \rangle_{x_0})$ and $f_n( \vert x_1, \hdots, x_j \rangle_{x_0} ) = f_{n+1}( \vert x_1, \hdots, x_j \rangle_{x_0} )$. Then a Cauchy sequence $(f_n)_{n \in \mathbb{N}}$ converges to the $k$-linear morphism $f: \mathcal{Q}(M) \to M^\prime$ which is defined on pure symmetrized tensors as $f(\vert 1 \rangle_{x_0}) = f_{N_0}(\vert 1 \rangle_{x_0})$ and $f( \vert x_1, \hdots, x_n \rangle_{x_0} )= f_{N_n}( \vert x_1, \hdots, x_n \rangle_{x_0} )$. 
\end{enumerate}
\end{example}

\section{Differential Constants}\label{sec:dcon}

In this section, we discuss differential constants, which are maps whose derivative is zero. In particular, the main result of this section is that for a cofree Cartesian $k$-differential category, the differential constants correspond precisely to the maps of the base category. Therefore, we may recapture the base category internally in a cofree Cartesian $k$-differential category using its differential constants. This will allow us to provide a base independent description of cofree Cartesian $k$-differential categories in Section \ref{sec:ChDC}. 

\begin{definition}\label{def:dcon} In a Cartesian $k$-differential category $(\mathbb{X}, \mathsf{D})$, a map $f: X \to Y$ is a \textbf{differential constant}, or $\mathsf{D}$-constant for short, if $\mathsf{D}[f] = 0$.
\end{definition}

\begin{example} \normalfont Here are the differential constants in our main examples of Cartesian $k$-differential categories. 
\begin{enumerate}[{\em (i)}]
\item In $k\text{-}\mathsf{POLY}$, every polynomial of degree zero, so a polynomial which is only a constant term, is a differential constant. However the converse is not necessarily true, that is, there could be differential constants that are not constant polynomials. For example, if $k=\mathbb{Z}_2$, then $x^2$ is a differential constant.
\item In $\mathsf{SMOOTH}$, the only differential constants are the constant functions. 
\item In a $k$-linear category $\mathbb{B}$ with finite biproducts, the only $\mathsf{D}^{\mathsf{lin}}$-constants are the zero maps. 
\item For a Cartesian left $k$-linear category $\mathbb{A}$, in its Faà di Bruno construction, the differential constants are the Faà di Bruno sequences of the form $(f, 0, 0, \hdots)$ for any map $f$ in $\mathbb{A}$. 
\item For a $k$-linear category $\mathbb{B}$ with finite biproducts, in $\mathbb{B}^\Delta$ the $\mathsf{D}^\Delta$-constants are the maps of the form $(f,0)$ for any map $f$ in $\mathbb{B}$.
\item In $k\text{-}\mathsf{MOD}^\mathcal{Q}$, the differential constants are the $k$-linear morphisms $f: \mathcal{Q}(M) \to M^\prime$ such that for all pure symmetrized tensors of degree $n \geq 1$, $f\left(\vert x_1, \hdots, x_n \rangle_{x_0} \right) = 0$. 
\end{enumerate}
\end{example}

Here are now some basic properties of differential constants. 

\begin{lemma}\label{lem:dconlem1} In a Cartesian $k$-differential category $(\mathbb{X}, \mathsf{D})$
    \begin{enumerate}[{\em (i)}]
    \item \label{dconlin} $f$ is $\mathsf{D}$-constant and $\mathsf{D}$-linear if and only if $f=0$;
\item \label{dconcomp} If $f$ or $g$ is a $\mathsf{D}$-constant, then their composite $g \circ f$ is a $\mathsf{D}$-constant;
\item \label{dconadd} The zero maps $0$ are $\mathsf{D}$-constant, and if $f$ and $g$ are $\mathsf{D}$-constants then for any $r,s \in k$, $r \cdot f + s \cdot g$ is a $\mathsf{D}$-constant; 
\item \label{dconpair} If $f_0$, ..., $f_n$ are $\mathsf{D}$-constants, then the tuple $\langle f_0, \hdots, f_n \rangle$ is a $\mathsf{D}$-constant.
\end{enumerate}
Furthermore, if $\mathcal{F}: (\mathbb{X}, \mathsf{D}) \to (\mathbb{Y}, \mathsf{D})$ is a Cartesian $k$-differential functor, if $f$ is a differential constant in $(\mathbb{X}, \mathsf{D})$ then $\mathcal{F}(f)$ is a differential constant in $(\mathbb{Y}, \mathsf{D})$.
\end{lemma}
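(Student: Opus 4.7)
The plan is to verify each of the four items and the functor claim by directly computing $\mathsf{D}[-]$ on the relevant map and invoking the corresponding axiom of the differential combinator. None of these arguments should require more than one or two steps, and I expect the entire lemma to be essentially bookkeeping once the right axiom is identified for each clause.

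I would begin with (iii), which is the most immediate: \textbf{[CD.1]} gives $\mathsf{D}[r\cdot f + s\cdot g] = r \cdot \mathsf{D}[f] + s\cdot \mathsf{D}[g]$, which is zero whenever $\mathsf{D}[f]$ and $\mathsf{D}[g]$ are, and specialising to $r=s=0$ (or writing $0 = 0\cdot 0$) shows that zero maps are $\mathsf{D}$-constant. Clause (iv) then follows directly from \textbf{[CD.4]}, since the derivative of a pairing is the pairing of the derivatives, and a pairing of zero maps is zero. For (ii), I would apply the chain rule \textbf{[CD.5]}, namely $\mathsf{D}[g\circ f] = \mathsf{D}[g]\circ \langle f\circ \pi_0, \mathsf{D}[f] \rangle$: if $g$ is $\mathsf{D}$-constant then the outer factor $\mathsf{D}[g]$ vanishes; if instead $f$ is $\mathsf{D}$-constant then $\mathsf{D}[f]=0$, and I would conclude $\mathsf{D}[g]\circ \langle f\circ \pi_0,0\rangle = 0$ from \textbf{[CD.2]} (the derivative is $k$-linear, hence pointed, in its second argument).

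For (i), the backward direction is free once we know from (iii) that $0$ is $\mathsf{D}$-constant and observe that $0 = 0\circ \pi_1$ makes it $\mathsf{D}$-linear as well. For the forward direction, if $f$ is simultaneously $\mathsf{D}$-constant and $\mathsf{D}$-linear, then $0 = \mathsf{D}[f] = f\circ \pi_1$; precomposing with $\langle 0, 1_X\rangle$ and using $\pi_1\circ \langle 0,1_X\rangle = 1_X$ recovers $f = 0$. Finally, the functor statement is an immediate consequence of the definition of a Cartesian $k$-differential functor together with preservation of the $k$-linear structure: $\mathsf{D}[\mathcal{F}(f)] = \mathcal{F}(\mathsf{D}[f]) = \mathcal{F}(0) = 0$. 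I do not anticipate any genuine obstacle, as each clause reduces to a single axiom application; the only thing worth being careful about is invoking \textbf{[CD.2]} rather than arguing informally when handling the ``$f$ is $\mathsf{D}$-constant'' half of (ii).
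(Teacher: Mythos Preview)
Your proposal is correct and is exactly the routine verification the paper has in mind: the paper's own ``proof'' simply states that these are straightforward and leaves them as an exercise, and your axiom-by-axiom breakdown (\textbf{[CD.1]} for (iii), \textbf{[CD.4]} for (iv), \textbf{[CD.5]} together with \textbf{[CD.2]} for (ii), the definitions for (i), and the functor axioms for the last clause) is precisely how that exercise is meant to go.
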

\begin{proof} These are straightforward to prove, so we leave them as an exercise for the reader. 
\end{proof}

The above lemma says that differential constants are closed under composition, addition, scalar multiplication, and tupling. Therefore, it is tempting to construct a ``subcategory'' of differential constants that is also Cartesian $k$-left linear. However, in general, this is not possible. Crucially, what is missing is an identity map in this ``subcategory'' of differential constants. Indeed, by \textbf{[CD.3]}, identity maps are not differential constants, except for the identity map of the terminal object (since, in this case, it is also a zero map). To solve this issue, we introduce a new notion called a differential constant unit, which behaves like an identity map for differential constants. 

\begin{definition}\label{def:varsigma} For a Cartesian $k$-differential category $(\mathbb{X}, \mathsf{D})$, a \textbf{differential constant unit} $\varsigma$, or $\mathsf{D}$-constant unit for short, is a family of endomorphisms indexed by objects of $\mathbb{X}$, $\varsigma = \lbrace \varsigma_X: X \to X \vert~ X \in \mathbb{X} \rbrace$ such that $\varsigma_X$ is a $\mathsf{D}$-constant and for every $\mathsf{D}$-constant $f: X \to Y$, $\varsigma_Y \circ f = f = f \circ \varsigma_X$. 
\end{definition}

\begin{lemma}\label{lem:sigma1} For a Cartesian $k$-differential category $(\mathbb{X}, \mathsf{D})$, if a $\mathsf{D}$-constant unit exists, then it is unique. Furthermore, if $(\mathbb{X}, \mathsf{D})$ has a $\mathsf{D}$-constant unit $\varsigma$, then: 
\begin{enumerate}[{\em (i)}]
\item \label{lem:sigma1.i} $\varsigma: 1_{\mathbb{X}} \Rightarrow 1_{\mathbb{X}}$ is a natural transformation, that is, for every map $f: X \to Y$, $\varsigma_Y \circ f = f \circ \varsigma_X$;
\item \label{lem:sigma1.ii} $\varsigma_X$ is an idempotent, that is, $\varsigma_X \circ \varsigma_X = \varsigma_X$;
\item \label{lem:sigma1.iii} $\varsigma_X$ is $k$-linear;
\item \label{lem:sigma1.iv} $\varsigma_{\ast} = 1_\ast$ and $\varsigma_{X_0 \times \hdots \times X_n} = \varsigma_{X_0} \times \hdots \times \varsigma_{X_n}$. 
\end{enumerate}
\end{lemma}
\begin{proof} We begin by proving uniqueness of differential constant units. So suppose that $\varsigma$ and $\varsigma^\prime$ are both $\mathsf{D}$-constant units for $(\mathbb{X}, \mathsf{D})$. Then $\varsigma$ and $\varsigma^\prime$ are both $\mathsf{D}$-constants and so we have that: $\varsigma = \varsigma^\prime \circ \varsigma = \varsigma^\prime$. Therefore, we conclude that differential constants are unique. Now suppose that $\varsigma$ is a $\mathsf{D}$-constant unit for $(\mathbb{X}, \mathsf{D})$. For any map $f$, by Lemma \ref{lem:dconlem1}.(\ref{dconcomp}), $\varsigma \circ f$ and $f \circ \varsigma$ are both $\mathsf{D}$-constants. Then we have that $\varsigma \circ f = \varsigma \circ f \circ \varsigma = f \circ \varsigma$. So, we conclude that $\varsigma$ is a natural transformation. That $\varsigma_X$ is an idempotent is a consequence of the fact that $\varsigma_X$ is itself a $\mathsf{D}$-constant and so by definition of being a $\mathsf{D}$-constant unit, we get that $\varsigma_X \circ \varsigma_X = \varsigma_X$. While that $\varsigma_X$ is $k$-linear follows from naturality of $\varsigma$. Lastly, $\varsigma_{\ast} = 1_\ast$ is automatic by definition of a terminal object, while $\varsigma_{X_0 \times \hdots \times X_n} = \varsigma_{X_0} \times \hdots \times \varsigma_{X_n}$ follows from naturality of $\varsigma$ and the universal property of the product. 
\end{proof}

Using the differential constant unit we can build a category of differential constants of a Cartesian $k$-differential category. However, this will not be a subcategory since the identity maps do not correspond to the identity maps in the starting Cartesian $k$-differential category. So in particular, we do not always have an inclusion functor from this category of differential constants back to the Cartesian $k$-differential category.   

\begin{lemma} \label{lem:DCONsub} Let $(\mathbb{X}, \mathsf{D})$ be a Cartesian $k$-differential category with a $\mathsf{D}$-constant unit $\varsigma$. Then $\mathsf{D}\text{-}\mathsf{con}\left[ \mathbb{X} \right]$ is a Cartesian left $k$-linear category where: 
\begin{enumerate}[{\em (i)}]
\item The objects of $\mathsf{D}\text{-}\mathsf{con}\left[ \mathbb{X} \right]$ are the same as $\mathbb{X}$;
\item The maps in $\mathsf{D}\text{-}\mathsf{con}\left[ \mathbb{X} \right]$ are differential constants in $\mathbb{X}$; 
\item Composition is defined as in $\mathbb{X}$;
\item The identity of $X$ in $\mathsf{D}\text{-}\mathsf{con}\left[ \mathbb{X} \right]$ is the $\mathsf{D}$-constant unit $\varsigma_X: X \to X$;  
\item The terminal object and the product of objects are the same as in $\mathbb{X}$;
\item The projections in $\mathsf{D}\text{-}\mathsf{con}\left[ \mathbb{X} \right]$ are $\varsigma_{X_j} \circ \pi_j: X_0 \times \hdots \times X_n \to X_j$ and where the tupling is as in $\mathbb{X}$;
\item The $k$-linear structure is defined as in $\mathbb{X}$. 
\end{enumerate}
Furthermore, the functor $\mathcal{E}_{\varsigma}: \mathbb{X} \to \mathsf{D}\text{-}\mathsf{con}\left[ \mathbb{X} \right]$ defined on objects as $\mathcal{E}_{\varsigma}(X) = X$ and on maps as $\mathcal{E}_{\varsigma}(f) = \varsigma \circ f$, is a Cartesian $k$-linear functor. 
\end{lemma}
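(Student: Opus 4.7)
The plan is to check the claimed structure by combining two observations: the closure properties of differential constants from Lemma \ref{lem:dconlem1}, and the good behaviour of $\varsigma$ from Lemma \ref{lem:sigma1}. Since composition and the $k$-linear structure in $\mathsf{D}\text{-}\mathsf{con}[\mathbb{X}]$ are inherited from $\mathbb{X}$, associativity, distributivity, and left $k$-linearity come for free; the verification reduces to checking that the proposed identities, projections, pairings, sums and scalar multiples land among the differential constants, that the unit laws and universal property hold, and that $\mathcal{E}_\varsigma$ respects everything.

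For the category axioms, composition is closed on differential constants by Lemma \ref{lem:dconlem1}.(\ref{dconcomp}), and $\varsigma_X$ is a differential constant by Definition \ref{def:varsigma}; the unit laws $\varsigma_Y \circ f = f = f \circ \varsigma_X$ for any differential constant $f: X \to Y$ hold by the very definition of a differential constant unit. Turning to the Cartesian left $k$-linear structure, Lemma \ref{lem:dconlem1}.(\ref{dconadd})--(\ref{dconpair}) closes differential constants under zero, addition, scalar multiplication, and pairing. The terminal object is $\ast$, with $\varsigma_\ast = 1_\ast$ by Lemma \ref{lem:sigma1}. For the product, $\varsigma_{X_j} \circ \pi_j$ is a differential constant by Lemma \ref{lem:dconlem1}.(\ref{dconcomp}), and given differential constants $f_j: Y \to X_j$, the pairing $\langle f_0, \ldots, f_n \rangle$ from $\mathbb{X}$ is a differential constant by Lemma \ref{lem:dconlem1}.(\ref{dconpair}) and satisfies $(\varsigma_{X_j} \circ \pi_j) \circ \langle f_0, \ldots, f_n \rangle = \varsigma_{X_j} \circ f_j = f_j$, where the last step uses that $\varsigma$ vanishes on the differential constant $f_j$. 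For uniqueness, if $h: Y \to X_0 \times \ldots \times X_n$ is a differential constant with $\varsigma_{X_j} \circ \pi_j \circ h = f_j$ for all $j$, then $\pi_j \circ h$ is itself a differential constant (composition with $h$), so $\varsigma_{X_j} \circ \pi_j \circ h = \pi_j \circ h = f_j$, forcing $h = \langle f_0, \ldots, f_n \rangle$ via the universal property in $\mathbb{X}$. The new projections are $k$-linear because $\varsigma_{X_j}$ and $\pi_j$ both are (using Lemma \ref{lem:sigma1}.(\ref{lem:sigma1.iii})).

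Finally, for the functor $\mathcal{E}_\varsigma$, identities are preserved since $\mathcal{E}_\varsigma(1_X) = \varsigma_X$ is the identity of $X$ in $\mathsf{D}\text{-}\mathsf{con}[\mathbb{X}]$. Functoriality on composition follows from naturality and idempotence of $\varsigma$ (Lemma \ref{lem:sigma1}):
\[
\mathcal{E}_\varsigma(g) \circ \mathcal{E}_\varsigma(f) = \varsigma \circ g \circ \varsigma \circ f = \varsigma \circ \varsigma \circ g \circ f = \varsigma \circ g \circ f = \mathcal{E}_\varsigma(g \circ f).
\]
On objects $\mathcal{E}_\varsigma$ is the identity, and on projections it sends $\pi_j$ to $\varsigma \circ \pi_j = \varsigma_{X_j} \circ \pi_j$, which is the projection in $\mathsf{D}\text{-}\mathsf{con}[\mathbb{X}]$; preservation of $k$-linear structure follows from left $k$-linearity of $\mathbb{X}$. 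The main (minor) subtlety is the uniqueness half of the product universal property, where one must exploit that even though the new projections are dressed with $\varsigma$, this dressing is invisible to differential constants, so $\varsigma$ can be stripped off without loss of information.
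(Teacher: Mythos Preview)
Your proof is correct and follows essentially the same approach as the paper: invoke the closure properties of differential constants from Lemma \ref{lem:dconlem1} together with the properties of $\varsigma$ from Lemma \ref{lem:sigma1}, and then verify the category axioms, the Cartesian left $k$-linear structure, and the functoriality of $\mathcal{E}_\varsigma$. Your version is in fact more detailed than the paper's, notably in spelling out the uniqueness clause of the product universal property (showing that the $\varsigma$-dressing on the new projections is invisible when composed with differential constants), which the paper glosses over.

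One small slip: when you say ``preservation of $k$-linear structure follows from left $k$-linearity of $\mathbb{X}$'', this is not quite the right justification. Left $k$-linearity says that \emph{pre}-composition preserves the module structure, but $\mathcal{E}_\varsigma(f) = \varsigma \circ f$ is \emph{post}-composition by $\varsigma$. What you actually need is that $\varsigma$ is $k$-linear, which is Lemma \ref{lem:sigma1}.(\ref{lem:sigma1.iii}) --- a fact you already invoked for the projections. With that correction, the argument is complete.
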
    
\begin{proof} By definition of being a $\mathsf{D}$-constant unit, $\varsigma$ is a well-defined identity map in $\mathsf{D}\text{-}\mathsf{con}\left[ \mathbb{X} \right]$, and so $\mathsf{D}\text{-}\mathsf{con}\left[ \mathbb{X} \right]$ is indeed a category. The $k$-linear structure is well-defined by  Lemma \ref{lem:dconlem1}.(\ref{dconadd}), and since $\mathbb{X}$ is a left $k$-linear category, so is $\mathsf{D}\text{-}\mathsf{con}\left[ \mathbb{X} \right]$. By Lemma \ref{lem:dconlem1}.(\ref{dconcomp}), the projections are well-defined and since both $\varsigma$ and $\pi_j$ are $k$-linear, their composite $\varsigma \circ \pi_j$ is also $k$-linear. By Lemma \ref{lem:dconlem1}.(\ref{dconpair}) the tupling is well-defined and it easy to see that it will satisfy the necessary universal property since it does so in $\mathbb{X}$. So we conclude that $\mathsf{D}\text{-}\mathsf{con}\left[ \mathbb{X} \right]$ is a Cartesian left $k$-linear category. Lastly, since $\varsigma$ is idempotent and natural, $\mathcal{E}_{\varsigma}: \mathbb{X} \to \mathsf{D}\text{-}\mathsf{con}\left[ \mathbb{X} \right]$ is indeed a functor, and it is clear that it is Cartesian $k$-linear. 
\end{proof}

We will now prove that every cofree Cartesian $k$-differential category has a differential constant unit and that the base category is isomorphic to its category of differential constants. To do so we will need the following useful lemma: 

\begin{lemma}\label{lem:Econ}Let $\left((\mathbb{X}, \mathsf{D}), \mathcal{E}\right)$ be a cofree Cartesian $k$-differential category over a Cartesian left $k$-linear category $\mathbb{A}$. If $f$ and $g$ are parallel differential constants in $\mathbb{X}$, then $f=g$ if and only if $\mathcal{E}(f) = \mathcal{E}(g)$.
\end{lemma}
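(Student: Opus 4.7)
The plan is to reduce this statement to Lemma \ref{lem:E=}, which already tells us that two parallel maps $f, g$ in $\mathbb{X}$ are equal if and only if $\mathcal{E}(\partial^n[f]) = \mathcal{E}(\partial^n[g])$ for every $n \in \mathbb{N}$. The forward implication is trivial by functoriality, so all the work is in the backward direction.

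First I would observe that when $f$ is a differential constant, all its higher-order partial derivatives vanish: indeed $\partial^1[f] = \mathsf{D}[f] = 0$ by assumption, and since $\partial^{n+1}[f] = \mathsf{D}_0[\partial^n[f]]$, and $\mathsf{D}_0$ is built from $\mathsf{D}$ by precomposition with a pairing, \textbf{[CD.1]} gives $\mathsf{D}[0] = 0$, so an easy induction yields $\partial^n[f] = 0$ for all $n \geq 1$. The same of course holds for $g$.

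Now assuming $\mathcal{E}(f) = \mathcal{E}(g)$, for $n = 0$ we have $\mathcal{E}(\partial^0[f]) = \mathcal{E}(f) = \mathcal{E}(g) = \mathcal{E}(\partial^0[g])$, while for $n \geq 1$ we have $\mathcal{E}(\partial^n[f]) = \mathcal{E}(0) = 0 = \mathcal{E}(\partial^n[g])$ since $\mathcal{E}$ is a Cartesian $k$-linear functor and hence preserves zero maps. Therefore $\mathcal{E}(\partial^n[f]) = \mathcal{E}(\partial^n[g])$ for every $n \in \mathbb{N}$, and by Lemma \ref{lem:E=} we conclude $f = g$.

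There is no serious obstacle here, since the nontrivial content has been packaged into Lemma \ref{lem:E=}. The only thing to double-check is the elementary induction that derivatives of zero are zero, which follows immediately from \textbf{[CD.1]} applied to $r = s = 0$.
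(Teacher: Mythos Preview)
Your proposal is correct and takes essentially the same approach as the paper: both reduce to Lemma~\ref{lem:E=} by noting that a differential constant has $\partial^{n+1}[f] = 0$ for all $n$, so that $\mathcal{E}(f) = \mathcal{E}(g)$ forces $\mathcal{E}(\partial^n[f]) = \mathcal{E}(\partial^n[g])$ for every $n$. The paper simply asserts the vanishing of the higher derivatives without spelling out the induction, but your extra justification via \textbf{[CD.1]} is a welcome level of detail.
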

\begin{proof} The $\Rightarrow$ direction is automatic. For the $\Leftarrow$ direction, suppose that $\mathcal{E}(f) = \mathcal{E}(g)$. Since $f$ and $g$ are $\mathsf{D}$-constants, then for all $n \in \mathbb{N}$, $\partial^{n+1}[f] = 0 = \partial^{n+1}[g]$. Then for all $n \in \mathbb{N}$, $\mathcal{E}\left( \partial^n[f] \right) = \mathcal{E}\left( \partial^n[g] \right)$. Then by Lemma \ref{lem:E=}, $f=g$.  
\end{proof}

\begin{proposition}\label{prop:cofree-diffcon} Let $\left((\mathbb{X}, \mathsf{D}), \mathcal{E}\right)$ be a cofree Cartesian $k$-differential category over a Cartesian left $k$-linear category $\mathbb{A}$. Then $\mathbb{X}$ has a differential constant unit $\varsigma$ such that $\mathcal{E}(\varsigma_X) = 1_{\mathcal{E}(X)}$. Furthermore, the functor $\overline{\mathcal{E}}_{\varsigma}: \mathsf{D}\text{-}\mathsf{con}\left[ \mathbb{X} \right] \to \mathbb{A}$, defined on objects and maps as $\overline{\mathcal{E}}_{\varsigma}(-) = \mathcal{E}(-)$, is a Cartesian $k$-linear isomorphism, so $\mathsf{D}\text{-}\mathsf{con}\left[ \mathbb{X} \right] \cong \mathbb{A}$, and it is the unique Cartesian $k$-linear isomorphism such that the following diagram commutes: 
 \begin{align*} \xymatrixcolsep{5pc}\xymatrix{ &   \mathbb{X} \ar[dr]^-{\mathcal{E}} \ar[dl]_-{\mathcal{E}_\varsigma} \\
\mathsf{D}\text{-}\mathsf{con}\left[ \mathbb{X} \right] \ar@{-->}[rr]_-{\exists! ~\overline{\mathcal{E}}_\varsigma}^-{\cong}  & & \mathbb{A} }
\end{align*}
In other words, for every map $f$ in $\mathbb{A}$, there exists a unique $\mathsf{D}$-constant $f^\sharp$ in $\mathbb{X}$ such that $\mathcal{E}(f^\sharp)=f$. 
\end{proposition}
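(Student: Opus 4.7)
The plan is to leverage the couniversal property applied to the auxiliary Cartesian $k$-differential category $(\mathbb{A}^\Delta, \mathsf{D}^\Delta)$ from Lemma \ref{lem:Adelta}, whose $\mathsf{D}^\Delta$-constants are precisely the pairs of the form $(f, 0)$. Applying the couniversal property of $\left((\mathbb{X}, \mathsf{D}), \mathcal{E}\right)$ to the Cartesian $k$-linear functor $\mathcal{P}_\mathbb{A}: \mathbb{A}^\Delta \to \mathbb{A}$ yields a unique Cartesian $k$-differential functor $\mathcal{P}_\mathbb{A}^\flat: (\mathbb{A}^\Delta, \mathsf{D}^\Delta) \to (\mathbb{X}, \mathsf{D})$ with $\mathcal{E} \circ \mathcal{P}_\mathbb{A}^\flat = \mathcal{P}_\mathbb{A}$. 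By Corollary \ref{cor:Eobj}, $\mathcal{P}_\mathbb{A}^\flat$ acts as the inverse of $\mathcal{E}$ on objects, and by the last clause of Lemma \ref{lem:dconlem1} it sends $\mathsf{D}^\Delta$-constants to $\mathsf{D}$-constants.

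For the first assertion, I define $\varsigma_X := \mathcal{P}_\mathbb{A}^\flat(1_{\mathcal{E}(X)}, 0): X \to X$. This is a $\mathsf{D}$-constant with $\mathcal{E}(\varsigma_X) = \mathcal{P}_\mathbb{A}(1_{\mathcal{E}(X)}, 0) = 1_{\mathcal{E}(X)}$. To verify the vanishing property, note that for any $\mathsf{D}$-constant $f: X \to Y$ the composite $\varsigma_Y \circ f$ is a $\mathsf{D}$-constant (Lemma \ref{lem:dconlem1}) and $\mathcal{E}(\varsigma_Y \circ f) = 1_{\mathcal{E}(Y)} \circ \mathcal{E}(f) = \mathcal{E}(f)$, so Lemma \ref{lem:Econ} forces $\varsigma_Y \circ f = f$; the equation $f \circ \varsigma_X = f$ follows analogously. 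Hence $\varsigma$ is a $\mathsf{D}$-constant unit.

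For the isomorphism, the functor $\overline{\mathcal{E}}_\varsigma: \mathsf{D}\text{-}\mathsf{con}\left[ \mathbb{X} \right] \to \mathbb{A}$ is well-defined and Cartesian $k$-linear: it sends the identity $\varsigma_X$ of $\mathsf{D}\text{-}\mathsf{con}\left[ \mathbb{X} \right]$ to $1_{\mathcal{E}(X)}$, preserves composition and the $k$-module structure because $\mathcal{E}$ does, and sends each projection $\varsigma_{X_j} \circ \pi_j$ to $1_{\mathcal{E}(X_j)} \circ \pi_j = \pi_j$. Next, I construct an inverse $\mathcal{G}: \mathbb{A} \to \mathsf{D}\text{-}\mathsf{con}\left[ \mathbb{X} \right]$ by $\mathcal{G}(A) := \mathcal{P}_\mathbb{A}^\flat(A)$ on objects and $\mathcal{G}(g) := \mathcal{P}_\mathbb{A}^\flat(g, 0)$ on maps. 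Functoriality in $\mathsf{D}\text{-}\mathsf{con}\left[ \mathbb{X} \right]$ holds because $(g,0) \circ (h,0) = (g \circ h, 0)$ in $\mathbb{A}^\Delta$, and because $\mathcal{G}(1_A) = \mathcal{P}_\mathbb{A}^\flat(1_A, 0) = \varsigma_{\mathcal{G}(A)}$ matches the identity in $\mathsf{D}\text{-}\mathsf{con}\left[ \mathbb{X} \right]$; preservation of the Cartesian $k$-linear structure is immediate from the corresponding property of $\mathcal{P}_\mathbb{A}^\flat$. The equation $\overline{\mathcal{E}}_\varsigma \circ \mathcal{G} = 1_\mathbb{A}$ is read off from $\mathcal{E} \circ \mathcal{P}_\mathbb{A}^\flat = \mathcal{P}_\mathbb{A}$, while $\mathcal{G} \circ \overline{\mathcal{E}}_\varsigma = 1$ follows from Lemma \ref{lem:Econ}: for every $\mathsf{D}$-constant $f$ in $\mathbb{X}$, both $\mathcal{P}_\mathbb{A}^\flat(\mathcal{E}(f), 0)$ and $f$ are $\mathsf{D}$-constants with the same image $\mathcal{E}(f)$.

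For uniqueness, if $\mathcal{G}'$ is any Cartesian $k$-linear isomorphism with $\mathcal{G}' \circ \mathcal{E}_\varsigma = \mathcal{E}$, then for every $\mathsf{D}$-constant $f$ one has $\mathcal{E}_\varsigma(f) = \varsigma \circ f = f$ in $\mathsf{D}\text{-}\mathsf{con}\left[ \mathbb{X} \right]$ by the vanishing property, so $\mathcal{G}'(f) = \mathcal{E}(f) = \overline{\mathcal{E}}_\varsigma(f)$. The main conceptual obstacle I expect is producing $\varsigma$ without appealing directly to the Faà di Bruno construction; this is resolved by recognising that $(\mathbb{A}^\Delta, \mathsf{D}^\Delta)$ naturally houses its own $\mathsf{D}^\Delta$-constant unit $(1_A, 0)$, which can then be transferred to $\mathbb{X}$ along the Cartesian $k$-differential functor $\mathcal{P}_\mathbb{A}^\flat$ given by the couniversal property.
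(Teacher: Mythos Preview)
Your proof is correct and follows essentially the same approach as the paper: both use the auxiliary $\Delta$-construction of Lemma \ref{lem:Adelta}, transfer its canonical $\mathsf{D}^\Delta$-constant $(1,0)$ into $\mathbb{X}$ via the couniversal property, and then appeal to Lemma \ref{lem:Econ} to verify the vanishing property and the bijectivity on maps. The only cosmetic difference is that the paper applies $\Delta$ to $\mathbb{X}$ when building $\varsigma$ (obtaining a functor $\mathcal{R}:\mathbb{X}^\Delta\to\mathbb{X}$ and setting $\varsigma_X=\mathcal{R}(1_X,0)$) and then separately applies $\Delta$ to $\mathbb{A}$ to produce the inverse of $\overline{\mathcal{E}}_\varsigma$, whereas you use $(\mathbb{A}^\Delta,\mathsf{D}^\Delta)$ throughout; your version is mildly more economical but otherwise the arguments coincide.
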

\begin{proof} Consider the Cartesian $k$-differential category $\left( \mathbb{X}^\Delta, \mathsf{D}^\Delta \right)$ and the Cartesian $k$-linear functor $\mathcal{P}_{\mathbb{X}}: \mathbb{X}^\Delta \to \mathbb{X}$ as defined in Lemma \ref{lem:Adelta}. Therefore by the couniversal property of $\left((\mathbb{X}, \mathsf{D}), \mathcal{E}\right)$, there exists a unique Cartesian $k$-differential functor $\mathcal{R}: \left( \mathbb{X}^\Delta, \mathsf{D}^\Delta \right) \to \left( \mathbb{X}, \mathsf{D} \right)$ such that the following diagram commutes: 
\begin{align*} \xymatrixcolsep{5pc}\xymatrix{\mathbb{X}^\Delta \ar[d]_-{\mathcal{P}_{\mathbb{X}}}  \ar@{-->}[r]^-{\exists! ~ \mathcal{R}}  & \mathbb{X} \ar[d]^-{\mathcal{E}} \\
 \mathbb{X} \ar[r]_-{\mathcal{E}}  & \mathbb{A} }
\end{align*}
Note that on objects, $\mathcal{E}(\mathcal{R}(X)) = \mathcal{E}(X)$. By Corollary \ref{cor:Eobj}, $\mathcal{E}$ is bijective on objects, and therefore $\mathcal{R}(X) = X$. Then define $\varsigma_X: 1_X \to 1_X$ as $\varsigma_X := \mathcal{R}(1_X, 0)$. First, note that by definition, we easily compute that: 
\[ \mathcal{E}(\varsigma_X) = \mathcal{E}(\mathcal{R}(1_X, 0)) = \mathcal{E}(\mathcal{P}_{\mathbb{X}}(1_X, 0)) = \mathcal{E}(1_X) = 1_{\mathcal{E}(X)} \]
So $\mathcal{E}(\varsigma_X) = 1_{\mathcal{E}(X)}$ as desired. Next observe that $(1_X, 0)$ is a differential constant in $\mathbb{X}^\Delta$, and thus by Lemma \ref{lem:dconlem1}, $\varsigma_X$ is also a differential constant. Now, for any differential constant $f: X \to Y$ in $\mathbb{X}$, note that $\varsigma_Y \circ f$ and $f\circ \varsigma_X$ are also differential constants. Since $\mathcal{E}(\varsigma_X) = 1_{\mathcal{E}(X)}$, it follows that $\mathcal{E}(\varsigma_Y \circ f) = \mathcal{E}(f) = \mathcal{E}(f\circ \varsigma_X)$. Then, by Lemma \ref{lem:Econ}, we have that $\varsigma_Y \circ f = f = f \circ \varsigma_X$. So, we conclude that $\varsigma$ is a differential constant unit. 

Now $\overline{\mathcal{E}}_{\varsigma}: \mathsf{D}\text{-}\mathsf{con}\left[ \mathbb{X} \right] \to \mathbb{A}$ will be a Cartesian $k$-linear functor since $\mathcal{E}$ is. On objects, $\overline{\mathcal{E}}_{\varsigma}(\mathcal{E}_\varsigma(X)) = \mathcal{E}(X)$, while on maps, since $\mathcal{E}(\varsigma_X) = 1_{\mathcal{E}(X)}$, it follows by definition that $\overline{\mathcal{E}}_{\varsigma}(\mathcal{E}_\varsigma(f)) = \mathcal{E}(f)$. Therefore, $\overline{\mathcal{E}}_{\varsigma} \circ \mathcal{E}_\varsigma = \mathcal{E}$. Uniqueness follows from the fact that if $f$ is differential constant, then $\mathcal{E}_\varsigma(f) = f$. To show that $\overline{\mathcal{E}}_{\varsigma}$ is an isomorphism, it is sufficient to show that it is bijective on objects and maps. By Corollary \ref{cor:Eobj}, $\overline{\mathcal{E}}_{\varsigma}$ is bijective on objects by definition. On the other hand, we will show that $\overline{\mathcal{E}}_{\varsigma}$ is injective and surjective on maps. That $\overline{\mathcal{E}}_{\varsigma}$ is injective on maps follows from Lemma \ref{lem:Econ}. For surjectivity, let $f: A \to B$ be a map in $\mathbb{A}$. Consider the Cartesian $k$-differential category $\left( \mathbb{A}^\Delta, \mathsf{D}^\Delta \right)$ and the Cartesian $k$-linear functor $\mathcal{P}_{\mathbb{A}}: \mathbb{A}^\Delta \to \mathbb{A}$ as defined in Lemma \ref{lem:Adelta}. Therefore by the couniversal property of $\left((\mathbb{X}, \mathsf{D}), \mathcal{E}\right)$, there exists a unique Cartesian $k$-differential functor $\mathcal{P}_{\mathbb{A}}^\flat: \left( \mathbb{A}^\Delta, \mathsf{D}^\Delta \right) \to \left( \mathbb{X}, \mathsf{D} \right)$ such that the following diagram commutes: 
\begin{align*} \xymatrixcolsep{5pc}\xymatrix{\mathbb{A}^\Delta \ar[dr]_-{\mathcal{P}_{\mathbb{A}}}  \ar@{-->}[r]^-{\exists! ~ \mathcal{P}_{\mathbb{A}}^\flat}  & \mathbb{X} \ar[d]^-{\mathcal{E}} \\
   & \mathbb{A} }
\end{align*}
Note that $(f,0)$ is a differential constant in $\mathbb{A}^\Delta$, and so by Lemma \ref{lem:dconlem1}, $\mathcal{P}_{\mathbb{A}}^\flat(f,0)$ is also a differential constant in $\mathbb{X}$. So $\mathcal{P}_{\mathbb{X}}^\flat(f,0)$ is a map in $\mathsf{D}\text{-}\mathsf{con}\left[ \mathbb{X} \right]$. By definition we have that $\overline{\mathcal{E}}_{\varsigma}\left( \mathcal{P}_{\mathbb{A}}^\flat(f,0)\right) = f$. As such, we have that $\overline{\mathcal{E}}_{\varsigma}$ is surjective on maps, and thus $\overline{\mathcal{E}}_{\varsigma}$ is bijective on maps. So we get that $\overline{\mathcal{E}}_{\varsigma}$ is an isomorphism as desired. Explicitly, the inverse $\overline{\mathcal{E}}^{-1}_{\varsigma}: \mathbb{A} \to \mathsf{D}\text{-}\mathsf{con}\left[ \mathbb{X} \right]$ is defined on objects as $\overline{\mathcal{E}}^{-1}_{\varsigma}(A) = \mathcal{P}_{\mathbb{A}}^\flat(A)$ and on maps as $\overline{\mathcal{E}}^{-1}_{\varsigma}(f) = \mathcal{P}_{\mathbb{A}}^\flat(f,0)$. Therefore, we conclude that $\overline{\mathcal{E}}_{\varsigma}$ is a Cartesian $k$-linear isomorphism, and so $\mathsf{D}\text{-}\mathsf{con}\left[ \mathbb{X} \right] \cong \mathbb{A}$. 
\end{proof}

Note that since differential constant units are unique, the differential constant unit of a cofree Cartesian $k$-differential category is independent of the base Cartesian left $k$-linear category. As such, it more or less immediately follows that for cofree Cartesian $k$-differential categories, base categories are unique up to isomorphism.

\begin{corollary}\label{lem:cofreebase} Let $\mathbb{A}$ and $\mathbb{A}^\prime$ be Cartesian left $k$-linear categories, and let $(\mathbb{X}, \mathsf{D})$ be a Cartesian $k$-differential category. Then if $\left((\mathbb{X}, \mathsf{D}), \mathcal{E}\right)$ is a cofree Cartesian $k$-differential category over $\mathbb{A}$ and $\left((\mathbb{X}, \mathsf{D}), \mathcal{E}^\prime\right)$ is a cofree Cartesian $k$-differential category over $\mathbb{A}^\prime$, then there exists a unique Cartesian $k$-linear isomorphism $\mathcal{A}: \mathbb{A} \to \mathbb{A}^\prime$, so $\mathbb{A} \cong \mathbb{A}^\prime$, such that the following diagram commutes: 
 \begin{align*} \xymatrixcolsep{5pc}\xymatrix{ &   \mathbb{X} \ar[dr]^-{\mathcal{E}^\prime} \ar[dl]_-{\mathcal{E}} \\
\mathbb{A} \ar@{-->}[rr]_-{\exists! ~\mathcal{A}}^-{\cong}  & & \mathbb{A}^\prime }
\end{align*}
\end{corollary}
\begin{proof} By Proposition \ref{prop:cofree-diffcon}, we get a chain of unique isomorphisms $\mathbb{A}^\prime \cong \mathsf{D}\text{-}\mathsf{con}\left[ \mathbb{X} \right] \cong \mathbb{A}$ which is compatible with $\mathcal{E}$ and $\mathcal{E}^\prime$ in the desired way. 
\hfill \end{proof}

As such, we may now provide another characterisation of cofree Cartesian $k$-differential categories as having a differential constant unit and being cofree over their category of differential constants. 

\begin{theorem} \label{thm:diffcon1} For a Cartesian $k$-differential category $(\mathbb{X}, \mathsf{D})$, the following are equivalent: 
\begin{enumerate}[{\em (i)}]
\item $(\mathbb{X}, \mathsf{D})$ is cofree; 
\item $(\mathbb{X}, \mathsf{D})$ has a $\mathsf{D}$-constant unit $\varsigma$ such that the triple $\left((\mathbb{X}, \mathsf{D}), \mathcal{E}_{\varsigma} \right)$ is a cofree Cartesian $k$-differential category over $\mathsf{D}\text{-}\mathsf{con}\left[ \mathbb{X} \right]$;
\item \label{thm:diffcon1.iii} $(\mathbb{X}, \mathsf{D})$ has a $\mathsf{D}$-constant unit $\varsigma$ and the induced unique Cartesian $k$-differential functor $\mathcal{E}_\varsigma^\flat: (\mathbb{X}, \mathsf{D}) \to (\mathsf{F}\left[\mathsf{D}\text{-}\mathsf{con}\left[ \mathbb{X} \right] \right], \mathsf{D})$ which makes the following diagram commute
\begin{align*} \xymatrixcolsep{5pc}\xymatrix{\mathbb{X} \ar[dr]_-{\mathcal{E}_\varsigma}  \ar@{-->}[r]^-{\exists! ~ \mathcal{E}^\flat}  & \mathsf{F}\left[\mathsf{D}\text{-}\mathsf{con}\left[ \mathbb{X} \right] \right]\ar[d]^-{\mathcal{E}_{\mathsf{D}\text{-}\mathsf{con}\left[ \mathbb{X} \right]}} \\
  & \mathsf{D}\text{-}\mathsf{con}\left[ \mathbb{X} \right] }
\end{align*}
which is explicitly defined on objects as $\mathcal{E}_\varsigma^\flat(X) = X$ and on maps as $\mathcal{E}_\varsigma^\flat(f) = \left( \varsigma \circ \partial^n[f] \right)_{n \in \mathbb{N}}$, is an isomorphism, so $\mathbb{X} \cong \mathsf{F}\left[\mathsf{D}\text{-}\mathsf{con}\left[ \mathbb{X} \right] \right]$. 
\end{enumerate}
\end{theorem}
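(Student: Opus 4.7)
The plan is to prove this as a short cyclic/bi-implicational argument, since all the heavy lifting has been done in the preceding section. First I would dispatch $(ii) \Rightarrow (i)$, which is immediate from the definition of being cofree: $(ii)$ literally supplies a Cartesian left $k$-linear category (namely $\mathsf{D}\text{-}\mathsf{con}\left[ \mathbb{X} \right]$) and a Cartesian $k$-linear functor $\mathcal{E}_\varsigma$ that witnesses cofreeness.

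For $(i) \Rightarrow (ii)$, suppose $\left((\mathbb{X}, \mathsf{D}), \mathcal{E}\right)$ is a cofree Cartesian $k$-differential category over some $\mathbb{A}$. By Proposition \ref{prop:cofree-diffcon}, $(\mathbb{X}, \mathsf{D})$ has a $\mathsf{D}$-constant unit $\varsigma$, and there is a Cartesian $k$-linear isomorphism $\overline{\mathcal{E}}_\varsigma \colon \mathsf{D}\text{-}\mathsf{con}\left[ \mathbb{X} \right] \xrightarrow{\cong} \mathbb{A}$ satisfying $\overline{\mathcal{E}}_\varsigma \circ \mathcal{E}_\varsigma = \mathcal{E}$. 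Rearranging this gives $\mathcal{E}_\varsigma = \overline{\mathcal{E}}_\varsigma^{-1} \circ \mathcal{E}$, which together with Lemma \ref{lem:cofreebase}.(\ref{lem:cofreebase.ii}) (transferring cofreeness along an isomorphism of base categories) immediately yields that $\left((\mathbb{X}, \mathsf{D}), \mathcal{E}_\varsigma \right)$ is a cofree Cartesian $k$-differential category over $\mathsf{D}\text{-}\mathsf{con}\left[ \mathbb{X} \right]$.

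For the equivalence $(ii) \Leftrightarrow (iii)$, I would invoke Lemma \ref{lem:cofree-faa2} directly, applied with base category $\mathbb{A} = \mathsf{D}\text{-}\mathsf{con}\left[ \mathbb{X} \right]$ and functor $\mathcal{E} = \mathcal{E}_\varsigma$: that lemma says precisely that $\left((\mathbb{X}, \mathsf{D}), \mathcal{E}_\varsigma\right)$ is cofree over $\mathsf{D}\text{-}\mathsf{con}\left[ \mathbb{X} \right]$ iff the induced Cartesian $k$-differential functor into the Faà di Bruno construction is an isomorphism. The only thing left is the explicit description of $\mathcal{E}_\varsigma^\flat$ on maps. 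This follows from the explicit formula for the universal functor into $\mathsf{F}[\mathbb{A}]$ given just before Lemma \ref{lem:cofree-faa2}, namely $\mathcal{F}^\flat(f) = \mathcal{F}(\partial^\bullet[f])$, instantiated at $\mathcal{F} = \mathcal{E}_\varsigma$: since $\mathcal{E}_\varsigma$ acts as $g \mapsto \varsigma \circ g$ and the Faà di Bruno functor acts componentwise on sequences, we obtain $\mathcal{E}_\varsigma^\flat(f) = (\varsigma \circ \partial^n[f])_{n \in \mathbb{N}}$ as claimed.

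The proof has essentially no obstacle, as all the work has been front-loaded into Proposition \ref{prop:cofree-diffcon}, Lemma \ref{lem:cofreebase}, and Lemma \ref{lem:cofree-faa2}. The mildest subtlety to check carefully is the direction of composition in the identity $\mathcal{E}_\varsigma = \overline{\mathcal{E}}_\varsigma^{-1} \circ \mathcal{E}$, so that Lemma \ref{lem:cofreebase}.(\ref{lem:cofreebase.ii}) applies to pass cofreeness from $\mathbb{A}$ to $\mathsf{D}\text{-}\mathsf{con}\left[ \mathbb{X} \right]$ rather than the other way around; after that, everything lines up mechanically.
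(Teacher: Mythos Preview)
Your proof is correct and follows essentially the same approach as the paper: both argue $(ii)\Leftrightarrow(iii)$ via Lemma~\ref{lem:cofree-faa2}, note $(ii)\Rightarrow(i)$ is definitional, and obtain $(i)\Rightarrow(ii)$ by combining Proposition~\ref{prop:cofree-diffcon} with Lemma~\ref{lem:cofreebase}.(\ref{lem:cofreebase.ii}). Your added remark justifying the explicit formula for $\mathcal{E}_\varsigma^\flat$ on maps is a welcome clarification but not a substantive deviation.
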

\begin{proof} Observe that $(ii) \Leftrightarrow (iii)$ is a specific case of Lemma \ref{lem:cofree-faa2}. Therefore, it suffices to prove that $(i) \Leftrightarrow (ii)$. By definition, $(ii) \Rightarrow (i)$ is automatic. For $(i) \Rightarrow (ii)$, let $\left((\mathbb{X}, \mathsf{D}), \mathcal{E}\right)$ be a cofree Cartesian $k$-differential category over a Cartesian left $k$-linear category $\mathbb{A}$. By Proposition \ref{prop:cofree-diffcon}, $\mathbb{X}$ has a differential constant unit $\varsigma$ and $\overline{\mathcal{E}}_{\varsigma}: \mathsf{D}\text{-}\mathsf{con}\left[ \mathbb{X} \right] \to \mathbb{A}$ is a Cartesian $k$-linear isomorphism, such that $\overline{\mathcal{E}}_{\varsigma}^{-1} \circ \mathcal{E} = \mathcal{E}_\varsigma$. Then by Lemma \ref{lem:cofree-lem0}.(\ref{lem:cofreebase.ii}), $\left((\mathbb{X}, \mathsf{D}), \mathcal{E}_{\varsigma} \right)$ is a cofree Cartesian $k$-differential category over $\mathsf{D}\text{-}\mathsf{con}\left[ \mathbb{X} \right]$. 
\end{proof}

Theorem \ref{thm:diffcon1}.(\ref{thm:diffcon1.iii}) will be particularly useful in the proofs of Theorem \ref{thm:2} and Theorem \ref{thm:faaalg-cofree} below, which provide base independent characterisations of cofree Cartesian $k$-differential categories. We can also apply the results of Section \ref{sec:cofreelin} to describe the differential linear and $k$-linear maps of a cofree Cartesian $k$-differential category as $k$-linear differential constants.  

\begin{corollary} Let $(\mathbb{X}, \mathsf{D})$ be a Cartesian $k$-differential category which is cofree. Then $\mathsf{D}\text{-}\mathsf{lin}\left[ \mathbb{X} \right] \cong k\text{-}\mathsf{lin}\left[ \mathsf{D}\text{-}\mathsf{con}\left[ \mathbb{X} \right] \right]$ and $k\text{-}\mathsf{lin}\left[ \mathbb{X} \right] \cong k\text{-}\mathsf{lin}\left[ \mathsf{D}\text{-}\mathsf{con}\left[ \mathbb{X} \right] \right]^\Delta$.  
\end{corollary}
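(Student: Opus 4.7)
The corollary is a direct combination of the results already established, so the plan is short and essentially bookkeeping. First, I would invoke Theorem \ref{thm:diffcon1} to upgrade ``cofree'' to ``cofree over $\mathsf{D}\text{-}\mathsf{con}[\mathbb{X}]$ via $\mathcal{E}_\varsigma$.'' Explicitly, since $(\mathbb{X},\mathsf{D})$ is cofree and possesses a $\mathsf{D}$-constant unit $\varsigma$, the equivalence $(i)\Leftrightarrow(ii)$ of Theorem \ref{thm:diffcon1} tells us that
\[
\bigl((\mathbb{X},\mathsf{D}),\, \mathcal{E}_\varsigma\bigr)
\]
is itself a cofree Cartesian $k$-differential category over the Cartesian left $k$-linear category $\mathsf{D}\text{-}\mathsf{con}[\mathbb{X}]$. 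Note that although the hypothesis only asserts that $(\mathbb{X},\mathsf{D})$ is cofree over \emph{some} base, Proposition \ref{prop:cofree-diffcon} (or Lemma \ref{lem:cofreebase}.(\ref{lem:cofreebase.i})) guarantees that this base is isomorphic to $\mathsf{D}\text{-}\mathsf{con}[\mathbb{X}]$, so no ambiguity arises.

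Next, I would instantiate Proposition \ref{prop:difflincofree} with the cofree structure $\bigl((\mathbb{X},\mathsf{D}),\,\mathcal{E}_\varsigma\bigr)$ and base $\mathbb{A}=\mathsf{D}\text{-}\mathsf{con}[\mathbb{X}]$. Proposition \ref{prop:difflincofree} produces a Cartesian $k$-linear isomorphism
\[
\mathsf{D}\text{-}\mathsf{lin}[\mathbb{X}] \;\cong\; k\text{-}\mathsf{lin}\bigl[\mathsf{D}\text{-}\mathsf{con}[\mathbb{X}]\bigr],
\]
which gives the first stated isomorphism.

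Finally, I would apply Proposition \ref{prop:klincofree} to the same cofree structure $\bigl((\mathbb{X},\mathsf{D}),\,\mathcal{E}_\varsigma\bigr)$ with base $\mathbb{A}=\mathsf{D}\text{-}\mathsf{con}[\mathbb{X}]$. That proposition yields a Cartesian $k$-differential isomorphism
\[
k\text{-}\mathsf{lin}[\mathbb{X}] \;\cong\; k\text{-}\mathsf{lin}\bigl[\mathsf{D}\text{-}\mathsf{con}[\mathbb{X}]\bigr]^{\Delta},
\]
which is the second stated isomorphism. Since every input to these two propositions is supplied directly by Theorem \ref{thm:diffcon1}, there is no real obstacle here — the proof is essentially a one-line composition of earlier theorems, and the only minor subtlety is verifying that the base category one substitutes is indeed $\mathsf{D}\text{-}\mathsf{con}[\mathbb{X}]$, which is exactly what Theorem \ref{thm:diffcon1}.(ii) provides.
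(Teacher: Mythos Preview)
Your proposal is correct and matches the paper's approach exactly: the paper states this corollary without proof, simply remarking that one ``can also apply the results of Section~\ref{sec:cofreelin}'' (i.e., Propositions~\ref{prop:difflincofree} and~\ref{prop:klincofree}) once Theorem~\ref{thm:diffcon1} identifies the base as $\mathsf{D}\text{-}\mathsf{con}[\mathbb{X}]$. Your write-up is, if anything, more explicit than the paper's.
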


\begin{example}\label{ex:diffconunit} \normalfont Here are the induced differential constant units in our examples of cofree Cartesian $k$-differential categories. 
\begin{enumerate}[{\em (i)}]
\item\label{ex:diffcoununit.faa} For a Cartesian left $k$-linear category $\mathbb{A}$, in its Faà di Bruno construction, the differential constant unit is the Faà di Bruno sequence ${\varsigma_\bullet}_X: X \to X$ defined as the identity map in the first term and zero for the rest of the sequence, ${\varsigma_\bullet}_X := (1_X, 0, 0, \hdots)$. 
\item For a $k$-linear category $\mathbb{B}$ with finite biproducts, in $\mathbb{B}^\Delta$ the $\mathsf{D}^\Delta$-constant unit is the pair of an identity map with a zero map, $\varsigma_A:= (1_A,0)$. 
\item In $k\text{-}\mathsf{MOD}^\mathcal{Q}$, the differential constant unit is the $k$-linear morphism $\varsigma_M: \mathcal{Q}(M) \to M$ defined on pure symmetrized tensors as follows
\begin{align*}
    \varsigma_M \left( \vert 1 \rangle_{(x_0)} \right) = x_0 && \varsigma_M\left(\vert x_1, \hdots, x_n \rangle_{x_0} \right) = 0
\end{align*}
\end{enumerate}
\end{example}

\begin{example}\label{ex:notcofree} \normalfont We may now also explain why the Cartesian $k$-differential categories given by differentiating smooth functions, polynomials, and linear maps are not cofree. 
\begin{enumerate}[{\em (i)}]
\item Suppose that $k\text{-}\mathsf{POLY}$ had a differential constant unit. So in particular we would have a map $\varsigma_1: 1 \to 1$, which is of course a polynomial in one variable $\varsigma(x) = \sum^{n}_{j=0} a_j x^j$. Since this is a differential constant, this implies that the derivative of $\varsigma(x)$ is zero, so $\varsigma^\prime(x) = 0$ which in particular implies that $a_1=0$. On the other hand, by Lemma \ref{lem:sigma1}.(\ref{lem:sigma1.iii}), $\varsigma(x)$ is $k$-linear, so in particular $\varsigma(0) = 0$, and so $a_0=0$. So our polynomial would have to be of the form $\varsigma(x) = \sum^{n}_{j=2} a_j x^j$. Now by Lemma \ref{lem:sigma1}.(\ref{lem:sigma1.ii}), we also have that $\varsigma(\varsigma(x)) = \varsigma(x)$. However, since $\varsigma(x)$ has no degree 0 or 1 terms, $\varsigma(\varsigma(x))$ has no degree 2 or degree 3 terms. So from $\varsigma(\varsigma(x)) = \varsigma(x)$ we get that $a_2 = 0$ and $a_3 =0$. So we now have that $\varsigma(x) = \sum^{n}_{j=4} a_j x^j$. But from this, we see that $\varsigma(\varsigma(x))$ would have no degree 4 or degree 5 terms. Then again from $\varsigma(\varsigma(x)) = \varsigma(x)$, we get that $a_4 =0$ and $a_5=0$. So by repeating this argument, we will eventually get that $a_j =0$ for all $0 \leq j \leq n$, and so the differential constant unit would have to be zero, $\varsigma(x) =0$. Now observe that all $r \in k$ gives a differential constant (seen as a constant polynomial).Then by definition of a being a differential constant unit, we would have that $\varsigma(r) = r$ for all $r \in k$. But since we have shown that $\varsigma(x) =0$, this implies that $r=0$ for all $r \in k$, that is, $k$ is trivial. Therefore if $k$ is non-trivial, $k\text{-}\mathsf{POLY}$ cannot have a differential constant unit, and hence $k\text{-}\mathsf{POLY}$ is not cofree.  

\item In $\mathsf{SMOOTH}$, recall that the differential constants were precisely the constant functions. So if $\mathsf{SMOOTH}$ had a differential constant unit, then we would have a constant function $\varsigma_\mathbb{R}: \mathbb{R} \to \mathbb{R}$. By Lemma \ref{lem:sigma1}.(\ref{lem:sigma1.iii}), $\varsigma_\mathbb{R}$ would also be $\mathbb{R}$-linear. However, there is no constant function $\mathbb{R} \to \mathbb{R}$ which is also $\mathbb{R}$-linear. Thus $\mathsf{SMOOTH}$ does not have a differential constant unit, and hence it is not cofree. 

\item Recall that in any $k$-linear category $\mathbb{B}$ with finite biproducts, the only $\mathsf{D}^{\mathsf{lin}}$-constants were the zero maps. As such, trivially $0$ is a differential constant unit given by $0$, and so $\mathsf{D}\text{-}\mathsf{con}\left[ \mathbb{B} \right]$ is equivalent to the terminal category $\ast$, which has only one object and one map. However, the Faà di Bruno construction over $\ast$ is precisely $\ast$. Therefore if $\mathbb{B}$ is non-trivial, that is, has at least one non-zero object, then $\mathbb{B}$ is not equivalent to $\ast$ and therefore cannot be cofree. 
\end{enumerate}
\end{example}

\section{Characterisation via Differential Constants}\label{sec:ChDC}

By Theorem \ref{thm:diffcon1}, a Cartesian $k$-differential category is cofree if and only if it has a differential constant unit and is cofree over its category of differential constants. This characterisation still requires one to check the couniversal property, and therefore we have to work ``outside'' of the Cartesian $k$-differential category. In this section, we provide necessary and sufficient conditions on differential constants to give us the desired couniversal property. In particular, this provides a base independent and completely internal characterisation of cofree Cartesian $k$-differential categories. 

As explained in Section \ref{sec:ultrametric}, the hom-sets of a cofree Cartesian $k$-differential categories have a canonical ultrametric induced by the functor down to the base category. To provide a base-independent description of the ultrametric, we may re-express it using the differential constant unit. 

\begin{definition}\label{def:dconcom} A Cartesian $k$-differential category $(\mathbb{X}, \mathsf{D})$ with a $ \mathsf{D}$-constant unit $\varsigma$ is said to be \textbf{differential constant complete}, or $\mathsf{D}$-constant complete for short, if for each hom-set the function defined as follows: 
\begin{gather*}
\mathsf{d}_\varsigma: \mathbb{X}(X,Y) \times \mathbb{X}(X,Y) \to \mathbb{R}_{\geq 0} \\ 
\mathsf{d}_\varsigma(f,g) = \begin{cases} 2^{-n} & \text{ where $n \in \mathbb{N}$ is the smallest natural number} \\
&~ \text{such that $\varsigma_Y \circ \partial^n[f] \neq \varsigma_Y \circ \partial^n[g]$}\\
& \\ 
0 & \text{ if all $n \in \mathbb{N}$, $\varsigma_Y \circ \partial^n[f] = \varsigma_Y \circ \partial{D}^n[g]$} 
\end{cases}
\end{gather*} 
is an ultrametric on $\mathbb{X}(X,Y)$ such that $(\mathbb{X}(X,Y), \mathsf{d}_\varsigma)$ is a complete ultrametric space. 
\end{definition}

We first show that cofree Cartesian $k$-differential categories are differential constant complete and that the ultrametric given by the differential constant unit from Definition \ref{def:dconcom} is the same as the ultrametric given by the functor down to its base from Proposition \ref{prop:um}. 

\begin{lemma}\label{lem:metric:diffcon} Let $\left((\mathbb{X}, \mathsf{D}), \mathcal{E}\right)$ be a cofree Cartesian $k$-differential category over a Cartesian left $k$-linear category $\mathbb{A}$, and let $\varsigma$ be the induced $\mathsf{D}$-constant unit. Then $(\mathbb{X}, \mathsf{D})$ is $\mathsf{D}$-constant complete and furthermore, $\mathsf{d}_\varsigma(-,-) = \mathsf{d}_\mathcal{E}(-,-)$. 
\end{lemma}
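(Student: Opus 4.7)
The plan is to reduce $\mathsf{d}_\varsigma$ to $\mathsf{d}_\mathcal{E}$ and then invoke Proposition \ref{prop:um}. The key observation is that composing any map with the differential constant unit $\varsigma$ produces a differential constant, and on differential constants the functor $\mathcal{E}$ is injective.

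First I would note that for any map $f: X \to Y$ in $\mathbb{X}$ and any $n \in \mathbb{N}$, the map $\varsigma_Y \circ \partial^n[f]$ is a $\mathsf{D}$-constant by Lemma \ref{lem:dconlem1}.(\ref{dconcomp}), since $\varsigma_Y$ is a $\mathsf{D}$-constant. Then by Lemma \ref{lem:Econ}, for parallel maps $f, g: X \to Y$:
\[ \varsigma_Y \circ \partial^n[f] = \varsigma_Y \circ \partial^n[g] \iff \mathcal{E}\left(\varsigma_Y \circ \partial^n[f]\right) = \mathcal{E}\left(\varsigma_Y \circ \partial^n[g]\right). \]
By Proposition \ref{prop:cofree-diffcon}, $\mathcal{E}(\varsigma_Y) = 1_{\mathcal{E}(Y)}$, so functoriality gives $\mathcal{E}(\varsigma_Y \circ \partial^n[f]) = \mathcal{E}(\partial^n[f])$, and similarly for $g$. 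Hence
\[ \varsigma_Y \circ \partial^n[f] = \varsigma_Y \circ \partial^n[g] \iff \mathcal{E}\left(\partial^n[f]\right) = \mathcal{E}\left(\partial^n[g]\right). \]

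Now I would compare the two definitions directly. For each pair $f, g$, the smallest $n \in \mathbb{N}$ (if any) such that $\varsigma_Y \circ \partial^n[f] \neq \varsigma_Y \circ \partial^n[g]$ agrees, by the equivalence above, with the smallest $n$ such that $\mathcal{E}(\partial^n[f]) \neq \mathcal{E}(\partial^n[g])$; and the condition that no such $n$ exists is the same in both cases. Thus $\mathsf{d}_\varsigma(f,g) = \mathsf{d}_\mathcal{E}(f,g)$ for all parallel $f, g$.

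The claim that $(\mathbb{X}, \mathsf{D})$ is $\mathsf{D}$-constant complete then follows immediately from Proposition \ref{prop:um}, which establishes that $(\mathbb{X}(X,Y), \mathsf{d}_\mathcal{E})$ is a complete ultrametric space; by the equality of the two functions, $(\mathbb{X}(X,Y), \mathsf{d}_\varsigma)$ is as well. There is no real obstacle here: the entire argument rests on the identity $\mathcal{E}(\varsigma_X) = 1_{\mathcal{E}(X)}$ from Proposition \ref{prop:cofree-diffcon} together with the injectivity of $\mathcal{E}$ on $\mathsf{D}$-constants from Lemma \ref{lem:Econ}, both of which are already established.
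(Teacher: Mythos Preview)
Your proof is correct and follows essentially the same approach as the paper. The paper phrases the key equivalence via the factorisation $\mathcal{E} = \overline{\mathcal{E}}_\varsigma \circ \mathcal{E}_\varsigma$ and the injectivity of the isomorphism $\overline{\mathcal{E}}_\varsigma$ from Proposition~\ref{prop:cofree-diffcon}, whereas you invoke Lemma~\ref{lem:Econ} directly together with $\mathcal{E}(\varsigma_Y)=1_{\mathcal{E}(Y)}$; these are the same underlying facts, and both routes reduce $\mathsf{d}_\varsigma$ to $\mathsf{d}_\mathcal{E}$ before appealing to Proposition~\ref{prop:um}.
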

\begin{proof} It suffices to prove that $\mathsf{d}_\varsigma(-,-) = \mathsf{d}_\mathcal{E}(-,-)$. By Proposition \ref{prop:cofree-diffcon},  $\overline{\mathcal{E}}_\varsigma$ is an isomorphism, so in particular is injective on maps, and also that $\overline{\mathcal{E}}_\varsigma \circ \mathcal{E}_\varsigma = \mathcal{E}$. Therefore, we have that: 
\begin{align*}
\mathsf{d}_\mathcal{E}\left( f, g \right) &= \begin{cases} 2^{-n} & \text{ where $n \in \mathbb{N}$ is the smallest natural number} \\
&~\text{such that $\mathcal{E}\left( \partial^{n}[f] \right) \neq \mathcal{E}\left( \partial^{n}[g] \right)$}\\ 
& \\ 
0 & \text{ if all $n \in \mathbb{N}$, $\mathcal{E}\left( \partial^n[f] \right) = \mathcal{E}\left( \partial^n[g] \right)$} 
\end{cases} \\
&=\begin{cases} 2^{-n} & \text{ where $n \in \mathbb{N}$ is the smallest natural number} \\
&~ \text{such that $\overline{\mathcal{E}}_\varsigma\left( \mathcal{E}_\varsigma\left( \partial^{n}[f] \right)\right) \neq \overline{\mathcal{E}}_\varsigma\left( \mathcal{E}_\varsigma\left( \partial^{n}[g] \right)\right) $}\\
& \\ 
0 & \text{ if all $n \in \mathbb{N}$, $\overline{\mathcal{E}}_\varsigma\left( \mathcal{E}_\varsigma\left( \partial^{n}[f] \right)\right) = \overline{\mathcal{E}}_\varsigma\left( \mathcal{E}_\varsigma\left( \partial^{n}[g] \right)\right) $} 
\end{cases} \\
&= \begin{cases} 2^{-n} & \text{ where $n \in \mathbb{N}$ is the smallest natural number} \\
&~ \text{such that $\overline{\mathcal{E}}_\varsigma\left( \varsigma \circ \partial^{n}[f] \right) \neq \overline{\mathcal{E}}_\varsigma\left( \varsigma \circ \partial^{n}[g] \right) $}\\ 
& \\ 
0 & \text{ if all $n \in \mathbb{N}$, $\overline{\mathcal{E}}_\varsigma\left( \varsigma \circ \partial^{n}[f] \right) = \overline{\mathcal{E}}_\varsigma\left( \varsigma \circ \partial^{n}[g] \right)  $} 
\end{cases} \\
&= \begin{cases} 2^{-n} & \text{ where $n \in \mathbb{N}$ is the smallest natural number} \\
&~ \text{such that $\varsigma_Y \circ \partial^n[f] \neq \varsigma_Y \circ \partial^n[g]$}\\
& \\ 
0 & \text{ if all $n \in \mathbb{N}$, $\varsigma_Y \circ \partial^n[f] = \varsigma_Y \circ \partial{D}^n[g]$} 
\end{cases} \\ 
&= \mathsf{d}_\varsigma(f,g) 
\end{align*}
As such, we conclude that $(\mathbb{X}, \mathsf{D})$ is $\mathsf{D}$-constant complete. 
\end{proof}

We also require the ability to express maps in terms of converging infinite sums based on their higher-order derivatives, which, in a way, means that maps are formal power series or Hurwitz series \cite{keigher2000hurwitz}. 

\begin{definition}\label{def:dconconv} In a Cartesian $k$-differential category $(\mathbb{X}, \mathsf{D})$ with a differential constant unit $\varsigma$, for every $n \in \mathbb{N}$, a \textbf{differential constant homogeneous map of degree $n$}, or $\mathsf{D}$-constant homogenous map for short, is a map ${f: X \to Y}$ such that $\varsigma_Y \circ \partial^m[f] = 0$ for all $m \neq n$. A Cartesian $k$-differential category $(\mathbb{X}, \mathsf{D})$ with a differential constant unit $\varsigma$ is said to have \textbf{convenient differential constants}, or convenient $\mathsf{D}$-constants for short, if for all $n \in \mathbb{N}$ and every differential constant $f: X \times X^n \to Y$ which is $k$-multilinear and symmetric in its last $n$-arguments, there exists a unique  $\mathsf{D}$-constant homogeneous map of degree $n$, $f^\natural: X \to Y$, such that $\varsigma_Y \circ \partial^n[f^\natural] = f$. 
\end{definition}

\begin{lemma}\label{lem:diffconvenient} Let $(\mathbb{X}, \mathsf{D})$ be a Cartesian $k$-differential category with a $\mathsf{D}$-constant unit $\varsigma$ which is $\mathsf{D}$-constant complete and also has convenient $\mathsf{D}$-constants. 
\begin{enumerate}[{\em (i)}]
\item \label{lem:diffconvenient.1} Suppose that $f_{\bullet}: X \to Y$ is a $\mathsf{D}\text{-}\mathsf{con}\left[ \mathbb{X} \right]$-Faà di Bruno sequence, that is, for each $n \in \mathbb{N}$, $f_n: X \times X^n \to Y$ is a $\mathsf{D}$-constant which $k$-multilinear and symmetric in its last $n$-arguments. For each $n \in \mathbb{N}$, consider the unique $\mathsf{D}$-constant homogenous map ${f_n}^\natural: X \to Y$ such that $\varsigma_Y \circ \partial^n[{f_n}^\natural] = f_n$. Then the series $\sum \limits^{\infty}_{n=0} {f_n}^\natural $ converges in $(\mathbb{X}(X,Y), \mathsf{d}_\varsigma)$, and furthermore, for all $m \in \mathbb{N}$, $\varsigma_Y \circ \partial^m\left[   \sum \limits^{\infty}_{n=0} {f_n}^\natural \right] = f_m$. 
\item \label{lem:diffconvenient.2} For any map $f: X \to Y$, define $f_{(n)}: X \to Y$ to be $f_{(n)} = (\varsigma_Y \circ \partial^n[f])^\natural$, that is, $f_{(n)}$ is the unique map such that $\varsigma_Y \circ \partial^n[f_{(n)}] = \varsigma_Y \circ \partial^n[f]$ and $\varsigma_Y \circ \partial^m[f_{(n)}] = 0$ for all $m \neq n$. Then $f_{(0)} = \varsigma_Y \circ f$ and the series $\sum \limits^{\infty}_{n=0}   f_{(n)}$ converges in $(\mathbb{X}(X,Y), \mathsf{d}_\varsigma)$ to $f$, that is, $\sum \limits^{\infty}_{n=0}   f_{(n)} = f$. 
\end{enumerate}
\end{lemma}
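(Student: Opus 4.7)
The plan is to derive Part $(i)$ from the Cauchy-then-limit recipe in the complete ultrametric space $(\mathbb{X}(X,Y), \mathsf{d}_\varsigma)$, and then reduce Part $(ii)$ directly to Part $(i)$. For Part $(i)$, I first compute, for each partial sum $S_N := \sum_{n=0}^{N} {f^n}^\natural$ and each $m \in \mathbb{N}$, the image $\varsigma_Y \circ \partial^m[S_N]$. Since $\partial^m$ is additive by \textbf{[HD.1]} and $\varsigma_Y$ is $k$-linear by Lemma \ref{lem:sigma1}, this splits as $\sum_{n=0}^{N} \varsigma_Y \circ \partial^m[{f^n}^\natural]$. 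By the defining property of $\mathsf{D}$-constant homogeneity of degree $n$, only the summand with $n = m$ survives, so the total equals $f^m$ when $m \leq N$ and $0$ otherwise. Consequently $\varsigma_Y \circ \partial^j[S_N] = \varsigma_Y \circ \partial^j[S_{N+1}]$ for every $j \leq N$, yielding $\mathsf{d}_\varsigma(S_N, S_{N+1}) \leq 2^{-(N+1)}$. The standard ultrametric characterisation of Cauchy sequences (as in point $(v)$ of Proposition \ref{prop:um}) then exhibits $(S_N)$ as Cauchy.

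By $\mathsf{D}$-constant completeness, $(S_N)$ converges to some $S_\infty \in \mathbb{X}(X,Y)$, which I take to be the value of $\sum_{n=0}^{\infty} {f^n}^\natural$. To read off $\varsigma_Y \circ \partial^m[S_\infty]$, I fix $m$ and pick $N \geq m$ large enough that $\mathsf{d}_\varsigma(S_N, S_\infty) < 2^{-m}$; the definition of $\mathsf{d}_\varsigma$ then forces $\varsigma_Y \circ \partial^j[S_\infty] = \varsigma_Y \circ \partial^j[S_N]$ for every $j \leq m$, and at $j = m$ the right-hand side is $f^m$ by the earlier calculation.

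For Part $(ii)$, I first verify that $\bar{f}^n := \varsigma_Y \circ \partial^n[f]$ yields a $\mathsf{D}\text{-}\mathsf{con}\left[ \mathbb{X} \right]$-Faà di Bruno sequence: each $\bar{f}^n$ is a $\mathsf{D}$-constant by Lemma \ref{lem:dconlem1}.(\ref{dconcomp}), and it inherits $k$-multilinearity and symmetry in its last $n$ arguments from $\partial^n[f]$ via \textbf{[HD.2]} and \textbf{[HD.7]}, since both properties are preserved under post-composition with the $k$-linear map $\varsigma_Y$. By construction $(\bar{f}^n)^\natural = f_{(n)}$, so Part $(i)$ produces a convergent sum $S_\infty = \sum_{n=0}^{\infty} f_{(n)}$ satisfying $\varsigma_Y \circ \partial^m[S_\infty] = \varsigma_Y \circ \partial^m[f]$ for every $m$; the non-distinguishability axiom of $\mathsf{d}_\varsigma$ then forces $S_\infty = f$. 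To identify $f_{(0)}$, I check that $\varsigma_Y \circ f$ already satisfies the defining properties of $(\varsigma_Y \circ f)^\natural$: it is a $\mathsf{D}$-constant (so $\varsigma_Y \circ \partial^m[\varsigma_Y \circ f] = 0$ for $m \geq 1$), and idempotence of $\varsigma_Y$ from Lemma \ref{lem:sigma1} handles $m = 0$; uniqueness in the definition of convenient $\mathsf{D}$-constants then gives $f_{(0)} = \varsigma_Y \circ f$.

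The main obstacle is the bookkeeping in Part $(i)$: keeping straight how $\varsigma_Y \circ -$, the higher partial derivative $\partial^m$, and finite sums interact so that homogeneity collapses the sum to a single term. Once that telescoping is in place, ultrametric completeness converts it into a genuine convergence statement almost for free, and Part $(ii)$ is then a straightforward reduction.
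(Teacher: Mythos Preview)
Your proof is correct and follows the same overall architecture as the paper's: establish that the partial sums are Cauchy by computing $\varsigma_Y \circ \partial^m[S_N]$ via \textbf{[HD.1]} and homogeneity, invoke completeness, and then reduce Part~(ii) to Part~(i) via the non-distinguishability axiom of $\mathsf{d}_\varsigma$.

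Where you genuinely diverge is in the second half of Part~(i), identifying $\varsigma_Y \circ \partial^m[S_\infty]$. The paper first proves an auxiliary continuity statement: it shows that $\mathsf{D}$ (and hence each $\partial^m$) preserves convergent infinite sums, using the observation that $\mathsf{d}_\varsigma(f,g) \leq 2^{-(n+1)}$ implies $\mathsf{d}_\varsigma(\mathsf{D}[f],\mathsf{D}[g]) \leq 2^{-n}$, and then computes $\varsigma_Y \circ \partial^m\bigl[\sum_{n=0}^\infty {f^n}^\natural\bigr] = \varsigma_Y \circ \sum_{n=0}^\infty \partial^m[{f^n}^\natural] = f^m$ directly. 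You bypass this entirely: you simply pick $N \geq m$ with $\mathsf{d}_\varsigma(S_N, S_\infty) < 2^{-m}$ and read off $\varsigma_Y \circ \partial^m[S_\infty] = \varsigma_Y \circ \partial^m[S_N] = f^m$ straight from the definition of $\mathsf{d}_\varsigma$. Your route is shorter and more elementary; the paper's route yields the reusable fact that $\partial^m$ is continuous for the ultrametric, which is of independent interest but not needed for the lemma itself. One small remark: your citation of Proposition~\ref{prop:um}(v) for the Cauchy criterion is formally for the cofree setting with $\mathsf{d}_\mathcal{E}$; what you actually need is the general ultrametric fact that $(x_n)$ is Cauchy iff $\mathsf{d}(x_n,x_{n+1}) \to 0$, which the paper states just before that proposition and which its own proof of this lemma also invokes.
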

\begin{proof} For (\ref{lem:diffconvenient.1}), since $(\mathbb{X}(X,Y), \mathsf{d}_\varsigma)$ is a complete metric space, to show that the series converges it suffices to show that the sequence $\left( \sum \limits^{m}_{n=0} {f_n}^\natural  \right)_{m \in \mathbb{N}}$ is Cauchy. Note that in this case, a sequence $(f_n)_{n\in \mathbb{N}}$ in $\left( \mathbb{X}(X,Y), \mathsf{d}_\varsigma \right)$ is Cauchy if and only if for all $m \in \mathbb{N}$, there exists a $N_m \in \mathbb{N}$ such that for all $n \geq N_m$, $\mathsf{d}_\varsigma(f_n,f_{n+1}) < 2^{-m}$, or equivalently, $\varsigma_Y \circ \partial^j[f_n] = \varsigma_Y \circ \partial^j[f_{n+1}]$ for all $0 \leq j \leq m$. Now for any natural number $p \in \mathbb{N}$, for all $m \geq p$, by homogeneity and \textbf{[HD.1]}, we have that for any $0 \leq j \leq p$ that $\varsigma_Y \circ \partial^j\left[  \sum \limits^{m}_{n=0} {f_n}^\natural \right] = f^j$. Therefore, for all $m \geq p$, $\mathsf{d}_\varsigma\left( \sum \limits^{m}_{n=0} {f_n}^\natural, \sum \limits^{m+1}_{n=0} f_n^\natural \right) < 2^{-p}$. Thus it follows that $\left( \sum \limits^{m}_{n=0} {f_n}^\natural  \right)_{m \in \mathbb{N}}$ is Cauchy and therefore the series $\sum \limits^{\infty}_{n=0} {f_n}^\natural $ converges in $(\mathbb{X}(X,Y), \mathsf{d}_\varsigma)$. 

To show the other desired identity, we first need to explain why the differential combinator preserves infinite sums. By similar arguments that were done in the proof of Proposition \ref{prop:um}, we note that if $\mathsf{d}_\varsigma(f,g) \leq 2^{-(n+1)}$ then $\mathsf{d}_\varsigma\left( \mathsf{D}[f], \mathsf{D}[g] \right) \leq 2^{-n}$. Now suppose that $\sum \limits^{\infty}_{n=0} f_n$ is a converging series. We must show that $\mathsf{D}\left[ \sum \limits^{\infty}_{n=0} f_n\right]$ is the limit of the sequence $\left(\sum \limits^{m}_{n=0} \mathsf{D}[f_n] \right)_{m \in \mathbb{N}}$. So let $\varepsilon >0$ be an arbitrary non-zero positive real number and let $p$ be the smallest natural number such that $2^{-p} \leq \varepsilon$. Since $\sum \limits^{\infty}_{n=0} f_n$ converges, there exists a natural number $N$ such that for all $m \geq N$, $\mathsf{d}_\varsigma\left( \sum \limits^{m}_{n=0} f_n, \sum \limits^{\infty}_{n=0} f_n \right) < 2^{-(p+2)}$. So we have that $\mathsf{d}_\varsigma\left( \mathsf{D}\left[\sum \limits^{m}_{n=0} f_n \right], \mathsf{D}\left[\sum \limits^{\infty}_{n=0} f_n \right] \right) \leq 2^{-(p+1)}$. By \textbf{[CD.2]}, $\mathsf{D}\left[\sum \limits^{m}_{n=0} f_n \right] = \sum \limits^{m}_{n=0}  \mathsf{D}\left[f_n \right]$. Therefore, we have that $\mathsf{d}_\varsigma\left(\sum \limits^{m}_{n=0}  \mathsf{D}\left[f_n \right], \mathsf{D}\left[\sum \limits^{\infty}_{n=0} f_n \right] \right) \leq 2^{-(p+1)} < 2^{-p} \leq \varepsilon$. So $\sum \limits^{\infty}_{n=0}  \mathsf{D}\left[f_n \right] = \mathsf{D}\left[\sum \limits^{\infty}_{n=0} f_n \right]$. Similarly, we can show that $\partial^m$ also preserves infinite sums. 

As such, we finally have that for all $m \in \mathbb{N}$ 
\[ \varsigma_Y \circ \partial^m\left[   \sum \limits^{\infty}_{n=0} {f_n}^\natural \right] = \varsigma_Y \circ   \sum \limits^{\infty}_{n=0} \partial^m\left[ {f_n}^\natural \right] = \varsigma_Y \circ (0 + 0 + \hdots + 0 + f_m + 0 + 0 + \hdots) = \varsigma_Y \circ f_m = f_m  \]
So $\varsigma_Y \circ \partial^m\left[   \sum \limits^{\infty}_{n=0} {f_n}^\natural \right] = f_m$ as desired. 

For (\ref{lem:diffconvenient.2}), note that $\left(\varsigma_Y \circ \partial^n[f] \right)_{n \in \mathbb{N}}$ is a $\mathsf{D}\text{-}\mathsf{con}\left[ \mathbb{X} \right]$-Faà di Bruno sequence. So by (\ref{lem:diffconvenient.1}) the series $\sum \limits^{\infty}_{n=0}   f_{(n)}$ converges. Next note that for all $n \in \mathbb{N}$, by (\ref{lem:diffconvenient.1}) we also have that, $\varsigma_Y \circ \partial^n\left[   \sum \limits^{\infty}_{n=0} {f_{(n)}}^\natural \right] = \varsigma_Y \circ \partial^n[f]$. But this implies that $\mathsf{d}_\varsigma\left(\sum\limits^{\infty}_{n=0} {f_{(n)}}^\natural,f \right)=0$. Since $\mathsf{d}_\varsigma$ is an ultrametric, we conclude that $\sum \limits^{\infty}_{n=0}   f_{(n)} = f$. 
\end{proof}

We may now state the main result of this section: 

\begin{theorem}\label{thm:2} A Cartesian $k$-differential category is cofree if and only if it has a differential constant unit, is differential constant complete, and has convenient differential constants. 
\end{theorem}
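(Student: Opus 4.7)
The plan is to use Theorem \ref{thm:diffcon1}.(\ref{thm:diffcon1.iii}) as the pivot: a Cartesian $k$-differential category $(\mathbb{X}, \mathsf{D})$ is cofree if and only if it admits a $\mathsf{D}$-constant unit $\varsigma$ and the induced Cartesian $k$-differential functor $\mathcal{E}_\varsigma^\flat: (\mathbb{X}, \mathsf{D}) \to (\mathsf{F}[\mathsf{D}\text{-}\mathsf{con}[\mathbb{X}]], \mathsf{D})$ given by $\mathcal{E}_\varsigma^\flat(f) = (\varsigma \circ \partial^n[f])_{n \in \mathbb{N}}$ is an isomorphism. Since this functor is already the identity on objects, only bijectivity on hom-sets is at stake; and bijectivity on hom-sets of a Cartesian $k$-differential functor suffices since the pointwise inverse automatically preserves all the structure.

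For the forward direction, Proposition \ref{prop:cofree-diffcon} supplies the $\mathsf{D}$-constant unit, and Lemma \ref{lem:metric:diffcon} together with Proposition \ref{prop:um} gives differential constant completeness (as $\mathsf{d}_\varsigma = \mathsf{d}_\mathcal{E}$). For convenient $\mathsf{D}$-constants, I transport the problem along the isomorphism $\mathbb{X} \cong \mathsf{F}[\mathsf{D}\text{-}\mathsf{con}[\mathbb{X}]]$ from Theorem \ref{thm:diffcon1}.(\ref{thm:diffcon1.iii}): in the Faà di Bruno construction, the $\mathsf{D}$-constant unit is $(1,0,0,\ldots)$ (Example \ref{ex:diffconunit}.(\ref{ex:diffcoununit.faa})), so by the Faà di Bruno composition formula, post-composing with $\varsigma^\bullet$ projects onto the $0$-th term. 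A $\mathsf{D}$-constant $\mathbb{X}$-Faà di Bruno sequence $(g,0,0,\ldots)$ that is $k$-multilinear and symmetric in its last $n$ arguments then has $g$ itself as a legitimate $n$-th slot of a Faà di Bruno sequence, so I can define $f^\natural := (0,\ldots,0,g,0,\ldots)$ with $g$ in position $n$. A direct computation using the Faà di Bruno formulas for $\partial^n$ and for composition with $\varsigma^\bullet$ verifies $\varsigma \circ \partial^n[f^\natural] = (g,0,0,\ldots)$ and homogeneity; uniqueness is immediate since homogeneity forces every other term of $f^\natural$ to vanish.

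For the reverse direction, injectivity of $\mathcal{E}_\varsigma^\flat$ on hom-sets follows directly: if $\mathcal{E}_\varsigma^\flat(f) = \mathcal{E}_\varsigma^\flat(g)$ then $\varsigma \circ \partial^n[f] = \varsigma \circ \partial^n[g]$ for all $n$, so $\mathsf{d}_\varsigma(f,g) = 0$, and the ultrametric axiom yields $f = g$. For surjectivity, let $(f^n)_{n \in \mathbb{N}}$ be a $\mathsf{D}\text{-}\mathsf{con}[\mathbb{X}]$-Faà di Bruno sequence. Each $f^n$ is a differential constant in $\mathbb{X}$ and is $k$-multilinear and symmetric in its last $n$ arguments relative to differential-constant test maps; I first upgrade this to the same statement relative to arbitrary maps in $\mathbb{X}$. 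The upgrade uses that $f^n$ being a $\mathsf{D}$-constant gives $f^n \circ \langle x_0,\ldots,x_n \rangle = f^n \circ \varsigma \circ \langle x_0,\ldots,x_n \rangle = f^n \circ \langle \varsigma \circ x_0,\ldots,\varsigma \circ x_n \rangle$ by naturality of $\varsigma$ (Lemma \ref{lem:sigma1}), reducing any test against arbitrary $x_j$ to a test against the $\mathsf{D}$-constants $\varsigma \circ x_j$, while $\varsigma$ being $k$-linear absorbs the scalars. Convenient $\mathsf{D}$-constants then furnish the homogeneous maps $(f^n)^\natural$, and Lemma \ref{lem:diffconvenient}.(\ref{lem:diffconvenient.1}) gives a convergent sum $f := \sum_{n=0}^{\infty} (f^n)^\natural$ in the complete ultrametric with $\varsigma \circ \partial^m[f] = f^m$ for all $m$, so $\mathcal{E}_\varsigma^\flat(f) = (f^n)_{n \in \mathbb{N}}$.

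The main obstacle is the compatibility between the two flavours of $k$-multilinearity and symmetry (taken in $\mathsf{D}\text{-}\mathsf{con}[\mathbb{X}]$ versus in $\mathbb{X}$); the bridging step using naturality and $k$-linearity of $\varsigma$ on $\mathsf{D}$-constants is what lets the hypothesis of convenient $\mathsf{D}$-constants be applied to each $f^n$. Everything else is bookkeeping, once one has Theorem \ref{thm:diffcon1} and Lemma \ref{lem:diffconvenient} in hand.
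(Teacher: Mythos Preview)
Your proof is correct and follows essentially the same strategy as the paper: both directions pivot on Theorem~\ref{thm:diffcon1}.(\ref{thm:diffcon1.iii}), with the forward direction using Lemma~\ref{lem:metric:diffcon} for completeness and the inverse of $\mathcal{E}_\varsigma^\flat$ to produce $f^\natural$ as a one-slot Fa\`a di Bruno sequence, and the reverse direction establishing bijectivity of $\mathcal{E}_\varsigma^\flat$ on hom-sets via Lemma~\ref{lem:diffconvenient}. Two small points of comparison: your injectivity argument (invoking the ultrametric non-distinguishability axiom directly to get $\mathsf{d}_\varsigma(f,g)=0 \Rightarrow f=g$) is slightly cleaner than the paper's, which instead expands both $f$ and $g$ as the infinite sums $\sum f_{(n)}$ and $\sum g_{(n)}$ via Lemma~\ref{lem:diffconvenient}.(\ref{lem:diffconvenient.2}); and your explicit bridging step showing that $k$-multilinearity and symmetry in $\mathsf{D}\text{-}\mathsf{con}[\mathbb{X}]$ upgrade to the same properties in $\mathbb{X}$ (via $f^n = f^n \circ \varsigma$ and $k$-linearity of $\varsigma$) fills in a point that the paper treats as self-evident in the statement of Lemma~\ref{lem:diffconvenient}.(\ref{lem:diffconvenient.1}).
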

\begin{proof} For the $\Rightarrow$ direction, suppose that $\left( \mathbb{X}, \mathsf{D} \right)$ is a Cartesian $k$-differential category which is cofree. By Theorem \ref{thm:diffcon1}, this implies that $\left( \mathbb{X}, \mathsf{D} \right)$ has a $\mathsf{D}$-constant unit $\varsigma$ and the unique induced Cartesian $k$-differential functor ${\mathcal{E}_\varsigma^\flat: \left( \mathbb{X}, \mathsf{D} \right) \to \left(\mathsf{F}\left[\mathsf{D}\text{-}\mathsf{con}\left[ \mathbb{X} \right]  \right],\mathsf{D} \right)}$ is an isomorphism. By Lemma \ref{lem:metric:diffcon}, we already have that $\left( \mathbb{X}, \mathsf{D} \right)$ is $\mathsf{D}$-constant complete. So it suffices to prove that $\left( \mathbb{X}, \mathsf{D} \right)$ also has convenient $\mathsf{D}$-constants. First observe that by the couniversal property of $\left(\left( \mathbb{X}, \mathsf{D} \right), \mathcal{E}_{\varsigma} \right)$, that we have a unique Cartesian $k$-differential functor ${\mathcal{E}_\varsigma^\flat}^{-1}: \left(\mathsf{F}\left[\mathsf{D}\text{-}\mathsf{con}\left[ \mathbb{X} \right]  \right],\mathsf{D} \right) \to \left( \mathbb{X}, \mathsf{D} \right)$ which makes the following diagram commute: 
\begin{align*} \xymatrixcolsep{5pc}\xymatrix{\mathsf{F}\left[\mathsf{D}\text{-}\mathsf{con}\left[ \mathbb{X} \right] \right] \ar[dr]_-{\mathcal{E}_{\mathsf{D}\text{-}\mathsf{con}\left[ \mathbb{X} \right]}}  \ar@{-->}[r]^-{\exists! ~ {\mathcal{E}_\varsigma^\flat}^{-1}}  & \mathbb{X} \ar[d]^-{\mathcal{E}_\varsigma} \\
  & \mathbb{A} }
\end{align*}
Then it follows that on objects ${\mathcal{E}_\varsigma^\flat}^{-1}(X) =X$ and that for a $\mathsf{D}\text{-}\mathsf{con}\left[ \mathbb{X} \right]$-Faà di Bruno sequence $(f_n)_{n \in \mathbb{N}}: X \to Y$, the following equality holds for all $n \in \mathbb{N}$, $\varsigma_Y \circ {\mathcal{E}_\varsigma^\flat}^{-1}\left( (f_n)_{n\in \mathbb{N}} \right) = f_0$. Now let $f: X \times X^n \to Y$ be a differential constant map which is also $k$-multilinear and symmetric in its last $n$-arguments. Then the sequence $(f^\sharp_m)_{m\in \mathbb{N}}$ defined as $f^\sharp_n = f_n$ and $f^\sharp_m = 0$ if $m \neq n$, is a $\mathsf{D}\text{-}\mathsf{con}\left[ \mathbb{X} \right]$-Faà di Bruno sequence. So $(f^\sharp_m)_{m\in \mathbb{N}}: X \to Y$ is a map in $\mathsf{F}\left[\mathsf{D}\text{-}\mathsf{con}\left[ \mathbb{X} \right] \right]$ and therefore ${\mathcal{E}_\varsigma^\flat}^{-1}\left( (f^\sharp_m)_{m\in \mathbb{N}} \right): X \to Y$ is a map in $\mathbb{X}$. So define $f^\natural := {\mathcal{E}_\varsigma^\flat}^{-1}\left( (f^\sharp_m)_{m\in \mathbb{N}} \right)$.  Then we compute: 
\begin{gather*}
    \varsigma_Y \circ \partial^n[f^\natural] = \varsigma_Y \circ \partial^n\left[{\mathcal{E}_\varsigma^\flat}^{-1}\left( (f^\sharp_m)_{m\in \mathbb{N}} \right) \right] = \varsigma_Y \circ {\mathcal{E}_\varsigma^\flat}^{-1}\left( \partial^n\left[(f^\sharp_m)_{m\in \mathbb{N}}  \right]   \right) = \partial^n\left[(f^\sharp_m)_{m\in \mathbb{N}}  \right]_0 = \begin{cases} f_n & \text{ if } m=n \\
     0 & \text{ if } m \neq n
 \end{cases}
\end{gather*}
So $\varsigma_Y \circ \partial^n[f^\natural] = f$ and $\varsigma_Y \circ \partial^m[f^\natural] = 0$ for all $m \neq n$ as desired. For uniqueness, suppose that there is another map $h: X \to Y$ such that $\varsigma_Y \circ \partial^n[h] = f$ and $\varsigma_Y \circ \partial^m[h] = 0$ for all $m \neq n$. By definition of $\mathcal{E}_\varsigma^\flat$, as given in Theorem \ref{thm:diffcon1}, it follows that $\mathcal{E}_\varsigma^\flat(h) = (f^\sharp_m)_{m\in \mathbb{N}}$. Thus, $h = {\mathcal{E}_\varsigma^\flat}^{-1}\left( (f^\sharp_m)_{m\in \mathbb{N}} \right)$, and so $f^\natural$ is unique. Therefore we conclude that $\left( \mathbb{X}, \mathsf{D} \right)$ has convenient $\mathsf{D}$-constants. 

For the $\Leftarrow$ direction, suppose that $\left( \mathbb{X}, \mathsf{D} \right)$ is a Cartesian $k$-differential category which has a $\mathsf{D}$-constant unit $\varsigma$, and which is $\mathsf{D}$-constant complete and has convenient $\mathsf{D}$-constants. Then by Theorem \ref{thm:diffcon1}, to prove that $\left( \mathbb{X}, \mathsf{D} \right)$ is cofree, it suffices to prove that the unique induced Cartesian $k$-differential functor ${\mathcal{E}_\varsigma^\flat: \left( \mathbb{X}, \mathsf{D} \right) \to \left(\mathsf{F}\left[\mathsf{D}\text{-}\mathsf{con}\left[ \mathbb{X} \right]  \right],\mathsf{D} \right)}$ is an isomorphism. To do so, it suffices to prove that $\mathcal{E}_\varsigma^\flat$ is bijective on objects and maps, allowing us to avoid working with the composition in $\mathsf{F}\left[\mathsf{D}\text{-}\mathsf{con}\left[ \mathbb{X} \right] \right]$. By construction, $\mathcal{E}_\varsigma^\flat$ is bijective on objects via Corollary \ref{cor:Eobj}. Then for maps, starting with injectivity, suppose that $\mathcal{E}_\varsigma^\flat(f) = \mathcal{E}_\varsigma^\flat(g)$. This implies that for all $n \in \mathbb{N}$, $\varsigma \circ \partial^n[f] = \varsigma \circ \partial^n[g]$. By Lemma \ref{lem:diffconvenient}.(\ref{lem:diffconvenient.2}), we have that: 
\[ f = \sum \limits^{\infty}_{n=0}   f_{(n)} = \sum \limits^{\infty}_{n=0} (\varsigma \circ \partial^n[f])^\natural = \sum \limits^{\infty}_{n=0} (\varsigma \circ \partial^n[g])^\natural = \sum \limits^{\infty}_{n=0}   g_{(n)} = g \]
So $f=g$, and thus $\mathcal{E}_\varsigma^\flat$ is injective on maps. For surjectivity, let $(f_n)_{n \in \mathbb{N}}$ be a $\mathsf{D}\text{-}\mathsf{con}\left[ \mathbb{X} \right]$-Faà di Bruno sequence. By Lemma \ref{lem:diffconvenient}.(\ref{lem:diffconvenient.1}), we have that the series $\sum \limits^{\infty}_{n=0} f^\natural_n$ converges (and is a map in $\mathbb{X}$). So we compute that: 
\begin{align*}
    \mathcal{E}_\varsigma^\flat\left(\sum \limits^{\infty}_{n=0} f^\natural_n \right) = \left(\varsigma \circ \partial^n\left[   \sum \limits^{\infty}_{n=0} f^\natural_n \right] \right)_{n \in \mathbb{X}} =  (f_n)_{n \in \mathbb{N}}
\end{align*}
So $\mathcal{E}_\varsigma^\flat$ is surjective. Thus, $\mathcal{E}_\varsigma^\flat$ is bijective on maps as well, and we conclude that $\mathcal{E}_\varsigma^\flat$ is an isomorphism. Explicitly, the inverse ${\mathcal{E}_\varsigma^\flat}^{-1}: \mathsf{F}\left[\mathsf{D}\text{-}\mathsf{con}\left[ \mathbb{X} \right] \right] \to \mathbb{X}$ is defined on objects as ${\mathcal{E}_\varsigma^\flat}^{-1}(X) = X$ and on maps as ${\mathcal{E}_\varsigma^\flat}^{-1}\left( (f_n)_{n \in \mathbb{N}} \right) = \sum \limits^{\infty}_{n=0} f^\natural$. Therefore, we conclude that $\mathbb{X}$ is cofree.
\end{proof}

We now explicitly express the couniversal property of a Cartesian $k$-differential category which has a differential constant unit, is differential constant complete, and has convenient differential constants.

\begin{corollary} Let $(\mathbb{X}, \!\mathsf{D})$ be a Cartesian $k$-differential category which has a $\mathsf{D}$-constant unit $\varsigma$, and which is $\mathsf{D}$-constant complete and has convenient $\mathsf{D}$-constants. Then the triple $\left((\mathbb{X}, \mathsf{D}), \mathcal{E}_{\varsigma} \right)$ is a cofree Cartesian $k$-differential category over $\mathsf{D}\text{-}\mathsf{con}\left[ \mathbb{X} \right]$, where in particular for any Cartesian $k$-differential category $(\mathbb{Y},\mathsf{D})$ and Cartesian $k$-linear functor ${\mathcal{F}: \mathbb{Y} \to \mathbb{A}}$, the unique Cartesian $k$-differential functor $\mathcal{F}^\flat: (\mathbb{Y}, \mathsf{D}) \to (\mathbb{X}, \mathsf{D})$ such that the following diagram commutes: 
\begin{align*} \xymatrixcolsep{5pc}\xymatrix{\mathbb{Y} \ar[dr]_-{\mathcal{F}}  \ar@{-->}[r]^-{\exists! ~ \mathcal{F}^\flat}  & \mathbb{X} \ar[d]^-{\mathcal{E}_\varsigma} \\
  & \mathsf{D}\text{-}\mathsf{con}\left[ \mathbb{X} \right] }
\end{align*}
is defined on objects as $\mathcal{F}^\flat(Y) = \mathcal{F}(Y)$ and on maps as $\mathcal{F}^\flat(f) = \sum \limits^{\infty}_{n=0} \mathcal{F}\left(\partial^n[f] \right)^\natural_n$. 
\end{corollary}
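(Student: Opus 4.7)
The existence and uniqueness of a Cartesian $k$-differential functor $\mathcal{F}^\flat$ with $\mathcal{E}_\varsigma \circ \mathcal{F}^\flat = \mathcal{F}$ is immediate from Theorem \ref{thm:2} combined with Theorem \ref{thm:diffcon1}, which jointly yield that $((\mathbb{X}, \mathsf{D}), \mathcal{E}_\varsigma)$ is a cofree Cartesian $k$-differential category over $\mathsf{D}\text{-}\mathsf{con}[\mathbb{X}]$. The substantive content here is therefore only the explicit formula for $\mathcal{F}^\flat$ on maps. My plan is to factor $\mathcal{F}^\flat$ as the composite of the Faà di Bruno-induced functor on $\mathcal{F}$ with the inverse isomorphism $ {\mathcal{E}_\varsigma^\flat}^{-1}$ already computed in the proof of Theorem \ref{thm:2}.

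Concretely, I would first invoke the couniversal property of $(\mathsf{F}[\mathsf{D}\text{-}\mathsf{con}[\mathbb{X}]], \mathsf{D})$ applied to the Cartesian $k$-linear functor $\mathcal{F}: \mathbb{Y} \to \mathsf{D}\text{-}\mathsf{con}[\mathbb{X}]$, giving a Cartesian $k$-differential functor $\mathcal{F}^\flat_{\mathsf{F}}: (\mathbb{Y}, \mathsf{D}) \to (\mathsf{F}[\mathsf{D}\text{-}\mathsf{con}[\mathbb{X}]], \mathsf{D})$ whose action on maps is $\mathcal{F}^\flat_{\mathsf{F}}(f) = (\mathcal{F}(\partial^n[f]))_{n \in \mathbb{N}}$. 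The sequence is a genuine $\mathsf{D}\text{-}\mathsf{con}[\mathbb{X}]$-Faà di Bruno sequence because each component is automatically a $\mathsf{D}$-constant (its codomain lies in $\mathsf{D}\text{-}\mathsf{con}[\mathbb{X}]$), and because Cartesian $k$-linear functors preserve $k$-multilinearity and symmetry in the last $n$-arguments.

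Next I would apply the inverse ${\mathcal{E}_\varsigma^\flat}^{-1}: (\mathsf{F}[\mathsf{D}\text{-}\mathsf{con}[\mathbb{X}]], \mathsf{D}) \to (\mathbb{X}, \mathsf{D})$ whose formula ${\mathcal{E}_\varsigma^\flat}^{-1}\left((f_n)_{n \in \mathbb{N}}\right) = \sum^\infty_{n=0} f_n^\natural$ was obtained in the proof of Theorem \ref{thm:2}. Setting $\mathcal{F}^\flat := {\mathcal{E}_\varsigma^\flat}^{-1} \circ \mathcal{F}^\flat_{\mathsf{F}}$ and unfolding yields the desired formula $\mathcal{F}^\flat(f) = \sum^\infty_{n=0} \mathcal{F}(\partial^n[f])^\natural$. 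The compatibility $\mathcal{E}_\varsigma \circ \mathcal{F}^\flat = \mathcal{F}$ then follows by factoring $\mathcal{E}_\varsigma = \mathcal{E}_{\mathsf{D}\text{-}\mathsf{con}[\mathbb{X}]} \circ \mathcal{E}_\varsigma^\flat$ via Theorem \ref{thm:diffcon1}.(\ref{thm:diffcon1.iii}) and then using the Faà di Bruno couniversal property; alternatively one can verify it directly on maps by applying Lemma \ref{lem:diffconvenient}.(\ref{lem:diffconvenient.1}) at $m=0$ to conclude that $\varsigma \circ \sum^\infty_{n=0} \mathcal{F}(\partial^n[f])^\natural = \mathcal{F}(\partial^0[f]) = \mathcal{F}(f)$. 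There is no real obstacle in this argument, since all the heavy lifting has already been done in the proofs of Theorem \ref{thm:2} and Lemma \ref{lem:diffconvenient}; the only mild subtlety is the notational bookkeeping needed to confirm that $\mathcal{F}(\partial^n[f])$ satisfies the multilinearity and symmetry hypotheses required for $(-)^\natural$ to be defined.
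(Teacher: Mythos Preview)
Your proposal is correct and follows exactly the route implicit in the paper: the corollary is stated without proof there, since it is just the composite of the Fa\`a di Bruno couniversal functor $\mathcal{F}^\flat_{\mathsf{F}}(f) = (\mathcal{F}(\partial^n[f]))_{n \in \mathbb{N}}$ with the explicit inverse ${\mathcal{E}_\varsigma^\flat}^{-1}\left((f_n)_{n \in \mathbb{N}}\right) = \sum^\infty_{n=0} f_n^\natural$ established at the end of the proof of Theorem~\ref{thm:2}. Your handling of the compatibility $\mathcal{E}_\varsigma \circ \mathcal{F}^\flat = \mathcal{F}$ via Lemma~\ref{lem:diffconvenient}.(\ref{lem:diffconvenient.1}) at $m=0$ is also exactly right.
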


\begin{example}\label{ex:cofreeChCD} \normalfont Here are the differential constant homogenous maps and infinite sum decomposition in our examples of cofree Cartesian $k$-differential categories. 
\begin{enumerate}[{\em (i)}]
\item\label{ex:faaChCD} For a Cartesian left $k$-linear category $\mathbb{A}$, in its Faà di Bruno construction, a Faà di Bruno sequence $f_{\bullet}$ is differential constant homogenous of degree $n$ if $f_m =0$ for all $m \neq n$, so $f_{\bullet}=(0,0,\hdots, 0, f_n, 0, 0, \hdots)$ for some map $f_n$ in $\mathbb{A}$ which is $k$-multilinear and symmetric in its last $n$-arguments. For a Faà di Bruno sequence $f_{\bullet}$, ${f_{\bullet}}_{(n)} = (0,0,\hdots, 0, f_n, 0, 0, \hdots)$, and therefore $f_{\bullet} = (f_0, 0, \hdots) + (0, f_1, 0 \hdots) + (0, 0, f_2, 0, \hdots) + \hdots$ as was desired. 
\item For a $k$-linear category $\mathbb{B}$ with finite biproducts, in $\mathbb{B}^\Delta$ there are no $\mathsf{D}^\Delta$-constant homogenous map of degree $n \geq 2$ other than $(0,0)$. Those of degree $0$ are of the form $(f,0)$ and those of degree $1$ are of the form $(0,g)$, for any maps $f$ and $g$ in $\mathbb{B}$. Then for an arbitrary map $(f,g)$ in $\mathbb{B}^\Delta$, $(f,g)_{(0)} = (f,0)$ and $(f,g)_{(1)} = (0,g)$, and clearly $(f,g) = (f,0) + (0,g)$. 
\item In $k\text{-}\mathsf{MOD}^\mathcal{Q}$, a $k$-linear morphism $f: \mathcal{Q}(M) \to M$ is differential constant homogenous of degree $0$ if for all $m \in \mathbb{N}$, $f\left(\vert x_1, \hdots, x_{m+1} \rangle_{x_0} \right) = 0$, and is of degree $n+1$ if $f \left( \vert 1 \rangle_{(x_0)} \right) = 0$ and $f\left(\vert x_1, \hdots, x_m \rangle_{x_0} \right) = 0$ for all $m \neq n+1$. Then for a $k$-linear morphism $f: \mathcal{Q}(M) \to M$, $f_{(0)}$ is defined on pure symmetrized tensors as $f_{(0)} \left( \vert 1 \rangle_{(x_0)} \right) = 0$ and $f_{(0)}\left(\vert x_1, \hdots, x_m \rangle_{x_0} \right) = 0$, while $f_{(n+1)}$ is defined on pure symmetrized tensors as $f_{(n+1)} \left( \vert 1 \rangle_{(x_0)} \right) = 0$ and $f_{(n+1)}\left(\vert x_1, \hdots, x_m \rangle_{x_0} \right) = 0$ if $m \neq n$, and $f_{(n+1)}\left(\vert x_1, \hdots, x_n \rangle_{x_0} \right) = f\left(\vert x_1, \hdots, x_n \rangle_{x_0} \right)$. 
\end{enumerate}
\end{example}

\section{Characterisation as Monad Algebras}\label{sec:cofreealg}

In this section, we characterise cofree Cartesian $k$-differential categories this time as algebras of a monad on the category of Cartesian $k$-differential categories. This monad arises from the Faà di Bruno adjunction \cite[Corollary 3.13]{garner2020cartesian}, which is the adjunction between the category of Cartesian $k$-differential categories and the category of Cartesian left $k$-linear categories where the left adjoint is the forgetful functor and the right adjoint is given by the Faà di Bruno construction. The resulting comonad was first studied by Cockett and Seely in \cite[Theorem 2.2.2]{cockett2011faa} where they also showed that the coalgebras of this comonad were precisely Cartesian $k$-differential categories \cite[Theorem 3.2.4]{cockett2011faa}. In other words, the Faà di Bruno adjunction is comonadic. In this section, we will also explain why the Faà di Bruno adjunction is also monadic, that is, why the algebras of the induced monad correspond precisely to Cartesian $k$-left linear categories. As such, the Faà di Bruno comonad is of effective descent type \cite[Section 2]{mesablishvili2006monads}.  

\begin{remark}\normalfont We note that this entire section can be reformulated in terms of the $\mathcal{D}$-sequence construction \cite{lemay2018tangent}, since it also provides a right adjoint to the forgetful functor. Therefore, cofree Cartesian $k$-differential categories can also be characterised as algebras of the monad induced by the $\mathcal{D}$-sequence construction. 
\end{remark}

Let $\mathbb{CLLC}_k$ be the category whose objects are Cartesian left $k$-linear categories and whose maps are Cartesian $k$-linear functors between them. Similarly, let $\mathbb{CDC}_k$ be the category whose objects are Cartesian $k$-differential categories and whose maps are Cartesian $k$-differential functors between them. Let $\mathfrak{U}: \mathbb{CDC}_k \to \mathbb{CLLC}_k$ be the forgetful functor which sends a Cartesian $k$-differential category to its underlying Cartesian $k$-linear category, $\mathfrak{U} (\mathbb{X}, \mathsf{D}) = \mathbb{X}$, and sends a Cartesian $k$-differential functor to itself viewed as a Cartesian $k$-linear functor, $\mathfrak{U}(\mathcal{F}) = \mathcal{F}$. Define the Faà di Bruno functor $\mathfrak{F}: \mathbb{CLLC}_k \to \mathbb{CDC}_k$ which sends a Cartesian left $k$-linear category to its Faà di Bruno construction, $\mathfrak{F}(A) = \left( \mathsf{F}[\mathbb{A}], \mathsf{D} \right)$, and sends a Cartesian $k$-linear functor $\mathcal{A}: \mathbb{A} \to \mathbb{A}^\prime$ to the induced unique Cartesian $k$-differential functor ${\mathsf{F}[\mathcal{A}]: \left( \mathsf{F}[\mathbb{A}], \mathsf{D} \right) \to \left( \mathsf{F}[\mathbb{A}^\prime], \mathsf{D} \right)}$ which makes the following diagram commute: 
\begin{align*} \xymatrixcolsep{5pc}\xymatrix{\mathsf{F}[\mathbb{A}] \ar[d]_-{\mathcal{E}_\mathbb{A}}  \ar@{-->}[r]^-{\exists! ~ \mathsf{F}[\mathcal{A}]}  & \mathsf{F}[\mathbb{A}^\prime] \ar[d]^-{\mathcal{E}_{\mathbb{A}^\prime}} \\
 \mathbb{A} \ar[r]_-{\mathcal{A}} & \mathbb{A}^\prime }
\end{align*}
so $\mathfrak{F}(\mathcal{A}) = \mathsf{F}[\mathcal{A}]$. Explicitly, on objects $\mathsf{F}[\mathcal{A}](A) = A$ and on maps $\mathsf{F}[\mathcal{A}](f_{\bullet}) = \mathcal{A}(f_{\bullet})$. 

The Faà di Bruno functor $\mathfrak{F}$ is a right adjoint of the forgetful functor $\mathfrak{U}$ \cite[Corollary 3.13]{garner2020cartesian}, so $\mathfrak{U} \dashv \mathfrak{F}$, which we call the \textbf{Faà di Bruno adjunction}. For a Cartesian left $k$-linear category $\mathbb{A}$, the counit of the Faà di Bruno adjunction is precisely ${\mathcal{E}_\mathbb{A}: \mathsf{F}[\mathbb{A}] \to \mathbb{A}}$. On the other hand, for a Cartesian $k$-differential category $(\mathbb{X}, \mathsf{D})$, the unit of the Faà di Bruno adjunction is the induced unique Cartesian $k$-differential functor $\mathcal{N}_{(\mathbb{X},\mathsf{D})}: (\mathbb{X},\mathsf{D}) \to \left( \mathsf{F}[\mathbb{X}], \mathsf{D} \right)$ that makes the following diagram commute: 
 \begin{align*} \xymatrixcolsep{5pc}\xymatrix{ \mathbb{X} \ar@{=}[dr]^-{} \ar@{-->}[r]^-{\exists! ~\mathcal{N}_{(\mathbb{X},\mathsf{D})}} & \mathsf{F}[\mathbb{X}] \ar[d]^-{\mathcal{E}_\mathbb{X}}     \\
 & \mathbb{X} 
 }
\end{align*}
Explicitly, on objects $\mathcal{N}_{(\mathbb{X},\mathsf{D})}(X) =X$, while on maps $\mathcal{N}_{(\mathbb{X},\mathsf{D})}(f)^\bullet = \partial^\bullet[f]=  (\partial^n[f])_{n \in \mathbb{N}}$ (as defined in Lemma \ref{lem:partialbullet}).  

The Faà di Bruno adjunction induces a comonad $\underline{\mathfrak{F}} = \mathfrak{U} \circ \mathfrak{F}$ on $\mathbb{CLLC}_k$ \cite[Theorem 2.2.2]{cockett2011faa} whose coalgebras correspond precisely to Cartesian $k$-differential categories \cite[Theorem 3.2.4]{cockett2011faa}. In other words, the Faà di Bruno adjunction is comonadic \cite[Corollary 3.13]{garner2020cartesian}, that is, the adjunction's induced comparison functor \cite[Section 2.3]{barr2000toposes} from $\mathbb{CDC}_k$ to the coEilenberg-Moore category of $\underline{\mathfrak{F}}$ is a an equivalence of categories.

Of course on the other hand, the Faà di Bruno adjunction also induces a monad $\overline{\mathfrak{F}} = \mathfrak{F} \circ \mathfrak{U}$ on $\mathbb{CDC}_k$. Let us unpack the description of this monad. The functor $\overline{\mathfrak{F}}: \mathbb{CDC}_k \to \mathbb{CDC}_k$ is defined on objects as $\overline{\mathfrak{F}}(\mathbb{X}, \mathsf{D}) := \left( \mathsf{F}[\mathbb{X}], \mathsf{D} \right)$, completely forgetting about the differential combinator of the input, and on maps as $\overline{\mathfrak{F}}(\mathcal{F}) = \mathsf{F}[\mathcal{F}]$. For a Cartesian $k$-differential category $(\mathbb{X}, \mathsf{D})$, the monad unit is ${\mathcal{N}_{(\mathbb{X},\mathsf{D})}: (\mathbb{X},\mathsf{D}) \to \left( \mathsf{F}[\mathbb{X}], \mathsf{D} \right)}$, while the monad multiplication $\mathcal{M}_{(\mathbb{X},\mathsf{D})}: \left( \mathsf{F}\left[ \mathsf{F}[\mathbb{X}] \right], \mathsf{D} \right) \to  \left( \mathsf{F}[\mathbb{X}], \mathsf{D} \right)$ is defined as $\mathcal{M}_{(\mathbb{X},\mathsf{D})} = \mathsf{F}[\mathcal{E}_{\mathbb{X}}]$, that is, the unique Cartesian $k$-differential functor which makes the following diagram commute: 
 \begin{align*} \xymatrixcolsep{5pc}\xymatrix{  \mathsf{F}\left[\mathsf{F}[\mathbb{X}] \right] \ar@{-->}[r]^-{\exists! ~\mathcal{M}_{(\mathbb{X},\mathsf{D})}}  \ar[d]_-{\mathcal{E}_{\mathsf{F}[\mathbb{X}]}} &   \mathsf{F}[\mathbb{X}] \ar[d]^-{\mathcal{E}_{\mathbb{X}}}    \\
 \mathsf{F}[\mathbb{X}] \ar[r]_-{\mathcal{E}_{\mathbb{X}}} & \mathbb{X}
 }
\end{align*}
Explicitly, on objects $\mathcal{M}_{(\mathbb{X},\mathsf{D})}(X)= X$, while for maps, which is a Faà di Bruno sequence of Faà di Bruno sequences, it picks out the first term of each sequence, $\mathcal{M}_{(\mathbb{X},\mathsf{D})}\left( \left( f_{\bullet,n} \right)_{n \in \mathbb{N}} \right) = (f_{0,n})_{n \in \mathbb{N}}$. Now recall that an $\overline{\mathfrak{F}}$-algebra is a pair $\left((\mathbb{X}, \mathsf{D}), \mathcal{X} \right)$ consisting of a Cartesian $k$-differential category $(\mathbb{X}, \mathsf{D})$ and a Cartesian $k$-differential functor $\mathcal{X}: \left( \mathsf{F}[\mathbb{X}], \mathsf{D} \right) \to \left( \mathbb{X}, \mathsf{D} \right)$, and such that the following diagrams commute: 
 \begin{align*} \xymatrixcolsep{5pc}\xymatrix{ \mathbb{X} \ar@{=}[dr]^-{} \ar[r]^-{\mathcal{N}_{(\mathbb{X},\mathsf{D})}} & \mathsf{F}[\mathbb{X}] \ar[d]^-{\mathcal{X}} &  \mathsf{F}\left[\mathsf{F}[\mathbb{X}] \right] \ar[r]^-{\mathcal{M}_{(\mathbb{X},\mathsf{D})}}  \ar[d]_-{\mathsf{F}[\mathcal{X}]} &   \mathsf{F}[\mathbb{X}] \ar[d]^-{\mathcal{X}}    \\
 & \mathbb{X} & \mathsf{F}[\mathbb{X}] \ar[r]_-{\mathcal{X}} & \mathbb{X}
 }
\end{align*}
The left diagram implies that that on objects $\mathcal{X}(X)=X$ and for any map $f$ of $\mathbb{X}$ that $\mathcal{X}(f, \partial^1[f], \partial^2[f], \hdots) = f$. The right diagram says that for a Faà di Bruno sequence of Faà di Bruno sequences that $\mathcal{X}\left( \left( \mathcal{X}\left( f_{\bullet,n} \right) \right)_{n \in \mathbb{N}} \right) = \mathcal{X}\left( (f_{0,n})_{n \in \mathbb{N}} \right)$. Later in Corollary \ref{cor:faastruct} we will in fact show that $\overline{\mathfrak{F}}$-algebra structure is unique, which means that being an $\overline{\mathfrak{F}}$-algebra is a property of a Cartesian $k$-differential category. Of course, from well known facts about adjunctions we have that for any Cartesian left $k$-linear category $\mathbb{A}$, $\left( \left( \mathsf{F}[\mathbb{A}], \mathsf{D} \right), \mathsf{F}[\mathcal{E}_\mathbb{A}] \right)$ is an $\overline{\mathfrak{F}}$-algebra.  

Our objective is to show that every $\overline{\mathfrak{F}}$-algebra is also a cofree Cartesian $k$-differential category. To help us do so, let us first show that an $\overline{\mathfrak{F}}$-algebra has a differential constant unit. 

\begin{lemma}\label{lem:faaalg-diffunit} Let $\left((\mathbb{X}, \mathsf{D}), \mathcal{X} \right)$ be an $\overline{\mathfrak{F}}$-algebra. Then $(\mathbb{X}, \mathsf{D})$ has a $\mathsf{D}$-constant unit $\varsigma$ defined as $\varsigma_X = \mathcal{X}(1_X,0, 0, \hdots)$, and we also have that: 
\begin{enumerate}[{\em (i)}]
\item \label{lem:fdu.i} The following diagram commutes: 
 \begin{align*} \xymatrixcolsep{5pc}\xymatrix{   \mathsf{F}[\mathbb{X}] \ar[r]^-{\mathcal{X}}  \ar[d]_-{\mathcal{E}_{\mathbb{X}}} &   \mathbb{X} \ar[d]^-{\mathcal{E}_\varsigma}    \\
\mathbb{X} \ar[r]_-{\mathcal{E}_\varsigma} & \mathsf{D}\text{-}\mathsf{con}\left[ \mathbb{X} \right]
 }
\end{align*} 
so in particular, for every $\mathbb{X}$-Faà di Bruno sequence $f_{\bullet}: X \to Y$, $\varsigma_Y \circ \mathcal{X}\left(f_{\bullet} \right) = \varsigma_Y \circ f_0$. 
\item  \label{lem:fdu.ii} For every map $f: X \to Y$ in $\mathbb{X}$, $( \varsigma_Y \circ \partial^n[f])_{n \in \mathbb{N}}$ is a $\mathbb{X}$-Faà di Bruno sequence and $\mathcal{X}\left( ( \varsigma_Y \circ \partial^n[f])_{n \in \mathbb{N}} \right) = f$.  
\end{enumerate}
\end{lemma}
\begin{proof} As explained in Example \ref{ex:diffconunit}.(\ref{ex:diffcoununit.faa}), ${\varsigma_\bullet}_X := (1_X,0, 0, \hdots)$ is a $\mathsf{D}$-constant unit in $\left( \mathsf{F}[\mathbb{X}], \mathsf{D} \right)$. Since $\mathcal{X}$ is a Cartesian $k$-differential functor, by Lemma \ref{lem:dconlem1}, $\varsigma_X = \mathcal{X}({\varsigma_\bullet}_X)$ is also a $\mathsf{D}$-constant. If $f: X \to Y$ is a $\mathsf{D}$-constant in $\mathbb{X}$, then since $\mathcal{N}_{(\mathbb{X}, \mathsf{D})}$ is also a Cartesian $k$-differential functor, we have that $\partial^\bullet[f]$ is also a $\mathsf{D}$-constant in $\left( \mathsf{F}[\mathbb{X}], \mathsf{D} \right)$. So we compute that: 
\[ f \circ \varsigma_X = \mathcal{X}\left( \partial^\bullet[f] \right) \circ \mathcal{X}({\varsigma_\bullet}_X) =  \mathcal{X}\left(\partial^\bullet[f] \circ {\varsigma_\bullet}_X \right) = \mathcal{X}\left( \partial^\bullet[f] \right) = f \]
and similarly that $\varsigma_Y \circ f = f$. So we conclude that $\varsigma$ is a $\mathsf{D}$-constant unit for $(\mathbb{X}, \mathsf{D})$. For (\ref{lem:fdu.i}), observe that in $\left( \mathsf{F}[\mathbb{X}], \mathsf{D} \right)$, by Proposition \ref{prop:cofree-diffcon}, that $\mathcal{E}_{\mathbb{X}}({\varsigma_\bullet} \circ f_{\bullet}) = f_0 = \mathcal{E}_{\mathbb{X}}({\varsigma_\bullet} \circ \partial^\bullet[f_0])$. By Lemma \ref{lem:dconlem1}.(\ref{dconcomp}), ${\varsigma_\bullet} \circ f_{\bullet}$ and ${\varsigma_\bullet} \circ  \partial^\bullet[f_0]$ are both $\mathsf{D}$-constants, so by Lemma \ref{lem:Econ}, since $\mathcal{E}_{\mathbb{X}}({\varsigma_\bullet} \circ f_{\bullet}) = \mathcal{E}_{\mathbb{X}}({\varsigma_\bullet} \circ  \partial^\bullet[f_0])$, then ${\varsigma_\bullet} \circ f_{\bullet} = {\varsigma_\bullet} \circ \partial^\bullet[f_0]$. Applying $\mathcal{X}$ to this equality, we then obtain $\varsigma \circ \mathcal{X}\left(f_{\bullet} \right) = \varsigma \circ f_0$, and the desired diagram commutes by definition. For (\ref{lem:fdu.ii}), by Lemma \ref{lem:sigma1}, $\varsigma$ is $k$-linear so $( \varsigma \circ \partial^n[f])_{n \in \mathbb{N}}$ is indeed an $\mathbb{X}$-Faà di Bruno sequence. Then using (\ref{lem:fdu.i}), we compute that:
\begin{gather*}
    \mathcal{X}\left( ( \varsigma \circ \partial^n[f])_{n \in \mathbb{N}} \right) = \mathcal{X}\left( \left( \varsigma \circ \mathcal{X}\left(  \partial^\bullet\left[ \partial^n f \right] \right)\right)_{n \in \mathbb{N}} \right) = \mathcal{X}\left( \left( \mathcal{X}\left( {\varsigma_\bullet}\circ \partial^\bullet\left[ \partial^n f \right] \right)\right)_{n \in \mathbb{N}} \right) =\mathcal{X}\left( \left( \partial^n[f] \right)_{n \in \mathbb{N}} \right) = f \\
\end{gather*}
So $\mathcal{X}\left( ( \varsigma \circ \partial^n[f])_{n \in \mathbb{N}} \right) = f$ as desired. 
\end{proof}

We may now prove the main result of this section. 

\begin{theorem}\label{thm:faaalg-cofree} A Cartesian $k$-differential category is cofree if and only if it is an $\overline{\mathfrak{F}}$-algebra. 
\end{theorem}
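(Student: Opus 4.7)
The plan is to prove both directions using the previously established characterisations of cofreeness.

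For the $\Rightarrow$ direction, suppose $(\mathbb{X}, \mathsf{D})$ is cofree over some Cartesian left $k$-linear category $\mathbb{A}$ via $\mathcal{E}: \mathbb{X} \to \mathbb{A}$. Since $\mathcal{E} \circ \mathcal{E}_{\mathbb{X}}: \mathsf{F}[\mathbb{X}] \to \mathbb{A}$ is a Cartesian $k$-linear functor, the couniversal property yields a unique Cartesian $k$-differential functor $\mathcal{X}: (\mathsf{F}[\mathbb{X}], \mathsf{D}) \to (\mathbb{X}, \mathsf{D})$ with $\mathcal{E} \circ \mathcal{X} = \mathcal{E} \circ \mathcal{E}_{\mathbb{X}}$. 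Both algebra axioms then drop out of Lemma \ref{lem:cofree-lem1}: composing $\mathcal{X} \circ \mathcal{N}_{(\mathbb{X}, \mathsf{D})}$ and $1_{\mathbb{X}}$ with $\mathcal{E}$ both give $\mathcal{E}$ (using $\mathcal{E}_{\mathbb{X}} \circ \mathcal{N}_{(\mathbb{X}, \mathsf{D})} = 1_{\mathbb{X}}$), while composing $\mathcal{X} \circ \mathcal{M}_{(\mathbb{X}, \mathsf{D})}$ and $\mathcal{X} \circ \mathsf{F}[\mathcal{X}]$ with $\mathcal{E}$ both reduce to $\mathcal{E} \circ \mathcal{E}_{\mathbb{X}} \circ \mathcal{E}_{\mathsf{F}[\mathbb{X}]}$, using the defining equation $\mathcal{E}_{\mathbb{X}} \circ \mathsf{F}[\mathcal{E}_{\mathbb{X}}] = \mathcal{E}_{\mathbb{X}} \circ \mathcal{E}_{\mathsf{F}[\mathbb{X}]}$ of the multiplication and the naturality identity $\mathcal{E}_{\mathbb{X}} \circ \mathsf{F}[\mathcal{X}] = \mathcal{X} \circ \mathcal{E}_{\mathsf{F}[\mathbb{X}]}$ respectively.

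For the $\Leftarrow$ direction, suppose $((\mathbb{X}, \mathsf{D}), \mathcal{X})$ is an $\overline{\mathfrak{F}}$-algebra. By Lemma \ref{lem:faaalg-diffunit}, $\mathbb{X}$ has a differential constant unit $\varsigma$, and by Theorem \ref{thm:diffcon1}.(\ref{thm:diffcon1.iii}) it is enough to show the canonical Cartesian $k$-differential functor $\mathcal{E}_\varsigma^\flat: (\mathbb{X}, \mathsf{D}) \to (\mathsf{F}[\mathsf{D}\text{-}\mathsf{con}[\mathbb{X}]], \mathsf{D})$ is an isomorphism. I will construct its inverse $\Phi$ directly from $\mathcal{X}$. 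The key observation is that every $\mathsf{D}\text{-}\mathsf{con}[\mathbb{X}]$-Faà di Bruno sequence $(f_n)_{n \in \mathbb{N}}$ consists of $\mathsf{D}$-constants in $\mathbb{X}$ that are $k$-multilinear and symmetric in their last $n$-arguments; since composition, pairing and the $k$-linear structure in $\mathsf{D}\text{-}\mathsf{con}[\mathbb{X}]$ are inherited from $\mathbb{X}$, each such sequence is simultaneously an $\mathbb{X}$-Faà di Bruno sequence. So define $\Phi: \mathsf{F}[\mathsf{D}\text{-}\mathsf{con}[\mathbb{X}]] \to \mathbb{X}$ on objects as the identity and on maps by $\Phi((f_n)_{n \in \mathbb{N}}) = \mathcal{X}((f_n)_{n \in \mathbb{N}})$, viewing the input as an $\mathbb{X}$-Faà di Bruno sequence.

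Verifying that $\Phi$ is a Cartesian $k$-differential functor hinges on showing that the Faà di Bruno formulas for composition, identities, projections, pairing, and the differential combinator in $\mathsf{F}[\mathsf{D}\text{-}\mathsf{con}[\mathbb{X}]]$ (which use $\varsigma_X$ as identities and $\varsigma \circ \pi_j$ as projections) coincide with those in $\mathsf{F}[\mathbb{X}]$ when restricted to $\mathsf{D}$-constant sequences. This reduces to two facts: $f \circ \varsigma = f$ for any $\mathsf{D}$-constant $f$, and $\varsigma$ is natural on products so $\varsigma_{X^{\times^m}} = \varsigma_X^{\times^m}$ by Lemma \ref{lem:sigma1}. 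For the preservation of identities in particular, the $\mathsf{F}[\mathsf{D}\text{-}\mathsf{con}[\mathbb{X}]]$-identity $(\varsigma_X, \varsigma \circ \pi_1, 0, 0, \ldots)$ equals $(\varsigma \circ \partial^n[1_X])_{n \in \mathbb{N}}$ by \textbf{[HD.3]}, so $\Phi$ sends it to $1_X$ by Lemma \ref{lem:faaalg-diffunit}.(\ref{lem:fdu.ii}). Once functoriality is in hand, $\Phi \circ \mathcal{E}_\varsigma^\flat = 1_{\mathbb{X}}$ is exactly Lemma \ref{lem:faaalg-diffunit}.(\ref{lem:fdu.ii}), while $\mathcal{E}_\varsigma^\flat \circ \Phi = 1$ follows by computing the $n$-th component as $\varsigma \circ \partial^n[\mathcal{X}((f_m)_{m \in \mathbb{N}})] = \varsigma \circ \mathcal{X}(\partial^n[(f_m)_{m \in \mathbb{N}}]) = \varsigma \circ \partial^n[(f_m)_{m \in \mathbb{N}}]^0 = \varsigma \circ f_n = f_n$, using Lemma \ref{lem:faaalg-diffunit}.(\ref{lem:fdu.i}), the universal property identity $\mathcal{E}(\partial^n[h^\bullet]) = h^n$, and that $f_n$ is a $\mathsf{D}$-constant. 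The main obstacle is precisely this bookkeeping comparison of Faà di Bruno formulas between $\mathsf{F}[\mathsf{D}\text{-}\mathsf{con}[\mathbb{X}]]$ and $\mathsf{F}[\mathbb{X}]$; once it is settled, the two inverse equations are essentially encoded in Lemma \ref{lem:faaalg-diffunit}.
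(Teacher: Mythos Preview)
Your proof is correct, and the $\Rightarrow$ direction is essentially the paper's argument (the paper routes through the specific base $\mathsf{D}\text{-}\mathsf{con}[\mathbb{X}]$ via Theorem~\ref{thm:diffcon1} rather than a general $\mathbb{A}$, but the verification of the algebra axioms is the same use of Lemma~\ref{lem:cofree-lem1}).

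For the $\Leftarrow$ direction your route differs from the paper's, and the difference is instructive. You build the inverse $\Phi$ explicitly and then set out to verify that $\Phi$ is a Cartesian $k$-differential functor by comparing the Fa\`a di Bruno formulas in $\mathsf{F}[\mathsf{D}\text{-}\mathsf{con}[\mathbb{X}]]$ and $\mathsf{F}[\mathbb{X}]$ --- the step you flag as ``the main obstacle.'' The paper sidesteps this entirely: since $\mathcal{E}_\varsigma^\flat$ is \emph{already} a Cartesian $k$-differential functor, it suffices to show it is bijective on objects and maps; the inverse is then automatically a Cartesian $k$-differential functor, and no Fa\`a di Bruno bookkeeping is needed. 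The paper's injectivity and surjectivity calculations are exactly your two equations $\Phi \circ \mathcal{E}_\varsigma^\flat = 1_{\mathbb{X}}$ and $\mathcal{E}_\varsigma^\flat \circ \Phi = 1$, read set-theoretically. So your two ``inverse equations'' already give the result without ever checking functoriality of $\Phi$; that whole paragraph on comparing compositions, identities and projections across the two Fa\`a di Bruno constructions is unnecessary work. In short, your approach is sound but does more than it needs to, and the paper's trick of proving bijectivity of the known functor rather than functoriality of the candidate inverse is precisely what avoids the obstacle you identified.
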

\begin{proof} For the $\Rightarrow$ direction, let $(\mathbb{X}, \mathsf{D})$ be a cofree Cartesian $k$-differential category, which by Theorem \ref{thm:diffcon1}, implies $\mathbb{X}$ has a differential constant unit $\varsigma$ such that $\left((\mathbb{X}, \mathsf{D}), \mathcal{E}_{\varsigma} \right)$ is a cofree Cartesian $k$-differential category over $\mathsf{D}\text{-}\mathsf{con}\left[ \mathbb{X} \right]$. Then, using the couniversal property of $\left((\mathbb{X}, \mathsf{D}), \mathcal{E}_{\varsigma} \right)$, define $\mathcal{X}: (\mathsf{F}[\mathbb{X}], \mathsf{D}) \to (\mathbb{X}, \mathsf{D})$ as the unique Cartesian $k$-differential functor which makes the following diagram commute: 
 \begin{align*} \xymatrixcolsep{5pc}\xymatrix{   \mathsf{F}[\mathbb{X}] \ar@{-->}[r]^-{\exists! ~\mathcal{X}}  \ar[d]_-{\mathcal{E}_\mathbb{X}} &   \mathbb{X} \ar[d]^-{\mathcal{E}_\varsigma}    \\
\mathbb{X} \ar[r]_-{\mathcal{E}_{\varsigma}} & \mathsf{D}\text{-}\mathsf{con}\left[ \mathbb{X} \right]
 }
\end{align*}
That $\mathcal{X}$ is an $\overline{\mathfrak{F}}$-algebra structure follows from the fact the diagrams on the left commute, which by Lemma \ref{lem:cofree-lem0}.(\ref{lem:cofree-lem1}) implies that the diagrams on the right commute. 
 \begin{align*} \begin{array}[c]{c}
\xymatrixcolsep{5pc}\xymatrix{  \mathbb{X} \ar@{=}[dr]^-{} \ar[r]^-{\mathcal{N}_{(\mathbb{X},\mathsf{D})}} &  \mathsf{F}[\mathbb{X}] \ar[r]^-{\mathcal{X}}  \ar[d]_-{\mathcal{E}_\mathbb{X}} &   \mathbb{X} \ar[d]^-{\mathcal{E}_\varsigma}    \\
& \mathbb{X} \ar[r]_-{\mathcal{E}_{\varsigma}} & \mathsf{D}\text{-}\mathsf{con}\left[ \mathbb{X} \right]
 }
   \end{array} \!\! \stackrel{\text{Lem.\ref{lem:cofree-lem0}.(\ref{lem:cofree-lem1})}}{\Longrightarrow}\!\!  \begin{array}[c]{c} \xymatrixcolsep{5pc}\xymatrix{ \mathbb{X} \ar@{=}[dr]^-{} \ar[r]^-{\mathcal{N}_{(\mathbb{X},\mathsf{D})}} & \mathsf{F}[\mathbb{X}] \ar[d]^-{\mathcal{X}}   \\
 & \mathbb{X} 
 }
   \end{array}  \\
   \begin{array}[c]{c}
\xymatrixcolsep{2pc}\xymatrix{ \mathsf{F}\left[\mathsf{F}[\mathbb{X}] \right] \ar[rrr]^-{ \mathcal{M}_{\mathbb{X}} } \ar[dr]^-{ \mathcal{E}_{\mathsf{F}[\mathbb{X}]} } \ar[dd]_-{ \mathsf{F}[\mathcal{X}] } &&& \mathsf{F}[\mathbb{X}]  \ar[r]^-{\mathcal{X}} \ar[d]_-{\mathcal{E}_{\mathbb{X}}} & \mathbb{X} \ar[ddd]^-{\mathcal{E}_\varsigma}    \\ 
&  \mathsf{F}[\mathbb{X}] \ar[rr]_-{\mathcal{E}_{\mathbb{X}}} \ar[dr]_-{\mathcal{X}}  && \mathbb{X} \ar[ddr]_-{\mathcal{E}_{\varsigma}} \\ 
\mathsf{F}[\mathbb{X}] \ar[rr]^-{\mathcal{E}_\varsigma}  \ar[d]_-{\mathcal{X}}  && \mathbb{X}  \ar[drr]^-{\mathcal{E}_\varsigma} && \\
\mathbb{X}\ar[rrrr]_-{\mathcal{E}_{\varsigma}} &&  && \mathsf{D}\text{-}\mathsf{con}\left[ \mathbb{X} \right]
 } 
   \end{array} \!\! \stackrel{\text{Lem.\ref{lem:cofree-lem0}.(\ref{lem:cofree-lem1})}}{\Longrightarrow} \!\!  \begin{array}[c]{c} \xymatrixcolsep{5pc}\xymatrix{   \mathsf{F}\left[\mathsf{F}[\mathbb{X}] \right] \ar[r]^-{\mathcal{M}_{(\mathbb{X},\mathsf{D})}}   \ar[d]_-{\mathsf{F}[\mathcal{X}]} &   \mathsf{F}[\mathbb{X}] \ar[d]^-{\mathcal{X}}    \\
 \mathsf{F}[\mathbb{X}] \ar[r]_-{\mathcal{X}} & \mathbb{X}
 }
   \end{array}
\end{align*}
So we conclude that $\left((\mathbb{X}, \mathsf{D}), \mathcal{X} \right)$ is an $\overline{\mathfrak{F}}$-algebra.

For the $\Leftarrow$ direction, let $\left((\mathbb{X}, \mathsf{D}), \mathcal{X} \right)$ be an $\overline{\mathfrak{F}}$-algebra. By Lemma \ref{lem:faaalg-diffunit}, $(\mathbb{X}, \mathsf{D})$ has a differential constant unit $\varsigma$. Then, by Theorem \ref{thm:diffcon1}, to prove that $(\mathbb{X}, \mathsf{D})$ is cofree, it suffices to prove that the induced Cartesian $k$-differential functor $\mathcal{E}_\varsigma^\flat: (\mathbb{X}, \mathsf{D}) \to (\mathsf{F}\left[\mathsf{D}\text{-}\mathsf{con}\left[ \mathbb{X} \right] \right], \mathsf{D})$ is an isomorphism. As before, to prove that $\mathcal{E}_\varsigma^\flat$ is an isomorphism, it is sufficient to prove that it is bijective on objects and maps, which allows us to avoid working with composition in the Faà di Bruno construction. Recall that $\mathcal{E}_\varsigma^\flat$ was worked out explicitly in Theorem \ref{thm:diffcon1}. Clearly, $\mathcal{E}_\varsigma^\flat$ is bijective on objects. On maps, we will explain why $\mathcal{E}_\varsigma^\flat$ is injective and surjective. Starting with injectivity, suppose that $\mathcal{E}_\varsigma^\flat(f) = \mathcal{E}_\varsigma^\flat(g)$. This implies that for all $n \in \mathbb{N}$, $\varsigma \circ \partial^n[f] = \varsigma \circ \partial^n[g]$, so we have an equality of $\mathbb{X}$-Faà di Bruno sequences $\left( \varsigma \circ \partial^n[f] \right)_{n \in \mathbb{N}} = \left( \varsigma \circ \partial^n[g] \right)_{n \in \mathbb{N}}$. Then by Lemma \ref{lem:faaalg-diffunit}.(\ref{lem:fdu.ii}), we have that: 
\[ f = \mathcal{X}\left( \left( \varsigma \circ \partial^n[f] \right)_{n \in \mathbb{N}} \right) = \mathcal{X}\left( \left( \varsigma \circ \partial^n[g] \right)_{n \in \mathbb{N}} \right) = g  \]
So $f=g$, and so $\mathcal{E}_\varsigma^\flat$ is injective on maps. For surjectivity, let $f_{\bullet}$ be a $\mathsf{D}\text{-}\mathsf{con}\left[ \mathbb{X} \right]$-Faà di Bruno sequence. This means that $f_{\bullet}$ is also a $\mathbb{X}$-Faà di Bruno sequence and so $\mathcal{X}\left( f_{\bullet} \right)$ is a map in $\mathbb{X}$. Then by Lemma \ref{lem:faaalg-diffunit}.(\ref{lem:fdu.i}), that $\partial^n[f_{\bullet}]_0 = f_n$, and that $\varsigma$ is a $\mathsf{D}$-constant unit, we have that: 
\begin{gather*}
    \mathcal{E}_\varsigma^\flat\left( \mathcal{X}\left( f_{\bullet} \right) \right) =  \left( \varsigma \circ \partial^n\left[ \mathcal{X}\left( f_{\bullet} \right)\right] \right)_{n \in \mathbb{N}} = \left( \varsigma \circ \mathcal{X}\left( \partial^n\left[ f_{\bullet}\right]  \right)\right)_{n \in \mathbb{N}}  = \left( \varsigma \circ \partial^n\left[ f_{\bullet} \right]_0 \right)_{n \in \mathbb{N}} = (\varsigma \circ f_n)_{n \in \mathbb{N}} =  (f_n)_{n \in \mathbb{N}} 
\end{gather*}
So $\mathcal{E}_\varsigma^\flat\left( \mathcal{X}\left( (f_n)_{n \in \mathbb{N}} \right) \right) = (f_n)_{n \in \mathbb{N}}$, thus $\mathcal{E}_\varsigma^\flat$ is surjective on maps. Therefore, $\mathcal{E}_\varsigma^\flat$ is bijective on maps, and we conclude that $\mathcal{E}_\varsigma^\flat$ is an isomorphism. Explicitly, the inverse functor ${\mathcal{E}_\varsigma^\flat}^{-1}: \mathsf{F}\left[\mathsf{D}\text{-}\mathsf{con}\left[ \mathbb{X} \right] \right] \to \mathbb{X}$ is defined on objects as ${\mathcal{E}_\varsigma^\flat}^{-1}(X) = X$ and on maps as ${\mathcal{E}_\varsigma^\flat}^{-1}\left( (f_n)_{n \in \mathbb{N}} \right) = \mathcal{X}\left( (f_n)_{n \in \mathbb{N}} \right)$. Therefore, we conclude that $(\mathbb{X}, \mathsf{D})$ is cofree. 
\end{proof}

We may explicitly write down the couniversal property for $\overline{\mathfrak{F}}$-algebras and explain how to use the $\overline{\mathfrak{F}}$-algebra structure to construct the induced unique Cartesian $k$-differential functors.

\begin{corollary}\label{cor:faacor3} Let $\left((\mathbb{X}, \mathsf{D}), \mathcal{X} \right)$ be an $\overline{\mathfrak{F}}$-algebra. Then $(\mathbb{X}, \mathsf{D})$ has a differential constant unit $\varsigma$ and $\left((\mathbb{X}, \mathsf{D}), \mathcal{E}_{\varsigma} \right)$ is a cofree Cartesian $k$-differential category over $\mathsf{D}\text{-}\mathsf{con}\left[ \mathbb{X} \right]$, where in particular for any Cartesian $k$-differential category $(\mathbb{Y},\mathsf{D})$ and Cartesian $k$-linear functor ${\mathcal{F}: \mathbb{Y} \to \mathbb{A}}$, the unique Cartesian $k$-differential functor $\mathcal{F}^\flat: (\mathbb{Y},\mathsf{D}) \to (\mathbb{X},\mathsf{D})$ such that the following diagram commutes: 
\begin{align*} \xymatrixcolsep{5pc}\xymatrix{\mathbb{Y} \ar[dr]_-{\mathcal{F}}  \ar@{-->}[r]^-{\exists! ~ \mathcal{F}^\flat}  & \mathbb{X} \ar[d]^-{\mathcal{E}_\varsigma} \\
  & \mathsf{D}\text{-}\mathsf{con}\left[ \mathbb{X} \right] }
\end{align*}
is defined on objects as $\mathcal{F}^\flat(Y) = \mathcal{F}(Y)$ and on maps as $\mathcal{F}^\flat(f) = \mathcal{X}\left( \mathcal{F}\left(\partial^\bullet[f] \right)\right)$. 
\end{corollary}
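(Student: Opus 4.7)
The plan is to derive this corollary directly from the results already established, treating the explicit formula for $\mathcal{F}^\flat$ as the only new content. First I would invoke Lemma \ref{lem:faaalg-diffunit} to obtain the differential constant unit $\varsigma = \mathcal{X}(1_{-}, 0, 0, \ldots)$, and then invoke Theorem \ref{thm:faaalg-cofree} together with Theorem \ref{thm:diffcon1} to conclude that $\left((\mathbb{X}, \mathsf{D}), \mathcal{E}_\varsigma\right)$ is a cofree Cartesian $k$-differential category over $\mathsf{D}\text{-}\mathsf{con}\left[\mathbb{X}\right]$. This takes care of everything in the statement except the explicit description of the induced functor $\mathcal{F}^\flat$.

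For the explicit formula, my strategy is to factor $\mathcal{F}^\flat$ through the Faà di Bruno construction. By the Faà di Bruno couniversal property, the unique Cartesian $k$-differential functor $\mathcal{F}^{\flat}_{\mathsf{F}} : (\mathbb{Y}, \mathsf{D}) \to (\mathsf{F}[\mathsf{D}\text{-}\mathsf{con}\left[\mathbb{X}\right]], \mathsf{D})$ with $\mathcal{E}_{\mathsf{D}\text{-}\mathsf{con}\left[\mathbb{X}\right]} \circ \mathcal{F}^{\flat}_{\mathsf{F}} = \mathcal{F}$ is defined on maps as $\mathcal{F}^{\flat}_{\mathsf{F}}(f) = \mathcal{F}(\partial^\bullet[f])$. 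Meanwhile, in the proof of Theorem \ref{thm:faaalg-cofree} it was shown that the isomorphism $\mathcal{E}_\varsigma^\flat : (\mathbb{X}, \mathsf{D}) \to (\mathsf{F}[\mathsf{D}\text{-}\mathsf{con}\left[\mathbb{X}\right]], \mathsf{D})$ admits the inverse ${\mathcal{E}_\varsigma^\flat}^{-1}\!\left((f_n)_{n \in \mathbb{N}}\right) = \mathcal{X}\!\left((f_n)_{n \in \mathbb{N}}\right)$. Composing, the unique Cartesian $k$-differential functor into $(\mathbb{X}, \mathsf{D})$ must be ${\mathcal{E}_\varsigma^\flat}^{-1} \circ \mathcal{F}^{\flat}_{\mathsf{F}}$, which gives precisely $\mathcal{F}^\flat(f) = \mathcal{X}\!\left(\mathcal{F}(\partial^\bullet[f])\right)$ on maps and $\mathcal{F}^\flat(Y) = \mathcal{F}(Y)$ on objects.

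Uniqueness and the commuting triangle need no further argument since they are automatic from the cofree couniversal property already obtained. The one sanity check I would carry out is that ${\mathcal{E}_\varsigma^\flat}^{-1} \circ \mathcal{F}^{\flat}_{\mathsf{F}}$ does satisfy $\mathcal{E}_\varsigma \circ \mathcal{F}^\flat = \mathcal{F}$: this reduces, after applying Lemma \ref{lem:faaalg-diffunit}.(\ref{lem:fdu.i}), to the equality $\varsigma \circ \mathcal{X}(\mathcal{F}(\partial^\bullet[f])) = \varsigma \circ \mathcal{F}(f)$, which is immediate from the formula since the right-hand side recovers the zeroth term of $\mathcal{F}(\partial^\bullet[f])$, namely $\mathcal{F}(f)$, and $\varsigma$ is $k$-linear.

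The only place where there could be any subtlety is verifying that the two routes to defining $\mathcal{F}^\flat$ — via the Faà di Bruno couniversal property composed with ${\mathcal{E}_\varsigma^\flat}^{-1}$, versus via direct construction — produce the same functor. But this is forced by Lemma \ref{lem:cofree-lem2}.(i) (uniqueness of cofree Cartesian $k$-differential categories over a fixed base), so I do not expect any real obstacle; this corollary is essentially a bookkeeping consequence of combining Theorem \ref{thm:diffcon1}, Lemma \ref{lem:faaalg-diffunit}, and Theorem \ref{thm:faaalg-cofree}.
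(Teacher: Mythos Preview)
Your proposal is correct and is precisely the natural derivation the paper intends: the corollary is stated without proof, being an immediate consequence of Lemma \ref{lem:faaalg-diffunit}, Theorem \ref{thm:faaalg-cofree}, and Theorem \ref{thm:diffcon1}, with the explicit formula read off from the inverse ${\mathcal{E}_\varsigma^\flat}^{-1}$ computed in the proof of Theorem \ref{thm:faaalg-cofree}. One tiny slip in your sanity check: the reason $\varsigma \circ \mathcal{F}(f) = \mathcal{F}(f)$ is not that $\varsigma$ is $k$-linear but that $\mathcal{F}(f)$ lands in $\mathsf{D}\text{-}\mathsf{con}[\mathbb{X}]$ and $\varsigma$ vanishes on $\mathsf{D}$-constants.
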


We can now show that a Cartesian $k$-differential category has at most one possible $\overline{\mathfrak{F}}$-algebra structure. 

\begin{corollary}\label{cor:faastruct} $\overline{\mathfrak{F}}$-algebra structure on a Cartesian $k$-differential category, if it exists, is unique.
\end{corollary}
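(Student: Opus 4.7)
The plan is to reduce the uniqueness claim to the general rigidity property of cofree Cartesian $k$-differential categories recorded in Lemma~\ref{lem:cofree-lem1}. So suppose $\mathcal{X}, \mathcal{X}' : (\mathsf{F}[\mathbb{X}], \mathsf{D}) \to (\mathbb{X}, \mathsf{D})$ are two Cartesian $k$-differential functors both making $((\mathbb{X}, \mathsf{D}), \mathcal{X})$ and $((\mathbb{X}, \mathsf{D}), \mathcal{X}')$ into $\overline{\mathfrak{F}}$-algebras. By Theorem~\ref{thm:faaalg-cofree}, $(\mathbb{X}, \mathsf{D})$ is cofree, and by Corollary~\ref{cor:faacor3} each algebra structure exhibits $(\mathbb{X}, \mathsf{D})$ as cofree over $\mathsf{D}\text{-}\mathsf{con}[\mathbb{X}]$ via the functor $\mathcal{E}_\varsigma$, where $\varsigma$ is the induced differential constant unit.

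First I would observe that the differential constant unit is an attribute of $(\mathbb{X}, \mathsf{D})$ alone: by Lemma~\ref{lem:sigma1}, if a $\mathsf{D}$-constant unit exists then it is unique. By Lemma~\ref{lem:faaalg-diffunit}, both $\mathcal{X}$ and $\mathcal{X}'$ produce a $\mathsf{D}$-constant unit for $(\mathbb{X}, \mathsf{D})$, namely $\mathcal{X}(1_X, 0, 0, \ldots)$ and $\mathcal{X}'(1_X, 0, 0, \ldots)$ respectively; uniqueness forces these to coincide with a common $\varsigma$. Consequently, the category $\mathsf{D}\text{-}\mathsf{con}[\mathbb{X}]$ and the Cartesian $k$-linear functor $\mathcal{E}_\varsigma : \mathbb{X} \to \mathsf{D}\text{-}\mathsf{con}[\mathbb{X}]$ are the same for both algebra structures, and $((\mathbb{X}, \mathsf{D}), \mathcal{E}_\varsigma)$ is cofree over $\mathsf{D}\text{-}\mathsf{con}[\mathbb{X}]$.

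Then I would apply Lemma~\ref{lem:faaalg-diffunit}.(\ref{lem:fdu.i}) to both algebra structures, which yields the equalities
\[
\mathcal{E}_\varsigma \circ \mathcal{X} \;=\; \mathcal{E}_\varsigma \circ \mathcal{E}_\mathbb{X} \;=\; \mathcal{E}_\varsigma \circ \mathcal{X}' .
\]
Since $\mathcal{X}$ and $\mathcal{X}'$ are both Cartesian $k$-differential functors out of the Cartesian $k$-differential category $(\mathsf{F}[\mathbb{X}], \mathsf{D})$ into the cofree Cartesian $k$-differential category $((\mathbb{X}, \mathsf{D}), \mathcal{E}_\varsigma)$, Lemma~\ref{lem:cofree-lem1} gives $\mathcal{X} = \mathcal{X}'$, as desired. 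The only subtle step is the reconciliation of the two candidate differential constant units, which is handled cleanly by Lemma~\ref{lem:sigma1}; everything else is a direct invocation of machinery already set up.
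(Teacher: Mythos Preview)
Your proof is correct and follows essentially the same approach as the paper's: both obtain a common differential constant unit $\varsigma$ via Lemma~\ref{lem:faaalg-diffunit} and the uniqueness in Lemma~\ref{lem:sigma1}, use Lemma~\ref{lem:faaalg-diffunit}.(\ref{lem:fdu.i}) to get $\mathcal{E}_\varsigma \circ \mathcal{X} = \mathcal{E}_\varsigma \circ \mathcal{E}_\mathbb{X} = \mathcal{E}_\varsigma \circ \mathcal{X}'$, and conclude $\mathcal{X} = \mathcal{X}'$ by the monicity property of Lemma~\ref{lem:cofree-lem1} applied to the cofree structure from Corollary~\ref{cor:faacor3}.
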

\begin{proof} Let $(\mathbb{X}, \mathsf{D})$ be a Cartesian $k$-differential category with two $\overline{\mathfrak{F}}$-algebra structures ${\mathcal{X}: \mathsf{F}[\mathbb{X}]  \to \mathbb{X}}$ and ${\mathcal{X}^\prime: \mathsf{F}[\mathbb{X}]  \to \mathbb{X}}$. By Lemma \ref{lem:faaalg-diffunit}, $(\mathbb{X}, \mathsf{D})$ has a $\mathsf{D}$-constant unit $\varsigma$. By Lemma \ref{lem:sigma1}, since $\mathsf{D}$-constant units are unique, we have that $\mathcal{X}({\varsigma_\bullet}) = \varsigma= \mathcal{X}^\prime({\varsigma_\bullet})$. By Corollary \ref{cor:faacor3}, $\left((\mathbb{X}, \mathsf{D}), \mathcal{E}_{\varsigma} \right)$ is a cofree Cartesian $k$-differential category over $\mathsf{D}\text{-}\mathsf{con}\left[ \mathbb{X} \right]$. By Lemma \ref{lem:faaalg-diffunit}.(\ref{lem:fdu.i}), we have that the following diagram commutes:
 \begin{align*} \xymatrixcolsep{5pc}\xymatrix{   \mathsf{F}[\mathbb{X}] \ar[rr]^-{\mathcal{X}} \ar[dd]_-{\mathcal{X}^\prime}  \ar[dr]_-{\mathcal{E}_{\mathbb{X}}} &&   \mathbb{X} \ar[dd]^-{\mathcal{E}_\varsigma}    \\ 
 & \mathbb{X} \ar[dr]_-{\mathcal{E}_\varsigma} \\ 
\mathbb{X} \ar[rr]_-{\mathcal{E}_\varsigma} && \mathsf{D}\text{-}\mathsf{con}\left[ \mathbb{X} \right]
 }
\end{align*}
and since $\mathcal{X}$ and $\mathcal{X}^\prime$ are Cartesian $k$-differential functors, by Lemma \ref{lem:cofree-lem0}.(\ref{lem:cofree-lem1}), it follows that $\mathcal{X} = \mathcal{X}^\prime$. 
\end{proof}

We conclude this section by explaining why the Faà di Bruno adjunction is monadic. First recall that an $\overline{\mathfrak{F}}$-algebra morphism $\mathcal{F}: \left((\mathbb{X}, \mathsf{D}), \mathcal{X} \right) \to \left((\mathbb{X}^\prime, \mathsf{D}), \mathcal{X}^\prime \right)$ is a Cartesian $k$-differential functor $\mathcal{F}: (\mathbb{X}, \mathsf{D}) \to (\mathbb{X}^\prime, \mathsf{D})$ such that the following diagram commutes: 
 \begin{align*} \xymatrixcolsep{5pc}\xymatrix{   \mathsf{F}\left[\mathbb{X} \right] \ar[r]^-{\mathsf{F}[\mathcal{F}]}  \ar[d]_-{\mathcal{X}} &   \mathsf{F}[\mathbb{X}^\prime] \ar[d]^-{\mathcal{X}^\prime}    \\
  \mathbb{X} \ar[r]_-{\mathcal{F}} & \mathbb{X}^\prime
 }
\end{align*}
Explicitly, this says that for an $\mathbb{X}$-Faà di Bruno sequence, we have that $\mathcal{F}(\mathcal{X}(f_{\bullet})) = \mathcal{X}^\prime( \mathcal{F}(f_{\bullet}) )$. Let $\mathbb{ALG}(\overline{\mathfrak{F}})$ be the category of $\overline{\mathfrak{F}}$-algebras and $\overline{\mathfrak{F}}$-algebra morphisms between them, and let $\mathfrak{C}: \mathbb{CLLC}_k \to \mathbb{ALG}(\overline{\mathfrak{F}})$ be the induced comparison functor \cite[Section 2.3]{barr2000toposes}, which is defined on objects as $\mathfrak{C}(\mathbb{A}) := \left( \left( \mathsf{F}[\mathbb{A}], \mathsf{D} \right), \mathsf{F}[\mathcal{E}_\mathbb{A}] \right)$ and maps as $\mathfrak{C}(\mathcal{A}) = \mathsf{F}[\mathcal{F}]$. Our goal is now to explain why the comparison functor is in fact an equivalence of categories. So in order to build a functor of type $\mathbb{ALG}(\overline{\mathfrak{F}}) \to \mathbb{CLLC}_k$, we will need the following lemma. 

\begin{lemma}\label{lem:faaalgmap-DCON} Let $\mathcal{F}: \left((\mathbb{X}, \mathsf{D}), \mathcal{X} \right) \to \left((\mathbb{X}^\prime, \mathsf{D}), \mathcal{X}^\prime \right)$ be an $\overline{\mathfrak{F}}$-algebra morphism. Define the functor ${\mathsf{D}\text{-}\mathsf{con}\left[ \mathcal{F} \right]: \mathsf{D}\text{-}\mathsf{con}\left[ \mathbb{X} \right] \to \mathsf{D}\text{-}\mathsf{con}\left[ \mathbb{X}^\prime \right]}$ on objects and maps as $\mathsf{D}\text{-}\mathsf{con}\left[ \mathcal{F} \right](-) = \mathcal{F}(-)$. Then $\mathsf{D}\text{-}\mathsf{con}\left[ \mathcal{F} \right]$ is a Cartesian $k$-linear functor and the following diagram commutes: 
 \begin{align*} \xymatrixcolsep{5pc}\xymatrix{  \mathbb{X} \ar[r]^-{\mathcal{F}}  \ar[d]_-{\mathcal{E}_\varsigma} &   \mathbb{X}^\prime \ar[d]^-{\mathcal{E}_\varsigma}    \\
\mathsf{D}\text{-}\mathsf{con}\left[ \mathbb{X} \right] \ar[r]_-{\mathsf{D}\text{-}\mathsf{con}\left[ \mathcal{F} \right]} & \mathsf{D}\text{-}\mathsf{con}\left[ \mathbb{X}^\prime \right]
 }
\end{align*} 
\end{lemma}
\begin{proof} Since $\mathcal{F}$ is Cartesian $k$-differential functor, by Lemma \ref{lem:dconlem1} it preserves $\mathsf{D}$-constants and so $\mathsf{D}\text{-}\mathsf{con}\left[ \mathcal{F} \right]$ is well-defined. Clearly, $\mathsf{D}\text{-}\mathsf{con}\left[ \mathcal{F} \right]$ preserves composition, and so it remains to show it also preserves identities. To do so, we must show that $\mathcal{F}$ preserves the differential constant unit. Now since $\mathcal{F}$ preserves zeros and identities, we have that $\mathcal{F}({\varsigma_\bullet}_X) = {\varsigma_\bullet}_{\mathcal{F}(X)}$. Therefore, we have that: 
\[ \mathcal{F}(\varsigma_X) = \mathcal{F}(\mathcal{X}({\varsigma_\bullet}_X)) = \mathcal{X}(\mathcal{F}({\varsigma_\bullet}_X)) = \mathcal{X}^\prime({\varsigma_\bullet}_{\mathcal{F}(X)}) = \varsigma_{\mathcal{F}(X)} \]
So $\mathsf{D}\text{-}\mathsf{con}\left[ \mathcal{F} \right]$ preserves identities and so is a functor. Since $\mathcal{F}$ is Cartesian $k$-linear then so is $\mathsf{D}\text{-}\mathsf{con}\left[ \mathcal{F} \right]$. 
\end{proof}

As such, from the above lemma, we obtain a functor $\mathfrak{D}: \mathbb{ALG}(\overline{\mathfrak{F}}) \to \mathbb{CLLC}_k$ which sends an $\overline{\mathfrak{F}}$-algebra to its category of differential constants, so $\mathfrak{D}\left((\mathbb{X}, \mathsf{D}), \mathcal{X} \right) = \mathsf{D}\text{-}\mathsf{con}\left[ \mathbb{X} \right]$, and sends an $\overline{\mathfrak{F}}$-algebra morphism to $\mathfrak{D}(\mathcal{F})=\mathsf{D}\text{-}\mathsf{con}\left[ \mathcal{F} \right]$, as defined above. Now by Lemma \ref{lem:cofreebase}, we have that $\mathfrak{D}\left( \mathfrak{C} (\mathbb{A}) \right) = \mathsf{D}\text{-}\mathsf{con}\left[ \mathsf{F}[\mathbb{A}] \right] \cong \mathbb{A}$. On the other hand, it follows from Theorem \ref{thm:faaalg-cofree}, Lemma \ref{lem:cofree-lem0}.(\ref{lem:cofree-lem2.i}), and Corollary \ref{cor:faastruct} that $\mathfrak{C}\left( \mathfrak{D}\left((\mathbb{X}, \mathsf{D}), \mathcal{X} \right) \right) =  \left( \left( \mathsf{F}\left[\mathsf{D}\text{-}\mathsf{con}\left[ \mathbb{X} \right] \right], \mathsf{D} \right), \mathsf{F}[\mathcal{E}_{\mathsf{D}\text{-}\mathsf{con}\left[ \mathbb{X} \right]}] \right) \cong \left((\mathbb{X}, \mathsf{D}), \mathcal{X} \right)$ as well. Thus we get that that $\mathbb{ALG}(\overline{\mathfrak{F}})$ and $\mathbb{CLLC}_k$ are equivalent via the Faà di Bruno adjunction's comparison functor. So, we conclude this paper with the following statement:

\begin{proposition}\label{prop:monadic} The Faà di Bruno adjunction $\mathfrak{U} \dashv \mathfrak{F}$ is monadic, so $\mathbb{ALG}(\overline{\mathfrak{F}}) \simeq \mathbb{CLLC}_k$. 
\end{proposition}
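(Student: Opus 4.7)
My plan is to prove the equivalence $\mathbb{ALG}(\overline{\mathfrak{F}}) \cong \mathbb{CLLC}_k$ by showing that the comparison functor $\mathfrak{C}$ and the differential constants functor $\mathfrak{D}$, both of which are already constructed just before the statement, are quasi-inverse to each other. All the heavy lifting has been done in the preceding lemmas; what remains is to assemble the pieces into natural isomorphisms.

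First I would handle $\mathfrak{D} \circ \mathfrak{C} \cong 1_{\mathbb{CLLC}_k}$. For a Cartesian left $k$-linear category $\mathbb{A}$, we have by definition $\mathfrak{D}(\mathfrak{C}(\mathbb{A})) = \mathsf{D}\text{-}\mathsf{con}\left[ \mathsf{F}[\mathbb{A}] \right]$, and by Proposition \ref{prop:cofree-diffcon} applied to the cofree $\left( (\mathsf{F}[\mathbb{A}], \mathsf{D}), \mathcal{E}_\mathbb{A} \right)$, the functor $\overline{\mathcal{E}_\mathbb{A}}_\varsigma: \mathsf{D}\text{-}\mathsf{con}\left[ \mathsf{F}[\mathbb{A}] \right] \to \mathbb{A}$ is a Cartesian $k$-linear isomorphism. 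For naturality in $\mathbb{A}$, note that if $\mathcal{A}: \mathbb{A} \to \mathbb{A}^\prime$ is a Cartesian $k$-linear functor, then $\mathfrak{D}(\mathfrak{C}(\mathcal{A})) = \mathsf{D}\text{-}\mathsf{con}[\mathsf{F}[\mathcal{A}]]$, which acts on a differential constant $f^\bullet$ of $\mathsf{F}[\mathbb{A}]$ as $\mathcal{A}(f^\bullet)$; the naturality square follows immediately from the definitions of $\overline{\mathcal{E}_\mathbb{A}}_\varsigma$ and $\overline{\mathcal{E}_{\mathbb{A}^\prime}}_\varsigma$ as restrictions of $\mathcal{E}_\mathbb{A}$ and $\mathcal{E}_{\mathbb{A}^\prime}$.

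Next I would handle $\mathfrak{C} \circ \mathfrak{D} \cong 1_{\mathbb{ALG}(\overline{\mathfrak{F}})}$. For a Faà di Bruno algebra $((\mathbb{X}, \mathsf{D}), \mathcal{X})$, we have $\mathfrak{C}(\mathfrak{D}((\mathbb{X}, \mathsf{D}), \mathcal{X})) = \left( (\mathsf{F}[\mathsf{D}\text{-}\mathsf{con}[\mathbb{X}]], \mathsf{D}), \mathsf{F}[\mathcal{E}_{\mathsf{D}\text{-}\mathsf{con}[\mathbb{X}]}] \right)$. By Theorem \ref{thm:faaalg-cofree}, $(\mathbb{X}, \mathsf{D})$ is cofree, and by Theorem \ref{thm:diffcon1}.(\ref{thm:diffcon1.iii}), the canonical Cartesian $k$-differential functor $\mathcal{E}_\varsigma^\flat: (\mathbb{X}, \mathsf{D}) \to (\mathsf{F}[\mathsf{D}\text{-}\mathsf{con}[\mathbb{X}]], \mathsf{D})$ is an isomorphism. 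To upgrade this to an $\overline{\mathfrak{F}}$-algebra isomorphism, I must verify that $\mathcal{E}_\varsigma^\flat$ is an algebra morphism, i.e.\ that the square
\[ \mathcal{E}_\varsigma^\flat \circ \mathcal{X} = \mathsf{F}[\mathcal{E}_{\mathsf{D}\text{-}\mathsf{con}[\mathbb{X}]}] \circ \mathsf{F}[\mathcal{E}_\varsigma^\flat] \]
commutes. Since both sides are Cartesian $k$-differential functors $(\mathsf{F}[\mathbb{X}], \mathsf{D}) \to (\mathsf{F}[\mathsf{D}\text{-}\mathsf{con}[\mathbb{X}]], \mathsf{D})$, by Lemma \ref{lem:cofree-lem1} applied to the cofree category $(\mathsf{F}[\mathsf{D}\text{-}\mathsf{con}[\mathbb{X}]], \mathsf{D})$ over $\mathsf{D}\text{-}\mathsf{con}[\mathbb{X}]$, it suffices to check they agree after post-composition with $\mathcal{E}_{\mathsf{D}\text{-}\mathsf{con}[\mathbb{X}]}$; this reduces, via the defining diagram of $\mathcal{E}_\varsigma^\flat$ and the definition of $\mathsf{F}[-]$ on morphisms, to Lemma \ref{lem:faaalg-diffunit}.(\ref{lem:fdu.i}), which is exactly the required compatibility of $\mathcal{X}$ with the differential constant unit. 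Naturality in the algebra is again immediate from the construction of $\mathcal{E}_\varsigma^\flat$.

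The main obstacle I expect is purely bureaucratic: juggling the several layers of Faà di Bruno constructions and making sure the algebra-morphism square actually commutes at the level of the induced functors. The trick is to avoid ever computing anything directly inside $\mathsf{F}[\mathsf{F}[\mathbb{X}]]$ and instead reduce every equality of Cartesian $k$-differential functors into the cofree base via Lemma \ref{lem:cofree-lem1}, which essentially turns each verification into a one-line diagram chase. Once both $\mathfrak{D} \circ \mathfrak{C}$ and $\mathfrak{C} \circ \mathfrak{D}$ are shown to be naturally isomorphic to identities, we conclude that $\mathfrak{C}$ is an equivalence of categories, so $\mathbb{ALG}(\overline{\mathfrak{F}}) \cong \mathbb{CLLC}_k$ and the Faà di Bruno adjunction is monadic.
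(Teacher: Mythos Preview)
Your proposal is correct and follows essentially the same strategy as the paper: both set up the comparison functor $\mathfrak{C}$ and the differential-constants functor $\mathfrak{D}$ and show they are quasi-inverse using Proposition~\ref{prop:cofree-diffcon} (or equivalently Lemma~\ref{lem:cofreebase}) for $\mathfrak{D}\circ\mathfrak{C}$ and Theorem~\ref{thm:faaalg-cofree} together with Theorem~\ref{thm:diffcon1} for $\mathfrak{C}\circ\mathfrak{D}$. You are in fact slightly more careful than the paper in explicitly verifying that $\mathcal{E}_\varsigma^\flat$ is an $\overline{\mathfrak{F}}$-algebra morphism via Lemma~\ref{lem:cofree-lem1} and Lemma~\ref{lem:faaalg-diffunit}.(\ref{lem:fdu.i}), and in spelling out naturality, both of which the paper leaves implicit.
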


\bibliographystyle{plain}      
\bibliography{cofreeCDCbib}   
\end{document}